\theoremstyle{plain}
\newtheorem{theorem}{Théorème}
\newtheorem{corollary}{Corollaire}
\newtheorem{proposition}{Proposition}
\newtheorem{notation}{Notation}
\theoremstyle{definition}
\theoremstyle{remark}
\newtheorem{remark}{Remarque}
\date{}
\title{ Représentations de réflexion de groupes de Coxeter\\Deuxième partie: outils pour des exemples}
\author{François ZARA
}
\begin{document}
\maketitle
\begin{abstract}
Cette partie est composée de trois sections. Dans la première section, nous étudions la famille de polynômes dont les racines sont $4\cos^{2}\frac{k\pi}{n}$, $(n\geqslant 3,1\leqslant k<\frac{n}{2})$. Nous obtenons ainsi une famille de polynômes orthogonaux. Cela nous permettra d'étudier en détail les exemples qui suivent. Dans la deuxième section nous donnons des formules techniques  pour ne pas refaire les calculs à chaque fois. Dans la troisième section nous donnons des applications, d'abord lorsque le corps $K$ est un sous-corps de $\mathbb{R}$ (étude de présentations de $W(H_{3})$ et $W(H_{4})$) et ensuite dans le cas complexe (étude des groupes de réflexion complexes $G(p,p,n)$, $G_{24}$ et $G_{27}$).
\end{abstract}
\begin{otherlanguage}{english}
\begin{abstract}
This part is made of three sections. In the first section we study the family of polynomials whose roots are $4\cos^{2}\frac{k\pi}{n}$, $(n\geqslant 3,1\leqslant k<\frac{n}{2})$. We obtain in this manner a family of orthogonal polynomials. This will permit us to study in details all the examples which follow. In the second section, we give technical formulae in order noto repeat calculations. In the third section, we give applications, first when the field $K$ is a sub-field of $\mathbb{R}$ (presentations of $W(H_{3})$ and $W(H_{4})$) then in the complex case (study of the complex reflection group $G(p,p,n)$, $G_{24}$ and $G_{27}$).
\end{abstract}
\end{otherlanguage}
\let\thefootnote\relax\footnote{Mots clés et phrases: groupes de Coxeter, groupes de réflexion, polynômes orthogonaux.}
\let\thefootnote\relax\footnote{Mathematics Subject Classification. 20F55,22E40,51F15,33C45.}
\section{Une famille de polynômes orthogonaux}
On donne quelques formules concernant les polynômes $u_{n}(X)$  et leurs racines qui nous seront utiles pour étudier les exemples. 
\subsection{Les polynômes $u_{n}(X)$.}
On commence par rappeler quelques résultats de \cite{Z}. 
On considère la suite $(u_{n})_{n\in \mathbb{N}}$ de polynômes à coefficients entiers:

\begin{gather}
u_{2n+1}(X) :=\sum_{k=0}^{n} (-1)^{k}\binom{2n-k}{k}X^{n-k} \qquad (n\geqslant 0),\\
u_{2n+2}(X) :=\sum_{k=0}^{n} (-1)^{k}\binom{2n+1-k}{k}X^{n-k} \qquad (n\geqslant 0),\\
u_{0}(X) :=0.
\end{gather}
Les premiers polynômes sont: $u_{0}(X)=0$, $u_{1}(X)=u_{2}(X)=1$, $u_{3}(X)=X-1$, $u_{4}(X)=X-2$, $u_{5}(X)=X^{2}-3X+1$, $u_{6}(X)=X^{2}-4X+3$, $u_{7}(X)=X^{3}-5X^{2}+6X-1$, etc. ...\\
Nous définissons $u_{n}(X)$ pour $n<0$ par $u_{n}(X)=-u_{-n}(X)$.
\begin{proposition} \label{a1}
1) Nous avons les formules de récurrence:
\begin{gather} 
\forall n \in \mathbb{Z}, \quad u_{2n+2}(X)-u_{2n+1}(X)+u_{2n}(X)=0;\tag{A1}\\
\forall n \in \mathbb{Z}, \quad u_{2n+1}(X)-Xu_{2n}(X)+u_{2n-1}(X)=0.\tag{A2}\\
\forall n \in \mathbb{Z},\quad u_{n+2}(X)-(X-2)u_{n+1}(X)+u_{n}(X)=0.\tag{AR}
\end{gather}
Les suites $n \mapsto (u_{2n}(X))$ et $n \mapsto (u_{2n+1}(X))$ forment chacune une base du $\mathbb{Z}$-module des solutions de la récurrence $(R)$.
\end{proposition}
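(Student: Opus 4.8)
The plan is to prove the two elementary relations (A1) and (A2) directly from the closed formulas (1)--(2), and then to obtain (AR) and the basis statement as purely formal consequences.

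For (A1) and (A2) the idea is to reduce each identity to a single application of Pascal's rule $\binom{m}{k}=\binom{m-1}{k}+\binom{m-1}{k-1}$. Fixing $n\geqslant 1$ and using (1)--(2), one shifts the summation index in the ``lower'' polynomial of the relation ($u_{2n}(X)=u_{2(n-1)+2}(X)$ in (A1), and $u_{2n-1}(X)=u_{2(n-1)+1}(X)$ in (A2)) by $k\mapsto k-1$, so that all three terms become multiples of $X^{n-k}$ over the same range $0\leqslant k\leqslant n$. For (A1) the coefficient of $X^{n-k}$ is then $\binom{2n+1-k}{k}-\binom{2n-k}{k}-\binom{2n-k}{k-1}$, which is $0$ by Pascal; for (A2) it is $\binom{2n-k}{k}-\binom{2n-1-k}{k}-\binom{2n-1-k}{k-1}$, again $0$ by Pascal. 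The convention $\binom{m}{-1}=0$ handles $k=0$, and the top term $k=n$ is checked by hand. This yields (A1) and (A2) for $n\geqslant 0$ (the case $n=0$ being immediate from $u_{0}=0$, $u_{1}=u_{2}=1$), and then for all $n\in\mathbb{Z}$ by substituting $n\mapsto -n-1$ in (A1) and $n\mapsto -n$ in (A2) and invoking $u_{-m}(X)=-u_{m}(X)$: each relation at a negative index is $(-1)$ times the same relation at a nonnegative index.

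Given (A1) and (A2) on all of $\mathbb{Z}$, the three-term recurrence (AR) --- equivalently, the statement that the parity-subsequences $(u_{2m})_{m}$ and $(u_{2m+1})_{m}$ both solve $(R):\ c_{m+2}-(X-2)c_{m+1}+c_{m}=0$ --- follows by elimination. In the even case, $u_{2n+4}\overset{(A1)}{=}u_{2n+3}-u_{2n+2}$, then $u_{2n+3}\overset{(A2)}{=}Xu_{2n+2}-u_{2n+1}$ and $u_{2n+1}\overset{(A1)}{=}u_{2n+2}+u_{2n}$, which combine to $u_{2n+4}=(X-2)u_{2n+2}-u_{2n}$; the odd case is symmetric (expand $Xu_{2n+4}$ via (A1), then replace $Xu_{2n+2}$ via (A2), giving $u_{2n+5}=(X-2)u_{2n+3}-u_{2n+1}$). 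Nothing more is needed here.

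For the last assertion: the solutions of $(R)$ form a free $\mathbb{Z}[X]$-module of rank $2$, because the coefficient of $c_{m}$ in $(R)$ is a unit, so the recurrence can be run both forwards and backwards from any pair $(c_{0},c_{1})$, giving an isomorphism (solution)$\,\mapsto(c_{0},c_{1})$ onto $\mathbb{Z}[X]^{2}$. Under it $(u_{2m})_{m}\mapsto(u_{0},u_{2})=(0,1)$ and $(u_{2m+1})_{m}\mapsto(u_{1},u_{3})=(1,X-1)$, and $\begin{vmatrix}0&1\\ 1&X-1\end{vmatrix}=-1$ is a unit of $\mathbb{Z}[X]$, so these two sequences form a basis of the solution module. (One may test this ``Casoratian'' at a single index because $c_{n}c'_{n+1}-c_{n+1}c'_{n}$ is constant in $n$ for any two solutions of $(R)$.) I expect the only points needing care to be the boundary terms $k=0,n$ in the Pascal computations and the sign bookkeeping in the passage to negative indices; the rest is formal.
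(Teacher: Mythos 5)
Your proof is correct, but note that the paper itself gives no argument here: Proposition 1 is explicitly recalled from the earlier paper \cite{Z} (``On commence par rappeler quelques résultats de \cite{Z}''), so there is no internal proof to compare against, and your self-contained derivation from the closed formulas (1)--(2) via Pascal's rule is a legitimate substitute. Your index-shift computation is right (coefficient of $X^{n-k}$ equal to $\binom{2n+1-k}{k}-\binom{2n-k}{k}-\binom{2n-k}{k-1}$ for (A1), $\binom{2n-k}{k}-\binom{2n-1-k}{k}-\binom{2n-1-k}{k-1}$ for (A2)), the boundary terms and the passage to negative indices via $u_{-m}=-u_{m}$ are handled correctly, and the elimination giving $u_{2n+4}=(X-2)u_{2n+2}-u_{2n}$ and $u_{2n+5}=(X-2)u_{2n+3}-u_{2n+1}$ is sound. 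One point worth making explicit: as printed, (AR) with consecutive indices is false (e.g. $u_{3}-(X-2)u_{2}+u_{1}=2$); you silently read it as the parity-wise recurrence $u_{n+4}-(X-2)u_{n+2}+u_{n}=0$, which is indeed the relation $(R)$ the paper actually uses later (for instance in the proof of Proposition \ref{a6}, where $(R)$ is $u_{2n+2}=(X-2)u_{2n}-u_{2n-2}$), so your interpretation is the intended one, but you should state the correction rather than leave it implicit. Finally, your basis argument is fine provided you read ``$\mathbb{Z}$-module'' as the module of $\mathbb{Z}[X]$-valued solutions over $\mathbb{Z}[X]$: the evaluation at $(c_{0},c_{1})$ is an isomorphism because the extreme coefficients of $(R)$ are units, and the determinant $\bigl|\begin{smallmatrix}0&1\\1&X-1\end{smallmatrix}\bigr|=-1$ being a unit settles the claim; the Casoratian remark is a harmless extra.
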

\begin{proposition}
1) Nous avons:
\begin{gather}
\forall n \in \mathbb{Z}, \quad u_{2n+1}(4\cos^{2}\theta)=\frac{\sin (2n+1)\theta}{\sin \theta},\\
\forall n \in \mathbb{Z},\quad  u_{2n}(4\cos^{2}\theta)=\frac{\sin (2n)\theta}{\sin 2\theta},
\end{gather}
 2) Pour chaque entier $n$, les racines de $u_{2n+1}(X)$ dans $\mathbb{R}$ sont $4\cos^{2} \frac{k\pi}{2n+1}$ $(1 \leqslant k \leqslant n)$ et celles de $u_{2n}(X)$ sont  $4\cos^{2} \frac{k\pi}{2n}$ $(1 \leqslant k \leqslant n-1)$.
\end{proposition}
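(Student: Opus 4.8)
The plan is to deduce both parts from the three–term recurrence of Proposition~\ref{a1}.

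\textbf{Part 1.} The decisive remark is that under the substitution $X=4\cos^{2}\theta$ one has $X-2=4\cos^{2}\theta-2=2\cos 2\theta$. Each of the two subsequences $(u_{2n+1})_{n}$ and $(u_{2n})_{n}$ satisfies (this is the last assertion of Proposition~\ref{a1}) a second–order linear recurrence of the form $y_{n+1}-(X-2)y_{n}+y_{n-1}=0$, which after the substitution reads $y_{n+1}-2\cos 2\theta\,y_{n}+y_{n-1}=0$. I would then check that, for a fixed $\theta$ with $\sin 2\theta\neq 0$, the two functions $n\mapsto \sin(2n+1)\theta/\sin\theta$ and $n\mapsto \sin 2n\theta/\sin 2\theta$ satisfy precisely this recurrence; this is immediate from $\sin(\alpha+2\theta)+\sin(\alpha-2\theta)=2\cos 2\theta\,\sin\alpha$, applied with $\alpha=(2n+1)\theta$, respectively $\alpha=2n\theta$. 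A solution of a second–order linear recurrence being determined by two consecutive terms, it then suffices to match initial data: on the odd side $u_{1}=1=\sin\theta/\sin\theta$ and $u_{3}(4\cos^{2}\theta)=4\cos^{2}\theta-1=1+2\cos 2\theta=\sin 3\theta/\sin\theta$; on the even side $u_{2}=1=\sin 2\theta/\sin 2\theta$ and $u_{4}(4\cos^{2}\theta)=4\cos^{2}\theta-2=2\cos 2\theta=\sin 4\theta/\sin 2\theta$. An induction upward and downward in $n\in\mathbb{Z}$ then yields the two formulae for every $\theta$ with $\sin 2\theta\neq 0$, and the excluded values of $\theta$ are covered by continuity, the right–hand sides extending to their obvious limits at the zeros of $\sin 2\theta$.

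\textbf{Part 2.} From the defining binomial sums, $u_{2n+1}$ is monic of degree $n$ and $u_{2n}$ is monic of degree $n-1$, so it is enough to exhibit the stated numbers as pairwise distinct roots. For $1\leqslant k\leqslant n$ put $\theta_{k}=\frac{k\pi}{2n+1}$; then $0<\theta_{k}<\pi/2$, hence $\sin\theta_{k}\neq 0$, whereas $\sin(2n+1)\theta_{k}=\sin k\pi=0$, so Part~1 gives $u_{2n+1}(4\cos^{2}\theta_{k})=0$. Since $t\mapsto 4\cos^{2}t$ is strictly decreasing, hence injective, on $[0,\pi/2]$, the $n$ numbers $4\cos^{2}\theta_{k}$ are pairwise distinct, and being $n$ distinct roots of a polynomial of degree $n$ they are all of its roots. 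For $u_{2n}$ the argument is identical with $\theta_{k}=\frac{k\pi}{2n}$, $1\leqslant k\leqslant n-1$: now $\sin 2\theta_{k}=\sin\frac{k\pi}{n}\neq 0$ while $\sin 2n\theta_{k}=\sin k\pi=0$, so $u_{2n}(4\cos^{2}\theta_{k})=0$, and these $n-1$ pairwise distinct numbers exhaust the roots of the degree-$(n-1)$ polynomial $u_{2n}$.

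\textbf{Where the difficulty lies.} There is no genuine obstacle: the whole content of the statement is the observation that $X=4\cos^{2}\theta$ turns the recurrence $(AR)$ into the defining recurrence of the functions $\sin(2n+1)\theta/\sin\theta$ and $\sin 2n\theta/\sin 2\theta$. The only steps requiring a little care are the two pairs of initial-value verifications in Part~1, the injectivity of $t\mapsto 4\cos^{2}t$ on $[0,\pi/2]$ used for the distinctness of the roots in Part~2, and — a minor technicality — the passage to the excluded values of $\theta$ in Part~1 by a continuity argument.
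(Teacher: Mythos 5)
Your proof is correct. The paper itself gives no proof of this proposition — it is recalled from the reference [Z] cited at the start of the section — so there is nothing to compare against; your argument (the substitution $X-2=2\cos 2\theta$ turning the recurrence $(R)$ on each parity subsequence into the Chebyshev-type recurrence satisfied by $\sin(2n+1)\theta/\sin\theta$ and $\sin 2n\theta/\sin 2\theta$, followed by matching two consecutive initial values, then counting distinct roots against the degrees read off from the defining sums) is the standard one and is complete, including the minor points about the excluded values of $\theta$ and the injectivity of $t\mapsto 4\cos^{2}t$ on $[0,\pi/2]$.
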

Nous allons maintenant obtenir une factorisation en facteurs irréductibles de $u_{n}(X)$ dans $\mathbb{Z}[X]$.

Soit $n$ un entier $\geqslant 1$. Nous définissons
\begin{itemize}
\item si $n$ est impair $P_{n}(X):=\frac{X^{n}-1}{X-1}$,
\item si $n$ est pair $P_{n}(X):=\frac{X^{n}-1}{X^{2}-1}$.
\end{itemize}
Alors les $P_{n}(X)$ sont des polynômes symétriques et
\begin{itemize}
\item si $n$ est impair, $n=2m+1$, $\frac{1}{X^{m}}P_{n}(X)$ est un polynôme en $X+X^{-1}$,
\item si $n$ est pair, $n=2m$, $\frac{1}{X^{m-1}}P_{n}(X)$ est un polynôme en $X+X^{-1}$.
\end{itemize}
Soit $\zeta$ une racine primitive n-ième de l'unité, par exemple $\zeta =\exp 2i\pi /n$, alors $\zeta$ est une racine de $P_{n}(X)$ car, pour $n>2$, $\zeta \ne -1$. Nous avons $\zeta + \zeta ^{-1} =2\cos 2\pi /n$ et $\gamma=\zeta + \zeta ^{-1}+2=4\cos^{2}\pi/n$.
Nous voyons ainsi que $\gamma$ est une racine de $u_{n}(X)$ et les autres racines de $u_{n}(X)$ sont obtenues en prenant les autres racines n-ièmes de l'unité ($\ne \pm 1$).
\begin{proposition}\label{a3}
Avec les hypothèses et notations précédentes, nous avons:
\begin{enumerate}
\item L'application $\delta:\zeta \mapsto \zeta + \zeta ^{-1}+2$ induit une bijection entre l'ensemble des racines de $P_{n}(X)$ et l'ensemble des racines de $u_{n}(X)$ (on définit $P_{0}(X):=0$).
\item $\delta$ induit une bijection $\Delta$ entre l'ensemble des facteurs unitaires de $P_{n}(X)$ dans $\mathbb{Z}[X]$ et l'ensemble des facteurs unitaires de $u_{n}(X)$ dans $\mathbb{Z}[X]$.
\item Soit $\Phi_{n}(X)$ le n-ième polynôme cyclotomique. On pose $v_{n}(X):=\Delta(\Phi_{n}(X))$. Alors:
\begin{center}
\[
P_{n}(X)=\prod_{d|n,n\geqslant 3}\Phi_{d}(X) \qquad \mbox{et} \qquad u_{n}(X)=\prod_{d|n,n\geqslant 3}v_{d}(X).
\]
\end{center}
\end{enumerate}
\end{proposition}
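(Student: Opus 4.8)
The plan is to reduce everything to a single polynomial identity relating $u_n$ to $P_n$ through the substitution $X \mapsto X + X^{-1} + 2$. First I would record the cyclotomic factorization $X^{n}-1=\prod_{d\mid n}\Phi_d(X)$, with the $\Phi_d$ monic, irreducible in $\mathbb{Z}[X]$ and pairwise distinct; since $\Phi_1=X-1$ and $\Phi_2=X+1$, deleting the appropriate factor(s) according to the parity of $n$ gives at once $P_n=\prod_{d\mid n,\,d\geq 3}\Phi_d$ (the first formula of 3)) and shows $P_n$ is separable, so that its monic divisors in $\mathbb{Z}[X]$ are exactly the $\prod_{d\in T}\Phi_d$ with $T\subseteq D:=\{d\mid n:d\geq 3\}$.

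Next I would introduce the operator $A\mapsto\widehat{A}$ sending a monic $A(Y)=\sum_i\alpha_i Y^i\in\mathbb{Z}[X]$ of degree $a$ to $\widehat{A}(X):=X^{a}A(X+X^{-1}+2)=\sum_i\alpha_i X^{a-i}(X+1)^{2i}$. One checks mechanically that $\widehat{A}\in\mathbb{Z}[X]$ is monic and palindromic of degree $2a$, that $\widehat{AB}=\widehat{A}\,\widehat{B}$, that $A\mapsto\widehat{A}$ is injective (since $X^{-a}\widehat{A}(X)=A(X+X^{-1}+2)$ in $\mathbb{Z}[X,X^{-1}]$ and $X+X^{-1}+2$ takes infinitely many values), and that $\xi\in\overline{\mathbb{Q}}^{\times}$ is a root of $\widehat{A}$ iff $\delta(\xi)=\xi+\xi^{-1}+2$ is a root of $A$. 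Conversely, for $d\geq 3$ the polynomial $\Phi_d$ is palindromic of even degree $\varphi(d)$, so $X^{-\varphi(d)/2}\Phi_d(X)$ is a polynomial $g_d(X+X^{-1})$ with $g_d\in\mathbb{Z}[Z]$ monic; setting $v_d(X):=g_d(X-2)$ produces a monic $v_d\in\mathbb{Z}[X]$ of degree $\varphi(d)/2$ with $\widehat{v_d}=\Phi_d$ and $\mathrm{roots}(v_d)=\delta(\mathrm{roots}(\Phi_d))$, and $v_d$ is irreducible, because a nontrivial monic factorization $v_d=AB$ would give $\Phi_d=\widehat{v_d}=\widehat{A}\,\widehat{B}$, contradicting irreducibility of $\Phi_d$. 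This is precisely $v_d=\Delta(\Phi_d)$.

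The heart of the matter is the identity $\widehat{u_n}=P_n$, equivalently $P_n(X)=X^{e_n}u_n(X+X^{-1}+2)$ with $e_n:=\deg u_n=\lfloor (n-1)/2\rfloor$. Both sides are monic of degree $2e_n$ — here one uses $\deg P_n=n-1$ or $n-2$ according to parity, which equals $2e_n$ in both cases — so it is enough to check they agree at the $2e_n$ distinct roots of $P_n$. If $\zeta=\exp(2k\pi i/n)$ is an $n$-th root of unity with $\zeta\neq\pm 1$, then $\delta(\zeta)=2\cos(2k\pi/n)+2=4\cos^{2}(k\pi/n)$, and the preceding Proposition (the trigonometric formulae together with the description of the real roots of $u_n$) shows this number is a root of $u_n$: for $1\leq k\leq e_n$ it is on the list directly, and otherwise one replaces $k$ by $n-k$ — excluding $k=n/2$ is exactly why $P_n$ omits the root $-1$ when $n$ is even. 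Hence $X^{e_n}u_n(X+X^{-1}+2)$ vanishes at every root of $P_n$ and the identity follows; in passing this shows $u_n$ is separable, with root set $\delta(\mathrm{roots}(P_n))$.

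With $\widehat{u_n}=P_n$ in hand the rest is formal. Statement 1) is that $\delta$ carries the roots of $P_n$ onto those of $u_n$; by the above the fibre over a root $4\cos^{2}(k\pi/n)$ of $u_n$ is the pair $\{\zeta,\zeta^{-1}\}$ with $\zeta\neq\zeta^{-1}$, so $\delta$ is a two-to-one surjection and induces a bijection once $\zeta$ is identified with $\zeta^{-1}$. For the second formula of 3), apply the hat operator to $P_n=\prod_{d\in D}\Phi_d=\prod_{d\in D}\widehat{v_d}=\widehat{\prod_{d\in D}v_d}$ and use injectivity of $A\mapsto\widehat{A}$ on monic polynomials to get $u_n=\prod_{d\in D}v_d$. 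Finally, the $v_d$ are irreducible in $\mathbb{Z}[X]$ and pairwise coprime (equal values $\delta(\zeta)=\delta(\zeta')$ force $\zeta'\in\{\zeta,\zeta^{-1}\}$, hence equal orders, so disjoint root sets), so they are exactly the irreducible factors of $u_n$, each with multiplicity one; therefore the monic divisors of $u_n$ in $\mathbb{Z}[X]$ are the $\prod_{d\in T}v_d$, $T\subseteq D$, and $\Delta\bigl(\prod_{d\in T}\Phi_d\bigr):=\prod_{d\in T}v_d$ is the asserted bijection, visibly induced by $\delta$ since $\mathrm{roots}(\Delta(F))=\delta(\mathrm{roots}(F))$ for every monic factor $F$ of $P_n$. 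The only genuinely delicate step is the parity bookkeeping in establishing $\widehat{u_n}=P_n$ (the degree count and the matching of the two lists of roots coming from the preceding Proposition); everything downstream is routine.
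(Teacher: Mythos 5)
Your proof is correct and follows exactly the route the paper indicates: the paper states this proposition without a formal proof, relying on the preceding remarks about the substitution $\zeta\mapsto\zeta+\zeta^{-1}+2$ and the symmetry of $P_{n}(X)$, and your hat-operator $A\mapsto X^{\deg A}A(X+X^{-1}+2)$ is precisely a rigorous packaging of that sketch, with the key identity $X^{e_{n}}u_{n}(X+X^{-1}+2)=P_{n}(X)$ established correctly by the degree count and root-matching. Your remark that $\delta$ is two-to-one on roots (so that statement 1) is a bijection only after identifying $\zeta$ with $\zeta^{-1}$) is the right way to make the paper's loose phrasing precise.
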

On appelle $v_{n}(X)$ le \emph{facteur primitif} de $u_{n}(X)$.\\
On définit une application $n\mapsto n':\mathbb{N}\to\mathbb{N}$ de la manière suivante:
$n'=2n$ si $n$ est impair; $n'=\frac{n}{2}$ si $n \,\equiv \,2 \pmod{4}$; $n'=n$ si $n \,\equiv \,0 \pmod{4}$.\\
Il est facile de vérifier que si $\mathbb{Q}(\zeta_{n})$ est le n-ième corps cyclotomique, alors $\mathbb{Q}(\zeta_{n'})=\mathbb{Q}(\zeta_{n})$.\\
Si $\alpha$ est une racine de $v_{n}(X)$, alors $\alpha'=4-\alpha$ est une racine de $v_{n'}(X)$ et l'application $\alpha \mapsto \alpha'$ établit une bijection entre l'ensemble des racines de $v_{n}(X)$ et l'ensemble des racines de $v_{n'}(X)$.

On donne maintenant des propriétés de la suite $(u_{n}(X))_{n \in \mathbb{Z}}$ dont on aura besoin dans la suite.\\
\begin{proposition}\label{a4}
On a les formules: $ \forall(n,m)\in \mathbb{Z}^2,$
\begin{gather}
  u_{n+2m}(X)+u_{n-2m}(X)=(u_{2m+1}(X)-u_{2m-1}(X))u_{n}(X) ;\\
u_{n+2m+1}(X)+u_{n-(2m+1)}(X) = X(u_{2m+2}(X)-u_{2m}(X))u_{n}(X) \text{si $n$ est pair}\\
u_{n+2m+1}(X)+u_{n-(2m+1)}(X) = (u_{2m+2}(X)-u_{2m}(X))u_{n}(X) \text{si $n$ est impair}.
\end{gather}
\end{proposition}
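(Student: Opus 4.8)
The plan is to pass to the substitution $X=4\cos^{2}\theta$ supplied by Proposition~2 and to reduce all three identities to the elementary sum-to-product formula $\sin a+\sin b=2\sin\frac{a+b}{2}\cos\frac{a-b}{2}$. For fixed $n$ and $m$ each side of each identity is a definite polynomial in $X$, so it suffices to check the equality for infinitely many values of $X$; letting $\theta$ range over any set on which $\sin\theta$, $\sin 2\theta$ and $\cos\theta$ are all nonzero produces infinitely many admissible values $X=4\cos^{2}\theta$, and the identities then hold in $\mathbb{Z}[X]$. Negative indices cause no trouble, since the trigonometric expressions of Proposition~2 are valid for all $n\in\mathbb{Z}$.

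First I would record the two auxiliary evaluations that produce the coefficients appearing on the right-hand sides. From Proposition~2 and the sum-to-product formula,
\[
u_{2m+1}(4\cos^{2}\theta)-u_{2m-1}(4\cos^{2}\theta)=\frac{\sin(2m+1)\theta-\sin(2m-1)\theta}{\sin\theta}=2\cos 2m\theta,
\]
\[
u_{2m+2}(4\cos^{2}\theta)-u_{2m}(4\cos^{2}\theta)=\frac{\sin(2m+2)\theta-\sin 2m\theta}{\sin 2\theta}=\frac{\cos(2m+1)\theta}{\cos\theta}.
\]

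Next, for the first identity I would split on the parity of $n$. If $n=2p$ then $u_{n\pm 2m}(4\cos^{2}\theta)=\frac{\sin 2(p\pm m)\theta}{\sin 2\theta}$, and since $\sin 2(p+m)\theta+\sin 2(p-m)\theta=2\sin 2p\theta\,\cos 2m\theta$ the left-hand side equals $2\cos 2m\theta\cdot\frac{\sin 2p\theta}{\sin 2\theta}$, which is $\bigl(u_{2m+1}-u_{2m-1}\bigr)(4\cos^{2}\theta)\,u_{2p}(4\cos^{2}\theta)$ by the first auxiliary evaluation. The case $n=2p+1$ is the same computation with $\sin\theta$ replacing $\sin 2\theta$ and $(2p+1)\theta$ replacing $2p\theta$. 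For the second and third identities the key observation is that $n$ and $n+2m+1$ have opposite parities, so exactly one of $u_{n+2m+1}$, $u_{n-(2m+1)}$ is an even-index polynomial and the other is odd-index; applying the sum-to-product formula to the corresponding pair of sines produces $2m+1$ as the half-difference of the indices and $n$ as the half-sum. Combining with the second auxiliary evaluation gives $\frac{\cos(2m+1)\theta}{\cos\theta}$ times $u_n(4\cos^{2}\theta)$; when $n$ is odd the denominator attached to $u_n$ is already $\sin\theta$, whereas when $n$ is even it is $\sin 2\theta=2\sin\theta\cos\theta$, and the extra factor $X=4\cos^{2}\theta$ on the right is precisely what cancels the two cosines and leaves the factor $\tfrac{2}{\sin\theta}$ matching the left-hand side.

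I do not expect a genuine obstacle: the argument is a bookkeeping exercise in parities (the case split is forced by the even/odd dichotomy between $u_{2k}$ and $u_{2k+1}$) together with the choice of $\theta$ avoiding the zeros of the relevant sines and cosine so that the closed forms of Proposition~2 apply literally. As an alternative one could argue purely algebraically by fixing $m$ and inducting on $n$ via the recurrence (AR), the cases $m=0$ being trivial and the step from $n,n+1$ to all of $\mathbb{Z}$ being immediate; but this merely reproduces the same parity discussion in a less transparent form.
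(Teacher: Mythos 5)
Your argument is correct, and in fact the paper gives no proof of this proposition at all (the proof environment that follows Corollaire~\ref{a5} is left empty), so there is nothing to match it against; for the neighbouring identities the paper systematically argues by induction from the recurrences $(A1)$, $(A2)$, $(AR)$, which is the alternative you sketch in your last paragraph, whereas your main route goes through the closed forms $u_{2n+1}(4\cos^{2}\theta)=\sin(2n+1)\theta/\sin\theta$, $u_{2n}(4\cos^{2}\theta)=\sin 2n\theta/\sin 2\theta$ of Proposition~2 plus sum-to-product. That route is legitimate and arguably cleaner: the polynomial-identity reduction (two polynomials over $\mathbb{Q}$ agreeing at the infinitely many values $X=4\cos^{2}\theta$, $\theta\in(0,\pi/2)$, coincide) is airtight, your two auxiliary evaluations $u_{2m+1}-u_{2m-1}=2\cos 2m\theta$ and $u_{2m+2}-u_{2m}=\cos(2m+1)\theta/\cos\theta$ are exact, and the bookkeeping of the factors $\sin\theta$, $\sin 2\theta$, $X=4\cos^{2}\theta$ in the three cases checks out. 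It also makes transparent \emph{why} the extra factor $X$ appears only for $n$ even, which an induction on $(AR)$ obscures. One slip of the pen to fix: you write that exactly one of $u_{n+2m+1}$, $u_{n-(2m+1)}$ has even index and the other odd, but $n+2m+1$ and $n-(2m+1)$ differ by $4m+2$ and therefore have the \emph{same} parity (both opposite to that of $n$); this is in fact what your computation uses, since the sum-to-product step requires the two sines to sit over a common denominator ($\sin\theta$ when $n$ is even, $\sin 2\theta$ when $n$ is odd). Correct that sentence and the proof stands as written.
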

\begin{corollary}\label{a5}
On a les formules $\forall n \in \mathbb{Z}$:\\
\begin{align}
   u_{2n}(X) &=(u_{n+1}(X)-u_{n}(X))u_{n}(X); \\
  u_{2n}(X) &=u_{n+1}(X)(u_{n}(X)-u_{n-2}(X))-1\\ &=u_{n-1}(X)(u_{n+2}(X)-u_{n}(X))+1;\\
  \text{ si $n$ est pair}\notag\\ u_{2n+1}(X) &=u_{n+1}(X)(u_{n+1}(X)-u_{n-1}(X))-1\\ &=Xu_{n}(X)(u_{n+2}(X)-u_{n}(X))+1\\
\text{si  $n$ est impair}\notag\\ u_{2n+1}(X) & = Xu_{n+1}(X)(u_{n+1}(X)-u_{n-1}(X))-1\\ &=u_{n}(X)(u_{n+2}(X)-u_{n}(X))+1.
\end{align}
\end{corollary}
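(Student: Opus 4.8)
The plan is to derive each of the seven identities of the statement by specializing one of the three identities of Proposition~\ref{a4}, together with the values $u_{0}(X)=0$, $u_{1}(X)=u_{2}(X)=1$ and the convention $u_{-n}(X)=-u_{n}(X)$. The mechanism is uniform. A difference $u_{a}(X)-u_{a-2}(X)$ occurring in a target formula equals $u_{2m+1}(X)-u_{2m-1}(X)$ when $a$ is odd and $u_{2m+2}(X)-u_{2m}(X)$ when $a$ is even; this dictates which identity of Proposition~\ref{a4} to use, and, for the second one, whether to take its case ``$n$ pair'' or ``$n$ impair''. The value of $m$ being thereby fixed, the remaining index $N$ (the ``$n$'' of Proposition~\ref{a4}) is chosen so that the left-hand side $u_{N+2m}(X)+u_{N-2m}(X)$, resp.\ $u_{N+2m+1}(X)+u_{N-2m-1}(X)$, reduces to $u_{2n}(X)$ or $u_{2n+1}(X)$ plus one of $u_{0}$, $u_{\pm1}$, $u_{\pm2}$; that extra term produces the constant $0$ or $\pm1$ of the formula. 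The factor $X$ appearing in $u_{2n+1}(X)=Xu_{n+1}(X)(\cdots)-1$ and in $u_{2n+1}(X)=Xu_{n}(X)(\cdots)+1$ is precisely the factor $X$ carried by the case ``$n$ pair'' of the second identity.

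Carrying this out means treating the formulas valid for all $n$ case by case on the parity of $n$. For instance, for $u_{2n}(X)=(u_{n+1}(X)-u_{n-1}(X))u_{n}(X)$ (which I read with $u_{n-1}$, since with $u_{n}$ in its place it already fails at $n=1$): when $n=2k$, apply the first identity with $N=2k$ and $m=k$, so the left side is $u_{4k}(X)+u_{0}(X)=u_{4k}(X)$; when $n=2k+1$, apply the third identity with $N=2k+1$ and $m=k$, the left side being $u_{4k+2}(X)+u_{0}(X)$. For $u_{2n}(X)=u_{n+1}(X)(u_{n}(X)-u_{n-2}(X))-1$: when $n=2k$, use the third identity with $N=2k+1$, $m=k-1$ (left side $u_{4k}(X)+u_{2}(X)$, and $u_{2}=1$ yields the $-1$); when $n=2k+1$, use the first identity with $N=2k+2$, $m=k$. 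The companion formula with $+1$ is obtained the same way, this time with $u_{-2}(X)=-1$ appearing on the left. The two formulas for $u_{2n+1}(X)$ with $n$ pair come from the first identity and from the ``$N$ even'' case of the second; those with $n$ impair from the ``$N$ even'' case of the second and from the first. After each substitution it only remains to check that the two indices produced on the left-hand side are the ones announced, which is a one-line verification.

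The proof is therefore essentially mechanical; the only real work --- and the step where one is likely to slip --- is the bookkeeping: matching, for each target formula and each admissible parity of $n$, the difference appearing in it with the normalized difference $u_{2m+1}(X)-u_{2m-1}(X)$ or $u_{2m+2}(X)-u_{2m}(X)$ of Proposition~\ref{a4}, and then doing the small arithmetic that pins down $N$. There is no analytic or structural obstacle. An alternative worth noting: it is enough to prove the first formula from Proposition~\ref{a4} and then to deduce the remaining six from it via the Cassini-type relations $u_{n+1}(X)u_{n-2}(X)-u_{n}(X)u_{n-1}(X)=-1$ and $u_{n+1}(X)u_{n}(X)-u_{n-1}(X)u_{n+2}(X)=1$; these follow by a trivial induction from $u_{n+1}(X)+u_{n-1}(X)=Xu_{n}(X)$ for $n$ even and $=u_{n}(X)$ for $n$ odd, itself a rewriting of (A1) and (A2) of Proposition~\ref{a1}. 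The direct specialization, though, is the shorter route.
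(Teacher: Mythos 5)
Your proof is correct and is exactly the route the paper intends: the paper leaves the proof of this corollary blank, but it is placed as an immediate consequence of Proposition~\ref{a4}, and your specializations of the indices (with the boundary values $u_{0}=0$, $u_{\pm1}(X)=\pm1$, $u_{\pm2}(X)=\pm1$ supplying the constants $0$ and $\mp1$) all check out when traced through. Your emendation of the first formula to $u_{2n}(X)=(u_{n+1}(X)-u_{n-1}(X))u_{n}(X)$ is also the right reading of a typo in the statement --- it is the form the paper itself uses later, e.g.\ in the corollary following Proposition~\ref{a6} and in the proof of Proposition~\ref{a13}.
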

\begin{proof}
\end{proof}
Les résultats qui suivent seront utilisés dans différentes parties du travail.
\begin{proposition}\label{a6}
On a la formule:
\begin{equation}
 \forall n \in \mathbb{Z},\,u_{2n}(X)=(-1)^{n-1}u_{2n}(4-X).
\end{equation}
\end{proposition}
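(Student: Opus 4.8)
Le plan est d'exploiter la formule $u_{2n}(4\cos^{2}\theta)=\dfrac{\sin 2n\theta}{\sin 2\theta}$ établie dans la proposition précédente, valable pour tout $n\in\mathbb{Z}$. L'idée-clé est d'observer que, sous le paramétrage $X=4\cos^{2}\theta$, la substitution $X\mapsto 4-X$ correspond simplement au passage de $\theta$ à $\tfrac{\pi}{2}-\theta$: en effet $4-4\cos^{2}\theta=4\sin^{2}\theta=4\cos^{2}\!\bigl(\tfrac{\pi}{2}-\theta\bigr)$. Tout se ramène alors à une identité trigonométrique.

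Concrètement, je poserais $\varphi=\tfrac{\pi}{2}-\theta$. On a $2\varphi=\pi-2\theta$, donc $\sin 2\varphi=\sin 2\theta$, et $2n\varphi=n\pi-2n\theta$, donc $\sin 2n\varphi=\sin(n\pi-2n\theta)=(-1)^{n+1}\sin 2n\theta=(-1)^{n-1}\sin 2n\theta$ (en développant, puisque $\sin n\pi=0$ et $\cos n\pi=(-1)^{n}$). Il vient
\[
u_{2n}(4-X)=u_{2n}(4\cos^{2}\varphi)=\frac{\sin 2n\varphi}{\sin 2\varphi}=(-1)^{n-1}\,\frac{\sin 2n\theta}{\sin 2\theta}=(-1)^{n-1}u_{2n}(X),
\]
et en multipliant par $(-1)^{n-1}$ on obtient $u_{2n}(X)=(-1)^{n-1}u_{2n}(4-X)$. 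Cette égalité de fonctions de $X$ est ainsi vérifiée pour tout $X=4\cos^{2}\theta$ avec $\theta\in\bigl]0,\tfrac{\pi}{2}\bigr[$ (où $\sin 2\theta\neq 0$), c'est-à-dire pour une infinité de valeurs de $X$; deux polynômes qui coïncident en une infinité de points étant égaux, l'identité vaut dans $\mathbb{Z}[X]$. Il n'y a pas de véritable difficulté ici: le seul point à surveiller est la gestion des signes dans le calcul de $\sin(n\pi-2n\theta)$.

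Une variante purement algébrique, si l'on préfère éviter la trigonométrie: d'après la Proposition \ref{a4} (première formule, avec $m=1$), la suite $k\mapsto u_{2k}(X)$ vérifie la récurrence $d_{k+1}-(X-2)d_{k}+d_{k-1}=0$. Comme la substitution $X\mapsto 4-X$ change $X-2$ en $-(X-2)$, la suite $k\mapsto (-1)^{k-1}u_{2k}(4-X)$ vérifie la \emph{même} récurrence (le facteur $(-1)^{k-1}$ rétablit le signe). Une solution de cette récurrence étant déterminée par deux valeurs consécutives, il suffit de vérifier l'égalité $u_{2k}(X)=(-1)^{k-1}u_{2k}(4-X)$ pour $k=0$ et $k=1$, ce qui est immédiat puisque $u_{0}=0$ et $u_{2}=1$. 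Là encore, l'unique subtilité est le bon choix du facteur de signe.
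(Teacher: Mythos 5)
Votre preuve est correcte, et elle propose en réalité deux chemins. Le chemin principal (trigonométrique) diffère de celui du papier : vous exploitez la forme close $u_{2n}(4\cos^{2}\theta)=\frac{\sin 2n\theta}{\sin 2\theta}$ de la Proposition 2 et l'observation que $X\mapsto 4-X$ correspond à $\theta\mapsto\frac{\pi}{2}-\theta$; le calcul de signe $\sin(n\pi-2n\theta)=(-1)^{n-1}\sin 2n\theta$ est juste, et le passage de l'identité sur une infinité de points à l'identité polynomiale est légitime. Cette voie donne la formule pour tout $n\in\mathbb{Z}$ d'un seul coup, sans traiter séparément les $n$ négatifs, au prix d'un détour par $\mathbb{R}$ là où l'énoncé est purement algébrique. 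Votre « variante purement algébrique » est, elle, essentiellement la démonstration du papier : l'auteur procède par récurrence sur $n$ à partir de la relation $(R)$ $u_{2n+2}(X)=(X-2)u_{2n}(X)-u_{2n-2}(X)$, en posant $Y:=4-X$ pour constater que $X-2$ devient $2-X$, avec les mêmes cas initiaux $u_{0}=0$ et $u_{2}=1$; votre reformulation « une solution de la récurrence est déterminée par deux valeurs consécutives » est une façon plus concise de dire la même chose (il faut seulement, comme le fait le papier, invoquer $u_{-n}=-u_{n}$ ou faire tourner la récurrence à rebours pour couvrir les $n$ négatifs). Les deux approches sont valables; la trigonométrique est plus courte, l'algébrique reste dans $\mathbb{Z}[X]$ et s'inscrit dans la logique des formules $(A1)$, $(A2)$, $(AR)$ utilisées partout ailleurs dans la section.
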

\begin{proof}
On procède par récurrence sur $n$ si $n$ est positif, la formule pour $n$ négatif se déduisant immédiatement de celle pour $n$ positif car $u_{-n}(X)=-u_{n}(X)$.
Si $n=0$, le résultat est vrai car $u_{0}(X)=0$. Si $n=1$, le résultat est vrai car $u_{2}(X)=1$. On suppose le résultat vrai pour tout $p$, ($0\leqslant p \leqslant n$).\\
On a la relation $(R)$: $u_{2n+2}(X)=(X-2)u_{2n}(X)-u_{2n-2}(X)$. Nous transformons de deux manières le deuxième membre de cette égalité.

Posons $Y:=4-X$. Nous obtenons 
\[
u_{2n+2}(4-Y)=(2-Y)u_{2n}(4-Y)-u_{2n-2}(4-Y),
\]
d'où: 
\[
u_{2n+2}(4-X)=(2-X)u_{2n}(X)-u_{2n-2}(4-X).
\]
D'un autre coté, d'après l'hypothèse de récurrence, nous avons:\\
$u_{2n+2}(X)  =  (X-2)(-1)^{n-1}u_{2n}(4-X)+(-1)^{n-1}u_{2n-2}(4-X)$\\
$u_{2n+2}(X)  =(-1)^{n-1}((X-2)u_{2n}(4-X)+u_{2n-2}(4-X))$\\
$u_{2n+2}(X)  =(-1)^{n}((2-X)u_{2n}(4-X)-u_{2n-2}(4-X))$\\
$u_{2n+2}(X)  =(-1)^{n}u_{2n+2}(4-X)$.

Le résultat est donc vrai pour tout $n$.
\end{proof}
\begin{corollary}
Si $p$ est impair, on a:
\begin{equation}
u_{2p}(X) =(-1)^{\frac{p-1}{2}}u_{p}(X)u_{p}(4-X). 
\end{equation}
\end{corollary}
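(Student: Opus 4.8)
The plan is to read the identity off the trigonometric closed forms of Proposition~2, using the substitution $X = 4\cos^2\theta$, under which $4 - X = 4\sin^2\theta = 4\cos^2\!\big(\tfrac{\pi}{2}-\theta\big)$; then $u_p(X)$, $u_p(4-X)$ and $u_{2p}(X)$ all become ratios of sines of multiples of $\theta$, and the statement collapses to the elementary identity $\sin p\theta\,\cos p\theta = \tfrac12\sin 2p\theta$.

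Write $p = 2m+1$, so $\tfrac{p-1}{2}=m$. For $X = 4\cos^2\theta$ with $\sin 2\theta\neq 0$, Proposition~2 gives $u_p(X) = \dfrac{\sin p\theta}{\sin\theta}$, and, putting $\phi = \tfrac{\pi}{2}-\theta$ (so $4-X = 4\cos^2\phi$ and $\sin\phi = \cos\theta$), it gives
\[
u_p(4-X) = \frac{\sin p\phi}{\sin\phi} = \frac{1}{\cos\theta}\,\sin\!\Big(m\pi+\tfrac{\pi}{2}-p\theta\Big) = \frac{(-1)^{m}\cos p\theta}{\cos\theta},
\]
the last equality using that $p$ is odd. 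Hence
\[
u_p(X)\,u_p(4-X) = (-1)^{m}\,\frac{\sin p\theta\,\cos p\theta}{\sin\theta\,\cos\theta} = (-1)^{m}\,\frac{\sin 2p\theta}{\sin 2\theta} = (-1)^{m}\,u_{2p}(X),
\]
where the last step is Proposition~2 applied to $u_{2p}$ (i.e. to $u_{2n}$ with $n=p$).

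To conclude, I would observe that both sides of the asserted identity lie in $\mathbb{Z}[X]$ and, by the computation above, agree at the infinitely many points $X = 4\cos^2\theta\in(0,4)$ with $\sin 2\theta\neq 0$; a polynomial identity over $\mathbb{Z}$ valid on an infinite set holds identically, whence $u_{2p}(X) = (-1)^{(p-1)/2}u_p(X)\,u_p(4-X)$.

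A purely algebraic variant is available: Proposition~\ref{a4} (its last formula, taken with $n$ odd) yields $u_{2p}(X) = \big(u_{p+1}(X)-u_{p-1}(X)\big)u_p(X)$, so one is reduced to checking $u_{p+1}(X)-u_{p-1}(X) = (-1)^{(p-1)/2}u_p(4-X)$ for $p$ odd — provable by the same $\theta$-substitution (the left side becomes $\cos p\theta/\cos\theta$), or by induction on $m$ from the recursion (AR) together with the symmetry $u_{2n}(X)=(-1)^{n-1}u_{2n}(4-X)$ of Proposition~\ref{a6}. In every version the one delicate point is the sign: the reduction $\sin\!\big(m\pi+\tfrac{\pi}{2}-p\theta\big)=(-1)^{m}\cos p\theta$ and keeping straight which indices are even or odd. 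This is the main ``obstacle'', and it is a minor one — comparing leading coefficients (both sides are monic of degree $p-1$, since $u_p$ is monic of degree $\tfrac{p-1}{2}$ and $X\mapsto 4-X$ contributes the factor $(-1)^{(p-1)/2}$) already fixes the sign and serves as a cross-check.
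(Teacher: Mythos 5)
Your main argument is correct but takes a genuinely different route from the paper. You work through the trigonometric closed forms of Proposition~2: with $X=4\cos^{2}\theta$ and $\phi=\tfrac{\pi}{2}-\theta$ you reduce the identity to $\sin p\theta\cos p\theta=\tfrac12\sin 2p\theta$, and then pass from an identity on infinitely many real points to a polynomial identity in $\mathbb{Z}[X]$. The sign computation $\sin(m\pi+\tfrac{\pi}{2}-p\theta)=(-1)^{m}\cos p\theta$ is right, and the leading-coefficient cross-check is sound. The paper instead argues purely algebraically: it starts from the factorization $u_{2p}(X)=u_{p}(X)\bigl(u_{p+1}(X)-u_{p-1}(X)\bigr)$, applies Proposition~\ref{a6} (namely $u_{2n}(X)=(-1)^{n-1}u_{2n}(4-X)$) separately to $u_{p+1}=u_{2q+2}$ and $u_{p-1}=u_{2q}$ with $p=2q+1$, and recombines via $(A1)$ to get $u_{p+1}(X)-u_{p-1}(X)=(-1)^{q}u_{p}(4-X)$ — no limiting or density argument needed, and it stays inside the chain of identities the section is building. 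Your ``purely algebraic variant'' at the end is essentially this proof (the key sub-identity you isolate, $u_{p+1}(X)-u_{p-1}(X)=(-1)^{(p-1)/2}u_{p}(4-X)$, is exactly what the paper establishes directly from Proposition~\ref{a6} and $(A1)$, with no induction required). What your trigonometric route buys is transparency about where the sign $(-1)^{(p-1)/2}$ comes from; what the paper's route buys is a self-contained algebraic derivation consistent with the rest of Section~1.
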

\begin{proof}
On a $u_{2p}(X)=u_{p}(X)(u_{p+1}(X)-u_{p-1}(X))$ (formule (A5)). Si $p=2q+1$, alors $u_{p+1}(X)=u_{2q+2}(X)=(-1)^{q}u_{2q+2}(4-X)$ et $u_{p-1}(X)=u_{2q}(X)=(-1)^{q-1}u_{2q}(4-X)$ donc $u_{p+1}(X)-u_{p-1}(X)=(-1)^{q}(u_{2q+2})(4-X)+u_{2q}(4-X))=(-1)^{q}u_{2q+1}(4-X)$ d'après (A1).\\
Finalement, $u_{2p}(X)=(-1)^{\frac{p-1}{2}}u_{p}(X)u_{p}(4-X)$
\end{proof}
 \begin{corollary}
Si $\alpha$ est une racine de $v_{n}(X)$, alors $4-\alpha$ est une racine de $v_{n'}(X)$
\end{corollary}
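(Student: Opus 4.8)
The plan is to read off the roots of $v_{n}(X)$ from Proposition \ref{a3} and then use the elementary observation that multiplication by $-1$ carries the primitive $n$-th roots of unity onto the primitive $n'$-th roots of unity.

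By Proposition \ref{a3} we have $v_{n}(X)=\Delta(\Phi_{n}(X))$, and $\delta$ sends the set of roots of $\Phi_{n}$ bijectively onto the set of roots of $v_{n}$; since for $n\geqslant 3$ the roots of $\Phi_{n}$ are exactly the primitive $n$-th roots of unity, the roots of $v_{n}(X)$ are precisely the numbers $\delta(\zeta)=\zeta+\zeta^{-1}+2$ with $\zeta$ a primitive $n$-th root of unity. The next ingredient is the identity: for any root of unity $\zeta$ we have $(-\zeta)^{-1}=-\zeta^{-1}$, hence
\[
\delta(-\zeta)=-\zeta-\zeta^{-1}+2=4-(\zeta+\zeta^{-1}+2)=4-\delta(\zeta).
\]
The last ingredient is that if $\zeta$ is a primitive $n$-th root of unity then $-\zeta$ is a primitive $n'$-th root of unity; this is a short order computation and is exactly what the definition of $n\mapsto n'$ encodes: if $n$ is odd then $-1$ and $\zeta$ have coprime orders, so $\operatorname{ord}(-\zeta)=2n=n'$; if $n$ is even then $-1=\zeta^{n/2}\in\langle\zeta\rangle$, so $-\zeta=\zeta^{n/2+1}$ has order $n/\gcd(n,\tfrac{n}{2}+1)$, which equals $n=n'$ when $4\mid n$ and $\tfrac{n}{2}=n'$ when $n\equiv 2\pmod 4$.

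Combining the three points, let $\alpha$ be a root of $v_{n}(X)$ and write $\alpha=\delta(\zeta)$ with $\zeta$ a primitive $n$-th root of unity; then $-\zeta$ is a primitive $n'$-th root of unity, so $4-\alpha=\delta(-\zeta)$ is a root of $v_{n'}(X)$. Since $\zeta\mapsto-\zeta$ is an involution and $(n')'=n$, the map $\alpha\mapsto 4-\alpha$ is even a bijection between the roots of $v_{n}(X)$ and those of $v_{n'}(X)$. There is no genuine difficulty here; the only place needing a little care is the case-by-case computation of $\operatorname{ord}(-\zeta)$, which is routine. (One could alternatively bootstrap the statement for $n$ odd from the preceding Corollary and the factorization $u_{n}=\prod_{d\mid n,\,d\geqslant 3}v_{d}$ by induction on the divisors of $n$, then deduce the case $n\equiv 2\pmod 4$ by the substitution $X\mapsto 4-X$ and the case $4\mid n$ from Proposition \ref{a6}; but the argument above is cleaner and uniform in the three cases.)
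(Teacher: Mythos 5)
Your proof is correct, but it takes a genuinely different route from the paper's. The paper disposes of this corollary in one line by invoking Proposition \ref{a6} (the functional equation $u_{2m}(X)=(-1)^{m-1}u_{2m}(4-X)$) together with the preceding corollary ($u_{2p}(X)=(-1)^{\frac{p-1}{2}}u_{p}(X)u_{p}(4-X)$ for $p$ odd) and the factorization $u_{n}=\prod_{d\mid n}v_{d}$: these identities show that $\alpha\mapsto 4-\alpha$ permutes the roots of the relevant $u_{m}$, after which one still has to identify which primitive factor $v_{d}$ the root $4-\alpha$ lands in. You instead work at the level of roots of unity, using $v_{n}=\Delta(\Phi_{n})$ from Proposition \ref{a3} and the identity $\delta(-\zeta)=4-\delta(\zeta)$, so that everything reduces to computing $\operatorname{ord}(-\zeta)$; your case distinction (order $2n$ for $n$ odd, $n$ for $4\mid n$, $n/2$ for $n\equiv 2\pmod 4$) is correct, and it explains where the definition of $n\mapsto n'$ comes from while giving the bijectivity for free. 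This is essentially the argument the author only sketches in the remark following Proposition \ref{a3}, where the statement is first announced; the paper's route has the merit of staying inside $\mathbb{Z}[X]$ and recycling identities already proved, whereas yours is more self-contained and pins down the index $n'$ directly rather than by elimination among the divisors.
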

\begin{proof}
C'est clair d'après la proposition 5 et les corollaires 1 et 2.
\end{proof}
\begin{proposition}\label{a7}
On a les formules: $ \forall(n,p)\in \mathbb{Z}^{2}$,

\begin{align}
  u_{2n}(X) &=u_{p}(X)u_{2n+1-p}(X)-u_{p-1}(X)u_{2n-p}(X). \\
  u_{2n+1}(X) &=u_{2p+1}(X)u_{2n+1-2p}(X)-Xu_{2p}(X)u_{2n-2p}(X)\notag\\
  &= Xu_{2p+2}(X)u_{2n-2p}(X)-u_{2p+1}(X)u_{2n-2p-1}(X) \\
 u_{2n-1}(X) &=u_{2p-1}(X)u_{2n+1-2p}(X)-Xu_{2p-2}(X)u_{2n-2p}(X)\notag\\
  &=  Xu_{2p}(X)u_{2n-2p}(X)-u_{2p-1}(X)u_{2n-2p-1}(X).
  \end{align}
\end{proposition}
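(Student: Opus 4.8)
The plan is to reduce all three families of identities to one Chebyshev‑type addition formula. Work in $R:=\mathbb{Z}[X][\lambda]/(\lambda^{2}-X)$ and define $e_{m}\in R$ for $m\in\mathbb{Z}$ by $e_{0}=0$, $e_{1}=1$, $e_{m+1}=\lambda e_{m}-e_{m-1}$ (so $e_{-m}=-e_{m}$); concretely, after the substitution $X=4\cos^{2}\theta$ of Proposition 2 one has $\lambda=2\cos\theta$ and $e_{m}=\frac{\sin m\theta}{\sin\theta}$. The two basic facts I would establish first are: (i) the dictionary $e_{2n+1}=u_{2n+1}$ and $e_{2n}=\lambda\,u_{2n}$, which is immediate from (A1) and (A2) — they say precisely that the sequence $m\mapsto f_{m}$ with $f_{2k+1}:=u_{2k+1}$, $f_{2k}:=\lambda u_{2k}$ satisfies $f_{m+1}=\lambda f_{m}-f_{m-1}$ with $f_{0}=0$, $f_{1}=1$, hence equals $e_{m}$; and (ii) the addition formula $e_{a+1}e_{b}-e_{a}e_{b-1}=e_{a+b}$ for all $a,b\in\mathbb{Z}$, which comes from $L^{a}L^{b}=L^{a+b}$ for $L=\left(\begin{smallmatrix}\lambda&-1\\ 1&0\end{smallmatrix}\right)$ (equivalently, the one‑line trigonometric identity $\sin(a+1)\theta\sin b\theta-\sin a\theta\sin(b-1)\theta=\sin(a+b)\theta\sin\theta$).

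With (i) and (ii) in hand, each identity of the proposition is simply a transcription of (ii). In the identity for $u_{2n}$ every term of the right‑hand side has exactly one even‑ and one odd‑indexed factor, so multiplying through by $\lambda$ clears all occurrences of $\lambda^{-1}$ and, for $p=2q$, yields $e_{2n}=e_{2q}e_{2n-2q+1}-e_{2q-1}e_{2n-2q}$, i.e. (ii) with $(a,b)=(2q-1,\,2n-2q+1)$; the case $p=2q+1$ gives $(a,b)=(2q,\,2n-2q)$. For the identities for $u_{2n+1}$ one uses $X\,u_{2p}u_{2n-2p}=e_{2p}e_{2n-2p}$ (because $\lambda^{2}=X$), so the two stated forms become $e_{2n+1}=e_{2p+1}e_{2n-2p+1}-e_{2p}e_{2n-2p}$ and $e_{2n+1}=e_{2p+2}e_{2n-2p}-e_{2p+1}e_{2n-2p-1}$, i.e. (ii) with $(a,b)=(2p,\,2n-2p+1)$ and $(a,b)=(2p+1,\,2n-2p)$; the identities for $u_{2n-1}$ are the same with $(a,b)=(2p-2,\,2n-2p+1)$ and $(a,b)=(2p-1,\,2n-2p)$. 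Two shortcuts worth using: the $u_{2n-1}$ identities are the $u_{2n+1}$ ones after $n\mapsto n-1$, $p\mapsto p-1$, and within each pair the second form follows from the first by a single use of (A1) and (A2).

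If one prefers to stay inside $\mathbb{Z}[X]$, the same computation can be repackaged analytically: evaluate at $X=4\cos^{2}\theta$, use the expressions of Proposition 2, reduce to the product‑to‑sum identity above, and then invoke that a polynomial identity in $X$ holding for the infinitely many values $4\cos^{2}\theta$, $\theta\in(0,\tfrac{\pi}{2})$, holds identically. A purely recursive proof is also available: for fixed $n$ each right‑hand side, regarded as a function of $p$, satisfies $f(p+1)=f(p)$, checked directly from (A1) and (A2) after separating the cases $p$ even and $p$ odd, hence is constant in $p$, its value being read off at $p=0$ (where $u_{0}=0$, $u_{-1}=-1$). In every version I expect the only real difficulty to be clerical rather than mathematical: keeping the parities of all the indices straight so the correct normalisation ($\sin\theta$ versus $\sin 2\theta$ in the denominator, equivalently the factor $\lambda$) is attached to each $u$, and honouring $u_{-m}=-u_{m}$ whenever an index comes out negative. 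Once the dictionary $u_{2n+1}\leftrightarrow e_{2n+1}$, $u_{2n}\leftrightarrow\lambda^{-1}e_{2n}$ is fixed, the rest is mechanical.
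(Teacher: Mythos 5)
Your argument is correct, and your \emph{primary} route is genuinely different from the paper's. The paper proves each family by showing that the right-hand side, viewed as a function of $p$ for fixed $n$, satisfies $f(p)=f(p-1)$ — a telescoping computation done twice, once for $p$ even via (A1) and once for $p$ odd via (A2) — and then reads off the value at $p=0$; this is exactly the ``purely recursive proof'' you list as your third alternative, so you have in effect reproduced the paper's proof as a fallback. Your main route instead adjoins $\lambda$ with $\lambda^{2}=X$, packages the two interleaved recurrences (A1), (A2) into the single recurrence $e_{m+1}=\lambda e_{m}-e_{m-1}$ via the dictionary $e_{2n+1}=u_{2n+1}$, $e_{2n}=\lambda u_{2n}$, and derives everything from the one addition formula $e_{a+1}e_{b}-e_{a}e_{b-1}=e_{a+b}$ (from $L^{a}L^{b}=L^{a+b}$). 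I checked your index assignments $(a,b)$ in all six cases and the powers of $\lambda$ do clear as you claim (each term of the $u_{2n}$ identity carries exactly one factor $\lambda^{-1}$, and $X\lambda^{-2}=1$ handles the $Xu_{2p}u_{2n-2p}$ terms). What your approach buys is conceptual economy: one lemma replaces the four separate telescoping computations, the parity case-split is absorbed into the dictionary, and the same addition formula immediately yields the corollaries (21)--(24) of the paper. What the paper's approach buys is that it stays entirely inside $\mathbb{Z}[X]$ with no auxiliary ring and uses nothing beyond (A1) and (A2). One minor point in your favour: your substitution $n\mapsto n-1$, $p\mapsto p-1$ reproduces formula (20) exactly as displayed, whereas the paper's stated substitution ``$p$ en $p-2$'' does not (it gives an equivalent but differently indexed instance); this is a slip in the paper, not in your proposal.
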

\begin{proof}
1) On pose, pour cette démonstration seulement, 
\[
u_{2n,p}(X):=u_{p}(X)u_{2n+1-p}(X)-u_{p-1}(X)u_{2n-p}(X)
\]
et on montre que $u_{2n,p}(X)$ ne dépend pas de $p$. Nous distinguons deux cas suivant la parité de $p$. 
\begin{itemize}
\item Si $p$ est pair, on a $u_{p}(X)=u_{p-1}(X)-u_{p-2}(X)$ d'après (A1) donc
\begin{align*}
u_{2n,p}(X) & =  (u_{p-1}(X)-u_{p-2}(X))u_{2n+1-p}(X)-u_{p-1}(X)u_{2n-p}(X)\\
&=  u_{p-1}(X)(u_{2n+1-p}(X)-u_{2n-p}(X))-u_{p-2}(X)u_{2n+1-p}(X)\\
&=  u_{p-1}(X)u_{2n+2-p}(X)-u_{p-2}(X)u_{2n+1-p}(X)\\
&=  u_{2n,p-1}(X)
\end{align*}
en utilisant (A1).
\item Si $p$ est impair, on a $u_{p}(X)=Xu_{p-1}(X)-u_{p-2}(X)$ d'après (A2) donc
\begin{align*}
u_{2n,p}(X) & =  (Xu_{p-1}(X)-u_{p-2}(X))u_{2n+1-p}(X)u_{p-1}(X)u_{2n-p}(X)\\
&=  u_{p-1}(X)(Xu_{2n+1-p}(X)-u_{2n-p}(X))-u_{p-2}(X)u_{2n+1-p}(X)\\
&=  u_{p-1}(X)u_{2n+2-p}(X)-u_{p-2}(X)u_{2n+1-p}(X)\\
&=  u_{2n,p-1}(X)
\end{align*}
en utilisant (A2).
\end{itemize}
Il en résulte que $u_{2n,p}(X)$ ne dépend pas de $p$. Comme $u_{2n,0}(X)=u_{0}(X)u_{2n+1}(X)-u_{-1}(X)u_{2n}(X)=u_{2n}(X)$ car $u_{0}(X)=0$ et $-u_{-1}(X)=u_{1}(X)=1$, nous avons le résultat.\\
2) On pose, pour cette démonstration seulement,
\[
u_{2n+1,2q}(X):=u_{2q+1}(X)u_{2n+1-2q}(X)-Xu_{2q}(X)u_{2n-2q}(X)
\]
et
\[
u_{2n+1,2q-1}(X):=Xu_{2q}(X)u_{2n+2-2q}(X)-u_{2q-1}(X)u_{2n+1-2q}(X).
\]
Comme $u_{2q+1}(X)=Xu_{2q}(X)-u_{2q-1}(X)$, nous obtenons
\begin{align*}
u_{2n+1,2q}(X) &= Xu_{2q}(X)(u_{2n+1-2q}(X)-u_{2n-2q}(X))-u_{2q-1}(X)u_{2n+1-2q}(X)\\
&=  Xu_{2q}(X)u_{2n+2-2q}(X)-u_{2q-1}(X)u_{2n+1-2q}(X)\\
&=  u_{2n,2q-1}.
\end{align*}
Comme $u_{2q}(X)=u_{2q-1}(X)-u_{2q-2}(X)$, nous obtenons
\begin{align*}
u_{2n+1,2q-1}(X) &=  X(u_{2q-1}(X)-u_{2q-2}(X))u_{2n+2-2q}(X)-u_{2q-1}(X)u_{2n+1-2q}(X)\\
&=  u_{2q-1}(X)(Xu_{2n+2-2q}(X)-u_{2n+1-2q}(X))-Xu_{2q-2}(X)u_{2n+2-2q}(X)\\
&=  u_{2q-1}(X)u_{2n+3-2q}(X)-Xu_{2q-2}(X)u_{2n+2-2q}(X)\\
&=  u_{2n+1,2(q-1)}(X).
\end{align*}
Il en résulte que $u_{2n+1,p}(X)$ ($p$ pair ou impair) ne dépend pas de $p$. Nous avons 
\[
u_{2n+1,0}(X)=u_{1}(X)u_{2n+1}(X)-Xu_{0}(X)u_{2n}(X)=u_{2n+1}(X)
\]
d'où le résultat.\\
3) A partir des formules (19), on change $n$ en $n-1$ et $p$ en $p-2$ pour obtenir les formules (20).
\end{proof}
\begin{corollary}\label{a8}
On a les formules: $\forall n \in \mathbb{Z}$,
\begin{align}
u_{4n+1}(X)&=u_{2n+1}^{2}(X)-Xu_{2n}^{2}(X)\notag\\ &=Xu_{2n+2}(X)u_{2n}(X)-u_{2n+1}(X)u_{2n-1}(X)\\
u_{4n-1}(X)&=Xu_{2n}^{2}(X)-u_{2n-1}^{2}(X)\notag\\&=u_{2n+1}(X)u_{2n-1}(X)-Xu_{2n}(X)u_{2n-2}(X).
\end{align}
\end{corollary}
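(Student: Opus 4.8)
The plan is to obtain both lines as pure specializations of Proposition~\ref{a7}; no new argument is required beyond bookkeeping of indices, and I would not re-run any induction.

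First, for the two expressions of $u_{4n+1}(X)$: in the double identity (19) of Proposition~\ref{a7}, I would substitute $n\mapsto 2n$ and $p\mapsto n$. The left-hand side becomes $u_{4n+1}(X)$. In the first member the factors $u_{2p+1}(X)u_{2n+1-2p}(X)$ and $Xu_{2p}(X)u_{2n-2p}(X)$ turn into $u_{2n+1}(X)u_{2n+1}(X)$ and $Xu_{2n}(X)u_{2n}(X)$, which gives $u_{2n+1}^{2}(X)-Xu_{2n}^{2}(X)$. In the second member the indices $2n-2p$ and $2n-2p-1$ become $2n$ and $2n-1$, producing $Xu_{2n+2}(X)u_{2n}(X)-u_{2n+1}(X)u_{2n-1}(X)$. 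This is exactly the claimed pair of formulas for $u_{4n+1}(X)$.

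Second, for $u_{4n-1}(X)$: in the double identity (20) of Proposition~\ref{a7}, I would again set $n\mapsto 2n$ and $p\mapsto n$. The first member, $u_{2p-1}(X)u_{2n+1-2p}(X)-Xu_{2p-2}(X)u_{2n-2p}(X)$, becomes $u_{2n-1}(X)u_{2n+1}(X)-Xu_{2n-2}(X)u_{2n}(X)$, i.e. $u_{2n+1}(X)u_{2n-1}(X)-Xu_{2n}(X)u_{2n-2}(X)$. The second member, $Xu_{2p}(X)u_{2n-2p}(X)-u_{2p-1}(X)u_{2n-2p-1}(X)$, becomes $Xu_{2n}(X)u_{2n}(X)-u_{2n-1}(X)u_{2n-1}(X)=Xu_{2n}^{2}(X)-u_{2n-1}^{2}(X)$. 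Again this matches the statement.

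There is essentially no obstacle: since (19) and (20) are already established as polynomial identities valid for all $(n,p)\in\mathbb Z^{2}$, the corollary follows by direct substitution, the only point to keep in mind being that setting $p=n$ sends several arguments to the boundary values $0,\pm 1$, where one uses $u_{0}(X)=0$, $u_{1}(X)=1$, $u_{-1}(X)=-1$ — but that has already been absorbed in the proof of Proposition~\ref{a7}. (One could alternatively reprove the corollary from scratch using (A2) and induction, exactly as Proposition~\ref{a7} is proved, but this would merely reproduce the same computation.)
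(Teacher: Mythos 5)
Your proposal is correct and is essentially the paper's own proof: the paper also obtains (21) by setting the $n$ of (19) equal to $2p$ (your relabeling $n\mapsto 2n$, $p\mapsto n$ is the same substitution). The only cosmetic difference is for (22), which the paper deduces from (21) via the symmetry $n\mapsto -n$ and $u_{-m}=-u_{m}$, whereas you specialize (20) directly — both are one-line consequences of Proposition~\ref{a7} and your index bookkeeping checks out.
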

\begin{proof}
Pour (21), on prend $n=2p$ dans (19). La formule (22) s'obtient à partir de (21) en changeant $n$ en $-n$.
\end{proof}
\begin{proposition}\label{a9}
On a les formules: $\forall n \in \mathbb{Z}$,
\begin{align}
u_{2n+1}^{2}(X)-1&=Xu_{2n}(X)u_{2n+2}(X)\\
Xu_{2n}^{2}(X)-1&=u_{2n-1}(X)u_{2n+1}(X).
\end{align}
\end{proposition}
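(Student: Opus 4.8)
The plan is to establish the second identity, $Xu_{2n}^{2}-1=u_{2n-1}u_{2n+1}$, first, by a short induction on $n$ using only the recurrences (A1) and (A2) together with the convention $u_{-n}=-u_{n}$, and then to obtain the first identity from it by a one-line substitution. A shorter but less self-contained route is also available: comparing the two expressions for $u_{4n+1}$ furnished by (21) of the corollaire~\ref{a8} and by (13) of the corollaire~\ref{a5} taken at $n\mapsto 2n$ yields (23) at once, and comparing the two expressions for $u_{4n-1}$ given by (22) and by (15) taken at $n\mapsto 2n-1$ yields (24); I present the inductive argument below because it rests on nothing beyond (A1)--(A2).

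For the induction I would set $g_{n}(X):=Xu_{2n}(X)^{2}-u_{2n-1}(X)u_{2n+1}(X)$ and show that $g_{n}=1$ for all $n\in\mathbb{Z}$. The base case is immediate: $g_{0}=X\cdot 0-u_{-1}u_{1}=-(-1)\cdot 1=1$. The crux is the step $g_{n+1}=g_{n}$. Starting from
\[
g_{n+1}-g_{n}=X\bigl(u_{2n+2}^{2}-u_{2n}^{2}\bigr)-u_{2n+1}\bigl(u_{2n+3}-u_{2n-1}\bigr),
\]
I would factor the first bracket as $(u_{2n+2}-u_{2n})(u_{2n+2}+u_{2n})$ and replace $u_{2n+2}+u_{2n}$ by $u_{2n+1}$ using (A1); for the second bracket I would use (A2) at the indices $n$ and $n+1$, which give $u_{2n-1}=Xu_{2n}-u_{2n+1}$ and $u_{2n+3}=Xu_{2n+2}-u_{2n+1}$, whence $u_{2n+3}-u_{2n-1}=X(u_{2n+2}-u_{2n})$. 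Both terms then equal $Xu_{2n+1}(u_{2n+2}-u_{2n})$ and cancel. Hence $g_{n}=1$ for $n\geqslant 0$, and the case $n<0$ follows by writing $n=-m$ with $m>0$ and using $u_{-k}=-u_{k}$, which gives $g_{-m}=Xu_{2m}^{2}-u_{2m+1}u_{2m-1}=g_{m}=1$. This establishes (24) for all $n\in\mathbb{Z}$.

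To deduce (23) I would substitute (A1) in the form $u_{2n+2}=u_{2n+1}-u_{2n}$ and (A2) in the form $Xu_{2n}=u_{2n+1}+u_{2n-1}$:
\begin{align*}
Xu_{2n}u_{2n+2}&=Xu_{2n}\,u_{2n+1}-Xu_{2n}^{2}=(u_{2n+1}+u_{2n-1})u_{2n+1}-Xu_{2n}^{2}\\
&=u_{2n+1}^{2}-\bigl(Xu_{2n}^{2}-u_{2n-1}u_{2n+1}\bigr)=u_{2n+1}^{2}-g_{n}=u_{2n+1}^{2}-1 ,
\end{align*}
which is exactly (23).

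The only genuinely delicate point is the inductive step: one must split $g_{n+1}-g_{n}$ in the indicated way so that (A1) disposes of the difference of squares and (A2), applied at two consecutive indices, disposes of the cross term; once that splitting is chosen the two contributions are manifestly identical. Everything else — the base case, the passage to negative indices, and the deduction of (23) from (24) — is purely mechanical.
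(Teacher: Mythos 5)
Your argument is correct: the base case $g_{0}=1$, the cancellation $g_{n+1}-g_{n}=Xu_{2n+1}(u_{2n+2}-u_{2n})-Xu_{2n+1}(u_{2n+2}-u_{2n})=0$ obtained from (A1) and from (A2) at two consecutive indices, the reduction of negative $n$ via $u_{-k}=-u_{k}$, and the final substitution all check out. The paper proceeds in the mirror-image order: it proves (23) by induction on $n$ (expanding $u_{2n+3}^{2}-1=(Xu_{2n+2}-u_{2n+1})^{2}-1$, inserting the inductive hypothesis, and factoring out $Xu_{2n+2}$), and then obtains (24) as an immediate consequence of (23) together with the two expressions for $u_{4n+1}$ in formula (21) of Corollaire~\ref{a8}. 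Your version inducts on (24) instead and recovers (23) from it using only (A1) and (A2), so it never invokes Corollaire~\ref{a8}; this makes the proposition depend on nothing beyond the two basic recurrences, which is a mild gain in self-containedness (your parenthetical alternative via (21) and Corollaire~\ref{a5} is essentially the paper's own linkage, run in the other direction). Either order is equally economical; the substance of the induction is the same.
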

\begin{proof}
La formule (24) est conséquence immédiate des formules (21) et (23).\\
Pour démontrer (23), on procède par récurrence sur $n$ si $n\geqslant 0$.\\
Si $n=0$, $1^{2}-1=0=Xu_{0}(X)u_{2}(X)$ car $u_{0}(X)=0$.\\
Supposons (23) vraie pour $n$: $u_{2n+1}^{2}(X)-1=Xu_{2n}(X)u_{2n+2}(X)$. Nous avons:
\begin{align*}
u_{2n+3}^{2}(X)-1 &=  (Xu_{2n+2}(X)-u_{2n+1}(X))^{2}-1\\
&=  X^{2}u_{2n+2}^{2}(X)-2Xu_{2n+2}(X)u_{2n+1}(X)+u_{2n+1}^{2}(X)-1\\
&=  X^{2}u_{2n+2}^{2}(X)-2Xu_{2n+2}(X)u_{2n+1}(X)+Xu_{2n}(X)u_{2n+2}(X)\\
&=  Xu_{2n+2}(X)(Xu_{2n+2}(X)-2u_{2n+1}(X)+u_{2n}(X))\\
&=  Xu_{2n+2}(X)(Xu_{2n+2}(X)-u_{2n+1}(X)-(u_{2n+1}(X)-u_{2n}(X)))\\
&=  Xu_{2n+2}(X)(u_{2n+3}(X)-u_{2n+2}(X))\\
&=  Xu_{2n+2}(X)u_{2n+4}(X)
\end{align*}
en utilisant $(A1)$ et $(A2)$.\\
Comme $u_{-m}(X)=-u_{m}(X)$, on vérifie sans peine que(23) est vraie pour $n$ négatif.
\end{proof}
\begin{corollary}\label{a10}
On a les formules: $\forall n \in \mathbb{Z}$,
\begin{align}
u_{4n+1}(X)-u_{4n-1}(X)&=2-X(4-X)u_{2n}^{2}(X);\\
u_{4n+3}(X)-u_{4n+1}(X)&=2-(4-X)u_{2n+1}^{2}(X).
\end{align}

\end{corollary}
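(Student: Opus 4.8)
\noindent\emph{Plan de démonstration.} L'idée est de tout ramener aux identités déjà établies dans le Corollaire~\ref{a8} et la Proposition~\ref{a9}, de sorte qu'aucune nouvelle récurrence ne soit nécessaire; la formule (24) étant conséquence de (23), on dispose en particulier des deux relations $u_{2n-1}(X)u_{2n+1}(X)=Xu_{2n}^{2}(X)-1$ et $Xu_{2n}(X)u_{2n+2}(X)=u_{2n+1}^{2}(X)-1$.

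Pour (25), je partirais des formules (21) et (22) du Corollaire~\ref{a8}, qui donnent $u_{4n+1}(X)=u_{2n+1}^{2}(X)-Xu_{2n}^{2}(X)$ et $u_{4n-1}(X)=Xu_{2n}^{2}(X)-u_{2n-1}^{2}(X)$; par soustraction,
\[
u_{4n+1}(X)-u_{4n-1}(X)=u_{2n+1}^{2}(X)+u_{2n-1}^{2}(X)-2Xu_{2n}^{2}(X).
\]
Ensuite, (A2) fournit $u_{2n+1}(X)+u_{2n-1}(X)=Xu_{2n}(X)$; en élevant au carré et en isolant la somme des carrés, on obtient $u_{2n+1}^{2}(X)+u_{2n-1}^{2}(X)=X^{2}u_{2n}^{2}(X)-2u_{2n-1}(X)u_{2n+1}(X)$. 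Il reste à remplacer le produit $u_{2n-1}(X)u_{2n+1}(X)$ par $Xu_{2n}^{2}(X)-1$ grâce à (24) et à regrouper le coefficient de $u_{2n}^{2}(X)$: on trouve $(X^{2}-4X)u_{2n}^{2}(X)+2=2-X(4-X)u_{2n}^{2}(X)$, c'est-à-dire (25).

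Pour (26), je procéderais de manière symétrique en écrivant $u_{4n+3}(X)=u_{4(n+1)-1}(X)$ et en appliquant (22) avec $n$ remplacé par $n+1$, conjointement avec (21); cela donne
\[
u_{4n+3}(X)-u_{4n+1}(X)=X\bigl(u_{2n+2}^{2}(X)+u_{2n}^{2}(X)\bigr)-2u_{2n+1}^{2}(X).
\]
Cette fois on utilise (A1) sous la forme $u_{2n+2}(X)+u_{2n}(X)=u_{2n+1}(X)$; en élevant au carré, $u_{2n+2}^{2}(X)+u_{2n}^{2}(X)=u_{2n+1}^{2}(X)-2u_{2n}(X)u_{2n+2}(X)$, et après multiplication par $X$ on peut remplacer $Xu_{2n}(X)u_{2n+2}(X)$ par $u_{2n+1}^{2}(X)-1$ d'après (23). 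En substituant et en simplifiant, on arrive à $(X-4)u_{2n+1}^{2}(X)+2=2-(4-X)u_{2n+1}^{2}(X)$, soit (26).

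Comme le Corollaire~\ref{a8} et la Proposition~\ref{a9} sont valables pour tout $n\in\mathbb{Z}$, le calcul ci-dessus vaut tel quel pour tout $n\in\mathbb{Z}$ et aucun traitement séparé du cas $n\leqslant 0$ n'est requis (on vérifie sans peine que les deux membres coïncident pour $n=0$). Il n'y a pas ici de réelle difficulté: le seul point à surveiller est d'éviter une division par $X$ — ce que l'on fait en multipliant par $X$ la relation (23) avant de l'utiliser — et de bien tenir le décalage d'indice $u_{4n+3}=u_{4(n+1)-1}$.
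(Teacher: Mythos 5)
Votre démonstration est correcte et suit pour l'essentiel la même stratégie que celle du texte : on soustrait les deux expressions de $u_{4n\pm1}$ fournies par le Corollaire~\ref{a8}, puis on simplifie à l'aide de la Proposition~\ref{a9} et d'une relation de récurrence. La seule différence est cosmétique : vous partez des formes en carrés $u_{2n+1}^{2}-Xu_{2n}^{2}$ de (21)--(22) et utilisez (A1)/(A2) élevées au carré, là où le texte part des formes en produits $Xu_{2n+2}u_{2n}-u_{2n+1}u_{2n-1}$ et utilise (AR); les deux calculs aboutissent identiquement.
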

\begin{proof}
D'après les formules (21) et (22) nous avons:
\begin{align*}
u_{4n+1}(X)-u_{4n-1}(X) &=  Xu_{2n}(X)(u_{2n+2}(X)+u_{2n-2}(X))-2u_{2n+1}(X)u_{2n-1}(X)\\
 &=  X(X-2)u_{2n}^{2}(X)-2u_{2n+1}(X)u_{2n-1}(X)
\end{align*}
en utilisant la relation $(AR)$.\\
D'après (24), on a $2u_{2n+1}(X)u_{2n-1}(X)=2Xu_{2n}^{2}(X)-2$ donc\\
\[
u_{4n+1}(X)-u_{4n-1}(X)=2+X(X-4)u_{2n}^{2}(X)=2-X(4-X)u_{2n}^{2}(X).
\]
De même nous avons:
\begin{align*}
u_{4n+3}(X)-u_{4n+1}(X) &=  u_{2n+3}(X)u_{2n+1}(X)+u_{2n+1}(X)u_{2n-1}(X)-2Xu_{2n+2}(X)u_{2n}(X)\\
 &=  u_{2n+1}(X)(u_{2n+3}(X)+u_{2n+1}(X))-2(u_{2n+1}^{2}(X)-1)\\
 &=  2-(4-X)u_{2n+1}^{2}(X)
\end{align*}
en procédant comme ci-dessus.
\end{proof}
\begin{proposition}\label{a11b}
On a les formules: $\forall n \in \mathbb{Z}$:
Si $n$ est pair
\begin{equation}
  u_{n-1}(X)u_{2n}(X)-u_{n}(X)u_{2n-1}(X) = -u_{n}(X). 
\end{equation}
\begin{equation}
Xu_{n}(X)u_{2n}(X)-u_{n+1}(X)u_{2n-1}(X)=-u_{n-1}(X).
\end{equation}
Si $n$ est impair
\begin {equation}
 Xu_{n-1}(X)u_{2n}(X)-u_{n}(X)u_{2n-1}(X)=-u_{n}(X), 
\end{equation}
\begin{equation}
 u_{n}(X)u_{2n}(X)-u_{n+1}(X)u_{2n-1}(X)=-u_{n-1}(X). 
\end{equation}
\end{proposition}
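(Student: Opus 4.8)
The plan is to reduce each of the four identities to the two quadratic relations of Proposition~\ref{a9} (that is, $u_{2m+1}^{2}-1=Xu_{2m}u_{2m+2}$ and $Xu_{2m}^{2}-1=u_{2m-1}u_{2m+1}$), after replacing $u_{2n}(X)$ and $u_{2n-1}(X)$ by bilinear expressions in $u_{n-2},u_{n-1},u_{n},u_{n+1}$ drawn from Proposition~\ref{a7}.

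\emph{Step 1: the two substitutions.} Taking $p=n$ in formula (18) of Proposition~\ref{a7} (this is formula (A5)) gives, for every $n\in\mathbb Z$,
\[
u_{2n}(X)=u_{n}(X)\bigl(u_{n+1}(X)-u_{n-1}(X)\bigr).
\]
Proposition~\ref{a7} also expands $u_{2n-1}(X)$ in two ways, and I use one or the other according to the parity of $n$. If $n$ is even, taking $p=n/2$ in the first of these expansions gives
\[
u_{2n-1}(X)=u_{n-1}(X)u_{n+1}(X)-Xu_{n-2}(X)u_{n}(X);
\]
if $n$ is odd, taking $p=(n-1)/2$ in the second one (namely (20)) gives
\[
u_{2n-1}(X)=Xu_{n-1}(X)u_{n+1}(X)-u_{n-2}(X)u_{n}(X).
\]
In both cases the chosen $p$ is an integer, so the computations below are valid for all $n\in\mathbb Z$, negative $n$ included (alternatively one uses $u_{-m}=-u_{m}$).

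\emph{Step 2: the even case.} Substituting the two formulas of Step 1 into $u_{n-1}u_{2n}-u_{n}u_{2n-1}$, the two copies of $u_{n-1}u_{n}u_{n+1}$ cancel and there remains $-u_{n}\bigl(u_{n-1}^{2}-Xu_{n-2}u_{n}\bigr)$; since $n-1$ is odd, the first relation of Proposition~\ref{a9} with $2m+1=n-1$ reads $u_{n-1}^{2}-Xu_{n-2}u_{n}=1$, which gives the first identity. For the second identity one substitutes likewise into $Xu_{n}u_{2n}-u_{n+1}u_{2n-1}$, collects the terms carrying the factor $Xu_{n}u_{n+1}$ and uses (A1) at the even index $n$ in the form $u_{n-2}+u_{n}=u_{n-1}$; after factoring out $u_{n-1}$ and rewriting $u_{n+1}-u_{n}=u_{n+2}$ (again (A1)), the remaining factor is $u_{n+1}^{2}-Xu_{n}u_{n+2}$, equal to $1$ by Proposition~\ref{a9} with $2m+1=n+1$. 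Hence $Xu_{n}u_{2n}-u_{n+1}u_{2n-1}=-u_{n-1}$.

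\emph{Step 3: the odd case and the main difficulty.} The computations are the same, now using the odd-$n$ expression for $u_{2n-1}$ from Step 1, together with (A2) at the even index $n-1$ (resp. $n+1$) in the form $u_{n-2}+u_{n}=Xu_{n-1}$ (resp. $Xu_{n+1}-u_{n}=u_{n+2}$), and the second relation of Proposition~\ref{a9} with $2m=n-1$ (resp. $2m=n+1$) in the form $Xu_{n-1}^{2}-u_{n-2}u_{n}=1$ (resp. $Xu_{n+1}^{2}-u_{n}u_{n+2}=1$); this yields the third and fourth identities. Each of the four computations is a one-line expansion with a single cancellation, so there is no conceptual obstacle; the only place where a slip is possible — and where care is genuinely needed — is the parity bookkeeping, i.e. deciding for each occurrence which of (A1)/(A2) and which of the two relations of Proposition~\ref{a9} is the one to invoke.
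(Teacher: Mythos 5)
Your proof is correct, and it follows essentially the same route as the paper: substitute $u_{2n}=u_{n}(u_{n+1}-u_{n-1})$, expand $u_{2n-1}$ via a half-index identity, and reduce each bracket to one of the two quadratic relations of Proposition~\ref{a9}. The only (inessential) difference is that you draw the expression for $u_{2n-1}$ from Proposition~\ref{a7} and finish with the recurrences (A1)/(A2), whereas the paper invokes the squared forms $u_{2n-1}=Xu_{n}^{2}-u_{n-1}^{2}$ (resp. $u_{n}^{2}-Xu_{n-1}^{2}$) of Corollaire~\ref{a8}; your parity bookkeeping checks out in all four cases.
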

\begin{proof}
1) on suppose que $n$ est pair.\\ Posons, pour cette démonstration seulement,
\[
A:=u_{n-1}(X)u_{2n}(X)-u_{n}(X)u_{2n-1}(X).
\]
Comme $u_{2n}(X)=u_{n}(X)(u_{n+1}(X)-u_{n-1}(X))$, nous obtenons
\[
A=u_{n}(X)(u_{n+1}(X)u_{n-1}(X)-u_{n-1}^{2}(X)-u_{2n-1}(X)).
\]
D'après (22), $u_{2n-1}(X)+u_{n-1}^{2}(X)=Xu_{n}^{2}(X)$ donc \\
$A=u_{n}(X)(u_{n+1}(X)u_{n-1}(X)-Xu_{n}^{2}(X))=-u_{n}(X)$.\\
Posons:
\[
B:=Xu_{n}(X)u_{2n}(X)-u_{n+1}(X)u_{2n-1}(X).
\]Alors:
\begin{align*}
B &=  Xu_{n}^{2}(X)(u_{n+1}(X)-u_{n-1}(X))-u_{n+1}(X)u_{2n-1}(X)\\
&=  u_{n+1}(X)(Xu_{n}^{2}(X)-u_{2n-1}(X))-Xu_{n}^{2}(X)u_{n-1}(X)\\
&=  u_{n+1}(X)u_{n-1}^{2}(X)-Xu_{n}^{2}(X)u_{n-1}(X)\\
&=  u_{n-1}(X)(u_{n+1}(X)u_{n-1}(X)-Xu_{n}^{2}(X))\\
&=  -u_{n-1}(X).
\end{align*}
2) On suppose que $n$ est impair.\\
Posons:
\[
C:=u_{n-1}(X)u_{2n}(X)-u_{n}(X)u_{2n-1}(X).
\]Alors:
\begin{align*}
C &=  Xu_{n-1}(X)u_{n}(X)(u_{n+1}(X)-u_{n-1}(X))-u_{n}(X)u_{2n-1}(X)\\
&=  u_{n}(X)(Xu_{n-1}(X)u_{n+1}(X)-Xu_{n-1}^{2}(X)-u_{2n-1}(X))\\
&=  u_{n}(X)(-1+u_{n}^{2}(X)-Xu_{n-1}^{2}(X)-u_{2n-1}(X))\\
&=  -u_{n}(X).
\end{align*}
Posons:
\[
D:=u_{n}(X)u_{2n}(X)-u_{n+1}(X)u_{2n-1}(X).
\]Alors:
\begin{align*}
D &=  u_{n}^{2}(X)(u_{n+1}(X)-u_{n-1}(X))-u_{n+1}(X)u_{2n-1}(X)\\
&=  u_{n+1}(X)(u_{n}^{2}(X)-u_{2n-1}(X))-u_{n}^{2}(X)u_{n-1}(X)\\
&=  u_{n+1}(X)Xu_{n-1}^{2}(X)-u_{n}^{2}(X)u_{n-1}(X)\\
&=  u_{n-1}(X)(Xu_{n+1}(X)u_{n-1}(X)-u_{n}^{2}(X))\\
&=  -u_{n-1}(X)
\end{align*}
en utilisant (23) et (21).
\end{proof}
\subsection{Propriétés des racines de $v_{p}(X)$.}
Soit $n$ un entier et soit $v_{n}(X)$ le facteur primitif de $u_{n}(X)$.
\begin{itemize}
\item Si $n=2p+1$, les racines de $v_{n}(X)$ sont les $4\cos^{2} \frac{k\pi}{2p+1}$ avec $1 \leqslant k \leqslant p$ et $(k,2p+1)=1$;
\item si $n=2p$, les racines de $v_{n}(X)$ sont les $4\cos^{2} \frac{k\pi}{2p}$ avec $1 \leqslant k \leqslant p-1$ et $(k,2p)=1$.
\end{itemize}
\begin{proposition}\label{a13}
Soient $p$ un entier $\geqslant 2$ et $\gamma$ une racine de $v_{2p}(X)$. Alors:
\begin{enumerate}
\item $\forall k, 0\leqslant k \leqslant p, u_{2p-k}(\gamma)=u_{k}(\gamma);$
\item $\forall k, 0\leqslant k <p,\forall l \in \mathbb{N}, u_{2lp+k}(\gamma)=(-1)^{l}u_{k}(\gamma)$.
\end{enumerate}

\end{proposition}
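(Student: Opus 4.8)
The plan is to transfer everything to trigonometry by means of the closed formulas $u_{2n}(4\cos^{2}\theta)=\sin(2n\theta)/\sin(2\theta)$ and $u_{2n+1}(4\cos^{2}\theta)=\sin((2n+1)\theta)/\sin\theta$ recorded earlier.

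First I would write $\gamma=4\cos^{2}\phi$ with $\phi=\frac{m\pi}{2p}$, where $1\leqslant m\leqslant p-1$ and $(m,2p)=1$, as permitted by the description of the roots of $v_{2p}(X)$ given just above the statement. The one crucial remark is that $(m,2p)=1$ forces $m$ to be \emph{odd}; I would also note that $0<\phi<\pi/2$, so that $\sin\phi$ and $\sin2\phi$ are nonzero and the two quoted formulas are valid when evaluated at $\gamma$. From $m$ odd one gets the elementary identity $\sin(\alpha+jm\pi)=(-1)^{jm}\sin\alpha=(-1)^{j}\sin\alpha$ for every $j\in\mathbb{Z}$, and this (with $j=\pm1$ for part 1 and $j=l$ for part 2) is essentially all that is needed.

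For (1), using $2p\phi=m\pi$, I would fix $k$ with $0\leqslant k\leqslant p$ and split on its parity. If $k$ is even then $2p-k$ is even, and $u_{2p-k}(\gamma)=\sin(m\pi-k\phi)/\sin2\phi=\sin(k\phi)/\sin2\phi=u_{k}(\gamma)$; if $k$ is odd then $2p-k$ is odd and the same computation with $\sin\phi$ in the denominator gives $u_{2p-k}(\gamma)=u_{k}(\gamma)$. For (2), using $2lp\phi=lm\pi$, I would similarly fix $0\leqslant k<p$ and $l\in\mathbb{N}$ and split on the parity of $k$: when $k$ is even, $2lp+k$ is even and $u_{2lp+k}(\gamma)=\sin(k\phi+lm\pi)/\sin2\phi=(-1)^{l}\sin(k\phi)/\sin2\phi=(-1)^{l}u_{k}(\gamma)$, and the odd case is identical with $\sin\phi$ in place of $\sin2\phi$.

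There is no genuine obstacle; the only point that must be got right is the very first step, namely that primitivity of $\gamma$ (i.e.\ $(m,2p)=1$) makes $m$ odd — it is exactly this parity that produces the reflection symmetry in (1) (the sign coming from $\cos m\pi=-1$ is cancelled) and the alternating factor $(-1)^{l}$ in (2); for a non‑primitive root of $u_{2p}(X)$ one would instead find $u_{2p-1}(\gamma)=-1$ and the statements would fail. Everything after that is just bookkeeping of the parities of $2p-k$ and $2lp+k$. If one prefers to avoid the trigonometric formulas, an alternative is to observe that both $k\mapsto u_{2p-k}(\gamma)$ and $k\mapsto u_{k}(\gamma)$ satisfy the recurrence (AR), which reduces (1) to the two initial equalities $u_{2p}(\gamma)=0$ (clear, since $v_{2p}$ divides $u_{2p}$) and $u_{2p-1}(\gamma)=1$, and then deduces (2) from Proposition \ref{a4} by induction on $l$; but the equality $u_{2p-1}(\gamma)=1$ (and not $-1$) again rests on primitivity, so the trigonometric route is the most economical.
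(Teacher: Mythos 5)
Your argument is correct, and it takes a genuinely different route from the paper. The paper proves (1) by a descending induction on $k$ starting from the middle: from the factorization $u_{2p}(X)=u_{p}(X)(u_{p+1}(X)-u_{p-1}(X))$ and the fact that $u_{p}(\gamma)\neq 0$ (primitivity enters here, since a root of $v_{2p}$ cannot be a root of any $v_{d}$ with $d\mid p$) it gets $u_{p+1}(\gamma)=u_{p-1}(\gamma)$, then propagates down to $k=0$ using $(A1)$ and $(A2)$; part (2) is then deduced by a somewhat laborious reduction of $m$ modulo $2p$ using Proposition \ref{a4}. You instead evaluate everything through the closed forms $u_{2n}(4\cos^{2}\theta)=\sin(2n\theta)/\sin(2\theta)$, $u_{2n+1}(4\cos^{2}\theta)=\sin((2n+1)\theta)/\sin\theta$, and you correctly isolate the one place where primitivity is used: $(m,2p)=1$ forces $m$ odd, which kills the sign $\cos(m\pi)$ in (1) and yields $(-1)^{lm}=(-1)^{l}$ in (2). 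Since every root of $v_{2p}$ is of the form $4\cos^{2}\frac{m\pi}{2p}$ with $0<\frac{m\pi}{2p}<\frac{\pi}{2}$, the denominators $\sin\phi$, $\sin 2\phi$ are nonzero and the formulas apply, so the computation is complete and, for part (2) especially, noticeably shorter than the paper's. Your closing alternative (both sides of (1) satisfy $(AR)$ in $k$, so it suffices to match two consecutive values) is essentially the paper's mechanism, and you rightly observe that the seed $u_{2p-1}(\gamma)=1$ is where the work hides. One small caveat on your parenthetical remark: for a non-primitive root $4\cos^{2}\frac{j\pi}{2p}$ one has $u_{2p-1}(\gamma)=(-1)^{j-1}$, so the conclusion fails only when $j$ is even, not for every non-primitive root (e.g.\ $j$ odd sharing an odd factor with $p$ still gives $+1$); this does not affect your proof, which only uses the implication in the direction primitive $\Rightarrow$ $m$ odd.
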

\begin{proof}
1) On a $u_{2p}(\gamma))=u_{p}(\gamma)(u_{p+1}(\gamma)-u_{p-1}(\gamma))$ (formule (9)), donc comme $u_{p}(\gamma)\ne 0$ et $u_{2p}(\gamma)=v_{2p}(\gamma)=0$, nous obtenons $u_{p+1}(\gamma)=u_{p-1}(\gamma)$.\\
Pour obtenir le résultat, nous effectuons une récurrence descendante sur $k$. Si $k=p$, $2p-k=p$ et si $k=p-1$, c'est la remarque initiale.\\
Supposons le résultat vrai pour tout $l$, $k+1 \leqslant l \leqslant p$: $u_{2p-l}(\gamma)=u_{l}(\gamma)$.
\begin{itemize}
\item Si $k$ est pair,
\begin{align*}
u_{2p-k}(\gamma) &=  u_{2p-(k+1)}(\gamma)-u_{2p-(k+2)}(\gamma)\\
&=  u_{k+1}(\gamma)-u_{k+2}(\gamma)\\
&=  u_{k}(\gamma)
\end{align*}
en utilisant $(A1)$.
\item Si $k$ est impair,
\begin{align*}
u_{2p-k}(\gamma) &=  \gamma u_{2p-(k+1)}(\gamma)-u_{2p-(k+2)}(\gamma)\\
&=  \gamma u_{k+1}(\gamma)-u_{k+2}(\gamma)\\
&=  u_{k}(\gamma)
\end{align*}
en utilisant $(A2)$.

\end{itemize}
2) On voit ainsi que $u_{2p-1}(\gamma)=u_{1}(\gamma)=1$ et, comme $0=u_{2p+1}(\gamma)-\gamma u_{2p}(\gamma)+u_{2p-1}(\gamma)$, nous obtenons $u_{2p+1}(\gamma)=-1$.\\
D'après les formules(6) et (7), nous avons $\forall m \in \mathbb{Z},u_{p+2m}(\gamma)+u_{p-2m}(\gamma)=0$.\\
Si $0<m<2p$, nous en déduisons que $u_{2p+m}(\gamma)=-u_{2p-m}(\gamma)=-u_{m}(\gamma)$.\\
Nous supposons que $2p$ ne divise pas $m$ et que $2p<m$. Divisons $m$ par $2p$: $m=2qp+r$ avec $0<r<2p$. Nous obtenons alors:
\[
u_{2p+m}(\gamma)=u_{2p(q+1)+r}(\gamma)=-u_{2p-2qp-r}(\gamma)=u_{2p(q-1)+r}(\gamma).
\]
Si $q$ est pair, nous aurons $u_{2p+m}(\gamma)=u_{2p+r}(\gamma)=-u_{r}(\gamma)$ tandis que si $q$ est impair, nous aurons $u_{2p+m}(\gamma)=u_{r}(\gamma)$.\\
Donc $u_{2p(q+1)}(\gamma)=(-1)^{q+1}u_{r}(\gamma)$. comme $u_{2p-r}(\gamma)=u_{r}(\gamma)$ d'après le 1), nous avons le résultat.
\end{proof}
\begin{proposition}\label{a14}
Soit  $\gamma$ une racine de $v_{2p}(X)$. Alors:\\
1) Si $p$ est impair, on a pour $k \in \{0,1,\ldots,p\}$, 
\begin{equation}
\left (\frac{4- \gamma}{2}\right )u_{p}(\gamma)u_{p-k}(\gamma)=u_{k+1}(\gamma)-u_{k-1}(\gamma).
\end{equation}
En particulier $(4-\gamma)u_{p}^{2}(\gamma)=4$ et $(4-\gamma)u_{p}(\gamma)u_{p-1}(\gamma)=2$.\\
2) Si $p$ est pair, on a pour $0\leqslant k < \frac{p}{2}$,
\begin{equation}
\frac{\gamma (4-\gamma)}{2}u_{p}(\gamma)u_{p-2k}(\gamma)=u_{2k+1}(\gamma)-u_{2k-1}(\gamma)
\end{equation}
et 
\begin{equation} 
\left (\frac{4-\gamma}{2}\right )u_{p}(\gamma)u_{p-(2k+1)}(\gamma)=u_{2k+2}(\gamma)-u_{2k}(\gamma)
\end{equation}
En particulier $\gamma(4-\gamma)u_{p}^{2}(\gamma)=4$ et $(4-\gamma)u_{p}(\gamma)u_{p-1}(\gamma)=2$.

\end{proposition}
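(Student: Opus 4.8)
The strategy is to exploit the symmetry $u_{2p}(X) = (-1)^{p-1}u_{2p}(4-X)$ (Proposition \ref{a6}) together with the reduction formulae from Propositions \ref{a7} and \ref{a11b}, specialized at a root $\gamma$ of $v_{2p}(X)$. The key preliminary facts are those of Proposition \ref{a13}: at such a $\gamma$ one has $u_{p+1}(\gamma)=u_{p-1}(\gamma)$, $u_{2p-k}(\gamma)=u_k(\gamma)$, $u_{2p-1}(\gamma)=1$, and $u_{2p+1}(\gamma)=-1$. I would first settle the two ``in particular'' identities, since they anchor the induction, and then prove the general formulae \eqref{33}, \eqref{34}, \eqref{35} by downward induction on $k$, exactly as in the proof of Proposition \ref{a13}.

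\textbf{Step 1: the anchor identities.} For $p$ odd, the earlier Corollary gives $u_{2p}(X) = (-1)^{\frac{p-1}{2}}u_p(X)u_p(4-X)$. Differentiating (or rather, using $u_{2p}'(\gamma)\neq 0$ since $\gamma$ is a simple root of $v_{2p}\mid u_{2p}$) is one route, but cleaner is this: from $u_{2p}(X)=u_p(X)(u_{p+1}(X)-u_{p-1}(X))$ and $u_{p+1}(\gamma)=u_{p-1}(\gamma)$, combined with the relation $(AR)$ $u_{p+1}-(X-2)u_p+u_{p-1}=0$ evaluated at $\gamma$, I get $2u_{p-1}(\gamma) = (\gamma-2)u_p(\gamma)$. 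Then I use formula (22), $u_{4n-1}=Xu_{2n}^2-u_{2n-1}^2$, or more directly the identity $Xu_{n}^2 - 1 = u_{n-1}u_{n+1}$ from Proposition \ref{a9}: with $u_{p+1}(\gamma)=u_{p-1}(\gamma)$ this reads $\gamma u_p^2(\gamma)-1 = u_{p-1}^2(\gamma) = \frac{(\gamma-2)^2}{4}u_p^2(\gamma)$. Solving, $u_p^2(\gamma)\bigl(\gamma - \frac{(\gamma-2)^2}{4}\bigr) = 1$, and $\gamma - \frac{(\gamma-2)^2}{4} = \frac{4\gamma - (\gamma-2)^2}{4} = \frac{-\gamma^2+8\gamma-4}{4}$; I need this to equal $\frac{4}{4-\gamma}$, i.e.\ $(4-\gamma)(-\gamma^2+8\gamma-4) = 16$. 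Hmm — that is $-(4-\gamma)\gamma(\gamma-4)\cdots$; let me instead note $4-\gamma = 4\sin^2(k\pi/2p)$ and $\gamma = 4\cos^2(k\pi/2p)$ are the clean forms, and for $p$ odd, $u_p(\gamma) = \sin(p\theta)/\sin\theta$ with $\gamma=4\cos^2\theta$, $\theta = k\pi/2p$, so $p\theta = k\pi/2$, giving $\sin(p\theta) = \pm 1$ (as $k$ is coprime to $2p$, hence odd), so $u_p^2(\gamma) = 1/\sin^2\theta = 4/(4-\gamma)$. This trigonometric route proves \emph{both} anchor identities at once: $(4-\gamma)u_p^2(\gamma)=4$ for $p$ odd, and $\gamma(4-\gamma)u_p^2(\gamma)=4$ for $p$ even (where $u_p(\gamma) = \sin(p\theta)/\sin 2\theta$ and $p\theta=k\pi/2$ still). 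The companion identity $(4-\gamma)u_p(\gamma)u_{p-1}(\gamma)=2$ follows from $2u_{p-1}(\gamma)=(\gamma-2)u_p(\gamma)$ in the odd case, and similarly in the even case after the analogous relation.

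\textbf{Step 2: the general formulae by downward induction.} Fix the parity of $p$. For $p$ odd I prove \eqref{33} by downward induction on $k$ from $k=p$ (trivial: left side is $\frac{4-\gamma}{2}u_p(\gamma)\cdot u_0(\gamma)=0$ and right side $u_{p+1}(\gamma)-u_{p-1}(\gamma)=0$) and $k=p-1$ (this is the anchor identity $(4-\gamma)u_p(\gamma)u_{p-1}(\gamma)=2$, matching $u_p(\gamma)-u_{p-2}(\gamma)$, which equals $1$ by... — here I need $u_p(\gamma)-u_{p-2}(\gamma)=1$; indeed $u_{2p-1}(\gamma)=u_1(\gamma)=1$ and by Proposition \ref{a13}(1) applied with the recurrences, $u_p(\gamma)-u_{p-2}(\gamma)$ can be related to $u_{2p-1}(\gamma)$ via the doubling formulae). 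The inductive step uses $(A1)$ when $k$ is even and $(A2)$ when $k$ is odd on the right-hand side $u_{k+1}(\gamma)-u_{k-1}(\gamma)$, together with the matching recurrence for $u_{p-k}(\gamma)$ in terms of $u_{p-(k+1)}(\gamma), u_{p-(k+2)}(\gamma)$ — precisely the bookkeeping already done in Proposition \ref{a13}. For $p$ even, \eqref{34} and \eqref{35} are handled simultaneously by the same downward induction, the two formulae feeding into each other through $(A1)$/$(A2)$, and the extra factor $\gamma$ in \eqref{34} appears exactly because the relevant step uses $(A2)$ (which carries the $X$).

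\textbf{Main obstacle.} The genuine content is Step 1 — establishing $(4-\gamma)u_p^2(\gamma)=4$ (resp.\ $\gamma(4-\gamma)u_p^2(\gamma)=4$); everything in Step 2 is a mechanical downward induction of the type already executed twice in the preceding proofs. I expect the cleanest proof of Step 1 to go through the trigonometric evaluation $\gamma = 4\cos^2(k\pi/2p)$ with $\gcd(k,2p)=1$, using Proposition 4's closed forms for $u_{2n+1}$ and $u_{2n}$ and the observation that $k$ odd forces $p\cdot\frac{k\pi}{2p} = \frac{k\pi}{2}$ to be an odd multiple of $\pi/2$; the purely polynomial derivation via Proposition \ref{a9} also works but requires verifying the identity $(4-\gamma)(-\gamma^2+8\gamma-4)=16$ holds \emph{modulo} $v_{2p}(X)$, i.e.\ that $v_{2p}(X)$ divides $(4-X)(-X^2+8X-4)-16 = -(X^2-4X+4)(\dots)$, which one checks is $-(X-2)^2(X^2-4X)+\cdots$ — cleaner to avoid and use trigonometry.
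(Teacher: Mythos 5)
Your overall architecture (two anchor identities at $k=0,1$ plus a parity-alternating two-term induction) matches the paper's, and your trigonometric derivation of the main anchors --- $\gamma=4\cos^{2}\theta$ with $\theta=k\pi/2p$, $k$ odd, so $\sin(p\theta)=\pm1$ and $u_{p}^{2}(\gamma)=1/\sin^{2}\theta$ resp.\ $1/\sin^{2}2\theta$ --- is correct and is a legitimate alternative to the paper's purely algebraic route (the paper instead combines $u_{p+1}(\gamma)=u_{p-1}(\gamma)$ with $(A1)$ to get $u_{p}(\gamma)=2u_{p-1}(\gamma)$ for $p$ odd, then feeds this into $u_{p}^{2}-1=\gamma u_{p-1}u_{p+1}$).

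However, two steps of your write-up fail as stated. First, the relation $2u_{p-1}(\gamma)=(\gamma-2)u_{p}(\gamma)$ is false: the recurrence $(AR)$ governs the subsequences of \emph{fixed parity} (it links $u_{n}$, $u_{n+2}$, $u_{n+4}$), not three consecutive indices, so you cannot write $u_{p+1}-(X-2)u_{p}+u_{p-1}=0$. The correct consequence of $u_{p+1}(\gamma)=u_{p-1}(\gamma)$ for $p$ odd is $(A1)$'s $u_{p}(\gamma)=2u_{p-1}(\gamma)$; a check at $p=5$, $\gamma=\tfrac{5+\sqrt5}{2}$ gives $u_{5}(\gamma)=1+\sqrt5$, $u_{4}(\gamma)=\tfrac{1+\sqrt5}{2}$, refuting your version. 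Since you derive the second anchor $(4-\gamma)u_{p}(\gamma)u_{p-1}(\gamma)=2$ from this false relation, that derivation is broken (it is rescued by substituting the correct one). Second, your downward induction is anchored at $k=p$ and $k=p-1$, but the case $k=p-1$ is \emph{not} one of the ``en particulier'' identities: it reads $\tfrac{4-\gamma}{2}u_{p}(\gamma)=u_{p}(\gamma)-u_{p-2}(\gamma)$, i.e.\ $2u_{p-2}(\gamma)=(\gamma-2)u_{p}(\gamma)$, which requires its own argument (e.g.\ $u_{p+2}(\gamma)=u_{p-2}(\gamma)$ plus the genuine same-parity recurrence); your parenthetical claim that what is needed is $u_{p}(\gamma)-u_{p-2}(\gamma)=1$ is neither the required statement nor true (same counterexample: $u_{5}(\gamma)-u_{3}(\gamma)=\tfrac{\sqrt5-1}{2}$). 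The paper sidesteps both problems by anchoring the induction at $k=0$ and $k=1$, which are exactly the two identities you have already proved, and inducting upward.
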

\begin{proof}
1) Supposons $p$ impair. Nous montrons la relation par récurrence sur $k$.\\
On a $u_{p-1}(\gamma)=u_{p+1}(\gamma)$ et, comme $p$ est impair, $u_{p}(\gamma)=2u_{p-1}(\gamma)$ en utilisant $(A1)$.\\
Maintenant $4u_{p}^{2}(\gamma)-4=4\gamma u_{p-1}(\gamma)u_{p+1}(\gamma)$ (formule (23)), d'où $4u_{p}^{2}(\gamma)-4=\gamma u_{p}^{2}(\gamma)$ et nous avons le résultat $(4-\gamma)u_{p}^{2}(\gamma)=4$.\\
Comme $u_{1}(\gamma)-u_{-1}(\gamma)=2$, c'est la formule cherchée pour $k=0$.\\
Remplaçons $u_{p}(\gamma)$ par $2u_{p-1}(\gamma)$dans la formule ci-dessus\\ pour obtenir $(4-\gamma)u_{p}(\gamma)u_{p-1}(\gamma)=2$.\\
Comme $u_{2}(\gamma)-u_{0}(\gamma)=1$, nous obtenons le résultat pour $k=1$.\\
Nous supposons maintenant le résultat vrai jusqu'à $k$.\\
 Si $k$ est pair, $p-(k+1)$ est pair, donc $u_{p-(k+1)}(\gamma)=u_{p-k}(\gamma)-u_{p-(k-1)}(\gamma)$, d'où
\begin{align*}
\left (\frac{4-\gamma}{2}\right )u_{p}(\gamma)u_{p-(k+1)}(\gamma) &=  \left (\frac{4-\gamma}{2}\right )u_{p}(\gamma)(u_{p-k}(\gamma)-u_{p-(k-1)}(\gamma))\\
&=  (u_{k+1}(\gamma)-u_{k-1}(\gamma))-(u_{k}(\gamma)-u_{k-2}(\gamma))\\
&= u_{k+2}(\gamma)-u_{k}(\gamma).
\end{align*}
 Si $k$ est impair, $p-(k+1)$ est impair, donc $u_{p-(k+1)}(\gamma)=\gamma u_{p-k}(\gamma)-u_{p-(k-1)}(\gamma)$, d'où
\begin{align*}
\left (\frac{4-\gamma}{2}\right )u_{p}(\gamma)u_{p-(k+1)}(\gamma) &=  \left (\frac{4-\gamma}{2}\right )u_{p}(\gamma)(\gamma u_{p-k}(\gamma)-u_{p-(k-1)}(\gamma))\\
&= (\gamma u_{k+1}(\gamma)-\gamma u_{k-1}(\gamma))-(u_{k}(\gamma)-u_{k-2}(\gamma))\\
&=  u_{k+2}(\gamma)-u_{k}(\gamma).
\end{align*}
Le résultat est donc vrai pour tout $k$.\\
2) Supposons $p$ pair. Comme ci-dessus on a $u_{p-1}(\gamma)=u_{p+1}(\gamma)$ et, comme $p$ est pair, $\gamma u_{p}(\gamma)=2u_{p-1}(\gamma)$.\\
Maintenant $4\gamma u_{p}^{2}(\gamma)-4=\gamma^{2}u_{p}^{2}(\gamma)$ et nous avons le résultat: $\gamma(4-\gamma)u_{p}^{2}(\gamma)=4$. C'est la formule annoncée pour $k=0$.\\
Pour obtenir le cas $k=1$ , on remplace $\gamma u_{p}(\gamma)$ par $2u_{p-1}(\gamma)$ dans la formule ci-dessus.

Nous montrons les relations par récurrence sur $k$. Nous supposons maintenant le résultat vrai jusqu'à $k-1$.\\
Nous avons $u_{p-2k}(\gamma)=u_{p-(2k-1)}(\gamma)-u_{p-(2k-2)}(\gamma)$ donc
\begin{align*}
u_{p}(\gamma)u_{p-2k}(\gamma) &=  u_{p}(\gamma)u_{p-(2k-1)}(\gamma)-u_{p}(\gamma)u_{p-(2k-2)}(\gamma)\\
&=  \left (\frac{2}{4-\gamma}\right )(u_{2k}(\gamma)-u_{2k-2}(\gamma))-\frac{2}{\gamma (4-\gamma)}(u_{2k-1}(\gamma)-u_{2k-3}(\gamma))\\
&=  \frac{2}{\gamma (4-\gamma)}((\gamma u_{2k}(\gamma)-u_{2k-1}(\gamma))-(\gamma u_{2k-2}(\gamma)-u_{2k-3}(\gamma)))\\
&=  \frac{2}{\gamma (4-\gamma)}(u_{2k+1}(\gamma)-u_{2k-1}(\gamma)).
\end{align*}
Nous avons $u_{p-(2k+1)}(\gamma)=\gamma u_{p-2k}(\gamma)-u_{p-(2k-1)}(\gamma)$ donc
\begin{align*}
u_{p}(\gamma)u_{p-(2k+1)}(\gamma)&= \left (\frac{2}{4-\gamma}\right )(u_{2k+1}(\gamma)-u_{2k-1}(\gamma))-\left (\frac{2}{4-\gamma}\right )(u_{2k}(\gamma)-u_{2k-2}(\gamma))\\
&=  \left (\frac{2}{4-\gamma}\right )((u_{2k+1}(\gamma)-u_{2k}(\gamma))-(u_{2k-1}(\gamma)-u_{2k-2}(\gamma)))\\
&=  \left (\frac{2}{4-\gamma}\right )(u_{2k+2}(\gamma)-u_{2k}(\gamma)).
\end{align*}
Le résultat est donc vrai pour tout $k$.

\end{proof}
\begin{proposition}\label{a15}
Soit $\gamma$ une racine de $v_{r}(X)$ avec $r=2r_{1}+1$. :\\
	1) Pour $0\leqslant k \leqslant r_{1}-1$ on a 
	\begin{eqnarray*}
u_{r-(2k+1)}(\gamma) & = & u_{2k+1}(\gamma)u_{r-1}(\gamma);\\
u_{r-(2k+2)}(\gamma) & = & \gamma u_{2k+2}(\gamma)u_{r-1}(\gamma).
\end{eqnarray*}\\
	2) On se place dans un sous-corps de $\mathbb{R}$. alors si $\gamma=4\cos^{2}\frac{k\pi}{r}$\\
	(i) on a $\sqrt{\gamma}u_{r-1}(\gamma)=(-1)^{k-1}$ où $\sqrt{\gamma}>0$.\\
	(ii)Supposons que $r\equiv 1\pmod{4}$: $r=4r_{2}+1$, alors on a pour $0\leqslant l\leqslant r_{2}$:
\begin{eqnarray}
u_{\frac{r-1}{2}-2l}(\gamma) & = & (u_{2l+1}(\gamma)-(-1)^{(k-1)}\sqrt{\gamma}u_{2l}(\gamma))u_{\frac{r-1}{2}}(\gamma);\\
u_{\frac{r-1}{2}-(2l-1)}(\gamma) & = &( \gamma u_{2l}(\gamma)-(-1)^{(k-1)}\sqrt{\gamma}u_{2l-1}(\gamma))u_{\frac{r-1}{2}}(\gamma).
\end{eqnarray}
Supposons que $r\equiv 3\pmod{4}$: $r=4r_{2}+3$, alors on a pour $0\leqslant l\leqslant \frac{r-3}{8}$:
\begin{eqnarray}
u_{\frac{r-1}{2}-2l}(\gamma) & = & (u_{2l+1}(\gamma)-(-1)^{(k-1)}\sqrt{\gamma}u_{2l}(\gamma))u_{\frac{r-1}{2}}(\gamma);\\
u_{\frac{r-1}{2}-(2l-1)}(\gamma) & = &( u_{2l}(\gamma)-(-1)^{(k-1)}\frac{1}{\sqrt{\gamma}}u_{2l-1}(\gamma))u_{\frac{r-1}{2}}(\gamma).
\end{eqnarray}

\end{proposition}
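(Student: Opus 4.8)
\emph{Part 1.} Since $\gamma$ is a root of $v_{r}(X)$, which divides $u_{r}(X)$ by Proposition \ref{a3}, we have $u_{r}(\gamma)=0$. I would prove the two identities simultaneously by induction on $j$ (renaming the index $k$ of the statement to $j$, $0\leqslant j\leqslant r_{1}-1$). For $j=0$ the first is trivial since $u_{1}=1$, and the second reads $u_{r-2}(\gamma)=\gamma u_{r-1}(\gamma)$, which is exactly (A2) evaluated at $\gamma$, the term $u_{2r_{1}+1}(\gamma)=u_{r}(\gamma)$ vanishing. For the inductive step, note that since $r$ is odd the index $r-(2j+3)$ is even and $r-(2j+4)$ is odd. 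I would get $u_{r-(2j+3)}(\gamma)$ from $u_{r-(2j+2)}(\gamma)$ and $u_{r-(2j+1)}(\gamma)$ using the three-term relation in its (A1)-shape (target index even), substitute the two inductive hypotheses, and collapse the bracket by $Xu_{2j+2}-u_{2j+1}=u_{2j+3}$, which is (A2); then $u_{r-(2j+4)}(\gamma)$ comes from $u_{r-(2j+3)}(\gamma)$ and $u_{r-(2j+2)}(\gamma)$ via the (A2)-shape (target index odd), collapsing by $u_{2j+3}-u_{2j+2}=u_{2j+4}$, which is (A1). The one point requiring care is choosing, at each step, the (A1)- or (A2)-form of the recurrence according to the parity of the index produced. (Alternatively, Part 1 follows at once from the trigonometric formulas of Proposition~2, but the induction has the merit of being valid over any field.) As a by-product the case $j=r_{1}-1$ of the second identity gives $\gamma u_{r-1}^{2}(\gamma)=1$; this also follows directly from formula~(24) of Proposition \ref{a9} together with $u_{r}(\gamma)=0$.

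\emph{Part 2(i).} Working in a subfield of $\mathbb{R}$ with $\gamma=4\cos^{2}\frac{k\pi}{r}$, set $\theta=\frac{k\pi}{r}$; since $1\leqslant k\leqslant r_{1}<\frac{r}{2}$ we have $0<\theta<\frac{\pi}{2}$, so $\sqrt{\gamma}=2\cos\theta>0$. By Proposition~2, $u_{r-1}(\gamma)=u_{2r_{1}}(4\cos^{2}\theta)=\dfrac{\sin 2r_{1}\theta}{\sin 2\theta}$. Since $2r_{1}\theta=(r-1)\theta=k\pi-\theta$, one has $\sin 2r_{1}\theta=(-1)^{k-1}\sin\theta$, while $\sin 2\theta=2\sin\theta\cos\theta$, whence $u_{r-1}(\gamma)=\dfrac{(-1)^{k-1}}{2\cos\theta}=\dfrac{(-1)^{k-1}}{\sqrt{\gamma}}$, i.e. $\sqrt{\gamma}\,u_{r-1}(\gamma)=(-1)^{k-1}$. (The magnitude is already contained in $\gamma u_{r-1}^{2}(\gamma)=1$; only the sign needs the trigonometric expression, which also forces $\cos\theta>0$.)

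\emph{Part 2(ii).} Put $N=\frac{r-1}{2}$. The crux is the relation
\[
u_{N+1}(\gamma)=(-1)^{k-1}\sqrt{\gamma}\,u_{N}(\gamma)\ \ (r\equiv 1\bmod 4,\ N=2r_{2}\text{ even}),\qquad u_{N+1}(\gamma)=(-1)^{k-1}\tfrac{1}{\sqrt{\gamma}}\,u_{N}(\gamma)\ \ (r\equiv 3\bmod 4,\ N=2r_{2}+1\text{ odd}).
\]
I would deduce these from Part 1 at $j=r_{2}$: the first identity of Part 1 gives $u_{N}(\gamma)=u_{N+1}(\gamma)u_{r-1}(\gamma)$ when $r\equiv1$, and the second gives $u_{N}(\gamma)=\gamma\,u_{N+1}(\gamma)u_{r-1}(\gamma)$ when $r\equiv3$; substituting $u_{r-1}(\gamma)=(-1)^{k-1}/\sqrt{\gamma}$ from 2(i) and using $(-1)^{k-1}=(-1)^{-(k-1)}$ yields the claim. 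Once $u_{N}(\gamma)$ and $u_{N+1}(\gamma)$ are pinned down, a downward induction finishes it: the relations (A1)/(A2) express each of $u_{N-1}(\gamma),u_{N-2}(\gamma),\dots$ in terms of the two preceding values, so at every stage the value is $\bigl(a\,u_{\bullet}(\gamma)-(-1)^{k-1}\sqrt{\gamma}^{\,\pm1}\,b\,u_{\bullet}(\gamma)\bigr)u_{N}(\gamma)$ with $a,b$ themselves obeying the $u$-recurrences, so the closed forms of the statement reproduce themselves. The sign of the exponent on $\sqrt{\gamma}$ alternates from step to step precisely because successive uses of the recurrence alternate between the (A1)-shape (no factor of $X$) and the (A2)-shape (a factor $X=(\sqrt{\gamma})^{2}$); this is why, for $r\equiv3$ (where $N$ is odd), the two displayed formulas carry $\sqrt{\gamma}$ and $\frac1{\sqrt{\gamma}}$ respectively, whereas for $r\equiv1$ both carry $\sqrt{\gamma}$. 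The base values $l=0$ (trivial, using $u_{0}=0$, $u_{-1}=-1$, and the crux relation) and $l=1$ are checked by hand.

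\textbf{Anticipated main difficulty.} Parts 1 and 2(i) are essentially mechanical. The delicate part is 2(ii): keeping the bookkeeping of the powers of $\sqrt{\gamma}$ coherent across the two residues of $r$ modulo $4$, and confirming the precise ranges of validity — for $r\equiv1$ the bound $0\leqslant l\leqslant r_{2}$ merely says "as long as the index $N-2l$ stays $\geqslant 0$", while for $r\equiv3$ the stricter bound $0\leqslant l\leqslant\frac{r-3}{8}$ is the one needed for the later applications, the induction itself in fact continuing somewhat further.
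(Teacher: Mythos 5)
Your proposal is correct, and most of it coincides with the paper's own argument: Part 1 is proved there by exactly the induction you describe (base cases $u_{r-1}=u_{1}u_{r-1}$ and $u_{r-2}(\gamma)=\gamma u_{r-1}(\gamma)$ from $u_{r}(\gamma)=0$, then alternating uses of $(A1)$ and $(A2)$ chosen by the parity of the target index). The one place where you genuinely diverge is 2(i): the paper obtains the sign of $u_{r-1}(\gamma)$ by an interlacing argument --- the roots of $u_{r-1}(X)$ separate those of $u_{r}(X)$, $u_{r-1}(4)>0$, so $u_{r-1}$ has sign $(-1)^{k-1}$ on the interval containing $\gamma=4\cos^{2}\frac{k\pi}{r}$ --- and then combines this with $\gamma u_{r-1}^{2}(\gamma)=1$; you instead evaluate $u_{r-1}(\gamma)=\frac{\sin 2r_{1}\theta}{\sin 2\theta}=\frac{(-1)^{k-1}}{2\cos\theta}$ directly from Proposition~2. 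Both are valid; your computation is shorter, gives sign and magnitude at once, and is in fact the very computation the paper performs elsewhere (in the proof of Proposition~\ref{a16}, part 3), whereas the interlacing argument has the mild advantage of not invoking the closed trigonometric form. For 2(ii) the paper contents itself with ``récurrence sur $l$ \ldots n'offre pas de difficultés'', so your sketch actually supplies more than the paper does: the anchor relations $u_{N+1}(\gamma)=(-1)^{k-1}\sqrt{\gamma}\,u_{N}(\gamma)$ (for $r\equiv 1\bmod 4$) and $u_{N+1}(\gamma)=(-1)^{k-1}\frac{1}{\sqrt{\gamma}}u_{N}(\gamma)$ (for $r\equiv 3\bmod 4$), deduced from Part 1 at $j=r_{2}$ together with 2(i), are exactly what is needed to start the descent, and your bookkeeping of the alternating powers of $\sqrt{\gamma}$ through the $(A1)$/$(A2)$ steps checks out against the displayed formulas at $l=0$ and $l=1$.
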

\begin{proof}
1) On procède par récurrence sur $k$ en utilisant les relations $(A1)$ et $(A2)$. Si $k=0$, $u_{r-1}(\gamma)=u_{1}(\gamma)u_{r-1}(\gamma)$ car $u_{1}(X)=1$; $u_{r-2}(\gamma)=\gamma u_{r-1}(\gamma)-u_{r}(\gamma)=\gamma u_{r-1}(\gamma)$ car $u_{r}(\gamma)=0$.\\
Supposons le résultat vrai pour $k\geqslant 1$. On a 
\begin{eqnarray*}
u_{r-(2k+3)}(X) & = & u_{r-(2k+2)}(X)-u_{r-(2k+1)}\, \text{d'après} \,(A1)\, \text{donc}\\
u_{r-(2k+3)}(\gamma) & = & \gamma u_{2k+2}(\gamma)u_{r-1}(\gamma)-u_{2k+1}(\gamma)u_{r-1}(\gamma)\\
& = & (\gamma u_{2k+2}(\gamma)-u_{2k+1}(\gamma))u_{r-1}(\gamma)\\
& = & u_{2k+3}(\gamma)u_{r-1}(\gamma)\, \text{d'après} \,(A2).
\end{eqnarray*}
\begin{eqnarray*}
u_{r-(2k+4)}(X) & = & Xu_{r-(2k+3)}(X)-u_{r-(2k+2)}(X)\, \text{d'après} \,(A2)\, \text{donc}\\
u_{r-(2k+4)}(\gamma) & = & \gamma u_{r-(2k+3)}(\gamma)-u_{r-(2k+2)}(\gamma)\\
& = & \gamma u_{2k+3}(\gamma)u_{r-1}(\gamma)-\gamma u_{2k+2}(\gamma)u_{r-1}(\gamma)\\
& = & \gamma(u_{2k+3}(\gamma)-u_{2k+2}(\gamma))u_{r-1}(\gamma)\\
& = & \gamma u_{2k+4}(\gamma)u_{r-1}(\gamma)\, \text{d'après} \,(A1).
\end{eqnarray*}
Le résultat est donc vrai pour tout $k$.\\

2) On suppose que nous sommes dans un sous-corps de $\mathbb{R}$.\\
(i) Les racines de $u_{r-1}(X)$ encadrent les racines de $u_{r}(X)$. Plus précisément on a 
\[
\forall l,\quad \frac{2l\pi}{r-1}<\frac{(2l+1)\pi}{r}<\frac{(2l+1)\pi}{r-1}
\]
d'où, comme l'application $\cos$ est décroissante:
\[
4\cos^{2}\frac{(2l+1)\pi}{r-1}<4\cos^{2}\frac{(2l+1)\pi}{r}<4\cos^{2}\frac{2l\pi}{r-1}.
\]
Le polynôme $u_{r-1}(X)$ n'a que des racines simples et $u_{r-1}(4)>0$, nous obtenons:
\begin{eqnarray*}
u_{r-1}(a)>0 & \text{si} & a \in \left ]4\cos^{2}\frac{(2l+1)\pi}{r-1},4\cos^{2}\frac{2l\pi}{r-1}\right [;\\
u_{r-1}(a)<0) & \text{si} & a \in \left ]4\cos^{2}\frac{(2l+2)\pi}{r-1},4\cos^{2}\frac{(2l+1)\pi}{r-1}\right [.
\end{eqnarray*}
D'après la formule (25) nous avons $\gamma u_{r-1}^{2}(\gamma)=1$ et de ce qui précède, nous déduisons que $\sqrt{\gamma}u_{r-1}(\gamma)=(-1)^{k-1}$.\\
(ii) 
La démonstration se fait par récurrence sur $l$ et n'offre pas de difficultés en utilisant la relation $(\mathcal{R})$

\end{proof}
\begin{proposition}\label{a16}
Soient $K$ un corps et $K'$ un sur-corps de $K$. Soit $\varphi$ un élément de $K^{*}$. Alors:\\
1) Si $a\in K'-K$ satisfait à $a^{2}=\varphi a+1$, on a $\forall n \in \mathbb{Z}$,:
\begin{align*}
  a^{2n}  &=  (-1)^{n-1}\varphi u_{2n}(-\varphi^{2})a+(-1)^{n-1}u_{2n-1}(-\varphi^{2}),\\
 a^{2n+1}  &=  (-1)^{n}u_{2n+1}(-\varphi^{2})a+(-1)^{n-1}\varphi u_{2n}(-\varphi^{2}).
\end{align*}
En particulier, $a^{n} \in K$ si et seulement si $u_{n}(-\varphi^{2})=0$.\\
2) Si $a\in K'-K$ satisfait à $a^{2}=\varphi a-1$, on a $\forall n \in \mathbb{Z}$,:
\begin{align*}
 a^{2n} &= \varphi u_{2n}(\varphi^{2})a-u_{2n-1}(\varphi^{2}),\\
 a^{2n+1} &= u_{2n+1}(\varphi^{2})a-\varphi u_{2n}(\varphi^{2}).
\end{align*}
En particulier, $a^{n} \in K$ si et seulement si $u_{n}(\varphi^{2})=0$.\\
3) On suppose que $K$ est un sous-corps de $\mathbb{C}$ et que $a\in K'-K$ satisfait à $a^{2}=\varphi a+1$. Alors:
\begin{itemize}
\item Si $-\varphi^{2}$ est racine de $v_{2n+1}(X)$, il existe $\epsilon \in \{-1,+1\}$ et un entier $k$, premier à $2n+1$, tels que $\varphi =2\epsilon i \cos \frac{k\pi}{2n+1}$ et l'on a 
\[
a^{2n+1}=(-1)^{n+k}\epsilon i,\, a^{2(2n+1)}=-1\, \mbox{et} \; a^{4(2n+1)}=1.
\]
\item Si $-\varphi^{2}$ est racine de $v_{2n}(X)$, il existe $\epsilon \in \{-1,+1\}$ et un entier $k$, premier à $2n$ tels que $\varphi =2\epsilon \cos \frac{k\pi}{2n}$ et l'on a $a^{2n}=(-1)^{n-1}$.
\end{itemize}
4) On suppose que $K$ est un sous-corps de $\mathbb{R}$ et que $a\in K'-K$ satisfait à $a^{2}=\varphi a-1$. Alors:
\begin{itemize}
\item Si $\varphi^{2}$ est racine de $v_{2n+1}(X)$, il existe $\epsilon \in \{-1,+1\}$ et un entier $k$, premier à $2n+1$, tels que $\varphi =2\epsilon \cos \frac{k\pi}{2n+1}$ et l'on a 
\[
a^{2n+1}=\epsilon(-1)^{k} \quad \mbox{et} \quad a^{2(2n+1)}=1.
\]
\item Si $\varphi^{2}$ est racine de $v_{2n}(X)$, il existe $\epsilon \in \{-1,+1\}$ et un entier $k$, premier à $2n$, tels que $\varphi =2\epsilon \cos \frac{k\pi}{2n}$ et l'on a 
\[
a^{2n}=-1 \quad \mbox{et} \quad a^{4n}=1.
\]
\end{itemize}

\end{proposition}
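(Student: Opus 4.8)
The plan is to obtain 1) and 2) by writing the powers of $a$ in the $K$-basis $\{1,a\}$ of $K(a)$ and matching against the recursions $(A1)$, $(A2)$, and then to deduce 3) and 4) by substituting into those formulas the explicit description of the roots of $v_r$. First, for 1) and 2): $a$ is a unit of $K(a)$, since $a(a-\varphi)=1$ in the case $a^2=\varphi a+1$ and $a(\varphi-a)=1$ in the case $a^2=\varphi a-1$; as moreover $a\notin K$, the family $\{1,a\}$ is a $K$-basis of $K(a)$, so for every $n\in\mathbb{Z}$ there are unique $c_n,d_n\in K$ with $a^n=c_na+d_n$. Multiplying by $a$ and reducing with $a^2=\varphi a+1$ gives $c_{n+1}=\varphi c_n+d_n$ and $d_{n+1}=c_n$, hence $d_n=c_{n-1}$ and $(c_n)$ is the solution of $c_{n+1}=\varphi c_n+c_{n-1}$ with $c_0=0$, $c_1=1$. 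It then suffices to check that the proposed right-hand sides $c_{2n}=(-1)^{n-1}\varphi u_{2n}(-\varphi^2)$, $c_{2n+1}=(-1)^{n}u_{2n+1}(-\varphi^2)$ satisfy this same recursion with the same initial values; distinguishing the parity of $n$, the two induction steps reduce exactly to $(A2)$ evaluated at $X=-\varphi^2$ and to $(A1)$ respectively (the factor $\varphi\neq 0$ being cancelled in the second step). The case $a^2=\varphi a-1$ is identical, with the recursion $c_{n+1}=\varphi c_n-c_{n-1}$, $d_n=-c_{n-1}$, the two steps reducing to $(A2)$ and $(A1)$ evaluated at $X=\varphi^2$; alternatively it follows from the first case by passing to $b=ia$ in a suitable extension, which satisfies $b^2=(i\varphi)b-1$. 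The ``in particular'' statements are then immediate: since $\{1,a\}$ is $K$-independent, $a^n\in K$ iff the coefficient of $a$ vanishes, i.e. (because $\varphi\neq 0$) iff $u_n(\mp\varphi^2)=0$.

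For 3) and 4), recall from the description of the roots of $v_r$ given just before Proposition \ref{a13} that the roots of $v_{2m+1}$ are the $4\cos^2\frac{k\pi}{2m+1}$ with $1\leqslant k\leqslant m$, $(k,2m+1)=1$, and those of $v_{2m}$ the $4\cos^2\frac{k\pi}{2m}$ with $1\leqslant k\leqslant m-1$, $(k,2m)=1$ (so $k$ is odd there). Hence if $-\varphi^2$ (part 3), resp. $\varphi^2$ (part 4), is such a root $\gamma$, then extracting a square root — in $\mathbb{C}$ for 3), in $\mathbb{R}$ for 4) — yields $\varphi=\pm i\sqrt{\gamma}$, resp. $\varphi=\pm\sqrt{\gamma}$, with $\sqrt{\gamma}=2\cos\frac{k\pi}{r}>0$ and $k$ uniquely determined in $1\leqslant k\leqslant\lfloor r/2\rfloor$, so the announced forms of $\varphi$ hold with $\epsilon\in\{-1,+1\}$ the sign and $(-1)^k$ unambiguous. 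Now substitute $\gamma$ into the formula of 1), resp. 2), at the index $2n+1$ when $\gamma$ is a root of $v_{2n+1}$ and at the index $2n$ when $\gamma$ is a root of $v_{2n}$: since $v_r\mid u_r$, the term carrying $u_{2n+1}(\gamma)$, resp. $u_{2n}(\gamma)$, vanishes, so $a^{2n+1}$ reduces to $(-1)^{n-1}\varphi u_{2n}(\gamma)$ (part 3) resp. $-\varphi u_{2n}(\gamma)$ (part 4), and $a^{2n}$ reduces to $(-1)^{n-1}u_{2n-1}(\gamma)$ (part 3) resp. $-u_{2n-1}(\gamma)$ (part 4). The surviving value is supplied by earlier results: $u_{2n-1}(\gamma)=u_1(\gamma)=1$ when $\gamma$ is a root of $v_{2n}$ (Proposition \ref{a13}, 1)), and $\sqrt{\gamma}\,u_{2n}(\gamma)=(-1)^{k-1}$ when $\gamma=4\cos^2\frac{k\pi}{2n+1}$ is a root of $v_{2n+1}$ (Proposition \ref{a15}, 2)(i), a statement about the positive real $\gamma$ that applies even when $\varphi\notin\mathbb{R}$, or re-derived from $u_{2n}(4\cos^2\theta)=\sin 2n\theta/\sin 2\theta$). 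Substituting $\varphi=\pm\sqrt{-\gamma}$, resp. $\pm\sqrt{\gamma}$, and $\sqrt{\gamma}=2\cos\frac{k\pi}{r}$ then gives $a^{2n+1}=(-1)^{n+k}\epsilon i$ and $a^{2n}=(-1)^{n-1}$ in 3), and $a^{2n+1}=\epsilon(-1)^k$ and $a^{2n}=-1$ in 4); the remaining identities $a^{2(2n+1)}=-1$, $a^{4(2n+1)}=1$, $a^{2(2n+1)}=1$, $a^{4n}=1$ follow by squaring.

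The inductions of 1) and 2) are routine, so the only delicate point is the sign bookkeeping in 3) and 4): one must carry $(-1)^n$, $(-1)^k$ and the ambiguous $\epsilon$ through the substitution $\varphi=\pm\sqrt{\mp\gamma}$ together with $u_{2n}(\gamma)=(-1)^{k-1}/\sqrt{\gamma}$, and verify that the $\epsilon$-dependence of $\varphi$ matches the $\epsilon$ appearing in the claimed value of $a^{2n+1}$ — which it does, since after the vanishing of the $u_r$-term $a^{2n+1}$ is linear in $\varphi$. Everything else — invertibility of $a$, the basis property of $\{1,a\}$, the reduction of the two induction steps to $(A1)$ and $(A2)$, and the uniqueness of $k$ in its range — is immediate; one should only note that in part 3) the square root forces $\varphi$ to be purely imaginary in the $v_{2n}$ case as well, so that the displayed $\varphi=2\epsilon\cos\frac{k\pi}{2n}$ there is to be read as $\varphi=2\epsilon i\cos\frac{k\pi}{2n}$.
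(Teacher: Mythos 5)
Your proof is correct and follows essentially the same route as the paper: the closed forms for $a^{n}$ come from reducing $a^{n+1}=a\cdot a^{n}$ modulo $a^{2}=\varphi a\pm 1$, with the two parity steps matching $(A2)$ and $(A1)$, and parts 3) and 4) follow by killing the $u_{r}$-term at the root and evaluating the surviving factor via $\sqrt{\gamma}\,u_{2n}(\gamma)=(-1)^{k-1}$ (resp. $u_{2n-1}(\gamma)=1$ from Proposition \ref{a13}). The only differences are cosmetic: your uniqueness-of-recurrence formulation disposes of negative $n$ in one stroke where the paper checks $ba^{2n}=1$ separately, and your reading of the second bullet of 3) as $\varphi=2\epsilon i\cos\frac{k\pi}{2n}$ correctly identifies a missing $i$ in the statement.
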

\begin{proof}
1) On suppose d'abord $n\geqslant 0$. La preuve se fait par récurrence sur $n$.\\
Si $n=0$, $u_{0}(-\varphi^{2})=0$, $u_{-1}(-\varphi^{2})=-1$, $u_{1}(-\varphi^{2})=1$ d'où le résultat dans ce cas.\\
Si $n=1$, nous avons $a^{2}=\varphi u_{2}(-\varphi^{2})a+u_{1}(-\varphi^{2})=\varphi a+1$ et\\
$a^{3}=\varphi a^{2}+a=\varphi(\varphi a+1)+a=(\varphi^{2}+1)a+\varphi=-u_{3}(-\varphi^{2})a+\varphi u_{2}(-\varphi^{2})$ car $u_{3}(X)=X-1$ et $u_{2}(X)=1$.\\
Supposons que l'on ait $a^{2n}=(-1)^{n-1}\varphi u_{2n}(-\varphi^{2})a+(-1)^{n-1}u_{2n-1}(-\varphi^{2})$. Alors:
\begin{align*}
a^{2n+1} &=  (-1)^{n-1}\varphi u_{2n}(-\varphi^{2})(\varphi a+1)+(-1)^{n-1}u_{2n-1}(-\varphi^{2})a\\
&= (-1)^{n-1}(\varphi^{2}u_{2n}(-\varphi^{2})+u_{2n-1}(-\varphi^{2}))a+(-1)^{n-1}\varphi u_{2n}(-\varphi^{2})\\
&=  (-1)^{n}((-\varphi^{2})u_{2n}(-\varphi^{2})-u_{2n-1}(-\varphi^{2}))a+(-1)^{n-1}\varphi u_{2n}(-\varphi^{2})\\
&=  (-1^{n}u_{2n+1}(-\varphi^{2})a+(-1)^{n-1}\varphi u_{2n}(-\varphi^{2})
\end{align*}
en utilisant (A2).\\
Supposons que l'on ait $a^{2n+1}=(-1)^{n}\varphi u_{2n+1}(-\varphi^{2})a+(-1)^{n-1}\varphi u_{2n}(-\varphi^{2})$. Alors:
\begin{align*}
a^{2n+2} &=  (-1)^{n}\varphi u_{2n+1}(-\varphi^{2})(\varphi a+1)+(-1)^{n-1}\varphi u_{2n}(-\varphi^{2})a\\
&=  (-1)^{n}\varphi(u_{2n+1}(-\varphi^{2})-u_{2n}(-\varphi^{2}))a+(-1)^{n}u_{2n+1}(-\varphi^{2})\\
&=  (-1)^{n}\varphi u_{2n+2}(-\varphi^{2})a+(-1)^{n}u_{2n+1}(-\varphi^{2})
\end{align*}
en utilisant (A1).\\
Le résultat est donc vrai pour tout entier positif ou nul.\\
Dans l'expression de $a^{2n}$, nous changeons $n$ en $-n$ pour obtenir:
\[
b:=(-1)^{n}\varphi u_{2n}(-\varphi^{2})a+(-1)^{n}u_{2n+1}(-\varphi^{2})
\]
car par définition, $u_{-n}(X)=-u_{n}(X)$. nous avons alors:
\begin{multline*}
ba^{2n}=(-1)(\varphi^{2}u_{2n}^{2}(-\varphi^{2})a^{2}+\varphi u_{2n}(-\varphi^{2})(u_{2n+1}(-\varphi^{2})+u_{2n-1}(-\varphi^{2}))a\\+u_{2n+1}(-\varphi^{2})u_{2n-1}(-\varphi^{2}))
\end{multline*}
Nous avons $a^{2}=\varphi a+1$,  $u_{2n+1}(-\varphi^{2})+u_{2n-1}(-\varphi^{2})=-\varphi^{2}u_{2n}(-\varphi^{2})$ d'après $(A2)$, et $u_{2n+1}(-\varphi^{2})u_{2n-1}(-\varphi^{2}=-1-\varphi^{2}u_{2n}^{2}(-\varphi^{2})$ d'après $(A15)$ donc
\[
ba^{2n}=(-1)(\varphi^{3}u_{2n}^{2}(-\varphi^{2})a+\varphi^{2}u_{2n}^{2}(-\varphi^{2})-\varphi^{3}u_{2n}^{2}(-\varphi^{2})-\varphi^{2}u_{2n}^{2}(-\varphi^{2})-1,
\]
donc $ba^{2n}=1$ et $b=a^{-2n}$.\\
Nous procédons de même pour $a^{-(2n+1)}$.\\
2) La preuve est la même que la précédente, en un peu plus simple car ici, il n'y a pas de questions de signes.\\
3) Si $-\varphi^{2}$ est racine de $v_{2n+1}(X)$, alors il existe $\epsilon \in \{-1,+1\}$ et un entier $k$ premier à $2n+1$ tels que $\varphi=2\epsilon i\cos \frac{k\pi}{2n+1}$.
Nous avons:
\[
u_{2n}(4\cos^{2} \frac{k\pi}{2n+1})=\frac {\sin \frac{2nk\pi}{2n+1}}{\sin\frac{2k\pi}{2n+1}}
\]
d'après la proposition 19. Nous avons:
\[
\sin\frac{2nk\pi}{2n+1}=\sin (k\pi -\frac{k\pi}{2n+1})=(-1)^{k-1}\sin \frac{k\pi}{2n+1}
\]
d'où:
\[
u_{2n}(4\cos^{2} \frac{k\pi}{2n+1})=(-1)^{k-1}\frac{1}{2\cos \frac{k\pi}{2n+1}}.
\]
Il en résulte que $a^{2n+1}=(-1)^{n-1}\epsilon i(-1)^{k-1}=(-1)^{n+k}\epsilon i$, d'où $a^{2(2n+1)}=-1$ et $a^{4(2n+1)}=1$.\\
Si $-\varphi^{2}$est racine de $v_{2n}(X)$, comme $a^{2n}=(-1)^{n-1}u_{2n-1}(-\varphi^{2})$ d'après la proposition \ref{a13}, $u_{2n-1}(-\varphi^{2})=1$ et $a^{2n}=(-1)^{n-1}$.\\
4) La preuve est la même que celle du 3).
\end{proof}
\begin{notation}\label{a17}
Soit $K$ un corps commutatif. Si $P(X)\in K[X]$, on note $\theta (P(X))$ le produit des racines de $P(X)$.
\end{notation}
Il est clair que si $P(X)$ et $Q(X)$ sont deux éléments de $K[X]$, on a $\theta (P(X)Q(X))=\theta (P(X))\theta (Q(X))$.\\
On applique ce qui précède à la famille des polynômes $u_{n}(X)$, $n \in \mathbb{N}$.\\
On voit que $\theta(u_{2n+1}(X))=1$ et $\theta(u_{2n}(X))=n$. On a la décomposition en produit de facteurs irréductibles de $u_{n}(X)$ dans $\mathbb{Z}[X]$:
\[
u_{n}(X)=\prod_{d|n,d>0}v_{d}(X)
\]
où $v_{1}(X)=v_{2}(X)=1$ et chaque $v_{n}(X)$ est un polynôme unitaire.\\
Le but de la proposition suivante est de calculer $\theta (v_{n}(X))$ pour tout $n$.
\begin{proposition}\hfill\label{a18}
\begin{enumerate}
\item Si $n=2p^{m}$ avec $p$ premier et $m\geqslant 1$, on a $\theta(v_{n}(X))=p$.
\item Si $n$ n'est pas de la forme précédente, on a $\theta(v_{n}(X))=1$.
\end{enumerate}

\end{proposition}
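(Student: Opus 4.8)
The plan is to translate the factorisation $u_n=\prod_{d\mid n}v_d$ into a multiplicative identity for $\theta$, invert it by Möbius inversion, and recognise the outcome as a von Mangoldt–type quantity. First, since $\theta(PQ)=\theta(P)\theta(Q)$ and $u_n(X)=\prod_{d\mid n}v_d(X)$ with $v_1=v_2=1$ (so $\theta(v_1)=\theta(v_2)=1$), one has $\theta(u_n)=\prod_{d\mid n}\theta(v_d)$ for every $n\geqslant1$; combined with the already established values $\theta(u_{2m+1})=1$ and $\theta(u_{2m})=m$, this means $\theta(u_d)=1$ for $d$ odd and $\theta(u_d)=d/2$ for $d$ even, all of which are positive integers.

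Next, multiplicative Möbius inversion gives $\theta(v_n)=\prod_{d\mid n}\theta(u_d)^{\mu(n/d)}$. Passing to logarithms, only the even divisors of $n$ contribute, so if $n$ is odd the sum is empty and $\theta(v_n)=1$ (consistent with the statement, since an odd integer is never of the form $2p^{m}$), while if $n$ is even we write $d=2e$ with $e\mid m$, $m:=n/2$; then $n/d=m/e$ and $\log(2e)-\log2=\log e$, so the $\log2$ terms cancel and
\[
\log\theta(v_n)=\sum_{e\mid m}\mu(m/e)\log e .
\]

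Finally, the right-hand side is the von Mangoldt function $\Lambda(m)$, equal to $\log p$ when $m=p^{k}$ is a prime power ($k\geqslant1$) and to $0$ otherwise — immediate on factoring $m$ and noting that only squarefree quotients $m/e$ contribute. Exponentiating, $\theta(v_n)=p$ exactly when $n=2m$ with $m=p^{k}$, i.e. when $n=2p^{m}$ with $p$ prime and $m\geqslant1$ (including $p=2$, where $n$ is a $2$-power $\geqslant4$), and $\theta(v_n)=1$ in all other cases. No step is genuinely hard; the only care needed is the even/odd divisor bookkeeping and checking that the small indices $n=1,2$ and the pure powers of $2$ land on the correct side of the dichotomy. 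One could also bypass Möbius inversion by strong induction on $n$: writing $n=2^{a}t$ with $t$ odd and grouping divisors as $2^{b}s$ ($s\mid t$), the identity $\theta(u_n)=\prod_{d\mid n}\theta(v_d)$ forces $\prod_{s\mid t}\theta(v_{2s})=t$ and $\prod_{s\mid t}\theta(v_{2^{a}s})=2$ for $a\geqslant2$, whence the claimed values by an easy induction on $t$.
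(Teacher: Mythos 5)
Your proof is correct, and it takes a genuinely different route from the paper's. Both arguments start from the same two facts: the multiplicativity $\theta(PQ)=\theta(P)\theta(Q)$ together with $u_n=\prod_{d\mid n}v_d$, and the explicit values $\theta(u_{2m+1})=1$, $\theta(u_{2m})=m$. The paper then proceeds by hand: an induction on $m$ to get $\theta(v_{2p^m})=p$ (treating $p=2$ and $p>2$ separately), followed by an accounting argument for general $n=2^{\beta}p_1^{\alpha_1}\cdots p_k^{\alpha_k}$ showing that the divisors $2^{\gamma}$ and $2p_i^{\beta_i}$ already contribute all of $\theta(u_n)=2^{\beta-1}p_1^{\alpha_1}\cdots p_k^{\alpha_k}$, which forces every remaining $\theta(v_d)$ to equal $1$ (using that each $\theta(v_d)$ is a positive integer). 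You instead invert the relation once and for all by multiplicative M\"obius inversion, reduce to $\log\theta(v_{2m})=\sum_{e\mid m}\mu(m/e)\log e=\Lambda(m)$, and read off the dichotomy from the von Mangoldt function; the only bookkeeping is that the even divisors of $2m$ are exactly the $2e$ with $e\mid m$, which you handle correctly, and the edge cases $n$ odd, $n=2$, and $n$ a power of $2$ all land where they should. Your version is shorter and identifies the structural reason for the answer (it \emph{is} $\exp\Lambda(n/2)$), at the cost of invoking M\"obius inversion and the identity $\log=\mathbf{1}*\Lambda$; the paper's version is more elementary and self-contained, and as a by-product makes explicit which divisors carry the nontrivial contributions. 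Your closing inductive sketch is essentially the paper's own argument, so the two approaches are reconciled there.
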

\begin{proof}
Nous pouvons remarquer que comme $v_{n}(X)$ est un polynôme à coefficients entiers et comme toutes les racines de $v_{n}(X)$ sont strictement positives, on a $\theta (v_{n}(X))\in \mathbb{N}-\{0\}$.\\
D'après la remarque initiale, si $n$ est impair, on a $\theta(v_{n}(X))=1$.\\
1) Nous montrons par récurrence sur $m$, que si $n=2p^{m}$ avec $p$ premier, alors $\theta(v_{n}(X))=p$.\\
supposons d'abord que $m=1$.\\
Si $p=2$, $u_{4}(X)=X-2=v_{4}(X)$ et $\theta(v_{4}(X))=2$.\\
Si $p>2$, alors $u_{2p}(X)=v_{1}(X)v_{2}(X)v_{p}(X)v_{2p}(X)$. D'après la remarque initiale, nous savons que $\theta(u_{2p}(X))=p$ et que $\theta(v_{1}(X))=\theta(v_{2}(X))=\theta(v_{p}(X))=1$, donc $\theta(v_{2p}(X))=p$.\\
Supposons maintenant $m>1$ et le résultat vrai pour tout entier $m'$ tel que $1\leqslant m'<m$.\\
Si $p=2$, 
\[
u_{2^{m+1}}(X)=\prod_{i=0}^{m+1}v_{2^{i}}(X)=(\prod_{i=0}^{m}v_{2^{i}}(X))v_{2^{m+1}}(X)=u_{2^{m}}(X)v_{2^{m+1}}(X).
\]
Nous avons $\theta (u_{2^{m}}(X))=2^{m-1}$ et $\theta (u_{2^{m+1}}(X))=2^{m}$, donc $\theta (v_{2^{m+1}}(X))=2$.\\
Si $p>2$, 
\[
u_{2p^{m}}(X)=u_{p^{m}}(X)\prod_{i=0}^{m}v_{2p^{i}}(X)
\]
d'où:
\[
\theta (u_{2p^{m}})(X)=p^{m}=\theta(u_{p^{m}}(X))(\prod_{i=0}^{m-1}\theta(v_{2p^{i}}(X)))\theta(v_{2p^{m}}(X)).
\]
Nous avons $\theta(u_{p^{m}}(X))=1$ et, par hypothèse de récurrence, $\theta(v_{2p^{i}}(X))=p$ pour $1\leqslant i\leqslant m-1$, d'où $\theta(v_{2p^{m}}(X))=p$.\\
2) Supposons que $n=2^{\beta}p_{1}^{\alpha_{1}}\ldots p_{k}^{\alpha_{k}}$, les $p_{i}$ étant des nombres premiers impairs, distincts deux à deux, $k\geqslant1$, $\beta\geqslant1$ ou $\beta\geqslant2$ si $k=1$ et $\alpha_{i}\geqslant 1$ pour tout $i$.\\
Parmi les diviseurs de $n$, il y a tous les $2^{\gamma}$, $1\leqslant \gamma \leqslant \beta$ et nous avons vu que $\theta(v_{2^{\gamma}}(X))=2$ ($2\leqslant \gamma \leqslant \beta$) donc, dans le produit $\prod_{d|n,d>1}\theta(v_{d}(X))$, le produit $\prod_{1\leqslant \gamma \leqslant \beta}\theta(v_{2^{\gamma}}(X))$ contribue $2^{\beta-1}$.\\
Parmi les diviseurs de $n$, il y a aussi tous les $2p_{i}^{\beta_{i}}$ ($1 \leqslant i \leqslant k, \, 1 \leqslant \beta_{i} \leqslant \alpha_{i}$) et pour chacun de ces diviseurs, on a $\theta(v_{2p_{i}^{\beta_{i}}}(X))=p_{i}$, donc dans $\theta (u_{n}(X))$, ils contribuent $\prod_{i=1}^{k}p_{i}^{\alpha_{i}}$.\\
En combinant ces résultats, nous voyons que tous ces diviseurs donnent une contribution de $2^{\beta-1}p_{1}^{\alpha_{1}}\ldots p_{k}^{\alpha_{k}}=\theta(u_{n}(X))$. Il en résulte que pour tous les autres facteurs $v_{d}(X)$, on a $\theta(v_{d}(X))=1$. En particulier $\theta(v_{n}(X))=1$.
\end{proof}Nous en déduisons le résultat suivant:
\begin{corollary}\label{a19}
Soit $\gamma$ une racine de $v_{r}(X)$. Alors:\\
\begin{enumerate}
\item \begin{enumerate}
\item Si $r$ est impair, $\gamma$ est inversible dans $K_{0}$ et l'on a $\gamma u_{r-1}^{2}(\gamma)=1$.
\item Si $r$ est pair,\begin{itemize}
\item si $r=2r_{1}$, avec $r_{1}$ puissance d'un nombre premier $p$, alors $\gamma$ n'est pas inversible et $N_{K_{0}/ \mathbb{Q}}(\gamma)=p$: il existe un polynôme unitaire $P(X) \in \mathbb{Z}[X]$ tel que $\gamma P(\gamma)=p$;
\item si $r_{1}$ n'est pas une  puissance d'un nombre premier, alors $\gamma$ est inversible et il existe un polynôme unitaire $P(X) \in \mathbb{Z}[X]$ tel que $\gamma P(\gamma)=1$.
\end{itemize}

\end{enumerate}
\item $4-\gamma$ n'est pas inversible si et seulement si $r$ est une puissance d'un nombre premier. Si $r=p^{n}$ avec $p$ premier, il existe un polynôme unitaire $P(X) \in \mathbb{Z}[X]$ tel que $(4-\gamma )P(\gamma)=p$.\\
Si $r$ n'est pas une puissance d'un nombre premier, il existe un polynôme unitaire $P(X) \in \mathbb{Z}[X]$ tel que $(4-\gamma )P(\gamma)=1$.
\end{enumerate}

\end{corollary}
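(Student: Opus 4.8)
Je commencerais par remarquer que $v_{r}$, étant l'un des facteurs irréductibles unitaires à coefficients entiers de $u_{r}$ (proposition \ref{a3}), est le polynôme minimal de $\gamma$ sur $\mathbb{Q}$; ainsi $\gamma$ est un entier algébrique, les conjugués de $\gamma$ sont exactement les racines de $v_{r}$, et donc $N_{K_{0}/\mathbb{Q}}(\gamma)=\theta(v_{r})$ au sens de la notation \ref{a17}. Comme toutes les racines de tout $v_{n}$ valent $4\cos^{2}\frac{k\pi}{n}>0$, on a $\theta(v_{n})>0$; l'idée centrale est que l'inversibilité d'un entier algébrique $x\in K_{0}$ équivaut à $N_{K_{0}/\mathbb{Q}}(x)=1$, et que dans ce cas la relation de dépendance algébrique de $x$ fournit même un polynôme unitaire $P\in\mathbb{Z}[X]$ avec $xP(x)=1$. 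On appliquera ceci à $x=\gamma$ et à $x=4-\gamma$, les deux normes se calculant via la proposition \ref{a18}.

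Pour le point 1(a), lorsque $r$ est impair, plutôt que de passer par la norme je partirais directement de la formule (24) prise en $2n=r-1$: puisque $v_{r}\mid u_{r}$, on a $u_{r}(\gamma)=0$, d'où $\gamma u_{r-1}^{2}(\gamma)=1$ (c'est l'identité déjà rencontrée dans la démonstration de la proposition \ref{a15}), ce qui donne l'inversibilité de $\gamma$ avec un inverse explicite dans $\mathbb{Z}[\gamma]$. Pour le point 1(b), avec $r=2r_{1}$, la proposition \ref{a18} donne $N_{K_{0}/\mathbb{Q}}(\gamma)=\theta(v_{r})=p$ si $r_{1}=p^{m}$ est une puissance d'un nombre premier, et $\theta(v_{r})=1$ sinon, d'où l'alternative annoncée sur l'inversibilité. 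Pour produire le polynôme $P$, j'écrirais $v_{r}(X)=X^{d}+c_{d-1}X^{d-1}+\cdots+c_{0}$ avec $d=\deg v_{r}$, et je poserais $Q(X):=(v_{r}(X)-v_{r}(0))/X\in\mathbb{Z}[X]$, qui est unitaire de degré $d-1$; l'égalité $v_{r}(\gamma)=0$ donne $\gamma Q(\gamma)=-v_{r}(0)=(-1)^{d+1}\theta(v_{r})$, et selon la parité de $d$ on prend $P=Q$ ou $P=v_{r}-Q$, encore unitaire, la valeur de $\gamma P(\gamma)$ n'étant pas modifiée puisque $v_{r}(\gamma)=0$; on obtient ainsi $\gamma P(\gamma)=\theta(v_{r})\in\{1,p\}$.

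Pour le point 2, j'utiliserais que $4-\gamma$ est racine de $v_{r'}$ (corollaire déjà établi et remarque précédant la proposition \ref{a4}), avec $\mathbb{Q}(4-\gamma)=\mathbb{Q}(\gamma)=K_{0}$; comme $v_{r}$ et $v_{r'}$ sont séparables (facteurs des $u_{n}$ à racines simples) et que $\alpha\mapsto 4-\alpha$ est une bijection des racines de $v_{r}$ sur celles de $v_{r'}$, on a $\deg v_{r'}=d$, de sorte que $v_{r'}$ est le polynôme minimal de $4-\gamma$ et $N_{K_{0}/\mathbb{Q}}(4-\gamma)=\theta(v_{r'})$. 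Par la proposition \ref{a18}, ceci vaut $p$ lorsque $r'=2p^{m}$ et $1$ sinon; il resterait à vérifier, en discutant la classe de $r$ modulo $4$ via la définition de $n\mapsto n'$, que « $r'$ est de la forme $2p^{m}$ » équivaut à « $r$ est une puissance $p^{n}$ d'un nombre premier » (si $r$ est impair, $r'=2r$ et $r'=2p^{m}\iff r=p^{m}$; si $r\equiv 2\pmod 4$, alors $r'=r/2$ est impair donc jamais de cette forme, et $r=2(r/2)$ n'est alors jamais une puissance de premier; si $r\equiv 0\pmod 4$, alors $r'=r$ et $r=2p^{m}\iff r=2^{m+1}$). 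Enfin, comme $v_{r'}(X)=\prod_{i}(X-(4-\gamma_{i}))$ où $(\gamma_{i})$ parcourt les racines de $v_{r}$, on a $\theta(v_{r'})=\prod_{i}(4-\gamma_{i})=v_{r}(4)$; le polynôme $\widetilde{Q}(X):=(v_{r}(4)-v_{r}(X))/(4-X)\in\mathbb{Z}[X]$ est unitaire de degré $d-1$ et vérifie $(4-\gamma)\widetilde{Q}(\gamma)=v_{r}(4)-v_{r}(\gamma)=v_{r}(4)=\theta(v_{r'})\in\{1,p\}$; on prend $P=\widetilde{Q}$.

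Le point délicat sera essentiellement comptable: d'une part la traduction soigneuse, selon $r\bmod 4$, entre « $r$ est une puissance d'un nombre premier » et « $r'$ (ou $r_{1}$) est de la forme $2p^{m}$ »; d'autre part le contrôle du signe $(-1)^{d}$ de façon à garantir que le polynôme auxiliaire reste unitaire, ce que l'on règle en lui ajoutant au besoin un multiple de $v_{r}$, lequel s'annule en $\gamma$. Tout le reste découle directement de la proposition \ref{a18}, de la bijection $\alpha\mapsto 4-\alpha$ et de la factorisation $u_{n}=\prod_{d\mid n}v_{d}$.
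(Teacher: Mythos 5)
Votre démonstration est correcte et suit essentiellement la même voie que l'article : la norme $N_{K_{0}/\mathbb{Q}}(\gamma)=\theta(v_{r})$ calculée par la proposition \ref{a18}, puis la localisation de $4-\gamma$ comme racine de $v_{r'}$ (votre discussion selon $r\bmod 4$ reproduit exactement l'analyse de cas du texte). Vous explicitez en plus ce que l'article laisse implicite — l'identité $\gamma u_{r-1}^{2}(\gamma)=1$ via la formule (24), et les polynômes $P$ via $(v_{r}(X)-v_{r}(0))/X$ et $(v_{r}(4)-v_{r}(X))/(4-X)$ avec $\theta(v_{r'})=v_{r}(4)$ — ce qui est un complément utile mais ne change pas l'argument.
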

\begin{proof}
Le 1) est une conséquence immédiate de la proposition \ref{a18} et du fait que tous les conjugués de $\gamma$ sont des polynômes en $\gamma$ car l'extension $K_{0}/ \mathbb{Q}$ est cyclique.\\
2) Il existe un entier $k$ tel que $(r,k)=1$, $1 \leqslant k < \frac{r}{2}$ et $\gamma = 4\cos^{2} \frac{k\pi}{r}$. Nous avons alors:
\[
4-\gamma=4\sin^{2} \frac{k\pi}{r}=4\cos^{2} (\frac{\pi}{2}-\frac{k\pi}{r})=4\cos^{2} (\frac{r-2k}{2r})\pi.
\]
\begin{itemize}
\item Si $r$ est impair, alors $(r-2k,2r)=1$ et $4-\gamma$ est racine de $v_{2r}(X)$.
\item Si $r$ est pair, $r=2r_{1}$, $4-\gamma=4\cos^{2} (\frac{r_{1}-k}{r})\pi$, $(k,r_{1})=1$ et $k$ est impair.
\begin{itemize}
\item Si $r_{1}$ est impair, $2|(r_{1}-k)$, donc $4-\gamma$ est racine de $v_{r_{1}}(X)$.
\item Si $r_{1}$ est pair, $r_{1}-k$ est impair, donc $4-\gamma$ est racine de $v_{r}(X)$.

\end{itemize}

\end{itemize}
De ceci on déduit tous les résultats de l'énoncé.
\end{proof}
\begin{proposition}
Soient $p$, $q$, $r$ des entiers $\geqslant 3$. Soient $\alpha$ une racine de $v_{p}(X)$, $\beta$ une racine de $v_{q}(X)$ et $\gamma$ une racine de $v_{r}(X)$.

1) On suppose que $\alpha\beta=4\gamma$. Alors on a $p=q=4$, $r=3$, $\alpha=\beta=2$ et $\gamma=1$.

2) On suppose que $4-\alpha-\beta-\gamma=0$. Alors on a deux possibilités (à l'ordre près de $p$, $q$, $r$).
\begin{enumerate}
  \item \[
  p=4,q=r=3,\alpha=2, \beta=\gamma=1
  \]
  \item 
  \[
  p=q=5,r=3,\alpha=\tau,\beta=3-\tau,\gamma=1
  \]
\end{enumerate}
où $\tau=\frac{3+\sqrt{5}}{2}$ est une racine du polynôme $u_{5}(X)$.
\end{proposition}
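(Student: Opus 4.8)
For part (1) the plan is to extract everything from a single norm computation. Put $L=\mathbb{Q}(\alpha,\beta)$; since $\gamma=\alpha\beta/4$ already lies in $L$, I would apply $N:=N_{L/\mathbb{Q}}$ to the relation $\alpha\beta=4\gamma$. As $v_{p}$ is the minimal polynomial of $\alpha$ over $\mathbb{Q}$ and all its roots are $>0$, one has $N_{\mathbb{Q}(\alpha)/\mathbb{Q}}(\alpha)=\theta(v_{p})$, hence $N(\alpha)=\theta(v_{p})^{[L:\mathbb{Q}(\alpha)]}$, and likewise for $\beta$ and $\gamma$, giving
\[
\theta(v_{p})^{[L:\mathbb{Q}(\alpha)]}\,\theta(v_{q})^{[L:\mathbb{Q}(\beta)]}=4^{[L:\mathbb{Q}]}\,\theta(v_{r})^{[L:\mathbb{Q}(\gamma)]}.
\]
By Proposition \ref{a18} each $\theta(v_{n})$ is $1$ or a prime, and is even exactly when $n$ is a power of $2$. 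Comparing $2$-adic valuations, the right-hand side has valuation $\geqslant 2[L:\mathbb{Q}]$, while the left-hand side has valuation $\leqslant [L:\mathbb{Q}(\alpha)]+[L:\mathbb{Q}(\beta)]\leqslant 2[L:\mathbb{Q}]$; equality throughout forces $\mathbb{Q}(\alpha)=\mathbb{Q}(\beta)=\mathbb{Q}$ (so $p,q\in\{3,4,6\}$) together with $2\mid\theta(v_{p})$ and $2\mid\theta(v_{q})$, which leaves only $p=q=4$, i.e. $\alpha=\beta=2$. Then $\gamma=1$, and $1=4\cos^{2}\tfrac{k\pi}{r}$ with $\gcd(k,r)=1$, $1\leqslant k<r/2$ forces $r=3$. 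I expect this part to be routine.

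For part (2) I would first translate the hypothesis into a vanishing sum of roots of unity. By Proposition \ref{a3} every root of $v_{n}$ is $\delta(\zeta)=\zeta+\zeta^{-1}+2$ for a primitive $n$-th root of unity $\zeta$; writing $\alpha=\zeta_{p}+\zeta_{p}^{-1}+2$, and similarly for $\beta,\gamma$, the equation $\alpha+\beta+\gamma=4$ becomes
\[
1+1+\zeta_{p}+\zeta_{p}^{-1}+\zeta_{q}+\zeta_{q}^{-1}+\zeta_{r}+\zeta_{r}^{-1}=0,
\]
a vanishing sum of eight roots of unity, invariant under $z\mapsto\bar z$, in which none of $\zeta_{p},\zeta_{q},\zeta_{r}$ equals $\pm1$ (since $p,q,r\geqslant 3$).

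Next I would invoke the classification of vanishing sums of roots of unity (Conway--Jones): such a sum is a union of minimal vanishing sums, each of prime length, so the lengths partition $8$ as $2+2+2+2$, $2+3+3$, or $3+5$; up to rotation the minimal sums of length $2,3,5$ are the antipodal pair, the equilateral triangle $\{\eta,\eta\omega,\eta\omega^{2}\}$ ($\omega$ a primitive cube root), and the regular pentagon $\{\eta\zeta_{5}^{j}\}_{0\leqslant j\leqslant 4}$, and one checks that the few sporadic minimal relations of length $\leqslant 8$ are neither stable under conjugation nor compatible with the repeated value $1$. The partition $2+2+2+2$ is impossible, because the two terms equal to $1$ cannot lie in any antipodal pair ($-1$ is not a term and $1+1\neq 0$). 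In the partition $2+3+3$, the antipodal pair must be $\{\zeta_{j},\zeta_{j}^{-1}\}=\{\mathrm{i},-\mathrm{i}\}$ for one index, forcing it to be $4$, and the two remaining triangles are each $\{1,\omega,\bar\omega\}$, forcing the other two indices to be $3$; this gives $p=4$, $q=r=3$, $\alpha=2$, $\beta=\gamma=1$. In the partition $3+5$, the triangle must be $\{1,\omega,\bar\omega\}$ (one index equal to $3$) and the pentagon must be $\{1,\zeta_{5},\zeta_{5}^{2},\zeta_{5}^{3},\zeta_{5}^{4}\}$, whose two conjugate pairs $\{\zeta_{5},\zeta_{5}^{4}\}$ and $\{\zeta_{5}^{2},\zeta_{5}^{3}\}$ account for the other two indices, forcing both of them to be $5$ with $\{\alpha,\beta\}=\{2+2\cos\tfrac{2\pi}{5},\,2+2\cos\tfrac{4\pi}{5}\}=\{\tau,3-\tau\}$; this gives the second family. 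The mixed ways of building the triangle (e.g. $\{1,\zeta_{p},\zeta_{q}\}$ with $\zeta_{p},\zeta_{q}$ not conjugate, or $\{\zeta_{p},\zeta_{p}^{-1},\zeta_{q}\}$) either reproduce these outcomes or contradict $\zeta_{\bullet}\neq\pm1$.

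The main obstacle is the bookkeeping in part (2): confirming the partition list, ruling out the sporadic minimal relations against the conjugation symmetry and the repeated $1$, and checking that every distribution of $\{\zeta_{p},\zeta_{p}^{-1},\zeta_{q},\zeta_{q}^{-1},\zeta_{r},\zeta_{r}^{-1},1,1\}$ among the minimal pieces collapses to one of the two stated triples. A more self-contained alternative would be to apply the Galois automorphisms $\sigma_{t}$ to $\alpha+\beta+\gamma=4$ to produce, after reducing to the case where $p,q,r$ are pairwise coprime, an instance $4\cos^{2}\tfrac{\pi}{p}+4\cos^{2}\tfrac{\pi}{q}+4\cos^{2}\tfrac{\pi}{r}=4$, which is impossible once $p,q,r$ are all large; this bounds $(p,q,r)$ to a short explicit list to be checked by hand, with the coprimality reductions being the delicate point there.
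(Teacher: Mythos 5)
Your part (1) is correct and is essentially the paper's own argument: both proofs reduce $\alpha\beta=4\gamma$ to the norm data of Proposition \ref{a18} (namely that $\theta(v_{n})$ is $1$ or a prime, and equals $2$ exactly when $n$ is a power of $2$); your $2$-adic valuation bookkeeping is a cleaner way to force $\mathbb{Q}(\alpha)=\mathbb{Q}(\beta)=\mathbb{Q}$ and $p=q=4$ than the paper's appeal to irreducibility of $v_{r}$ over a $2$-power cyclotomic field, and it closes the case fully.

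Part (2) is a genuinely different route. The paper works directly with the real numbers $\alpha,\beta,\gamma$, treats $r=3$ by hand, and then asserts (without detail) that if one of $p,q,r$ lies in $\{3,4,6\}$ one falls back on the previous cases and that for $p,q,r\geqslant 5$ the sum $\alpha+\beta+\gamma$ is never an integer. Your reduction to the vanishing sum $1+1+\zeta_{p}+\zeta_{p}^{-1}+\zeta_{q}+\zeta_{q}^{-1}+\zeta_{r}+\zeta_{r}^{-1}=0$ and the Conway--Jones classification is, if anything, the more systematic argument, and the two admissible configurations do land exactly on the stated triples. However, two steps as written are wrong or insufficient. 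First, minimal vanishing sums do \emph{not} all have prime length: within weight $8$ the classification also contains the composite minimal sums of type $(R_{5}{:}R_{3})$ (weights $6,7,8$) and $(R_{7}{:}R_{3})$ (weight $8$), so the partition list $2+2+2+2$, $2+3+3$, $3+5$ must be supplemented by the decompositions $6+2$ and $8$. Second, your criterion that a sporadic piece is ``not stable under conjugation'' is not a legitimate exclusion: the multiset is conjugation-stable, but the individual pieces of a decomposition into minimal sums need not be. The exclusion that actually works is the one you mention second: every minimal vanishing sum of weight at most $8$ in the classification has pairwise distinct terms, so no single piece can absorb both copies of $1$, while neither copy of $1$ can sit in an $R_{2}$-piece (as $-1$ is absent and $1+1\neq0$); this kills the decompositions $8$ and $6+2$ outright and forces each of the two $1$'s into its own rotated $R_{3}$, $R_{5}$ or $R_{7}$ containing $1$. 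You should also justify, in the $2+3+3$ case, that the antipodal pair is $\{\zeta_{j},\zeta_{j}^{-1}\}$ rather than a mixed pair $\{\zeta_{p},\zeta_{q}\}$ with $\zeta_{q}=-\zeta_{p}$: in the latter case $\zeta_{p}^{-1}$ and $\zeta_{q}^{-1}$ would have to lie in triangles $\{1,\omega,\omega^{2}\}$, forcing $p=q=3$ and contradicting $-\omega\notin\{\omega,\omega^{2}\}$. With these repairs the argument is complete; as it stands, the prime-length claim and the conjugation criterion are genuine gaps.
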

\begin{proof}
1) Comme $\alpha\beta=4\gamma$, on voit que $2$ divise $N_{K_{0}/\mathbb{Q}}(\alpha)$ et $N_{K_{0}/\mathbb{Q}}(\beta)$ donc $N_{K_{0}/\mathbb{Q}}(\alpha)=N_{K_{0}/\mathbb{Q}}(\beta)=2$ et $p$ et $q$ sont des puissances de $2$ et nous pouvons supposer que $p \leqslant q$; on voit aussi que, dans ces conditions, $\gamma$ est inversible, donc $r$ n'est pas une puissance de $2$. $\alpha$ et $\beta$ sont des éléments d'un sous-corps $K$ réel d'un corps cyclotomique $\mathbb{Q}(\zeta)$ où $\zeta$ est une racine primitive q-ième de l'unité. On a $\gamma \in K$ par hypothèse, mais $v_{r}(X)$ reste irréductible dans $K$, donc $v_{r}(X)$ est de degré $1$: $v_{r}(X)=X-1$, $r=3$ et $\gamma=1$. On a alors $\alpha=\beta=2$.\\
2) Supposons d'abord que $r=3$. Alors $\gamma=1$ et on a la relation: $\alpha+\beta=3$. On peut avoir $\alpha=2$ et $\beta=1$ donc $p=4$ et $q=3$, ou bien $p=q=5$, $\alpha=\tau$, $\beta=3-\tau$. Si l'un de $p$,$q$, $r$ est dans $\{3,4,6\}$ on est dans l'un des cas précédents, nous pouvons donc supposer que $p$,$q$,et $r$ sont $\geqslant 5$ mais alors aucune somme $\alpha+\beta+\gamma$ n'est entière (égale à $4$).
\end{proof}
\section{Formules en rang $3$}
Dans cette section on suppose que $(W(p,q,r),S)$ est un système de Coxeter de rang $3$. On pose $S:=\{s_{1},s_{2},s_{3}\}$. Soit $R \to GL(M)$ une représentation de réflexion. Le but de cette section est, entre autres, de donner des formules pour les éléments

$(s_{i}s_{j})^{k}, s_{i}(s_{i}s_{j})^{k}, t_{k}:=s_{1}(s_{2}s_{3})^{k}, x_{k}:=s_{2}(s_{3}s_{1})^{k}, y_{k}:=s_{3}(s_{1}s_{2})^{k},$
qui nous seront utiles dans les exemples étudiés.\\
\emph{Il faut remarquer que les formules obtenues sont valables sans hypothèses sur $\alpha$, $\beta$ et $\gamma$.}
\begin{proposition}\label{b12}
On a les formules: $\forall k \in \mathbb{Z}$,\\
1) 
\begin{align}
(s_{1}s_{2})^{2k} &=  \begin{pmatrix}
u_{4k+1}(\alpha) & -\alpha u_{4k}(\alpha) & \beta \alpha u_{2k}^{2}(\alpha)+\alpha lu_{2k+1}(\alpha)u_{2k}(\alpha)\\
u_{4k}(\alpha) & -u_{4k-1}(\alpha) & \alpha lu_{2k}^{2}(\alpha)+\beta u_{2k}(\alpha)u_{2k-1}(\alpha)\\
0 & 0 & 1
\end{pmatrix}\\
(s_{1}s_{2})^{2k+1} &=  \begin{pmatrix}
u_{4k+3}(\alpha) & -\alpha u_{4k+2}(\alpha) & \beta  u_{2k+1}^{2}(\alpha)+\alpha lu_{2k+2}(\alpha)u_{2k+1}(\alpha)\\
u_{4k+2}(\alpha) & -u_{4k+1}(\alpha) &  lu_{2k+1}^{2}(\alpha)+\beta u_{2k+1}(\alpha)u_{2k}(\alpha)\\
0 & 0 & 1
\end{pmatrix}.
\end{align}\\
En particulier si $p$ est pair: $p=2p_{1}$, on a:
\begin{align}
(s_{1}s_{2})^{p_{1}} &= 
\begin{pmatrix}
-1 & 0 & \frac{2(2\beta+\alpha l)}{4-\alpha}\\
0 & -1 & \frac{2(\beta+2l)}{4-\alpha}\\
0 & 0 &1
\end{pmatrix}.
\end{align}
2)
\begin{align}
(s_{1}s_{3})^{2k} &= \begin{pmatrix}
u_{4k+1}(\beta) & \alpha\beta u_{2k}^{2}(\beta)+\beta mu_{2k+1}(\beta)u_{2k}(\beta) & -\beta u_{4k}(\beta)\\
0 & 1 & 0\\
u_{4k}(\beta) & \beta m u_{2k}^{2}(\beta)+\alpha u_{2k}(\beta)u_{2k-1}(\beta) & -u_{4k-1}(\beta)
\end{pmatrix}\\
(s_{1}s_{3})^{2k+1} &=  \begin{pmatrix}
u_{4k+3}(\beta) & \alpha u_{2k+1}^{2}(\beta)+\beta mu_{2k+2}(\beta)u_{2k+1}(\beta) & -\beta u_{4k+2}(\beta)\\
0 & 1 & 0\\
u_{4k+2}(\beta) & m u_{2k+1}^{2}(\beta)+\alpha u_{2k+1}(\beta)u_{2k}(\beta) & -u_{4k+1}(\beta)
\end{pmatrix}.
\end{align}\\
En particulier si $q$ est pair: $q=2q_{1}$, on a:
\begin{align}
(s_{1}s_{3})^{q_{1}} &= 
\begin{pmatrix}
-1 &  \frac{2(2\alpha+\beta m)}{4-\beta} & 0 \\
0 & 1 & 0 \\
0 & \frac{2(\alpha+2m)}{4-\beta} & -1
\end{pmatrix}.
\end{align}
3) \begin{align}
(s_{2}s_{3})^{2k}  &=  \begin{pmatrix}
1 & 0 & 0\\
\gamma u_{2k}^{2}(\gamma)+lu_{2k+1}(\gamma)u_{2k}(\gamma) & u_{4k+1}(\gamma) & -lu_{4k}(\gamma)\\
\gamma u_{2k}^{2}(\gamma)+mu_{2k}(\gamma)u_{2k-1 }(\gamma) & mu_{4k}(\gamma) & -u_{4k-1}(\gamma)
\end{pmatrix}\\
(s_{2}s_{3})^{2k+1} &=  \begin{pmatrix}
1 & 0 & 0\\
 u_{2k+1}^{2}(\gamma)+lu_{2k+2}(\gamma)u_{2k+1}(\gamma) & u_{4k+3}(\gamma) & -lu_{4k+2}(\gamma)\\
 u_{2k+1}^{2}(\gamma)+mu_{2k+1}(\gamma)u_{2k }(\gamma) & mu_{4k+2}(\gamma) & -u_{4k+1}(\gamma)
\end{pmatrix}.
\end{align}
En particulier si $r$ est pair: $r=2r_{1}$, on a:
\begin{align}
(s_{2}s_{3})^{r_{1}} &= 
\begin{pmatrix}
1 & 0 & 0 \\
 \frac{2(l+2)}{4-\gamma}& -1 & 0 \\
 \frac{2(m+2)}{4-\gamma} &0 & -1
\end{pmatrix}.
\end{align}
\end{proposition}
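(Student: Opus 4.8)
The plan is to reduce everything to a computation with the two-by-two block of $s_{1}s_{2}$ together with the recurrences $(A1)$, $(A2)$, $(AR)$ and the identities of \S1.1. In the chosen basis $(e_{1},e_{2},e_{3})$ of $M$ each $s_{i}$ is a reflection, so its matrix differs from the identity only in the $i$-th row; hence in $s_{1}s_{2}$ the third row is $(0,0,1)$, the plane $\langle e_{1},e_{2}\rangle$ is stable, and every power $(s_{1}s_{2})^{n}$ has the shape displayed in 1). Similarly $(s_{1}s_{3})^{n}$ has second row $(0,1,0)$ and $(s_{2}s_{3})^{n}$ has first row $(1,0,0)$; parts 2) and 3) are obtained by exactly the same computation after relabelling the basis vectors and the parameters $\alpha ,\beta ,\gamma ,l,m$, so I shall only treat 1).

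On $\langle e_{1},e_{2}\rangle$ the operator $B:=s_{1}s_{2}$ has determinant $1$ and trace $\alpha -2$, hence $B^{2}=(\alpha -2)B-I$; since $B^{1}=\left(\begin{smallmatrix}u_{3}(\alpha)&-\alpha u_{2}(\alpha)\\ u_{2}(\alpha)&-u_{1}(\alpha)\end{smallmatrix}\right)$ (a direct check from the matrices of $s_{1},s_{2}$), an easy induction using $(A1)$ and $(A2)$ to rewrite the entries gives
\[
B^{n}=\begin{pmatrix}u_{2n+1}(\alpha)&-\alpha u_{2n}(\alpha)\\ u_{2n}(\alpha)&-u_{2n-1}(\alpha)\end{pmatrix}\qquad(n\in\mathbb{Z}),
\]
and taking $n=2k$ and $n=2k+1$ produces the blocks of 1). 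For the last column write $(s_{1}s_{2})^{n}e_{3}=c_{1}(n)e_{1}+c_{2}(n)e_{2}+e_{3}$; applying $s_{1}s_{2}$ once more yields $\binom{c_{1}(n+1)}{c_{2}(n+1)}=B\binom{c_{1}(n)}{c_{2}(n)}+\binom{c_{1}(1)}{c_{2}(1)}$ with $\binom{c_{1}(0)}{c_{2}(0)}=\binom{0}{0}$, where $\binom{c_{1}(1)}{c_{2}(1)}=(\beta +\alpha l,\ l)$ is read off from the matrices of $s_{1},s_{2}$ (it is the last column of the second matrix of 1) at $k=0$, and is linear in $\beta$ and $l$). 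I would then prove, by induction on $n$ starting from the direct computation of $(s_{1}s_{2})^{0}$ and $(s_{1}s_{2})^{1}$, that $c_{1}(n),c_{2}(n)$ equal the entries claimed in 1); each step from $n$ to $n+1$ is a one-line substitution using $(A1)$, $(A2)$, the relations $u_{2m+1}^{2}(\alpha)-1=\alpha u_{2m}(\alpha)u_{2m+2}(\alpha)$ and $\alpha u_{2m}^{2}(\alpha)-1=u_{2m-1}(\alpha)u_{2m+1}(\alpha)$ of Proposition \ref{a9}, and, as convenient, the product identities of Corollary \ref{a5} and Proposition \ref{a7}. The computation splits according to the parity of $n$, which is exactly why the coefficient of $\beta$ is $\alpha u_{2k}^{2}(\alpha)$ in one matrix of 1) and $u_{2k+1}^{2}(\alpha)$ in the other; negative $k$ follows from $(s_{1}s_{2})^{-n}=((s_{1}s_{2})^{n})^{-1}$ and $u_{-m}(X)=-u_{m}(X)$, all the identities used being valid for every $n\in\mathbb{Z}$.

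For the special value, when $p=2p_{1}$ one has $\alpha =4\cos ^{2}(\pi /p)$, so $u_{2p_{1}}(\alpha)=u_{p}(\alpha)=0$ and $\alpha$ is a root of $v_{2p_{1}}(X)$. Setting $n=p_{1}$ in the formulas just proved: the off-diagonal entries of the block carry the factor $u_{2p_{1}}(\alpha)$ and vanish, while $(A2)$ and Proposition \ref{a13} give $u_{2p_{1}-1}(\alpha)=1$ and $u_{2p_{1}+1}(\alpha)=-1$, so the block reduces to $-I$; and Proposition \ref{a14} — which supplies $u_{p_{1}+1}(\alpha)=u_{p_{1}-1}(\alpha)$, the relation $(4-\alpha)u_{p_{1}}^{2}(\alpha)=4$ (respectively $\alpha(4-\alpha)u_{p_{1}}^{2}(\alpha)=4$) according to the parity of $p_{1}$, and $(4-\alpha)u_{p_{1}}(\alpha)u_{p_{1}-1}(\alpha)=2$ — turns the last column into $\bigl(\tfrac{2(2\beta +\alpha l)}{4-\alpha},\ \tfrac{2(\beta +2l)}{4-\alpha},\ 1\bigr)$, which is the claimed matrix; the analogous substitutions give those for $(s_{1}s_{3})^{q_{1}}$ and $(s_{2}s_{3})^{r_{1}}$.

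The only genuine difficulty is the induction on $n$ for the last column: one must keep the two parities apart and, at each step, pick precisely the identity among those of \S1.1 that makes the $\beta$-part and the $l$-part of the last column separate exactly as stated. Everything else — the two-by-two block, the special values, the cases $n<0$, and parts 2) and 3) — is formal once that is in place.
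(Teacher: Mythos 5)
Your proposal is correct and follows essentially the same route as the paper: induction on the exponent, with the third row/column structure isolating a $2\times 2$ block governed by the recurrence $(AR)$ and a last column satisfying an affine recurrence whose verification splits by parity and uses the identities of \S 1.1 (notably (21)--(24)), the case $k<0$ being handled via $u_{-m}(X)=-u_{m}(X)$, and the special values via Propositions \ref{a13} and \ref{a14}. The only cosmetic differences are that you derive the block from the Cayley--Hamilton relation where the paper cites \cite{Z}, and you iterate by left multiplication ($c(n+1)=Bc(n)+c(1)$) where the paper multiplies on the right; both are equivalent.
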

\begin{proof}
Nous ne démontrons que les formules du 1), les autres se montrant de manière analogue.\\
Nous supposons d'abord $k \geqslant 0$ et nous procédons par récurrence sur $k$.\\
Nous avons:\\ $s_{1}=
\begin{pmatrix}
-1 & \alpha & \beta\\
0 & 1 & 0\\
0 & 0 & 1
\end{pmatrix}$, $s_{2}=
\begin{pmatrix}
1 & 0 & 0\\
1 & -1 & l\\
0 & 0 & 1
\end{pmatrix}$ donc $s_{1}s_{2}=
\begin{pmatrix}
\alpha-1 & -\alpha & \alpha l+\beta\\
1 & -1 & l\\
0 & 0 & 1
\end{pmatrix}$.
Lorsque $k=0$, $u_{4k+1}(\alpha)=u_{1}(\alpha)=1$, $u_{4k}(\alpha)=u_{0}(\alpha)=0$, $u_{4k-1}(\alpha)=-u_{1}(\alpha)=-1$, $u_{2k}(\alpha)=0$, donc le résultat est vrai pour $k=0$.\\
Comme $u_{3}(\alpha)=\alpha-1$ et $u_{2}(\alpha)=1$, le résultat est aussi vrai pour $k=1$.\\
Nous posons $(s_{1}s_{2})^{k}=(a_{ij}(k))_{1\leqslant i,j\leqslant3}$.\\
Il est clair que pour tout $k$, $a_{31}(k)=a_{32}(k)=0$ et $a_{33}(k)=1$.\\
Les calculs pour $a_{11}$, $a_{21}$, $a_{12}$, et $a_{22}$ ont déjà été faits dans \cite{Z}.\\
Soit $k\geqslant 1$. Supposons que $(s_{1}s_{2})^{2k}$ a la forme indiquée.\\
Avons nous:
\[
a_{13}(2k+1)=(l\alpha+\beta)a_{11}(2k)+la_{12}(2k)+a_{13}(2k)?
\]
On montre que 
\begin{multline*}
\beta u_{2k}^{2}(\alpha) +l\alpha u_{2k+2}(\alpha)u_{2k+1}(\alpha)=\\
(l\alpha+\beta)u_{4k+1}(\alpha)-l\alpha u_{4k}(\alpha)+\beta u_{2k+1}^{2}(\alpha)+l\alpha u_{2k+1}(\alpha)u_{2k}(\alpha)
\end{multline*}
ce qui est équivalent à 
\begin{multline*}
l\alpha[u_{4k+1}(\alpha)-u_{4k}(\alpha)-u_{2k+2}(\alpha)u_{2k+1}(\alpha)+u_{2k+1}(\alpha)u_{2k}(\alpha)]\\
+\beta[u_{4k+1}(\alpha)+\alpha u_{2k}^{2}(\alpha)-u_{2k+1}^{2}(\alpha)]=0.
\end{multline*}
Dans cette égalité, le coefficient de $\beta$ est $0$ par (21).
Nous avons:
\[
u_{4k+1}(\alpha)-u_{4k}(\alpha)-u_{2k+1}(\alpha)(u_{2k+2}(\alpha)-u_{2k}(\alpha))=u_{4k+1}(\alpha)-u_{4k}(\alpha)-u_{4k+2}(\alpha)=0
\]
en utilisant (9) puis (A1). Il en résulte que $a_{13}(2k+1)$ a bien la forme indiquée. Nous procédons de même pour $a_{23}$. Enfin c'est la même méthode pour passer de $(s_{1}s_{2})^{2k-1}$ à $(s_{1}s_{2})^{2k}$ en utilisant (9), (A2)et (22).\\
Les formules pour $k < 0$ s'obtiennent à partir des formules précédentes en changeant $k$ en $-k$ comme on le vérifie en utilisant (21), (22), (23) et (24).

\end{proof}
Du résultat précédent, nous déduisons les matrices des réflexions contenues dans les sous-groupes paraboliques $<s_{i},s_{j}>$ ($1\leqslant i < j \leqslant 3$).
\begin{proposition}\label{b13}
On a les formules: $\forall k \in \mathbb{Z}$,\\
1)\begin{align*}
s_{1}(s_{1}s_{2})^{2k} &=  \begin{pmatrix} 
u_{4k-1}(\alpha) & -\alpha u_{4k-2}(\alpha) & u_{2k-1}(\alpha)(\alpha lu_{2k}(\alpha)+\beta u_{2k-1}(\alpha))\\
u_{4k}(\alpha) & -u_{4k-1}(\alpha) & u_{2k}(\alpha)(\alpha lu_{2k}(\alpha)+\beta u_{2k-1}(\alpha))\\
0 & 0 & 1
\end{pmatrix}\\
s_{1}(s_{1}s_{2})^{2k+1} &=  \begin{pmatrix} 
u_{4k+1}(\alpha) &  -\alpha u_{4k}(\alpha) & \alpha u_{2k}(\alpha)(\beta u_{2k}(\alpha)+lu_{2k+1}(\alpha))\\
u_{4k+2}(\alpha) & -u_{4k+1}(\alpha) & u_{2k+1}(\alpha)(\beta u_{2k}(\alpha)+lu_{2k+1}(\alpha))\\
0 & 0 & 1
\end{pmatrix}\\
H(s_{1}(s_{1}s_{2})^{2k}) &=  <b_{3},\alpha(u_{2k}(\alpha)-u_{2k-2}(\alpha))a_{1}+(u_{2k+1}(\alpha)-u_{2k-1}(\alpha))a_{2}>\\
v_{s_{1}(s_{1}s_{2})^{2k}}^{-} &=  u_{2k-1}(\alpha)a_{1}+u_{2k}(\alpha)a_{2}\\
H(s_{1}(s_{1}s_{2})^{2k+1}) &=  <b_{3},(u_{2k+1}(\alpha)-u_{2k-1}(\alpha))a_{1}+(u_{2k+2}(\alpha)-u_{2k}(\alpha))a_{2}>\\
v_{s_{1}(s_{1}s_{2})^{2k+1}}^{-} &=  \alpha u_{2k}(\alpha)a_{1}+u_{2k+1}(\alpha)a_{2}.
\end{align*}
2)\begin{align*}
s_{1}(s_{1}s_{3})^{2k} &= 
\begin{pmatrix}
u_{4k-1}(\beta) & u_{2k-1}(\beta)(\alpha u_{2k-1}(\beta)+\beta mu_{2k}(\beta)) & -\beta u_{4k-2}(\beta)\\
0 & 1 & 0\\
u_{4k}(\beta) & u_{2k}(\beta)(\alpha u_{2k-1}(\beta)+\beta mu_{2k}(\beta)) & -u_{4k-1}(\beta)
\end{pmatrix}\\
s_{1}(s_{1}s_{3})^{2k+1} &= 
\begin{pmatrix}
u_{4k+1}(\beta) & \beta u_{2k}(\beta)(mu_{2k+1}(\beta)+\alpha u_{2k}(\beta)) & -\beta u_{4k}(\beta) \\
0 & 1 & 0 \\
u_{4k+2}(\beta) & u_{2k+1}(\beta)(mu_{2k+1}(\beta)+\alpha u_{2k}(\beta)) & -u_{4k+1}(\beta)
\end{pmatrix}\\
H(s_{1}(s_{1}s_{3})^{2k}) &=  <b_{2}, \beta(u_{2k}(\beta)-u_{2k-2}(\beta))a_{1}+(u_{2k+1}(\beta)-u_{2k-1}(\beta))a_{3}>\\
v_{s_{1}(s_{1}s_{3})^{2k}}^{-} &=  u_{2k-1}(\beta)a_{1}+u_{2k}(\beta)a_{3}\\
H(s_{1}(s_{1}s_{3})^{2k+1}) &=  <b_{2},(u_{2k+1}(\beta)-u_{2k-1}(\beta))a_{1}+(u_{2k}(\beta)-u_{2k-2}(\beta))a_{2}>\\
v_{s_{1}(s_{1}s_{3})^{2k+1}}^{-} &=  \beta u_{2k}(\beta)a_{1}+u_{2k+1}(\beta)a_{3}.
\end{align*}
3)\begin{align*}
s_{2}(s_{2}s_{3})^{2k} &= 
\begin{pmatrix}
1 & 0 & 0\\
u_{2k-1}(\gamma)(u_{2k-1}(\gamma)+lu_{2k}(\gamma)) & u_{4k-1}(\gamma) & -lu_{4k-2}(\gamma)\\
mu_{2k}(\gamma)(u_{2k-1}(\gamma)+lu_{2k}(\gamma)) & mu_{4k}(\gamma) & -u_{4k-1}(\gamma)
\end{pmatrix}\\
s_{2}(s_{2}s_{3})^{2k+1} &= 
\begin{pmatrix}
1 & 0 & 0\\
lu_{2k}(\gamma)(u_{2k+1}(\gamma)+mu_{2k}(\gamma)) & u_{4k+1}(\gamma) & -lu_{4k}(\gamma)\\
u_{2k+1}(\gamma)(u_{2k+1}(\gamma)+mu_{2k}(\gamma)) & mu_{4k+2}(\gamma) & -u_{4k+1}(\gamma) 
\end{pmatrix}\\
H(s_{2}(s_{2}s_{3})^{2k}) &= <b_{1},l(u_{2k}(\gamma)-u_{2k-2}(\gamma))a_{2}+(u_{2k+1}(\gamma)-u_{2k-1}(\gamma))a_{3}>\\
v_{s_{2}(s_{2}s_{3})^{2k}}^{-} &=  u_{2k-1}(\gamma)a_{2}+mu_{2k}(\gamma)a_{3}\\
H(s_{2}(s_{2}s_{3})^{2k+1}) &=  <b_{1},(u_{2k+1}(\gamma)-u_{2k-1}(\gamma))a_{2}+m(u_{2k+2}(\gamma)-u_{2k}(\gamma))a_{3}>\\
v_{s_{2}(s_{2}s_{3})^{2k+1}}^{-} &=  lu_{2k}(\gamma)a_{2}+u_{2k+1}(\gamma)a_{3}.
\end{align*}

\end{proposition}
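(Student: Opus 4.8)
Le plan est d'utiliser les égalités $s_1(s_1s_2)^k=s_1\cdot(s_1s_2)^k$, $s_1(s_1s_3)^k=s_1\cdot(s_1s_3)^k$ et $s_2(s_2s_3)^k=s_2\cdot(s_2s_3)^k$ : il suffira de multiplier à gauche, par la matrice de $s_1$ (resp.\ de $s_2$) rappelée dans la preuve de la proposition \ref{b12}, les matrices que cette proposition fournit. On ne traitera que le point 1), les points 2) et 3) s'obtenant par le même calcul en échangeant les rôles de $s_2,s_3$ et des paramètres $\alpha,\beta,\gamma,l,m$. On supposera d'abord $k\geqslant 0$, les formules pour $k<0$ résultant ensuite de $u_{-n}(X)=-u_n(X)$ comme dans la preuve de \ref{b12}.

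On commence par le bloc supérieur gauche et la nature de l'élément. Posons $w:=s_1(s_1s_2)^{2k}$ ; d'après la forme de $(s_1s_2)^{2k}$ donnée par \ref{b12}, $w$ a la forme $\left(\begin{smallmatrix}A&c\\0&1\end{smallmatrix}\right)$ avec $A$ carrée d'ordre $2$. Le coefficient $(1,1)$ de $A$ vaut $-u_{4k+1}(\alpha)+\alpha u_{4k}(\alpha)=u_{4k-1}(\alpha)$ par $(A2)$, le coefficient $(1,2)$ vaut $\alpha\bigl(u_{4k}(\alpha)-u_{4k-1}(\alpha)\bigr)=-\alpha u_{4k-2}(\alpha)$ par $(A1)$, et la deuxième ligne de $A$ est celle de $(s_1s_2)^{2k}$. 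On a alors $\operatorname{tr}A=0$ (et $\det A=-1$ grâce à (23)), donc $w$ est bien une réflexion, ce qui est conforme au fait que les $s_i(s_is_j)^k$ sont les réflexions du groupe diédral $\langle s_i,s_j\rangle$.

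Pour la troisième colonne $c$, on la développe à partir des colonnes de $s_1$ et de $(s_1s_2)^{2k}$ et on vérifie la forme annoncée exactement comme dans la preuve de \ref{b12}, en utilisant $(A1)$, $(A2)$, la formule (9), les formules (21) à (24) et la proposition \ref{a7} ; la seule précaution est de distinguer la parité de l'exposant, ce qui modifie les indices $4k$, $4k\pm1$, $4k\pm2$ sans changer la nature des identités. Pour l'hyperplan $H(w)$ et le vecteur $v^-_w$ : comme $w$ est une réflexion, $v^-_w$ engendre $\operatorname{im}(w-\mathrm{id})$ et $H(w)=\ker(w-\mathrm{id})$. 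Le sous-espace $\langle a_1,a_2\rangle$ est stable par $\langle s_1,s_2\rangle$, tandis que $b_3$ engendre la droite fixée à la fois par $s_1$ et par $s_2$, donc par $w$ ; ainsi $b_3\in H(w)$ et il reste à déterminer $v^-_w\in\langle a_1,a_2\rangle$ et un générateur de $H(w)\cap\langle a_1,a_2\rangle$. Pour $v^-_w$, on vérifie que $A$ envoie le vecteur colonne $(u_{2k-1}(\alpha),u_{2k}(\alpha))$ sur son opposé : la composante en $a_2$ est l'identité $u_{4k}(\alpha)u_{2k-1}(\alpha)-u_{4k-1}(\alpha)u_{2k}(\alpha)=-u_{2k}(\alpha)$, c'est-à-dire la formule (28) de \ref{a11b} avec $n=2k$, et la composante en $a_1$ se déduit de la formule (9) appliquée à $u_{4k-2}$, puis de (22) et (23). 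Pour $H(w)$, on calcule la première colonne de $A+\mathrm{id}$ : à l'aide de (9) et de l'identité $u_{4k-1}(\alpha)+1=\alpha u_{2k}(\alpha)\bigl(u_{2k}(\alpha)-u_{2k-2}(\alpha)\bigr)$ (conséquence de (22) et (23)), elle est égale à $u_{2k}(\alpha)\bigl(\alpha(u_{2k}(\alpha)-u_{2k-2}(\alpha))\,a_1+(u_{2k+1}(\alpha)-u_{2k-1}(\alpha))\,a_2\bigr)$, d'où le générateur annoncé. Le cas impair $s_1(s_1s_2)^{2k+1}$ et les points 2) et 3) se traitent de la même manière.

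L'obstacle principal sera purement calculatoire : la vérification des coefficients de la troisième colonne, où il faut manier soigneusement les identités de la section 1 selon la parité de l'exposant ; le reste (bloc $2\times2$, $v^-_w$, $H(w)$) ne demande que quelques identités bien ciblées déjà établies.
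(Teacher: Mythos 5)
Votre démarche est correcte et coïncide avec celle du texte, dont la preuve se réduit à « multiplier par $s_{1}$ (resp. $s_{2}$) les matrices de la proposition~\ref{b12} et simplifier à l'aide des formules de la section~1 » ; vos vérifications ponctuelles (bloc $2\times 2$ via $(A1)$, $(A2)$, troisième colonne via (9) et (21)--(24), $v^{-}$ via la proposition~\ref{a11b}) sont exactes. Vous détaillez même davantage que l'article les points concernant $H(w)$ et $v^{-}_{w}$, sans changer de méthode.
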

\begin{proof}
Elle ne présente pas de difficultés en utilisant la proposition 32 et les formules de la section 1.\\
\end{proof}
\begin{remark}
Soit $s$ une réflexion de $G(\alpha, \beta,\gamma;l)$. On suppose que $v^{-}(s)=\lambda_{1}a_{1}+\lambda_{2}a_{2}+\lambda_{3}a_{3}$. Alors on a:
\[
s=Id_{M}+\begin{pmatrix}
\lambda_{1}&0&0\\
0&\lambda_{2}&0\\
0&0&\lambda_{3}
\end{pmatrix}
\begin{pmatrix}
c(s,s_{1})&c(s,s_{2})&c(s,s_{3})\\
c(s,s_{1})&c(s,s_{2})&c(s,s_{3})\\
c(s,s_{1})&c(s,s_{2})&c(s,s_{3})
\end{pmatrix}
\]
\end{remark}
\begin{proof}
On a $s(a_{i})=a_{i}+c(s,s_{i})v^{-}(s)$ par définition et on en déduit tout de suite le résultat.
\end{proof}
\begin{proposition}
On a $\forall k \in \mathbb{Z}$:
\begin{enumerate}
\item  \begin{align}
C(s_{3},s_{1}(s_{1}s_{2})^{2k})&=  (u_{2k-1}(\alpha)+mu_{2k}(\alpha))(\alpha lu_{2k}(\alpha)+\beta u_{2k-1}(\alpha))\\
&=  \alpha\gamma u_{2k}^{2}(\alpha)+\beta u_{2k-1}^{2}(\alpha)+(\alpha l+\beta m)u_{2k}(\alpha)u_{2k-1}(\alpha)
\end{align}
 \begin{align}
C(s_{3},s_{1}(s_{1}s_{2})^{2k+1})&= (\alpha u_{2k}(\alpha)+mu_{2k+1}(\alpha))(\beta u_{2k}(\alpha)+lu_{2k+1}(\alpha))\\
&=  \gamma u_{2k+1}^{2}(\alpha)+\alpha\beta u_{2k}^{2}(\alpha)+(\alpha l+\beta m)u_{2k+1}(\alpha)u_{2k}(\alpha)
\end{align}
En particulier si $p$ est pair: $p=2p_{1}$, on a:
 \begin{align}
 C(s_{3},s_{1}(s_{1}s_{2})^{p_{1}})&= 4-\beta - \frac{2\Delta}{4-\alpha}\\
 C(s_{3},s_{2}(s_{1}s_{2})^{p_{1}})&= 4-\gamma - \frac{2\Delta}{4-\alpha}
 \end{align}
\item \begin{align}
C(s_{2},s_{1}(s_{1}s_{3})^{2k})&= (u_{2k-1}(\beta)+lu_{2k}(\beta))(\alpha u_{2k-1}(\beta)+\beta mu_{2k}(\beta))\\
&=  \beta\gamma u_{2k}^{2}(\beta)+\alpha u_{2k-1}^{2}(\beta)+(\alpha l+\beta m)u_{2k}(\beta)u_{2k-1}(\beta)\\
C(s_{2},s_{1}(s_{1}s_{3})^{2k+1})&= (\beta u_{2k}(\beta)+lu_{2k+1}(\beta))(mu_{2k+1}(\beta)+\alpha u_{2k}(\beta))\\
&=  \gamma u_{2k+1}^{2}(\beta)+\alpha\beta u_{2k}^{2}(\beta)+(\alpha l+\beta m)u_{2k+1}(\beta)u_{2k}(\beta)
\end{align}
En particulier si $q$ est pair: $q=2q_{1}$, on a:
\begin{align}
C(s_{2},s_{1}(s_{1}s_{3})^{q_{1}})&= 4-\alpha-\frac{2\Delta}{4-\beta}\\
C(s_{2},s_{3}(s_{1}s_{3})^{q_{1}})&= 4-\gamma-\frac{2\Delta}{4-\beta}\\
\end{align}
\item \begin{align}
C(s_{1},s_{2}(s_{2}s_{3})^{2k})&= (\alpha u_{2k-1}(\gamma)+\beta mu_{2k}(\gamma))(u_{2k-1}(\gamma)+lu_{2k}(\gamma))\\
&=  \beta\gamma u_{2k}^{2}(\gamma)+\alpha u_{2k-1}^{2}(\gamma)+(\alpha l+\beta m)u_{2k}(\gamma)u_{2k-1}(\gamma)
\end{align}
\begin{align}
C(s_{1},s_{2}(s_{2}s_{3})^{2k+1})&= (\alpha lu_{2k}(\gamma)+\beta u_{2k+1}(\gamma))(u_{2k+1}(\gamma)+mu_{2k}(\gamma))\\
&=  \beta u_{2k+1}^{2}(\gamma)+\alpha\gamma u_{2k}^{2}(\gamma)+(\alpha l+\beta m)u_{2k+1}(\gamma)u_{2k}(\gamma)
\end{align}
En particulier si $r$ est pair: $r=2r_{1}$, on a:
\begin{align}
C(s_{1},s_{2}(s_{2}s_{3})^{r_{1}})&= 4-\alpha-\frac{2\Delta}{4-\gamma}\\
C(s_{1},s_{3}(s_{2}s_{3})^{r_{1}})&= 4-\beta-\frac{2\Delta}{4-\gamma}\\
\end{align}
\item On suppose que $p$ et $q$ sont pairs: $p=2p_{1}$ et $q=2q_{1}$, alors on a:
\begin{align*}
C(s_{1}(s_{1}s_{2})^{p_{1}}),s_{1}(s_{1}s_{3})^{q_{1}}))&=4\left(\frac{\beta+2l}{4-\alpha}\right)\left(\frac{\alpha+2m}{4-\beta}\right)\\&=4-\frac{8\Delta}{(4-\alpha)(4-\beta)};\\
C(s_{1}(s_{1}s_{2})^{p_{1}}),s_{3}(s_{1}s_{3})^{q_{1}}))&=\beta\left(\frac{\beta+2l}{4-\alpha}\right)\left(\frac{\alpha+2m}{4-\beta}\right)\\&=\beta-\frac{2\beta\Delta}{(4-\alpha)(4-\beta)};\\
C(s_{2}(s_{1}s_{2})^{p_{1}}),s_{1}(s_{1}s_{3})^{q_{1}}))&=\alpha\left(\frac{\beta+2l}{4-\alpha}\right)\left(\frac{\alpha+2m}{4-\beta}\right)\\&=\alpha-\frac{2\alpha\Delta}{(4-\alpha)(4-\beta)};\\
C(s_{2}(s_{1}s_{2})^{p_{1}}),s_{3}(s_{1}s_{3})^{q_{1}}))&=\left(\frac{-8+2\alpha+2\beta+\alpha l}{4-\alpha}\right)\left(\frac{-8+2\alpha+2\beta+\beta m}{4-\beta}\right)\\&=\gamma+\frac{\Delta(8-2\alpha-2\beta)}{(4-\alpha)(4-\beta)}.
\end{align*}
\item On suppose que $p$ et $r$ sont pairs: $p=2p_{1}$ et $r=2r_{1}$, alors on a:
\begin{align*}
C(s_{1}(s_{1}s_{2})^{p_{1}}),s_{2}(s_{2}s_{3})^{r_{1}}))&=\alpha\left(\frac{2\beta+\alpha l}{4-\alpha}\right)\left(\frac{m+2}{4-\beta}\right)\\&=\alpha-\frac{2\alpha\Delta}{(4-\alpha)(4-\beta)};\\
C(s_{1}(s_{1}s_{2})^{p_{1}}),s_{3}(s_{2}s_{3})^{r_{1}}))&=\left(\frac{-8+2\alpha+2\gamma+\beta m}{4-\alpha}\right)\left(\frac{-8+2\alpha+2\gamma+\alpha l}{4-\gamma}\right)\\&=\beta+\frac{\Delta(8-2\alpha-2\gamma)}{(4-\alpha)(4-\gamma)};\\
C(s_{2}(s_{1}s_{2})^{p_{1}}),s_{2}(s_{2}s_{3})^{r_{1}}))&=4\left(\frac{2\beta+\alpha l}{4-\alpha}\right)\left(\frac{m+2}{4-\gamma}\right)\\&=4-\frac{8\Delta}{(4-\alpha)(4-\gamma)};\\
C(s_{2}(s_{1}s_{2})^{p_{1}}),s_{3}(s_{2}s_{3})^{r_{1}}))&=\gamma\left(\frac{2\beta+\alpha l}{4-\alpha}\right)\left(\frac{m+2}{4-\beta}\right)\\&=\gamma-\frac{2\gamma\Delta}{(4-\alpha)(4-\gamma)}.
\end{align*}
\item On suppose que $q$ et $r$ sont pairs: $q=2q_{1}$ et $r=2r_{1}$, alors on a:
\begin{align*}
C(s_{1}(s_{1}s_{3})^{q_{1}}),s_{2}(s_{2}s_{3})^{r_{1}}))&=\left(\frac{-8+2\beta+2\gamma+\alpha l}{4-\beta}\right)\left(\frac{-8+2\beta+2\gamma+\beta m}{4-\gamma}\right)\\&=\alpha+\frac{\Delta(8-2\beta-2\gamma)}{(4-\beta)(4-\gamma)};\\
C(s_{1}(s_{1}s_{3})^{q_{1}}),s_{3}(s_{2}s_{3})^{r_{1}}))&=\beta\left(\frac{2\alpha+\beta m}{4-\beta}\right)\left(\frac{l+2}{4-\gamma}\right)\\&=\beta-\frac{2\beta\Delta}{(4-\beta)(4-\gamma)};\\
C(s_{3}(s_{1}s_{3})^{q_{1}}),s_{2}(s_{2}s_{3})^{r_{1}}))&=\gamma\left(\frac{2\alpha+\beta m}{4-\beta}\right)\left(\frac{l+2}{4-\gamma}\right)\\&=\gamma-\frac{2\gamma\Delta}{(4-\beta)(4-\gamma)};\\
C(s_{3}(s_{1}s_{3})^{q_{1}}),s_{3}(s_{2}s_{3})^{r_{1}}))&=4\left(\frac{2\alpha+\beta m}{4-\beta}\right)\left(\frac{l+2}{4-\gamma}\right)\\&=4-\frac{8\Delta}{(4-\beta)(4-\gamma)}.
\end{align*}

\end{enumerate}

\end{proposition}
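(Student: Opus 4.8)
The plan is to reduce every formula to the matrices already produced in Propositions~\ref{b12} and \ref{b13} together with the elementary remark recalled above. Recall that for a reflection $s$ the quantity $c(s,t)$ attached to a second reflection $t$ is the unique scalar with $s(v^{-}(t))-v^{-}(t)=c(s,t)\,v^{-}(s)$; since $s$ is linear and $s(a_{j})-a_{j}=c(s,s_{j})\,v^{-}(s)$, one has $c(s,t)=\sum_{j}\mu_{j}\,c(s,s_{j})$ whenever $v^{-}(t)=\sum_{j}\mu_{j}a_{j}$, and $C(s,t)=c(s,t)\,c(t,s)$. The point that makes the bookkeeping uniform is that $C(s,t)$ is insensitive to rescaling $v^{-}(s)$ or $v^{-}(t)$, the two factors scaling by reciprocal amounts; so at each step I may use whichever representative of $v^{-}$ is convenient. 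The gauge fixed by the matrices of $s_{1},s_{2},s_{3}$ is $c(s_{2},s_{1})=c(s_{3},s_{1})=1$, $c(s_{1},s_{2})=\alpha$, $c(s_{1},s_{3})=\beta$, $c(s_{2},s_{3})=l$, $c(s_{3},s_{2})=m$, so $\gamma=lm$; and throughout $\Delta=8-2\alpha-2\beta-2\gamma-\alpha l-\beta m$.

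First I would prove the general formulas of (1)--(3); take for instance $r=s_{1}(s_{1}s_{2})^{2k}$ paired with $s_{3}$. Proposition~\ref{b13} gives $v^{-}(r)=u_{2k-1}(\alpha)a_{1}+u_{2k}(\alpha)a_{2}$ and presents the third column of $r$ as $a_{3}+\bigl(\alpha l u_{2k}(\alpha)+\beta u_{2k-1}(\alpha)\bigr)v^{-}(r)$, so $c(r,s_{3})=\alpha l u_{2k}(\alpha)+\beta u_{2k-1}(\alpha)$ may be read off at once. On the other side, linearity gives $c(s_{3},r)=u_{2k-1}(\alpha)\,c(s_{3},s_{1})+u_{2k}(\alpha)\,c(s_{3},s_{2})=u_{2k-1}(\alpha)+m\,u_{2k}(\alpha)$. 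The product is the first displayed expression; expanding it and using $lm=\gamma$ (to turn $\alpha lm$ into $\alpha\gamma$ and $\beta lm$ into $\beta\gamma$) gives the second. Every remaining case in (1)--(3) is handled identically: one reads $v^{-}$ and a single Cartan coefficient off Proposition~\ref{b13}, computes the partner coefficient by linearity from the tabulated constants, multiplies, and simplifies via $\gamma=lm$; the parity of $k$ merely selects which of the two displayed matrices of Proposition~\ref{b13} is used.

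Next I would treat the ``in particular'' statements, in which $p=2p_{1}$ (resp.\ $q=2q_{1}$, $r=2r_{1}$) and $\alpha$ is a root of $v_{2p_{1}}$. Here the closed form of $(s_{1}s_{2})^{p_{1}}$ given in Proposition~\ref{b12} — which rests on $u_{2p_{1}}(\alpha)=0$, on $u_{2p_{1}\pm1}(\alpha)=\mp1$ (Proposition~\ref{a13}) and on $2u_{p_{1}-1}(\alpha)=u_{p_{1}}(\alpha)$ or $\alpha u_{p_{1}}(\alpha)$ according to the parity of $p_{1}$, together with $(4-\alpha)u_{p_{1}}(\alpha)u_{p_{1}-1}(\alpha)=2$ (Proposition~\ref{a14}) — yields, after one matrix multiplication, the matrices of $s_{1}(s_{1}s_{2})^{p_{1}}$ and $s_{2}(s_{1}s_{2})^{p_{1}}$. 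One reads off $v^{-}(s_{1}(s_{1}s_{2})^{p_{1}})\propto\alpha a_{1}+2a_{2}$ and $v^{-}(s_{2}(s_{1}s_{2})^{p_{1}})\propto 2a_{1}+a_{2}$ (consistent with Proposition~\ref{b13} again through Proposition~\ref{a14}), and, after clearing the factor $4-\alpha$, $c(s_{1}(s_{1}s_{2})^{p_{1}},s_{3})=\tfrac{\beta+2l}{4-\alpha}$ and $c(s_{2}(s_{1}s_{2})^{p_{1}},s_{3})=\tfrac{2\beta+\alpha l}{4-\alpha}$. The partner coefficients come again by linearity from $c(s_{3},s_{1})=1$, $c(s_{3},s_{2})=m$, the products give the stated values, and the right-hand sides of the shape $4-\beta-\tfrac{2\Delta}{4-\alpha}$ follow by substituting $\gamma=lm$ and the expression for $\Delta$ and collecting over $4-\alpha$.

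Finally, (4)--(6) pair two reflections coming from two rank-$2$ parabolics sharing one generator, say $\langle s_{1},s_{2}\rangle$ and $\langle s_{1},s_{3}\rangle$ sharing $s_{1}$; the recipe is applied twice. With $v^{-}(r')$ written as a multiple of $\beta a_{1}+2a_{3}$ or of $2a_{1}+a_{3}$, one gets $c(r,r')=\beta\,c(r,s_{1})+2\,c(r,s_{3})$ (resp.\ $2\,c(r,s_{1})+c(r,s_{3})$), where $c(r,s_{1})=0$ when $a_{1}$ lies in the fixed hyperplane of $r$ — the case $r=s_{1}(s_{1}s_{2})^{p_{1}}$, where the $a_{1}$-term vanishes — and $c(r,s_{1})=-1$ otherwise — the case $r=s_{2}(s_{1}s_{2})^{p_{1}}$, where $r(a_{1})-a_{1}=-v^{-}(r)$ — while $c(r,s_{3})$ is the value already computed. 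Doing the same for $c(r',r)$ and multiplying yields $C(r,r')$; the final forms such as $\gamma+\tfrac{\Delta(8-2\alpha-2\beta)}{(4-\alpha)(4-\beta)}$ are then pure algebra using $\gamma=lm$ and the expression for $\Delta$. No step is conceptually hard; the whole difficulty is the bookkeeping over the many nearly identical cases — the parity of $k$, which of $p,q,r$ is even, and which pair of reflections is taken — and the single non-formal ingredient, the closed forms of the $(s_{i}s_{j})^{p_{i}}$, is precisely what Propositions~\ref{a13}, \ref{a14} and \ref{b12} furnish.
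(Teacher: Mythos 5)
Your proof is correct and is essentially the paper's own argument: the paper likewise reduces everything to the remark $s(a_i)=a_i+c(s,s_i)v^{-}(s)$ (its Remarque 3), reads $v^{-}$ and one Cartan coefficient off Propositions \ref{b12}--\ref{b13}, uses Proposition \ref{a14} for the even cases, and carries out only two representative computations, leaving the rest as identical bookkeeping. Your write-up just makes that recipe explicit.
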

\begin{proof}
Les calculs se font sans difficultés en utilisant la remarque 3.\\
Pour les formules lorsque $p$ (ou $q$, ou $r$) est pair ($p=2p_{1}$) nous ne montrons que
\[ 
C(s_{3},s_{1}(s_{1}s_{2})^{p_{1}})=4-\beta-\frac{2\Delta}{4-\alpha} 
\]
avec $p_{1}$ pair, les autres cas se traitant de la même manière. On a:
\[ C(s_{3},s_{1}(s_{1}s_{2})^{p_{1}})=\gamma\alpha u_{p_{1}}^{2}(\alpha)+\beta u_{p_{1}-1}^{2}(\alpha)+(\alpha l+\beta m)u_{p_{1}}(\alpha)u_{p_{1}-1}(\alpha).
\]
D'après la proposition \ref{a14} nous avons: $\alpha u_{p_{1}}^{2}(\alpha)=\frac{4}{4-\alpha}$, $u_{p_{1}-1}^{2}(\alpha)=\frac{\alpha}{4-\alpha}$ et $u_{p_{1}}(\alpha)u_{p_{1}-1}(\alpha)=\frac{2}{4-\alpha}$ donc 
\[
C(s_{3},s_{1}(s_{1}s_{2})^{p_{1}})=(\frac{1}{4-\alpha})(4\gamma+\alpha\beta+2(\alpha l+\beta m)).
\]
Mais $\Delta=8-2\alpha-2\beta-2\gamma-(\alpha l+\beta m)$ donc $4\gamma+2(\alpha l+\beta m)=16-4\alpha-4\beta-2\Delta$ et
\[
C(s_{3},s_{1}(s_{1}s_{2})^{p_{1}})=(\frac{1}{4-\alpha})(16-4\alpha-4\beta+\alpha\beta-2\Delta)
\]
Comme $16-4\alpha-4\beta+\alpha\beta=(4-\alpha)(4-\beta)$ nous avons le résultat.\\
Les démonstrations de toutes les formules en 4), 5), 6) sont semblables. Nous ne démontrons que la quatrième du 4). On a dans la base $\mathcal{A}$ de $M$:
\[
s_{2}(s_{1}s_{2})^{p_{1}}=\begin{pmatrix} 1 & 0 & 0\\ 1 & -1 & l \\ 0 & 0 & 1
\end{pmatrix}\begin{pmatrix} -1 & 0 & \frac{2\beta+\alpha l}{4-\alpha} \\ 
0 & -1 &\frac{2(\beta+2l)}{4-\alpha}\\ 0 & 0 & 1
\end{pmatrix} = \begin{pmatrix} -1 & 0 & \frac{2(2\beta+\alpha l)}{4-\alpha} \\ -1 & 1 & \frac{2\beta+\alpha l}{4-\alpha} \\ 0 & 0 & 1

\end{pmatrix}
\]
et nous obtenons:
\begin{align*}
s_{2}(s_{1}s_{2})^{p_{1}}(a_{1}) &=  a_{1}-(2a_{1}+a_{2})\\
s_{2}(s_{1}s_{2})^{p_{1}}(a_{2}) &=  a_{2}\\
s_{2}(s_{1}s_{2})^{p_{1}}(a_{3}) &=  a_{3}+\left(\frac{2\beta+\alpha l}{4-\alpha}\right)(2a_{1}+a_{2}).
\end{align*}
Nous voyons de la même manière que l'on a:
\begin{align*}
s_{3}(s_{1}s_{3})^{q_{1}}(a_{1}) &=  a_{1}-(2a_{1}+a_{3})\\
s_{3}(s_{1}s_{3})^{q_{1}}(a_{2}) &=  a_{2}+\left(\frac{2\alpha+\beta m}{4-\beta}\right)(2a_{1}+a_{3})\\
s_{3}(s_{1}s_{2})^{q_{1}}(a_{3}) &=  a_{3}.
\end{align*}
Nous obtenons:
\[
s_{2}(s_{1}s_{2})^{p_{1}}(2a_{1}+a_{3})  =  (2a_{1}+a_{3})+\left(\frac{-8+2\alpha+2\beta+\alpha l}{4-\alpha}\right)(2a_{1}+a_{2})
\]
\[
s_{3}(s_{1}s_{3})^{q_{1}}(2a_{1}+a_{2})  =  (2a_{1}+a_{2})\left(\frac{-8+2\alpha+2\beta+\beta m}{4-\beta}\right)(2a_{1}+a_{3}).
\]
Nous avons alors:
\[
C(s_{2}(s_{1}s_{2})^{p_{1}},s_{3}(s_{1}s_{3})^{q_{1}})=\left(\frac{-8+2\alpha+2\beta+\alpha l}{4-\alpha}\right)\left(\frac{-8+2\alpha+2\beta+\beta m}{4-\beta}\right)
\]
et un calcul simple donne le résultat:
\[
C(s_{2}(s_{1}s_{2})^{p_{1}},s_{3}(s_{1}s_{3})^{q_{1}})=\gamma+\frac{\Delta(8-2\alpha-2\beta)}{(4-\alpha)(4-\beta)}.
\]
\end{proof}
\begin{notation}
\begin{enumerate}
\item Pour tout $k$ dans $\mathbb{Z}$, on pose $t_{k}:=s_{1}(s_{2}s_{3})^{k}$, $x_{k}:=s_{2}(s_{3}s_{1})^{k}$ et  $y_{k}:=s_{3}(s_{1}s_{2})^{k}$.
\item On pose $\theta:=-4+\alpha+\beta+\gamma+\alpha l$ et $\theta':=4-\alpha-\beta-\gamma-\beta m$.
\end{enumerate}

\end{notation}
On peut remarquer que $\theta'-\theta=\Delta$ et que $\alpha l-\beta m=\theta+\theta'$. Il en résulte que $\Delta=0$ équivaut à $\theta'=\theta$.
\begin{proposition}\label{b15}
\begin{enumerate}
\item Le polynôme caractéristique de $t_{k}$ (resp. de $x_{k}$, resp. de $y_{k}$) est:
\begin{align}
P_{t_{2k}}(X) &=  X^{3}-(1+\theta\gamma u_{2k}^{2}(\gamma)-(\theta+\theta')u_{2k}(\gamma)u_{2k-1}(\gamma))X^{2}\notag\\
    &+ (-1+\theta'\gamma u_{2k}^{2}(\gamma)-(\theta+\theta')u_{2k}(\gamma)u_{2k-1}(\gamma))X+1\\
P_{t_{2k+1}}(X) &=  X^{3}-(1+\theta u_{2k+1}^{2}(\gamma)-(\theta+\theta')u_{2k+1}(\gamma)u_{2k}(\gamma))X^{2}\notag\\
   &+(-1+\theta' u_{2k+1}^{2}(\gamma)-(\theta+\theta')u_{2k+1}(\gamma)u_{2k}(\gamma))X+1
\end{align}
\begin{align}
P_{x_{2k}}(X) &=  X^{3}-(1+\theta\beta u_{2k}^{2}(\beta)-(\theta+\theta')u_{2k}(\beta)u_{2k-1}(\beta))X^{2}\notag\\
  & +(-1+\theta'\beta u_{2k}^{2}(\beta)-(\theta+\theta')u_{2k}(\beta)u_{2k-1}(\beta))X+1\\
P_{x_{2k+1}}(X) &=  X^{3}-(1+\theta u_{2k+1}^{2}(\beta)-(\theta+\theta')u_{2k+1}(\beta)u_{2k}(\beta))X^{2}\notag\\
  & +(-1+\theta'u_{2k+1}^{2}(\beta)-(\theta+\theta')u_{2k+1}(\beta)u_{2k}(\beta))X+1
\end{align}
\begin{align}
P_{y_{2k}}(X) &=  X^{3}-(1+\theta\alpha u_{2k}^{2}(\alpha)-(\theta+\theta')u_{2k}(\alpha)u_{2k-1}(\alpha))X^{2}\notag\\
  & +(-1+\theta'\alpha u_{2k}^{2}(\alpha)-(\theta+\theta')u_{2k}(\alpha)u_{2k-1}(\alpha))X+1\\
P_{y_{2k+1}}(X) &=  X^{3}-(1+\theta u_{2k+1}^{2}(\alpha)-(\theta+\theta')u_{2k+1}(\alpha)u_{2k}(\alpha))X^{2}\notag\\
 & +(-1+\theta'u_{2k+1}^{2}(\alpha)-(\theta+\theta')u_{2k+1}(\alpha)u_{2k}(\alpha))X+1
\end{align}
\item Valeur en $1$ des polynômes caractéristiques.\\
\parbox{5cm}{\begin{align*}
P_{t_{2k}}(1)&=  \Delta \gamma u_{2k}^{2}(\gamma),\\
P_{x_{2k}}(1)&=  \Delta \beta u_{2k}^{2}(\beta),\\
P_{y_{2k}}(1)&=  \Delta \alpha u_{2k}^{2}(\alpha),
\end{align*}}
\hfill \parbox{5cm}{\begin{align*}
P_{t_{2k+1}}(1)&=  \Delta u_{2k+1}^{2}(\gamma);\\
P_{x_{2k+1}}(1)&=  \Delta u_{2k+1}^{2}(\beta);\\
P_{y_{2k+1}}(1)&=  \Delta u_{2k+1}^{2}(\alpha).\\
\end{align*}}\\
En particulier, si $\Delta=0$, c'est à dire si la représentation $R(\alpha,\beta,\gamma;l)$ est réductible, on a:
\begin{align}
P_{t_{k}}(X) &=  (X-1)(X^{2}-\theta u_{2k}(\gamma)X-1);\\
P_{x_{k}}(X) &=  (X-1)(X^{2}-\theta u_{2k}(\beta)X-1);\\
P_{y_{k}}(X) &=  (X-1)(X^{2}-\theta u_{2k}(\alpha)X-1).
\end{align}
\item Valeur en $-1$ des polynômes caractéristiques.\\
\begin{align*}
P_{t_{k}}(-1) &=  -(\alpha l-\beta m)u_{2k}(\gamma);\\
P_{x_{k}}(-1) &=  -(\alpha l-\beta m)u_{2k}(\beta);\\
P_{y_{k}}(-1) &=  -(\alpha l-\beta m)u_{2k}(\alpha).\\
\end{align*}
En particulier si $\alpha l = \beta m$, on a:
\begin{align*}
P_{t_{2k}}(X)&=(X+1)(X^{2}-(2+\theta\gamma u_{2k}^{2}(\gamma))X+1),\\
P_{t_{2k+1}}(X)&=(X+1)(X^{2}-(2+\theta u_{2k+1}^{2}(\gamma))X+1);\\
P_{x_{2k}}(X)&=(X+1)(X^{2}-(2+\theta\beta u_{2k}^{2}(\beta))X+1),\\
P_{x_{2k+1}}(X)&=(X+1)(X^{2}-(2+\theta u_{2k+1}^{2}(\beta))X+1);\\
P_{y_{2k}}(X)&=(X+1)(X^{2}-(2+\theta\alpha u_{2k}^{2}(\alpha))X+1),\\
P_{y_{2k+1}}(X)&=(X+1)(X^{2}-(2+\theta u_{2k+1}^{2}(\alpha))X+1).
\end{align*}
De même, si $r=2r_{1}$ est pair (resp. $p=2p_{1}$, resp. $q=2q_{1}$), on a
\[
P_{t_{r_{1}}}(X)=(X+1)(X^{2}-2\left(1-\frac{\Delta}{4-\gamma}\right)X+1)
\]
(resp.
\[
P_{x_{q_{1}}}(X)=(X+1)(X^{2}-2\left(1-\frac{\Delta}{4-\beta}\right)X+1),
\]
resp.
\[
P_{y_{p_{1}}}(X)=(X+1)(X^{2}-2\left(1-\frac{\Delta}{4-\alpha}\right)X+1)).
\]
\end{enumerate}
\end{proposition}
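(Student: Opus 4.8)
The whole argument rests on one remark. Since each $s_i$ is a reflection, $\det s_i=-1$, so $\det t_k=\det s_1\cdot(\det s_2s_3)^{k}=-1$ (and likewise for $x_k,y_k$); this already accounts for the constant term $+1$ in every displayed formula. Writing $P_g(X)=X^{3}-(\operatorname{tr}g)X^{2}+c_2(g)X-\det g$ for $g\in GL(M)$, where $c_2(g)$ is the sum of the three principal $2\times2$ minors of $g$, one has $c_2(g)=(\det g)\operatorname{tr}(g^{-1})$. Now $t_k^{-1}=(s_2s_3)^{-k}s_1$, and since trace is conjugation invariant, $\operatorname{tr}(t_k^{-1})=\operatorname{tr}\bigl(s_1(s_2s_3)^{-k}\bigr)=\operatorname{tr}(t_{-k})$, so the coefficient of $X$ in $P_{t_k}$ is $c_2(t_k)=-\operatorname{tr}(t_{-k})$. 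Hence the only genuine computation is that of $\operatorname{tr}(t_{2k})$ and $\operatorname{tr}(t_{2k+1})$; everything else (including $x_k,y_k$) is substitution.

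To get the trace I would multiply the matrix of $s_1$ (written out in the proof of Proposition~\ref{b12}) by the matrix of $(s_2s_3)^{2k}$ from part~3) of Proposition~\ref{b12} and add the diagonal entries; the off-diagonal columns contribute nothing, leaving $\operatorname{tr}(t_{2k})=-1+\alpha(\gamma u_{2k}^{2}+lu_{2k+1}u_{2k})+\beta(\gamma u_{2k}^{2}+mu_{2k}u_{2k-1})+u_{4k+1}-u_{4k-1}$ (all $u_n$ at $\gamma$). Then I substitute the identity of Corollaire~\ref{a10}, $u_{4k+1}(\gamma)-u_{4k-1}(\gamma)=2-\gamma(4-\gamma)u_{2k}^{2}(\gamma)$, use $(A2)$ as $u_{2k+1}+u_{2k-1}=\gamma u_{2k}$ to split $\gamma u_{2k}^{2}$, and regroup with $\theta=-4+\alpha+\beta+\gamma+\alpha l$ and $\theta+\theta'=\alpha l-\beta m$; this collapses to $\operatorname{tr}(t_{2k})=1+\theta\gamma u_{2k}^{2}(\gamma)-(\theta+\theta')u_{2k}(\gamma)u_{2k-1}(\gamma)$. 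The odd case is the same with the second identity of Corollaire~\ref{a10} and $(A1)$ as $u_{2k+2}+u_{2k}=u_{2k+1}$, giving $\operatorname{tr}(t_{2k+1})=1+\theta u_{2k+1}^{2}(\gamma)-(\theta+\theta')u_{2k+1}(\gamma)u_{2k}(\gamma)$. Applying these to $t_{-2k}$, resp. $t_{-2k-1}$, and using $u_{-n}=-u_n$ (so the squares are unchanged while $u_{2k}u_{2k-1}$ becomes $u_{2k}u_{2k+1}$), then using $(A2)$ once more and $\Delta=\theta'-\theta$, turns $c_2(t_{2k})=-\operatorname{tr}(t_{-2k})$ into the announced coefficient $-1+\theta'\gamma u_{2k}^{2}-(\theta+\theta')u_{2k}u_{2k-1}$. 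The $x_k,y_k$ cases come word for word from parts~2) and 1) of Proposition~\ref{b12}; the only care needed is to keep the asymmetry between $\theta$ and $\theta'$, i.e.\ between $\alpha l$ and $\beta m$, correct through the cyclic relabelling.

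Parts~2) and 3) are then algebra on these cubics. Since $P_{t_k}(1)=2-\operatorname{tr}(t_k)+c_2(t_k)$, the $\theta+\theta'$ terms and the $\pm1$'s cancel and one is left with $(\theta'-\theta)\gamma u_{2k}^{2}(\gamma)=\Delta\gamma u_{2k}^{2}(\gamma)$ in the even case, $\Delta u_{2k+1}^{2}(\gamma)$ in the odd case (likewise $x_k,y_k$). If $\Delta=0$ then $\theta=\theta'$ and $P_{t_k}(X)=X^{3}-(1+c)X^{2}+(c-1)X+1=(X-1)(X^{2}-cX-1)$ with $c=\theta\gamma u_{2k}^{2}-2\theta u_{2k}u_{2k-1}=\theta u_{2k}(u_{2k+1}-u_{2k-1})$ by $(A2)$, hence $c=\theta u_{4k}(\gamma)$ by the identity $u_{2n}(X)=u_n(X)\bigl(u_{n+1}(X)-u_{n-1}(X)\bigr)$ (formula (9); the odd case uses $(A1)$ and gives $c=\theta u_{4k+2}(\gamma)$), so uniformly $P_{t_k}(X)=(X-1)(X^{2}-\theta u_{2k}(\gamma)X-1)$, $2k$ denoting twice the index of $t_k$. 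Similarly $P_{t_k}(-1)=-\operatorname{tr}(t_k)-c_2(t_k)$: now the $\pm1$'s cancel and the $\theta+\theta'$ terms add, and the same two-step reduction gives $-(\theta+\theta')u_{2k}(u_{2k+1}-u_{2k-1})=-(\alpha l-\beta m)u_{4k}(\gamma)$, i.e.\ uniformly $P_{t_k}(-1)=-(\alpha l-\beta m)u_{2k}(\gamma)$. If $\alpha l=\beta m$ then $\theta+\theta'=0$, so $P_{t_k}(-1)=0$ and $X+1$ divides $P_{t_k}$; writing $P_{t_k}(X)=(X+1)(X^{2}+bX+1)$ and matching the $X^{2}$-coefficient ($b+1=-\operatorname{tr}(t_k)=-(1+\theta\gamma u_{2k}^{2})$, the $\theta+\theta'$ part having vanished) gives $b=-2-\theta\gamma u_{2k}^{2}(\gamma)$, i.e.\ the stated factorisation (odd case identical).

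It remains to treat $r=2r_1$ even: this is exactly the situation in which the last formula of part~3) of Proposition~\ref{b12} holds, so $\gamma$ is a root of $v_r(X)$, whence $u_{2r_1}(\gamma)=u_r(\gamma)=0$ and $P_{t_{r_1}}(-1)=-(\alpha l-\beta m)u_{2r_1}(\gamma)=0$ automatically, so $X+1$ divides $P_{t_{r_1}}$. By Proposition~\ref{a14} one has $\gamma u_{r_1}^{2}(\gamma)=\tfrac{4}{4-\gamma}$ and $u_{r_1}(\gamma)u_{r_1-1}(\gamma)=\tfrac{2}{4-\gamma}$ (whether $r_1$ is odd or even), so $\operatorname{tr}(t_{r_1})=1+\tfrac{4\theta-2(\theta+\theta')}{4-\gamma}=1-\tfrac{2\Delta}{4-\gamma}$; matching $X^{2}$-coefficients in $(X+1)(X^{2}+bX+1)$ then gives $b=-2\bigl(1-\tfrac{\Delta}{4-\gamma}\bigr)$, hence $P_{t_{r_1}}(X)=(X+1)(X^{2}-2(1-\tfrac{\Delta}{4-\gamma})X+1)$, and the $x_{q_1},y_{p_1}$ statements are obtained the same way. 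The only step with real content is the first trace computation and its rewriting in terms of $\theta,\theta'$; once that is in hand, the identity $c_2(g)=-\operatorname{tr}(g^{-1})=-\operatorname{tr}(t_{-k})$ removes all further matrix work and the rest is bookkeeping with $(A1)$, $(A2)$, formula (9) and Corollaire~\ref{a10}. The one thing to watch is the $x_k,y_k$ transcription — that the roles of $\theta$ and $\theta'$ are not inadvertently swapped.
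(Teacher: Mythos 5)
Your argument is correct and reaches every claim of the proposition, but it departs from the paper's proof in one substantive and worthwhile way. The paper computes both coefficients of $P_{t_k}$ head-on: the trace exactly as you do (multiply the matrix of $s_1$ by that of $(s_2s_3)^k$ from Proposition~\ref{b12}, reduce $c+f$ with Corollaire~\ref{a10}, regroup via $(A1)$/$(A2)$ into $\theta$ and $\theta+\theta'$), but then it evaluates the sum of principal $2\times2$ minors $tr^{(2)}(t_k)$ by expanding three determinants and invoking the product identities (24), (25), (26), (33), (34) — by far the longest part of its proof. You replace that entire block by the observation that for $g\in GL_3$ one has $c_2(g)=(\det g)\operatorname{tr}(g^{-1})$, hence $c_2(t_k)=-\operatorname{tr}\bigl((s_2s_3)^{-k}s_1\bigr)=-\operatorname{tr}(t_{-k})$ by cyclic invariance; since the trace formula of Proposition~\ref{b12} is valid for all $k\in\mathbb{Z}$ and $u_{-n}=-u_n$, a single application of $(A2)$ (even case) or $(A1)$ (odd case), via $u_{2k+1}+u_{2k-1}=\gamma u_{2k}$ resp. $u_{2k+2}+u_{2k}=u_{2k+1}$, converts $-\operatorname{tr}(t_{-k})$ into the announced coefficient with $\theta$ replaced by $\theta'$. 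This is genuinely slicker and also explains structurally why the $X$-coefficient is the $X^2$-coefficient with $\theta\leftrightarrow\theta'$ swapped, which in the paper appears only as the outcome of a calculation. Your parts 2) and 3) then coincide with the paper's (evaluation at $\pm1$, collapse of the $(\theta+\theta')$ terms, factorisation forced by the constant term $1$, and formula (9) to produce $u_{4k}$, $u_{4k+2}$). Two cosmetic points: the invocation of $\Delta=\theta'-\theta$ in the $c_2$ step is not actually needed there (only $(A2)$ and the definition of $\theta'$ are); and in the final paragraph the identity $\gamma u_{r_1}^2(\gamma)=\frac{4}{4-\gamma}$ holds only for $r_1$ even — for $r_1$ odd Proposition~\ref{a14} gives $u_{r_1}^2(\gamma)=\frac{4}{4-\gamma}$ instead — but since the odd-index trace formula carries $\theta u_{r_1}^2$ rather than $\theta\gamma u_{r_1}^2$, your conclusion $\operatorname{tr}(t_{r_1})=1-\frac{2\Delta}{4-\gamma}$ is correct in both parities, exactly as in the paper's case split $r=4k$ versus $r=4k+2$.
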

\begin{proof}
1) On ne fait les calculs que pour $P_{t_{k}}(X)$, car ceux pour $P_{x_{k}}(X)$ et $P_{y_{k}}(X)$ sont complètement similaires.\\
On a 
\[
t_{k}=s_{1}(s_{2}s_{3})^{k}=\begin{pmatrix}
-1&\alpha&\beta\\
0&1&0\\
0&0&1
\end{pmatrix}\begin{pmatrix}
1&0&0\\
a&c&e\\
b&d&f
\end{pmatrix}=\begin{pmatrix}
-1+\alpha a+\beta b&\alpha c+\beta d&\alpha e+\beta f\\
a&c&e\\
b&d&f
\end{pmatrix}
\]
où les valeurs de $a,b,c,d,e,f$ sont données par (voir la proposition \ref{b12}):\\
\begin{itemize}
\item si $k$ est pair: $a=\gamma u_{k}^{2}(\gamma)+lu_{k+1}(\gamma)u_{k}(\gamma)$, $b=\gamma u_{k}^{2}(\gamma)+mu_{k}(\gamma)u_{k-1}(\gamma)$, $c=u_{2k+1}(\gamma)$, $d=mu_{2k}(\gamma)$, $e=-lu_{k}(\gamma)$, $f=-u_{2k-1}(\gamma)$;
\item si $k$ est impair $a=u_{k}^{2}(\gamma)+lu_{k+1}(\gamma)u_{k}(\gamma)$, $b=u_{k}^{2}(\gamma)+mu_{k}(\gamma)u_{k-1}(\gamma)$, $c=u_{2k+1}(\gamma)$, $d=mu_{2k}(\gamma)$, $e=-lu_{2k}(\gamma)$, $f=-u_{2k-1}(\gamma)$.
\end{itemize}
Nous avons $P_{t_{k}}(X)=X^{3}-tr^{(1)}(t_{k})X^{2}+tr^{(2)}(t_{k})X+1$ où
\[
tr^{(1)}(t_{k})=-1+c+f+\alpha a+\beta b,
\]
\[
tr^{(2)}(t_{k})=\begin{vmatrix}
-1+\alpha a+\beta b&\alpha c+\beta d\\
a&c
\end{vmatrix}+\begin{vmatrix}
-1+\alpha a+\beta b&\alpha e+\beta f\\
b&f
\end{vmatrix}+\begin{vmatrix}
c&e\\
d&f
\end{vmatrix}
\]
car $\det t_{k}=-1$.
Pour calculer $tr^{(1)}(t_{k})$ et $tr^{(2)}(t_{k})$, nous distinguons deux cas suivant la parité de $k$.
Si $k$ est pair, $c+f=2-4\gamma(4-\gamma)u_{k}^{2}(\gamma)$ d'après (25), d'où
\begin{align*}
tr^{(1)}(t_{k})&=1-\gamma(4-\gamma)u_{k}^{2}(\gamma)+\alpha\gamma u_{k}^{2}(\gamma)+\alpha lu_{k+1}(\gamma)u_{k}(\gamma)+\beta\gamma u_{k}^{2}(\gamma)\\
&+\beta mu_{k}(\gamma)u_{k-1}(\gamma)\\
&=1+\gamma(-4+\gamma+\alpha+\beta)u_{k}^{2}(\gamma)+\alpha lu_{k}(\gamma)(\gamma u_{k}(\gamma)-u_{k-1}(\gamma))\\
&+\beta mu_{k}(\gamma)u_{k-1}(\gamma)\\
&=1+\gamma(-4+\alpha+\beta+\gamma+\alpha l)u_{k}^{2}(\gamma)+(\beta m-\alpha l)u_{k}(\gamma)u_{k-1}(\gamma)
\end{align*}
donc
\[ 
tr^{(1)}(t_{k})=1+\theta\gamma u_{k}^{2}(\gamma)-(\theta+\theta')u_{k}(\gamma)u_{k-1}(\gamma).
\]
Nous avons:
\begin{align*}
tr^{(2)}(t_{k})&=\begin{vmatrix}
-1+\beta b & \beta d\\
a & c
\end{vmatrix}+\begin{vmatrix}
-1+\alpha a & \alpha e\\
b & f
\end{vmatrix}
+cf-de\\
&=  cf-de-(c+f)+\alpha(af-be)+\beta(bc-ad).
\end{align*}
Nous avons:
\[
cf-de=\gamma u_{k}^{2}(\gamma)-u_{2k+1}(\gamma)u_{2k-1}(\gamma)=1
\]
d'après (24).\\
Nous avons:
\begin{align*}
\alpha(af-be)&=\alpha\gamma u_{k}(\gamma)(u_{k-1}(\gamma)u_{2k}(\gamma)-u_{k}(\gamma)u_{2k-1}(\gamma))\\
 &+\alpha lu_{k}(\gamma)(\gamma u_{k}(\gamma)u_{2k}(\gamma)-u_{k+1}(\gamma)u_{2k-1}(\gamma)).
\end{align*}
Nous avons, d'après (25) et (26):
\[
\alpha(af-be)=-\alpha\gamma u_{k}^{2}(\gamma)-\alpha lu_{k}(\gamma)u_{k-1}(\gamma).
\]
Nous avons:
\begin{align*}
\beta(bc-ad) &=  \beta\gamma u_{k}^{2}(\gamma)u_{2k+1}(\gamma)+\beta mu_{k}(\gamma)u_{k-1}(\gamma)u_{2k+1}(\gamma)\\
&-\beta\gamma u_{k+1}(\gamma)u_{k}(\gamma)u_{2k}(\gamma)-\beta m\gamma u_{k}^{2}(\gamma)u_{2k}(\gamma)\\
&= \beta\gamma u_{k}(\gamma)(u_{k}(\gamma)u_{2k+1}(\gamma)-u_{k+1}(\gamma)u_{2k}(\gamma)\\
&+\beta mu_{k}(\gamma)(u_{k-1}(\gamma)u_{2k+1}(\gamma)-\gamma u_{k}(\gamma)u_{2k}(\gamma)\\
&=  \beta\gamma u_{k}(\gamma)A'+\beta mu_{k}(\gamma)B'.
\end{align*}
En changeant $n$ en $-n$ dans (25) (resp. dans (26)), nous trouvons que $A'=-u_{k}(\gamma)$ et $B'=-u_{k+1}(\gamma)$. Il en résulte que:
\[
\beta(bc-ad)=-\beta\gamma u_{k}^{2}(\gamma)-\beta mu_{k+1}(\gamma)u_{k}(\gamma).
\]
Nous obtenons ainsi:
\begin{align*}
tr^{(2)}(t_{k}) &=  -1+\gamma(4-\gamma)u_{k}^{2}(\gamma)-\alpha\gamma u_{k}^{2}(\gamma)-\alpha lu_{k}(\gamma)u_{k-1}(\gamma)-\beta\gamma u_{k}^{2}(\gamma)\\
&-\beta mu_{k+1}(\gamma)u_{k}(\gamma)\\
&=  -1+\gamma(4-\alpha-\beta--\gamma)u_{k}^{2}(\gamma)-\beta mu_{k+1}(\gamma)u_{k}(\gamma)-\alpha lu_{k}(\gamma)u_{k-1}(\gamma)\\
&=  -1+\gamma(4-\alpha-\beta-\gamma-\beta m)u_{k}^{2}(\gamma)+(\beta m-\alpha l)u_{k}(\gamma)u_{k-1}(\gamma)\\
&=  -1+\theta'\gamma u_{k}^{2}(\gamma)-(\theta+\theta')u_{k}(\gamma)u_{k-1}(\gamma).
\end{align*}
Nous avons ainsi le résultat annoncé pour $P_{t_{k}}(X)$ lorsque $k$ est pair.

Si $k$ est impair, $c+f=2-(4-\gamma)u_{k}^{2}(\gamma)$ d'après (26), d'où
\begin{align*}
tr^{(1)}(t_{k}) &=  1-(4-\gamma)u_{k}^{2}(\gamma)+\alpha u_{k}^{2}(\gamma)+\alpha lu_{k+1}(\gamma)u_{k}(\gamma)+\beta u_{k}^{2}(\gamma)\\
&+\beta mu_{k}(\gamma)u_{k-1}(\gamma)\\
&= 1+(-4+\alpha+\beta+\gamma)u_{k}^{2}(\gamma)+\alpha lu_{k+1}(\gamma)u_{k}(\gamma)+\beta mu_{k}(\gamma)u_{k-1}(\gamma)\\
&=  1+(-4+\alpha+\beta+\gamma+\alpha l)u_{k}^{2}(\gamma)+(\beta m-\alpha l)u_{k}(\gamma)u_{k-1}(\gamma)\\
&=  1+\theta u_{k}^{2}(\gamma)-(\theta+\theta')u_{k}(\gamma)u_{k-1}(\gamma).
\end{align*}
Nous avons
\[
tr^{(2)}(t_{k})=(cf-de)-(c+f)+\alpha(af-be)+\beta(bc-ad).
\]
D'après $(26)$, 
$cf-de=\gamma u_{2k}^{2}(\gamma)-u_{2k+1}(\gamma)u_{2k-1}(\gamma)=1$.\\
Nous avons:
\begin{align*}
\alpha(af-be) &=  -\alpha u_{k}^{2}(\gamma)u_{2k-1}(\gamma)-\alpha lu_{k+1}(\gamma)u_{k}(\gamma)u_{2k-1}(\gamma)\\
&+\alpha\gamma u_{k}(\gamma)u_{k-1}(\gamma)u_{2k}(\gamma)+\alpha lu_{k}^{2}(\gamma)u_{2k}(\gamma)\\
&=  \alpha u_{k}(\gamma)(-u_{k}(\gamma)u_{2k-1}(\gamma)+\gamma u_{k-1}(\gamma)u_{2k}(\gamma)\\
& +\alpha lu_{k}(\gamma)(u_{k}(\gamma)u_{2k}(\gamma)-u_{k+1}(\gamma)u_{2k-1}(\gamma)\\
&=  -\alpha u_{k}^{2}(\gamma)-\alpha lu_{k}(\gamma)u_{k-1}(\gamma)
\end{align*}
en utilisant (33).\\
Nous avons:
\begin{align*}
\beta(bc-ad)&=  \beta u_{k}^{2}(\gamma)u_{2k+1}(\gamma)+\beta mu_{k}(\gamma)u_{k-1}(\gamma)u_{2k+1}(\gamma)\\
& -\beta\gamma u_{k+1}(\gamma)u_{k}(\gamma)u_{2k}(\gamma)-\beta mu_{k}^{2}(\gamma)u_{2k}(\gamma)\\
&=  \beta u_{k}(\gamma)(u_{k}(\gamma)u_{2k+1}(\gamma)-\gamma u_{k+1}(\gamma)u_{2k}(\gamma))\\
&+\beta mu_{k}(\gamma)(u_{k-1}(\gamma)u_{2k+1}(\gamma)-u_{k}(\gamma)u_{2k}(\gamma))\\
&=  \beta u_{k}(\gamma)C+\beta mu_{k}(\gamma)D.
\end{align*}
En changeant $n$ en $-n$ dans (33) (resp. (34)), nous voyons que $C=-u_{k}(\gamma)$ et $D=-u_{k+1}(\gamma)$.
Il en résulte que:
\[
\beta(bc-ad)=-\beta u_{k}^{2}(\gamma)-\beta mu_{k+1}(\gamma)u_{k}(\gamma)
\]
Nous obtenons ainsi, de la même manière que dans le cas $k$ pair:
\[
tr{(2)}(t_{k})=-1+\theta'u_{k^{2}}(\gamma)-(\theta+\theta')u_{k}(\gamma)u_{k-1}(\gamma).
\]
Nous avons le résultat pour $P_{t_{k}}(X)$ lorsque $k$ est impair.

Les calculs pour trouver $P_{x_{k}}(X)$ et $P_{y_{k}}(X)$ sont identiques à ceux faits pour $P_{t_{k}}(X)$.\\
2) Nous avons:
\begin{align*}
P_{t_{2k}}(1) &=  -\theta\gamma u_{k}^{2}(\gamma)+(\theta+\theta')u_{2k}(\gamma)u_{2k-1}(\gamma)
+\theta'\gamma u_{2k}^{2}(\gamma)-(\theta+\theta')u_{2k}(\gamma)u_{2k-1}(\gamma)\\
&=  (\theta'-\theta)\gamma u_{2k}^{2}(\gamma)\\
&=  \Delta\gamma u_{2k}^{2}(\gamma).
\end{align*}
On trouve de même $P_{t_{2k+1}}(1)=\Delta u_{2k+1}^{2}(\gamma)$.
Si $\Delta=0$, alors $P_{t_{k}}(1)=0$ et nous obtenons sans difficultés:
\begin{itemize}
\item $P_{t_{k}}(X)=(X-1)(X^{2}-\theta u_{2k}(\gamma)X-1);$
\item $P_{x_{k}}(X)=(X-1)(X^{2}-\theta u_{2k}(\beta)X-1);$
\item $P_{y_{k}}(X)=(X-1)(X^{2}-\theta u_{2k}(\alpha)X-1).$
\end{itemize}
3) Les calculs ne présentent pas de difficultés:
\begin{align*}
P_{t_{2k}}(-1) &=  -1-(1+\theta\gamma u_{2k}^{2}(\gamma)-(\theta+\theta')u_{2k}(\gamma)u_{2k-1
}(\gamma)\\
&-(-1+\theta'\gamma u_{2k}^{2}(\gamma)-(\theta+\theta')u_{2k}(\gamma)u_{2k-1}(\gamma))+1\\
&=  -(\theta+\theta')\gamma u_{2k}^{2}(\gamma)+2(\theta+\theta')u_{2k}(\gamma)u_{2k-1}(\gamma)\\
&=  -(\theta+\theta')u_{2k}(\gamma)(\gamma u_{2k}(\gamma)-u_{2k-1}(\gamma)-u_{2k-1}(\gamma))\\
&=  -(\theta+\theta')u_{2k}(\gamma)(u_{2k+1}(\gamma)-u_{2k-1}(\gamma))\\
&=  -(\theta+\theta')u_{4k}(\gamma).
\end{align*}
\begin{align*}
P_{t_{2k+1}}(-1) &=  -1-(1+\theta u_{2k+1}^{2}(\gamma)-(\theta+\theta')u_{2k+1}(\gamma)u_{2k}(\gamma)\\
  & -(-1+\theta'u_{2k+1}^{2}(\gamma)-(\theta+\theta')u_{2k+1}(\gamma)u_{2k}(\gamma))+1\\
&=  -(\theta+\theta')u_{2k+1}(\gamma)(u_{2k+1}(\gamma)-u_{2k}(\gamma)-u_{2k}(\gamma))\\
&=   -(\theta+\theta')u_{4k+2}(\gamma).
\end{align*}
Comme $\theta+\theta'=\alpha l-\beta m$, nous en déduisons toutes les formules annoncées.

Nous supposons maintenant que $r$ est pair: $r=2r'$ et nous ne faisons les calculs que pour $t_{r'}$ car ceux pour $x_{q'}$ et $y_{p'}$ (avec $q=2q'$ et $p=2p'$) sont identiques.

Nous avons en effectuant la division de $P_{t_{k}}(X)$ par $X+1$:
\begin{align*}
P_{t_{2k}}(X)&=(X+1)(X^{2}-(2+\theta\gamma u_{2k}^{2}(\gamma)-(\theta+\theta')u_{2k}(\gamma)u_{2k-1}(\gamma))X\\
&+1+(\theta+\theta')u_{4k}(\gamma))-(\theta+\theta')u_{4k}(\gamma)
\end{align*}
\begin{align*}
P_{t_{2k+1}}(X) &=  (X+1)(X^{2}-(2+\theta u_{2k+1}^{2}(\gamma)-(\theta+\theta')u_{2k+1}(\gamma)u_{2k}(\gamma))X\\
&+1+(\theta+\theta')u_{4k+2}(\gamma))-(\theta+\theta')u_{4k+2}(\gamma).
\end{align*}
Si $r=4k$, alors
\[
P_{t_{2k}}(X)=(X+1)(X^{2}-(2+\theta\gamma u_{2k}^{2}(\gamma)-(\theta+\theta')u_{2k}(\gamma)u_{2k-1}(\gamma))X+1)
\]
d'après la proposition \ref{a14}, nous avons $\gamma u_{2k}^{2}(\gamma)=\frac{4}{4-\gamma}$ et $u_{2k}(\gamma)u_{2k-1}(\gamma)=\frac{2}{4-\gamma}$, nous obtenons ainsi 
\[
P_{t_{2k}}(X)=(X+1)(X^{2}-2\left (1-\frac{\Delta}{4-\gamma}\right )X+1)
\]
car $\Delta=\theta-\theta'$.\\
Si $r=4k+2$, alors
\[
P_{t_{2k+1}}(X)=(X+1)(X^{2}-(2+\theta u_{2k+1}^{2}(\gamma)-(\theta+\theta')u_{2k+1}(\gamma)u_{2k}(\gamma))X+1)
\]
d'après la proposition \ref{a14}, nous avons $u_{2k+1}^{2}(\gamma)=\frac{4}{4-\gamma}$ et $u_{2k+1}(\gamma)u_{2k}(\gamma)=\frac{2}{4-\gamma}$, nous obtenons ainsi
\[
P_{t_{2k+1}}(X)=(X+1)(X^{2}-2\left (1-\frac{\Delta}{4-\gamma}\right )X+1.
\]
Il en résulte que si $r=2r'$, nous avons:
\[
P_{r'}(X)=(X+1)(X^{2}-2\left (1-\frac{\Delta}{4-\gamma}\right )X+1).
\]
\end{proof}
\section{Exemples lorsque le corps $K$ est réel.}
Dans cette section on donne quelques exemples de groupes de réflexion finis réels de petit rang pour montrer comment la théorie précédente s'applique et ceci uniquement lorsque la représentation $R$ est irréductible. Les autres parties de ce travail seront consacrées au cas où $R$ est réductible.

Dans la suite, nous nous intéresserons \textbf{principalement} au cas où le groupe de Coxeter $W$ est de rang $3$. On a $S:=\{s_{1},s_{2},s_{3}\}$ et
\[
\begin{picture}(150,68)
\put(-30,35){$\Gamma(W):=$}
\put(60,63){$s_{1}$}
\put(63,55){\circle*{7}}
\put (37,5){\circle{7}}
\put (90,5){\circle{7}}
\put(33,-8){$s_{2}$}
\put(88,-8){$s_{3}$}
\linethickness{0.6mm}
\put(40,6,5){\line(1,0){48}}
\put(37,6,8){\line(1,2){23}}
\put(90,6,8){\line(-1,2){23}}
\put(41,35){$p$}
\put(80,35){$q$}
\put(60,-4){$r$}
\end{picture}
\]
Pour préciser dans la notation l'ordre de $s_{i}s_{j}$, nous notons $W(p,q,r)$ le groupe $W$ avec $p:=m_{s_{1}s_{2}}$, $q:=m_{s_{1}s_{3}}$ et $r:=m_{s_{2}s_{3}}$. On a $p,q\geqslant 3$ et $r\geqslant 2$. Ayant choisi $\alpha$ une racine de $V_{p}(X)$, $\beta$ une racine de $v_{q}(X)$ et $\gamma$ une racine de $v_{r}(X)$ et $l$ et $m$ dans $K=K_{0}(l)$ tels que $lm=\gamma$ (si $r=2$, $l=m=\gamma =0$), on a la représentation de réflexion $R \to GL(M)$ obtenue grâce à la construction fondamentale. On pose $G:=Im \,R=G(\alpha,\beta,\gamma ;l)$ et $R:=R(\alpha,\beta,\gamma;l)$ et on appelle $\textit{P}(G):=\textit{P}(\alpha,\beta,\gamma ;\alpha l,\beta m)$ le système de paramètres de la représentation $R$ de $W$ (ce qui représente une légère modification de la définition initiale). On a :
\[
\Delta=\Delta(G)=8-2\alpha-2\beta-2\gamma -(\alpha l+\beta m).
\]
On a vu dans les sections précédentes les résultats suivants:

\begin{proposition}\hfill
\begin{enumerate}
  	\item \begin{enumerate}
  		\item $Z(G)$ est un groupe cyclique d'ordre un diviseur de $6$; de plus\\ $Z(G)\cap G^{+} \subset Z(G^{+})$ est cyclique d'ordre $1$ ou $3$.
  		\item Si $\Delta(G)=0$, $Z(G)=\{1\}$.
 \end{enumerate}
  	\item Si $dim \,\Phi=1$ et $\Phi=<\varphi>$ alors:
  \begin{enumerate}
  		\item Les deux conditions suivantes sont équivalentes:
  		\begin{enumerate}
  			\item $\varphi$ est symétrique et $\theta=id_{K}$;
  			\item $\alpha l=\beta m$
  		\end{enumerate}
  		\item $\varphi$ est dégénérée si et seulement si $\Delta(G)=0$.
 \end{enumerate}
  	\item On suppose que $\alpha l = \beta m$, alors $K \subset \mathbb{R}$ et:
  \begin{enumerate}
  		\item $dim \,\Phi =1$;
  		\item $\varphi$ est symétrique et $\theta=id_{K}$;
  		\item \begin{enumerate}
  			\item si $\Delta > 0$, $\varphi$ est définie positive;
  			\item si $\Delta = 0$, $\varphi$ est positive dégénérée, de noyau $C_{M}(G)$;
  			\item si $\Delta < 0$, $\varphi$ est de signature $(2,1)$.
		\end{enumerate}
\end{enumerate}
	\item On suppose que $\Delta(G)=0$ et que $l \not \in K_{0}$. Alors on a:
\begin{enumerate}
  \item $dim \Phi =1$;
  \item $K$ est un corps de décomposition du polynôme 
  \[
  Q(X)=X^{2}-2(4-\alpha -\beta -\gamma)X+\alpha\beta\gamma
  \]
   sur le corps $K_{0}$ et $\theta$ est l'élément générateur du groupe de Galois $\mathcal{G}(K/K_{0})$.
   \end{enumerate}
  \item Le système de paramètres de la représentation contragrédiente $R^{*}$ est: $\textit{P}(G):=\textit{P}(\alpha,\beta,\gamma ;\beta m,\alpha l)$ si $R$ est irréductible.
\end{enumerate}
\end{proposition}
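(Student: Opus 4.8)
The statement is a digest of results established in the first part of this work (reference \cite{Z}), so the plan is to recall, item by item, which result of \cite{Z} each assertion rests on, together with the mechanism behind it; beyond \cite{Z} only one elementary identity is needed. For (1) one works in $GL(M)\cong GL_3$: each generator $s_i$ has determinant $-1$, so $\det$ restricts to a homomorphism $G\to\{\pm1\}$ with kernel $G^{+}$. When $\Delta(G)\neq0$ the representation is irreducible (irreducibility being equivalent to nondegeneracy of the invariant form, i.e.\ to $\Delta(G)\neq0$, by part (2b)), hence a central element is a scalar $\lambda\,\mathrm{Id}_M$ by Schur's lemma, and $\det(\lambda\,\mathrm{Id}_M)=\lambda^{3}\in\{\pm1\}$ forces $\lambda\in\mu_6$; thus $Z(G)$ is cyclic of order dividing $6$, and $Z(G)\cap G^{+}$, cut out by $\lambda^{3}=1$, is cyclic of order $1$ or $3$. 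When $\Delta(G)=0$ the explicit reducible (but indecomposable) structure of $R$ in \cite{Z}, in particular the line $C_M(G)$ of $G$-fixed vectors, forces $Z(G)=\{1\}$; this is (1b) and the case $\Delta(G)=0$ of (1a).

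For (2) and (3), $\Phi$ is the space of $G$-invariant $\theta$-sesquilinear forms on $M$ and $\varphi$ a generator of it; that $\dim\Phi=1$ in the relevant situations, and that $\varphi$ has an explicit Gram matrix in the basis $\mathcal{A}=(a_1,a_2,a_3)$ whose entries are built from $\alpha,\beta,\gamma,\alpha l,\beta m$, are the corresponding statements of \cite{Z}. The only asymmetry of that Gram matrix lies in the pairs of off-diagonal entries that distinguish $\alpha l$ from $\beta m$, so ``$\varphi$ symmetric and $\theta=\mathrm{id}_K$'' is plainly equivalent to $\alpha l=\beta m$, which is (2a); and a direct expansion shows the determinant of the Gram matrix is a positive multiple of $\Delta(G)$, so $\varphi$ is degenerate exactly when $\Delta(G)=0$, which is (2b) and identifies the kernel with $C_M(G)$. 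When in addition $\alpha l=\beta m$ all the data are real, so $K\subset\mathbb{R}$ and $\varphi$ is a genuine real symmetric form; Sylvester's law of inertia applied to its Gram matrix---whose leading principal minors one checks to be positive and whose full determinant has the sign of $\Delta$---then yields the three cases $\Delta>0$, $\Delta=0$, $\Delta<0$ of (3c), together with $\dim\Phi=1$ from \cite{Z}.

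For (4), the one elementary observation is that $\alpha l$ and $\beta m$ are precisely the two roots of $Q(X)=X^{2}-2(4-\alpha-\beta-\gamma)X+\alpha\beta\gamma$: the hypothesis $\Delta(G)=8-2\alpha-2\beta-2\gamma-(\alpha l+\beta m)=0$ says exactly that $\alpha l+\beta m=2(4-\alpha-\beta-\gamma)$, while $\alpha l\cdot\beta m=\alpha\beta\,(lm)=\alpha\beta\gamma$ because $lm=\gamma$ by construction. Since $\alpha\in K_0$ and $l\notin K_0$ we have $\alpha l\notin K_0$, so $K=K_0(l)=K_0(\alpha l)$ is exactly the splitting field of $Q$ over $K_0$, a quadratic extension whose nontrivial Galois element interchanges the two roots $\alpha l$ and $\beta m$; identifying this with the involution $\theta$ attached to $\varphi$ in \cite{Z} (under which the roles of $\alpha l$ and $\beta m$ are exchanged) gives (4b), and $\dim\Phi=1$ is once more from \cite{Z}. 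Finally, (5) is read off the matrices of Proposition \ref{b12}: passing to the contragredient replaces each $R(w)$ by its inverse transpose, which transposes the structure constants $c(s_i,s_j)$ and hence---after the normalization fixing the basis $\mathcal{A}$---interchanges $\alpha l$ and $\beta m$ while fixing $\alpha,\beta,\gamma$, i.e.\ sends $\textit{P}(\alpha,\beta,\gamma;\alpha l,\beta m)$ to $\textit{P}(\alpha,\beta,\gamma;\beta m,\alpha l)$.

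The only place where genuine computation, rather than bookkeeping or citation, occurs is the evaluation of the Gram matrix of $\varphi$ --- its entries, its determinant, and its leading principal minors --- underpinning (2b) and (3c); the remaining point of substance, the identity $\{\alpha l,\beta m\}=\{\text{roots of }Q\}$ used in (4), is two lines once one recalls $lm=\gamma$ and unwinds the definition of $\Delta(G)$.
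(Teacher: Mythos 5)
The paper gives no proof of this proposition at all: it is introduced by \og On a vu dans les sections précédentes les résultats suivants\fg{} and is an explicit recapitulation of results established in the first part of the work, so there is no argument in the text to compare yours against. Your reconstruction is consistent with the framework and correct where it can be checked. In particular the one genuinely self-contained computation you supply --- that $\Delta(G)=0$ is exactly the condition $\alpha l+\beta m=2(4-\alpha-\beta-\gamma)$, which together with $\alpha l\cdot\beta m=\alpha\beta\,lm=\alpha\beta\gamma$ identifies $\{\alpha l,\beta m\}$ with the roots of $Q(X)$, whence $K=K_{0}(l)=K_{0}(\alpha l)$ is the splitting field of $Q$ over $K_{0}$ --- is right, and your Sylvester argument for (3c) (first two leading minors positive, determinant of the Gram matrix a positive multiple of $\Delta$) gives the three signatures as claimed. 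One point worth tightening if you were to write this out in full: in (1a) you invoke Schur's lemma, but $K$ is not algebraically closed, so Schur only gives that the commutant is a division algebra; to conclude that a central element of $G$ is a scalar you should argue directly that an operator commuting with all the $R(s_{i})$ preserves each line $Ka_{i}$, hence is diagonal in the basis $\mathcal{A}$, and that connectedness of the graph forces the diagonal entries to coincide --- this also handles the reducible case $\Delta(G)=0$ uniformly, where a scalar in $G$ must act trivially on the nonzero fixed space and is therefore the identity.
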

Dans toute la suite nous utiliserons de nombreux résultats des sections 1 et 2.
\subsection{Questions de fidélité.}
Nous gardons les hypothèses et notations précédentes. En général, nous ne savons pas si la représentation $R$ est fidèle.
\begin{proposition}
Le sous-groupe $ker \,R$ de $W$ est sans torsion.
\end{proposition}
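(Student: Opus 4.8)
Le plan est de combiner un résultat classique de Tits (voir Bourbaki, \emph{Groupes et algèbres de Lie}, Ch.~IV--VI) sur la torsion dans les groupes de Coxeter avec les formules explicites de la section~2. D'après ce résultat, tout élément d'ordre fini de $W$ est conjugué à un élément d'un sous-groupe parabolique standard $W_{J}$ ($J\subseteq S$) fini. Comme $\ker R$ est distingué dans $W$, il est stable par conjugaison, donc il suffira de montrer que $R$ est injective en restriction à chaque $W_{J}$ fini, c'est-à-dire que $\ker R\cap W_{J}=\{1\}$: en effet, si $w\in\ker R$ est d'ordre fini et $gwg^{-1}\in W_{J}$, alors $gwg^{-1}\in\ker R\cap W_{J}=\{1\}$, d'où $w=1$.

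Pour $|J|\leqslant 1$ c'est immédiat, $R(s_{i})$ étant une réflexion non triviale. Pour $|J|=2$, disons $J=\{s_{i},s_{j}\}$ et $m:=m_{s_{i}s_{j}}$, le groupe $W_{J}$ est diédral d'ordre $2m$: ses éléments sont les ``rotations'' $(s_{i}s_{j})^{k}$ et les ``réflexions'' $s_{i}(s_{i}s_{j})^{k}$, et ces dernières, de déterminant $-1$, ne sont pas dans $\ker R$. Pour une rotation, la proposition~\ref{b12} donne la matrice de $R\bigl((s_{i}s_{j})^{k}\bigr)$: son bloc $2\times 2$ supérieur gauche a pour déterminant $1$ (formule (24)) et pour trace $u_{2k+1}(\delta)-u_{2k-1}(\delta)$, où $\delta\in\{\alpha,\beta,\gamma\}$ est la racine choisie de $v_{m}$. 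En écrivant $\delta=4\cos^{2}(a\pi/m)$ avec $(a,m)=1$ (propriétés des racines de $v_{m}$ et des formules $u_{2n+1}(4\cos^{2}\theta)=\sin(2n+1)\theta/\sin\theta$), cette trace vaut $2\cos(2ak\pi/m)$; si le bloc est l'identité, elle vaut $2$, ce qui force $m\mid k$, et alors $(s_{i}s_{j})^{k}=1$ déjà dans $W_{J}$. Donc $R\bigl((s_{i}s_{j})^{k}\bigr)=\mathrm{Id}$ entraîne $(s_{i}s_{j})^{k}=1$, d'où $\ker R\cap W_{J}=\{1\}$. (Le cas $m=2$, où $W_{J}\cong(\mathbb{Z}/2)^{2}$, se lit directement sur les matrices avec $\gamma=l=m=0$.)

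Reste le cas $J=S$ avec $W$ fini, qui est le point délicat. Comme $p,q\geqslant 3$ et $r\geqslant 2$, les seules possibilités sont, à l'échange de $p$ et $q$ près, $(p,q,r)\in\{(3,3,2),(3,4,2),(3,5,2)\}$, soit $W\cong W(A_{3}),\,W(B_{3}),\,W(H_{3})$; dans ces cas $r=2$ donc $\gamma=0$, et un calcul direct donne $\Delta\neq 0$: $R$ est irréductible. Comme $\det R(s_{i})=-1$, l'application $\det\circ R$ est le caractère signe de $W$, de sorte que $\ker R\subseteq W^{+}$, le sous-groupe des rotations. Or $W^{+}(A_{3})=A_{4}$, $W^{+}(B_{3})\cong S_{4}$, $W^{+}(H_{3})=A_{5}$, et un examen (court, ces groupes étant petits) de leurs sous-groupes distingués dans $W$ montre que tout tel sous-groupe non trivial contient un élément non trivial d'un parabolique de rang $\leqslant 2$: par exemple $s_{1}s_{3}$ (d'ordre $5$) pour $W(H_{3})$ puisque $A_{5}$ est simple, $(s_{1}s_{3})^{2}$ pour $W(B_{3})$, et $s_{2}s_{3}$ pour $W(A_{3})$. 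Comme $\ker R$ est distingué et, d'après le paragraphe précédent, ne rencontre aucun parabolique de rang $\leqslant 2$ hors de $\{1\}$, on conclut $\ker R=\{1\}$; c'est l'énoncé pour $W$ fini.

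Je m'attends à ce que l'obstacle principal soit précisément ce dernier cas: il exige la fidélité de $R$ sur $W$ tout entier, que le texte ne garantit pas en général, et qu'on obtient ici soit par la structure (très simple) des sous-groupes distingués de $A_{3}$, $B_{3}$, $H_{3}$ comme ci-dessus, soit en reconnaissant que $R$ est alors, à conjugaison galoisienne près, la réalisation géométrique usuelle de ces groupes, donc fidèle. Les vérifications des paragraphes intermédiaires, en revanche, ne sont que de la routine une fois disposées les formules de la section~1.
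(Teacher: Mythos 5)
Votre démonstration est correcte et repose sur le même pivot que celle du texte : le résultat de Bourbaki (exerc. 2 d), p.~130) qui conjugue tout sous-groupe fini de $W$ dans un sous-groupe parabolique standard fini, combiné à l'injectivité de $R$ en restriction aux paraboliques de rang $\leqslant 2$. Les différences sont de deux ordres. D'une part, là où le texte invoque simplement l'hypothèse faite sur $R$ (à savoir $\langle R(s_{i}),R(s_{j})\rangle\simeq\langle s_{i},s_{j}\rangle$ pour $i\neq j$), vous la revérifiez par le calcul de la trace du bloc $2\times 2$ des matrices de la proposition~\ref{b12} ; c'est plus long mais cela rend l'énoncé autonome. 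D'autre part, le texte conjugue directement $H$ dans un parabolique de rang $2$, ce qui suppose implicitement $W$ infini ; vous traitez à part le cas où $W$ lui-même est fini ($(p,q,r)\in\{(3,3,2),(3,4,2),(3,5,2)\}$, soit $W(A_{3})$, $W(B_{3})$, $W(H_{3})$) en montrant, via $\ker R\subseteq W^{+}$ et la liste des sous-groupes distingués de $W^{+}$, que tout sous-groupe distingué non trivial rencontre un parabolique de rang $\leqslant 2$, donc que $R$ y est fidèle. Ce complément comble un petit vide de l'argument du texte ; pour le reste, les deux preuves sont essentiellement identiques.
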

\begin{proof}
Soit $H$ un sous-groupe fini de $ker\, R$. Alors d'après \cite{B} (page 130, exercice 2 d)), il existe $i,j \, (i\neq j)$, il existe $g\in W$ tels que $gHg^{-1}\subset <s_{i},s_{j}>$. Mais l'hypothèse faite sur $R$ : $<R(s_{i}),R(s_{j})> \simeq <s_{i},s_{j}>$ si $i\neq j$ montre que $ker\,R\,\cap <s_{i},s_{j}>={1}$. Il en résulte aussitôt que $H={1}$. 
\end{proof}
\begin{proposition}
Soit $W(p,q,r),S)$ un système de Coxeter de rang $3$ avec $p,q,r \geqslant 3$. On pose $S:=\{s_{1},s_{2},s_{3}\}$. Soit $s$ une réflexion de $W(p,q,r)$ contenue dans $<s_{j},s_{k}>-\{s_{j},s_{k}\}$. Alors $s_{i}s$ est d'ordre infini si $|\{i,j,k\}|=3$.
\end{proposition}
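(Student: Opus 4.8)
The plan is to reduce the statement to a computation with a pair of linear reflections. Among the reflection representations $R=R(\alpha,\beta,\gamma;l)$ available here, take the ``geometric'' one, with $\alpha=4\cos^{2}\tfrac{\pi}{p}$, $\beta=4\cos^{2}\tfrac{\pi}{q}$, $\gamma=4\cos^{2}\tfrac{\pi}{r}$ and positive semidefinite invariant form; since $\ker R$ is torsion free (proposition above), $R$ is injective on every finite subgroup of $W$, so if $s_is$ had finite order then $R(s_i)R(s)$ would have the same finite order. Hence it suffices to prove that the linear map $R(s_i)R(s)$ has infinite order. Write $B$ for the canonical bilinear form with simple roots $\alpha_1,\alpha_2,\alpha_3$, normalized by $B(\alpha_a,\alpha_a)=1$ and $B(\alpha_a,\alpha_b)=-\cos\tfrac{\pi}{m_{ab}}$, and let $\beta$ be the (positive) root of the reflection $s$.

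The first step is to describe $\beta$. As $m_{jk}\geqslant 3$ is finite, $\langle s_j,s_k\rangle$ is a finite dihedral group of order $2m_{jk}$ and $\beta$ lies in the plane $\langle\alpha_j,\alpha_k\rangle$; writing the unit root at ``angle'' $t\tfrac{\pi}{m}$ in the basis $\alpha_j,\alpha_k$ (with $m:=m_{jk}$) gives
\[
\beta^{(t)}=\frac{\sin\!\big((t+1)\tfrac{\pi}{m}\big)}{\sin\tfrac{\pi}{m}}\,\alpha_j+\frac{\sin\!\big(t\tfrac{\pi}{m}\big)}{\sin\tfrac{\pi}{m}}\,\alpha_k,\qquad 0\leqslant t\leqslant m-1,
\]
with $\beta^{(0)}=\alpha_j$, $\beta^{(m-1)}=\alpha_k$, and the $m-2$ reflections of $\langle s_j,s_k\rangle$ other than $s_j,s_k$ having roots (up to sign) $\beta^{(1)},\dots,\beta^{(m-2)}$. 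So $\beta=\lambda\alpha_j+\mu\alpha_k$ with $\lambda,\mu>0$ and, for $1\leqslant t\leqslant m-2$,
\[
\lambda+\mu=\frac{\sin\!\big((t+1)\tfrac{\pi}{m}\big)+\sin\!\big(t\tfrac{\pi}{m}\big)}{\sin\tfrac{\pi}{m}}=\frac{\sin\!\big((2t+1)\tfrac{\pi}{2m}\big)}{\sin\tfrac{\pi}{2m}}\geqslant 2,
\]
the inequality holding because $(2t+1)\tfrac{\pi}{2m}\in[\tfrac{3\pi}{2m},\pi-\tfrac{3\pi}{2m}]$ and $\sin$ is concave on $[0,\pi]$, so the quotient is at least $\tfrac{\sin 3x}{\sin x}=3-4\sin^{2}x\geqslant 2$ at $x=\tfrac{\pi}{2m}\leqslant\tfrac{\pi}{6}$.

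The second step uses the hypothesis $|\{i,j,k\}|=3$: then $m_{ij}$ and $m_{ik}$ both belong to $\{p,q,r\}$, hence are $\geqslant 3$, so $\cos\tfrac{\pi}{m_{ij}},\cos\tfrac{\pi}{m_{ik}}\geqslant\tfrac12$ and
\[
B(\alpha_i,\beta)=-\lambda\cos\tfrac{\pi}{m_{ij}}-\mu\cos\tfrac{\pi}{m_{ik}}\leqslant-\tfrac12(\lambda+\mu)\leqslant-1 .
\]
Thus the Gram matrix of $B$ on $\mathrm{span}(\alpha_i,\beta)$ has $1$ on the diagonal and $b:=B(\alpha_i,\beta)$ off the diagonal, of determinant $1-b^{2}\leqslant 0$, so $B$ is not positive definite there. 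It is then standard that for unit vectors $u,v$ with $|B(u,v)|\geqslant 1$ the product $s_us_v$ has infinite order on the plane $\langle u,v\rangle$ (a nontrivial unipotent transvection when $|B(u,v)|=1$, an element with a real eigenvalue $\ne\pm1$ when $|B(u,v)|>1$). Applying this with $u=\alpha_i$, $v=\beta$ shows $R(s_i)R(s)=s_{\alpha_i}s_\beta$ has infinite order, hence so does $s_is$ in $W(p,q,r)$.

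The only nonroutine point is the first step — identifying the root of an arbitrary interior reflection of $\langle s_j,s_k\rangle$ and proving $\lambda+\mu\geqslant 2$; the rest is bookkeeping. One caveat worth stating in the proof: this bound is sharp, equality $\lambda+\mu=2$ occurring exactly when $m_{jk}=3$ (and then $B(\alpha_i,\beta)=-1$ precisely when moreover $m_{ij}=m_{ik}=3$), so the argument must cover the borderline case $|B(u,v)|=1$ and not merely $|B(u,v)|>1$.
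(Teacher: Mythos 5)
Your proof is correct, but it follows a genuinely different route from the paper's. The paper stays inside its parametrized family $R(\alpha,\beta,\gamma;l)$: writing $s=s_{2}(s_{2}s_{3})^{k}$, it quotes the explicit formulas for $C(s_{1},s_{2}(s_{2}s_{3})^{k})$ from Section~2, observes that since $2k-1$, $2k$, $2k+1$ are not multiples of $r$ the coefficient $u_{2k}(\gamma)u_{2k\pm 1}(\gamma)$ of the free quantity $\alpha l+\beta m$ is nonzero, and then chooses $l$ so that $C(s_{1},s)$ avoids the countable set $\{4\cos^{2}\pi n:n\in\mathbb{Q}\}$; in the resulting image group the product has infinite order, hence also in $W(p,q,r)$. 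You instead fix one concrete representation (the classical geometric one), identify the root $\beta=\lambda\alpha_{j}+\mu\alpha_{k}$ of $s$, prove the uniform bound $\lambda+\mu\geqslant 2$, and deduce $B(\alpha_{i},\beta)\leqslant -1$, which forces $s_{\alpha_{i}}s_{\beta}$ to be unipotent or hyperbolic on the plane it spans; your checks (the dihedral root formula, the estimate $\sin 3x/\sin x\geqslant 2$ for $x\leqslant\pi/6$, the treatment of the borderline case $|B|=1$) are all correct. What each approach buys: yours is a concrete, genericity-free argument showing the conclusion already in the standard representation, at the price of the trigonometric estimate; the paper's is almost free once the $C$-formulas are in place and yields the stronger by-product (exploited in the corollary that follows) that for suitable non-generic $l$ the representation is \emph{not} faithful. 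Two minor remarks: the invariant form of the geometric representation is positive semidefinite only when $\frac{1}{p}+\frac{1}{q}+\frac{1}{r}\geqslant 1$ (in general it has signature $(2,1)$), but you never use semidefiniteness, so this descriptor is harmless; and the implication you actually need --- infinite order of $R(s_{i})R(s)$ implies infinite order of $s_{i}s$ --- is automatic and does not require the torsion-freeness of $\ker R$.
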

\begin{proof}
Soit $R(\alpha,\beta,\gamma;l)$ une représentation de réflexion de $W(p,q,r)$ qui satisfait aux conditions (R). Supposons, pour fixer les idées, que $i=1$. Il existe un entier $k$ tel que $s=s_{2}(s_{2}s_{3})^{k}$. On a (voir la section 2):
\[
C(s_{1},s_{2}(s_{2}s_{3})^{2k})=\beta\gamma u_{2k}^{2}(\gamma)+\alpha u_{2k-1}^{2}(\gamma)+(\alpha l+\beta m)u_{2k}(\gamma)u_{2k-1}(\gamma)
\]
\[
C(s_{1},s_{2}(s_{2}s_{3})^{2k+1})=\beta u_{2k+1}^{2}(\gamma)+\alpha\gamma u_{2k}^{2}(\gamma)+(\alpha l+\beta m)u_{2k+1}(\gamma)u_{2k}(\gamma).
\]
Avec les hypothèses faites, on a toujours $u_{2k}(\gamma)u_{2k-1}(\gamma)u_{2k+1}(\gamma)\neq 0$. En effet $s_{2}s_{3}$ est d'ordre $r$ et comme $s \not \in \{s_{2},s_{3}\}$, nécessairement $2k-1$, $2k$ et $2k+1$ sont différents de $0$ et de $r$. Le produit des deux réflexions $s_{1}$ et $s$ est d'ordre fini si et seulement si il existe un nombre rationnel $n$, $0\leqslant n < \frac{1}{2}$ tel que $C(s_{1},s)=4 \cos^{2} \pi n$. Nous pouvons choisir $l$ (donc aussi $m$ puisque $lm=\gamma$) de telle sorte que $C(s_{1},s)$ ne soit pas de la forme $4 \cos^{2} \pi n$. Il en résulte que dans $G$, $s_{1}s$ est d'ordre infini. Il en est de même dans $W(p,q,r)$.
\end{proof}
\begin{corollary}(De la démonstration de la proposition 21.)
Soit $(W(p,q,r),S)$ un système de Coxeter de rang $3$ avec $p,q,r \geqslant 3$. Il y a une infinité dénombrable au moins de représentations de réflexion de $W(p,q,r)$ qui ne sont pas fidèles.
\end{corollary}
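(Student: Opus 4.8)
The plan is to run the argument in the proof of the preceding proposition \emph{in reverse}: there one chooses the parameter $l$ so that $C(s_1,s)$ \emph{avoids} being of the form $4\cos^2\pi n$, whereas here one chooses $l$ so as to \emph{realise} such a value. Fix a reflection $s$ of $W(p,q,r)$ with $s\in\langle s_2,s_3\rangle\setminus\{s_2,s_3\}$ --- such an $s$ exists since $r\geqslant 3$, so that $\langle s_2,s_3\rangle$ is a dihedral group with at least three reflections --- and write $s=s_2(s_2s_3)^k$. Exactly as in that proof, the hypothesis $s\notin\{s_2,s_3\}$ forces $u_{2k}(\gamma)u_{2k-1}(\gamma)u_{2k+1}(\gamma)\neq 0$, and the element $s_1 s$ has infinite order in $W(p,q,r)$.

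For fixed roots $\alpha,\beta,\gamma$ of $v_p,v_q,v_r$ and a reflection representation $R(\alpha,\beta,\gamma;l)$ with $lm=\gamma$ subject to the conditions (R), the formulas of Section~2 express $C(s_1,s)$ as a polynomial in $\alpha,\beta,\gamma,l,m$ in which $l$ and $m$ enter only through $\alpha l+\beta m$. Since $\gamma=4\cos^2(k\pi/r)\neq 0$ we may substitute $m=\gamma/l$ and obtain
\[
C(s_1,s)=g(l):=(\alpha u)\,l+\frac{\beta\gamma u}{l}+w,
\]
where $u:=u_{2k}(\gamma)u_{2k\pm1}(\gamma)\neq 0$ and $w$ is a constant depending only on $\alpha,\beta,\gamma,k$. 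Because $\alpha\neq 0$ and $u\neq 0$, both coefficients $\alpha u$ and $\beta\gamma u$ are nonzero, so $g$ is a genuinely non-constant rational function of $l$. Recall finally that $R(s_1 s)=R(s_1)R(s)$ has finite order in $GL(M)$ if and only if $C(s_1,s)=4\cos^2\pi n$ for some rational $n$ with $0<n<\tfrac12$.

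Now let $n$ run through the rationals with $0<n<\tfrac12$ and set $v_n:=4\cos^2\pi n$; since $n\mapsto\cos^2\pi n$ is strictly monotone on $(0,\tfrac12)$ the $v_n$ are pairwise distinct, hence countably infinite in number. For each $n$ the equation $g(l)=v_n$, that is $(\alpha u)l^2+(w-v_n)l+\beta\gamma u=0$, has nonzero leading and constant coefficients, so it admits a root $l_n\neq 0$ in a quadratic extension of $K_0(v_n)$; one checks that $(\alpha,\beta,\gamma,l_n)$ still satisfies the conditions (R), so there is a reflection representation $R_n:=R(\alpha,\beta,\gamma;l_n)$ of $W(p,q,r)$ over $K_n:=K_0(l_n)$. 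By construction $R_n(s_1 s)$ has finite order in $GL(M)$ while $s_1 s$ has infinite order in $W(p,q,r)$; hence a nontrivial power of $s_1 s$ lies in $\ker R_n$ and $R_n$ is not faithful. Moreover the $l_n$ are pairwise distinct (if $l_n=l_{n'}$ then $v_n=g(l_n)=g(l_{n'})=v_{n'}$), and, $\alpha$ being nonzero, the parameter $\alpha l_n$ --- a component of the system of parameters of $R_n$ --- takes infinitely many values, so the $R_n$ are pairwise non-equivalent. This exhibits the asserted countable infinity of non-faithful reflection representations. The only steps requiring genuine verification --- and both are already essentially contained in the discussion of the preceding proposition --- are the non-vanishing of $u$, equivalently the fact that $g$ is non-constant, and the persistence of the conditions (R) under the specialisation $l=l_n$; this is the only real subtlety.
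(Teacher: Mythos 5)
Your argument is correct and is essentially the paper's own proof, which it states in two lines: for each rational $n\in(0,\tfrac12)$ one solves $C(s_{1},s)=4\cos^{2}n\pi$ for $l$, using that $l,m$ enter only through $\alpha l+\beta m$ with $(\alpha l)(\beta m)=\alpha\beta\gamma$ fixed, so each $n$ yields one or two admissible values of $l$ and hence a non-faithful representation. Your write-up merely supplies the details the paper leaves implicit (non-vanishing of the coefficient $u$, distinctness of the resulting parameter systems, and the passage from finite order of $R(s_{1}s)$ versus infinite order of $s_{1}s$ to a nontrivial kernel), all of which check out.
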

\begin{proof}
Dans la démonstration précédente, nous prenons $n$ rationnel tel que $0 < n <\frac{1}{2}$ et nous calculons $l$ de telle sorte que $C(s_{1},s)=4\cos^{2} n\pi$. Cela nous donne une ou deux valeurs pour $l$ car alors nous connaissons $\alpha l+\beta m$ et $\alpha l\beta m=\alpha\beta\gamma$.
\end{proof}
\subsection{Un résultat général.}
Dans toute la suite nous aurons besoin des formules suivantes démontrées dans la section 2 (formules $\mathcal{C}_{i}$).
\begin{align}
C(s_{1},s_{2}^{s_{3}})& =&C(s_{2},s_{1}^{s_{3}})&=&(l+1)(\alpha+\beta m) &=&\alpha+\beta\gamma+(\alpha l+\beta m)\\
C(s_{1},s_{3}^{s_{2}})&=&C(s_{3},s_{1}^{s_{2}})&=&(m+1)(\beta+\alpha l) &=&\beta+\alpha\gamma+(\alpha l+\beta m)\\
C(s_{2},s_{3}^{s_{1}})&=&C(s_{3},s_{2}^{s_{1}})&=&(\alpha+m)(\beta+l) &=&\gamma+\alpha\beta+(\alpha l+\beta m)
\end{align}
\begin{align}
C(s_{1},s_{2}^{s_{3}s_{2}}) &=&(l+u_{3}(\gamma)(\beta m+\alpha u_{3}(\gamma))&=&\beta\gamma+\alpha u_{3}(\gamma)^{2}+u_{3}(\gamma(\alpha l+\beta m)\\
C(s_{1},s_{3}^{s_{2}s_{3}}) &=&(m+u_{3}(\gamma))(\alpha l+\beta u_{3}(\gamma))&=&\alpha\gamma+\beta u_{3}(\gamma)^{2}+u_{3}(\gamma)(\alpha l+\beta m)\\
C(s_{2},s_{1}^{s_{3}s_{1}}) &=&(l+u_{3}(\beta))(\beta m+\alpha u_{3}(\beta))&=&\beta\gamma+\alpha u_{3}(\beta)^{2}+u_{3}(\beta)(\alpha l+\beta m)\\
C(s_{2},s_{3}^{s_{1}s_{3}})&=&(\beta+lu_{3}(\beta))(\alpha+mu_{3}(\beta))&=&\alpha\beta+\gamma u_{3}(\beta)^{2}+u_{3}(\beta)(\alpha l+\beta m)\\
C(s_{3},s_{1}^{s_{2}s_{1}})&=&(\beta u_{3}(\alpha)+\alpha l)(m+u_{3}(\alpha))&=&\alpha\gamma+\beta u_{3}(\alpha)^{2}+u_{3}(\alpha)(\alpha l+\beta m)\\
C(s_{3},s_{2}^{s_{1}s_{2}})&=&(\alpha+mu_{3}(\alpha))(\beta+lu_{3}(\alpha))&=&\alpha\beta+\gamma u_{3}(\alpha)^{2}+u_{3}(\alpha)(\alpha l+\beta m)
\end{align}
Si $\Delta(G)=0$ et si\\
$p=2p_{1},$ $C(s_{3},s_{1}(s_{1}s_{2})^{p_{1}})=4-\beta$, $C(s_{3},s_{2}(s_{1}s_{2})^{p_{1}})=4-\gamma$\\
$q=2q_{1},$ $C(s_{2},s_{1}(s_{1}s_{3})^{q_{1}})=4-\alpha$, $C(s_{2},s_{1}(s_{1}s_{3})^{q_{1}})=4-\gamma$\\
$p=2r_{1},$ $C(s_{1},s_{2}(s_{2}s_{3})^{r_{1}})=4-\alpha$, $C(s_{1},s_{3}(s_{2}s_{3})^{r_{1}})=4-\beta$.

L'un des buts de ce numéro est de démontrer le résultat suivant:
\begin{theorem}
Soit $W(p,p,r)$ un groupe de Coxeter de rang $3$ avec $r$ impair. Alors il existe un quotient de ce groupe isomorphe au groupe de Coxeter de rang $3$, $W(p,r,2)$. Réciproquement, tout groupe de Coxeter de rang $3$, $W(p,r,2)$ avec $r$ impair est isomorphe à un quotient de $W(p,p,r)$.
\end{theorem}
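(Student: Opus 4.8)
The plan is to prove the theorem in one shot by writing down an explicit surjective homomorphism $\phi\colon W(p,p,r)\to W(p,r,2)$: both assertions follow at once, since then $W(p,p,r)/\ker\phi\cong W(p,r,2)$, and this works for every $p$ and every odd $r$. Let $s_1,s_2,s_3$ be the standard generators of $W(p,p,r)$, so $m_{s_1s_2}=m_{s_1s_3}=p$ and $m_{s_2s_3}=r$, and let $t_1,t_2,t_3$ be those of $W(p,r,2)$, so $m_{t_1t_2}=p$, $m_{t_1t_3}=r$ and $m_{t_2t_3}=2$ (i.e.\ $t_2$ and $t_3$ commute). I would set
\[
\phi(s_1):=t_2,\qquad \phi(s_2):=t_1,\qquad \phi(s_3):=t_3t_1t_3 .
\]

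First one checks that $\phi$ kills the defining relators of $W(p,p,r)$. Involutivity is clear: $\phi(s_1)^2=\phi(s_2)^2=1$, and $\phi(s_3)^2=t_3t_1t_3t_3t_1t_3=1$ using $t_3^2=t_1^2=1$. Next $\phi(s_1s_2)=t_2t_1$ has order $p$, so $(s_1s_2)^p\mapsto 1$. For $\phi(s_2s_3)=t_1\cdot t_3t_1t_3=(t_1t_3)^2$ one gets $((t_1t_3)^2)^r=(t_1t_3)^{2r}=1$, so $(s_2s_3)^r\mapsto 1$ (and in fact, since $r$ is odd, $(t_1t_3)^2$ has order exactly $r$). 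The only relator needing a rewriting is $(s_1s_3)^p$: here $\phi(s_1s_3)=t_2t_3t_1t_3=t_3(t_2t_1)t_3$ by $t_2t_3=t_3t_2$, which is a conjugate of $t_2t_1$, hence of order $p$, so $(s_1s_3)^p\mapsto t_3(t_2t_1)^pt_3=1$. Thus $\phi$ is a well-defined group homomorphism.

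It remains to see that $\phi$ is onto. Its image contains $t_1$, $t_2$ and $t_3t_1t_3$, hence also $t_1\cdot t_3t_1t_3=(t_1t_3)^2$. Since $r$ is odd and $t_1t_3$ has order $r$, the integer $2$ is invertible modulo $r$, so $(t_1t_3)^2$ generates $\langle t_1t_3\rangle$; therefore $t_1t_3$, and then $t_3=t_1\cdot(t_1t_3)$, lie in $\mathrm{Im}\,\phi$. Hence $\mathrm{Im}\,\phi\supseteq\langle t_1,t_2,t_3\rangle=W(p,r,2)$, so $\phi$ is surjective. Consequently $W(p,p,r)/\ker\phi\cong W(p,r,2)$, which is the first assertion; reading this isomorphism the other way round, for arbitrary $p$ and arbitrary odd $r$, gives the converse.

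There is no genuine obstacle here beyond choosing the right images for the generators; the single delicate point is the identity $t_2t_3t_1t_3=t_3(t_2t_1)t_3$, which is exactly what makes the two edges labelled $p$ of $W(p,p,r)$ map onto two conjugate copies of the one order-$p$ element $t_1t_2$ of $W(p,r,2)$. The hypothesis that $r$ be odd enters in precisely two places — to guarantee that the relator $(s_2s_3)^r$ survives in the nondegenerate form $((t_1t_3)^2)^r=1$, and to recover $t_3$ from $(t_1t_3)^2$ in the surjectivity argument — which is why one expects the statement to fail for $r$ even. (One could instead route the same map through the reflection representations of sections 1 and 2, but the elementary verification above is the shortest path.)
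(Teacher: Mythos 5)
Your proof is correct, and it takes a genuinely different route from the paper. You verify by von Dyck's theorem that $s_1\mapsto t_2$, $s_2\mapsto t_1$, $s_3\mapsto t_3t_1t_3$ kills all Coxeter relators of $W(p,p,r)$ (the commutation $t_2t_3=t_3t_2$ making the two edges labelled $p$ land on conjugates of $t_1t_2$, and the oddness of $r$ making $(t_1t_3)^2$ of order exactly $r$), and then recover $t_3$ from $(t_1t_3)^2$ to get surjectivity; since both assertions of the theorem amount to the existence of a surjection $W(p,p,r)\twoheadrightarrow W(p,r,2)$, this single map settles everything. The paper instead works through the reflection representations: it adds the relation $(s_1(s_2s_3)^{r_1}s_2)^2=1$ with $r=2r_1+1$, uses its Proposition characterizing when that element has order $2$ (forcing $\beta=\alpha$, $l=m=\epsilon\sqrt{\gamma}$), identifies the resulting group $G(\alpha,\alpha,\gamma_k;\epsilon\sqrt{\gamma_k})$ with $G(\alpha,\gamma_{k'},0;0)$, and finally invokes a Galois conjugation to match this with the (faithful) geometric representation of $W(p,r,2)$. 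Your argument is shorter, purely presentational, and avoids the case distinctions on $p=r$ versus $p\neq r$ and the Galois step; what the paper's heavier route buys is the explicit parameter system $(\alpha,\alpha,\gamma;\epsilon\sqrt{\gamma})$ of the representation that factors through the quotient, which is what the subsequent sections (presentations of $W(H_3)$, $W(H_4)$, etc.) actually use. Note also that your element $t_3t_1t_3$ is precisely the image of the paper's $s_3'=(s_2s_3)^{r_1}s_2$ read backwards, so the two constructions are mutually inverse in spirit; as a freestanding proof of the stated theorem, yours is complete.
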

Nous commen\c cons par quelques résultats techniques.\\
On suppose $r$ impair: $r=2r_{1}+1$. D'après la proposition 11 $\exists \epsilon\in \{+1,-1\}$ tel que si $\gamma$ est une racine de $v_{r}(X)$ (avec $\epsilon \in \{-1,+1\}$):
\begin{itemize}
  \item si $r_{1}$ est pair, on ait $u_{r_{1}+1}(\gamma)+\epsilon\sqrt{\gamma}u_{r_{1}}(\gamma)=0$
  \item si $r_{1}$ est impair, on ait $\epsilon\sqrt{\gamma}u_{r_{1}+1}(\gamma)+u_{r_{1}}(\gamma)=0$.
 \end{itemize}
 On a alors:
 \begin{proposition}
Soient $W(p,q,r)$ un groupe de Coxeter de rang $3$ et $R(\alpha,\beta,\gamma ;l)$ une de ses représentations de réflexion. On suppose $r$ impair: $r=2r_{1}+1$. Les deux conditions suivantes sont équivalentes:
\begin{enumerate}
  \item $s_{1}(s_{2}s_{3})^{r_{1}}s_{2}$ est d'ordre $2$;
  \item $q=p$, $\beta = \alpha$ et $l=m=\epsilon\sqrt{\gamma}$ (donc $\alpha l=\beta m$).
 \end{enumerate}
\end{proposition}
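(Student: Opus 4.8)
The strategy is to compute the element $g := s_{1}(s_{2}s_{3})^{r_{1}}s_{2} = t_{r_{1}}s_{2}$ explicitly in the reflection representation and read off when it has order $2$. Writing $g$ as an automorphism of $M$, the condition ``$g$ has order $2$'' in $GL(M)$ is equivalent (since $\det g = -1\cdot(-1)^{r_1}\cdot\ldots$ — in any case $g$ is a product of an odd number of reflections, hence an involution iff it is itself a reflection or $-\mathrm{Id}$ on the relevant part) to $g^{2}=\mathrm{Id}_{M}$, i.e. the minimal polynomial of $g$ divides $X^{2}-1$. Because in a rank-$3$ reflection representation an element that is a product of three reflections and has order $2$ must be a reflection, the cleanest route is: $g$ has order $2$ $\iff$ $g$ is a reflection $\iff$ $g$ fixes a hyperplane pointwise, equivalently $\mathrm{rk}(g-\mathrm{Id}_{M})=1$.

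First I would assemble the matrix of $t_{r_{1}}=s_{1}(s_{2}s_{3})^{r_{1}}$ from Proposition~\ref{b12} (the formulas for $(s_{2}s_{3})^{k}$) composed with $s_{1}$, exactly as in the proof of Proposition~\ref{b15} where the matrix of $t_{k}$ is written out in terms of $a,b,c,d,e,f$; then multiply on the right by the matrix of $s_{2}$. This gives the $3\times 3$ matrix of $g$ with entries that are polynomials in $\alpha,\beta,\gamma,l,m$ and the values $u_{j}(\gamma)$. Since $r=2r_{1}+1$ is odd, the key simplification is the relation, recalled just before the statement from Proposition~\ref{a15} (part 2(i)) and Proposition~\ref{a16}: for a root $\gamma$ of $v_{r}(X)$ one has, with the sign $\epsilon$, either $u_{r_{1}+1}(\gamma)+\epsilon\sqrt{\gamma}\,u_{r_{1}}(\gamma)=0$ (if $r_{1}$ even) or $\epsilon\sqrt{\gamma}\,u_{r_{1}+1}(\gamma)+u_{r_{1}}(\gamma)=0$ (if $r_{1}$ odd), together with $\gamma u_{r-1}^{2}(\gamma)=1$ and the identities of Corollary~\ref{a8} and Proposition~\ref{a9} for $u_{2r_{1}}(\gamma)=u_{r-1}(\gamma)$, $u_{2r_{1}+1}(\gamma)=u_{r}(\gamma)=0$, $u_{2r_{1}-1}(\gamma)$. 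Using $u_{r}(\gamma)=0$ collapses many entries.

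Next I would impose $\mathrm{rk}(g-\mathrm{Id}_{M})=1$, i.e. vanishing of all $2\times 2$ minors of $g-\mathrm{Id}_{M}$. This yields a system of polynomial equations in $\alpha,\beta,\gamma,l,m$ (remembering $lm=\gamma$). The direction (2)$\Rightarrow$(1) is the easy check: substitute $q=p$, $\beta=\alpha$, $l=m=\epsilon\sqrt{\gamma}$ (so $lm=\gamma$ is consistent and $\alpha l=\beta m$), use the $\epsilon\sqrt{\gamma}$-relation to see the matrix of $g$ becomes manifestly a reflection matrix (its square is the identity), possibly invoking the contragredience/symmetry facts from Proposition~37(3) — namely that $\alpha l=\beta m$ forces the bilinear form $\varphi$ to be symmetric and $\theta=\mathrm{id}_{K}$ — to organize the computation. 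For (1)$\Rightarrow$(2), the vanishing minors must be shown to force first $\beta=\alpha$ and $p=q$ (comparing the ``off-diagonal'' structure tied to the first coordinate, where $\alpha$ and $\beta$ enter asymmetrically), then $l=m$, and finally, combined with $lm=\gamma$, that $l=m=\epsilon\sqrt{\gamma}$ with precisely the sign $\epsilon$ dictated by the $v_{r}$-relation (the wrong sign would make $g-\mathrm{Id}$ of rank $2$, i.e. $g$ of infinite order or order $>2$, via the same parity relation). I expect the main obstacle to be this last step: extracting from the minor equations the exact scalar identity pinning down $l$ (not merely $l^{2}=\gamma$), which is where the sign $\epsilon$ and the odd-$r$ relations from Proposition~\ref{a15} must be used in an essential way rather than just $\gamma u_{r_1}^2$-type square relations. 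Once $\beta=\alpha$, $l=m$ are known, $\alpha l=\beta m$ is automatic, and irreducibility-independence of the formulas (emphasized in the remark opening Section~2) means no further case distinctions on $\Delta$ are needed.
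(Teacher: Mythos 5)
There is a genuine gap at the heart of your plan. The element $g=s_{1}(s_{2}s_{3})^{r_{1}}s_{2}$ is a word of length $2r_{1}+2$ in the generators, hence $\det g=+1$; it can never be a reflection. Your proposed criterion ``$g$ of order $2$ $\iff$ $g$ is a reflection $\iff$ $\mathrm{rk}(g-\mathrm{Id}_{M})=1$'' is therefore false: an involution of determinant $+1$ in rank $3$ has eigenvalues $(-1,-1,+1)$, so $\mathrm{rk}(g-\mathrm{Id}_{M})=2$, while the rank-one condition with $\det=+1$ would pick out transvections, which have infinite order in characteristic $0$. Imposing the vanishing of the $2\times 2$ minors of $g-\mathrm{Id}_{M}$ would thus characterize the wrong locus and the implication $(1)\Rightarrow(2)$ could not be extracted from it. Your parenthetical determinant count (``product of an odd number of reflections'') is also off.

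The idea the paper uses, and which your write-up misses, is to observe that $(s_{2}s_{3})^{r_{1}}s_{2}=s_{2}(s_{2}s_{3})^{r_{1}+1}$ is itself a \emph{single reflection} of the dihedral parabolic $\langle s_{2},s_{3}\rangle$ (here the relation $(s_{2}s_{3})^{r}=1$ with $r=2r_{1}+1$ odd is used). Hence $g$ is a product of two reflections $s_{1}$ and $t$, and $g$ has order $2$ exactly when $s_{1}$ and $t$ commute, i.e.\ when \emph{both} coefficients $c(s_{1},t)$ and $c(t,s_{1})$ vanish --- not merely their product $C(s_{1},t)$ (if only one vanishes, $s_{1}t$ is a nontrivial unipotent of infinite order). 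The quantity $C(s_{1},s_{2}(s_{2}s_{3})^{k})$ was already computed in factored form $(\,\cdot\,)(\,\cdot\,)$ in the proposition on the $C(s_{i},\cdot)$'s, so condition (1) becomes a system of two linear equations in $l,m,\alpha,\beta$ and the $u_{j}(\gamma)$; combined with $u_{r_{1}+1}(\gamma)+\epsilon\sqrt{\gamma}\,u_{r_{1}}(\gamma)=0$ (or its analogue for $r_{1}$ odd) and $lm=\gamma$, this yields $\alpha l=\beta m$, $m=\epsilon\sqrt{\gamma}$, $l=m$, $\beta=\alpha$, $q=p$ with essentially no computation. The parts of your plan concerning the use of Proposition~\ref{a15} and the sign $\epsilon$ are in the right spirit, but they cannot be salvaged without first replacing your order-$2$ criterion by the commutation of the two reflections.
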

\begin{proof}
1) On montre que (1) implique (2).\\
On a $(s_{2}s_{3})^{r_{1}}s_{2}=s_{2}(s_{2}s_{3})^{r_{1}+1}$. On applique les résultats de la proposition 17.\\
Si $r_{1}$ est pair, $C(s_{1},s_{2}(s_{2}s_{3})^{r_{1}+1})=(\alpha l u_{r_{1}}(\gamma)+\beta u_{r_{1}+1}(\gamma))(u_{r_{1}+1}(\gamma)+m u_{r_{1}}(\gamma))$ donc $s_{1}s_{2}(s_{2}s_{3})^{r_{1}+1}$ d'ordre 2 équivaut au système:
\begin{align}
\alpha lu_{r_{1}}(\gamma)+\beta u_{r_{1}+1}(\gamma) &= 0 \notag\\
m u_{r_{1}}(\gamma)+ u_{r_{1}+1}(\gamma) &= 0. \notag
\end{align}
On obtient d'abord $\alpha l=\beta m$. On a aussi l'équation $u_{r_{1}+1}(\gamma)+\epsilon\sqrt{\gamma}u_{r_{1}}(\gamma)=0$ donc $m=\epsilon\sqrt{\gamma}$. Comme $lm=\gamma$ on voit que $l=m$, $\beta = \alpha$ et $q=p$.\\
Si $r_{1}$ est impair $C(s_{1},s_{2}(s_{2}s_{3})^{r_{1}+1})=(\alpha u_{r_{1}}(\gamma)+\beta m u_{r_{1}+1}(\gamma))(u_{r_{1}}(\gamma)+l u_{r_{1}+1}(\gamma))$ donc $s_{1}s_{2}(s_{2}s_{3})^{r_{1}+1}$ d'ordre 2 équivaut au système :
\begin{align}
\alpha u_{r_{1}}(\gamma)+\beta m u_{r_{1}+1}(\gamma) &= 0 \notag\\
u_{r_{1}}(\gamma)+l u_{r_{1}+1}(\gamma) &= 0\notag
\end{align}
On obtient d'abord $\alpha l=\beta m$. On a aussi l'équation $\epsilon\sqrt{\gamma}u_{r_{1}+1}(\gamma)+u_{r_{1}}(\gamma)=0$ donc $l=\epsilon\sqrt{\gamma}$. Comme ci-dessus, $l=m$, $\beta=\alpha$ et $q=p$.\\
2) On montre que (2) implique (1).\\
Si $\beta=\alpha$ et $l=m=\epsilon\sqrt{\gamma}$ alors si $r_{1}$ est pair, le système:
\begin{align}
\alpha lu_{r_{1}}(\gamma)+\beta u_{r_{1}+1}(\gamma) &= 0 \notag\\
m u_{r_{1}}(\gamma)+ u_{r_{1}+1}(\gamma) &= 0. \notag\\
u_{r_{1}+1}(\gamma)+\epsilon\sqrt{\gamma}u_{r_{1}}(\gamma) &=0 \notag
\end{align}
se réduit à la dernière équation qui est satisfaite par hypothèse donc $s_{1}(s_{2}s_{3})^{r_{1}}s_{2}$ est d'ordre $2$.\\
La démonstration est identique si $r_{1}$ est impair.
\end{proof}
Nous avons deux corollaires à ce résultat.
\begin{corollary}
On pose $\gamma_{k}:=4\cos^{2} \frac{k\pi}{r}$, $0\leqslant k < \frac{r}{2}$, $(k,r)=1$. Avec les hypothèses et notations de la proposition 15, on a:
\[
G(\alpha,\alpha,\gamma_{k};\epsilon\sqrt\gamma_{k}) \simeq G(\alpha,\gamma_{k'},0;0)
\]
où
\begin{itemize}
  \item si $k=2k_{1}$, on a $k'=k_{1}$;
  \item si $k=2k_{1}+1$, on a $k'=r_{1}-k_{1}$.
  \end{itemize}
\end{corollary}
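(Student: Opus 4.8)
Le plan est de produire, à l'intérieur de $G:=G(\alpha,\alpha,\gamma_{k};\epsilon\sqrt{\gamma_{k}})$, un nouveau système de trois réflexions qui engendre $G$, vérifie les relations de Coxeter de $W(p,r,2)$, et porte exactement les données de représentation de réflexion de système de paramètres $(\alpha,\gamma_{k'},0;0)$. On commencera par poser $\rho:=s_{2}s_{3}$, élément d'ordre $r$ de $\langle s_{2},s_{3}\rangle$, et $s:=(s_{2}s_{3})^{r_{1}}s_{2}=\rho^{r_{1}}s_{2}$. De la relation diédrale $s_{2}\rho s_{2}=\rho^{-1}$ on tire $s^{2}=\rho^{r_{1}}(s_{2}\rho^{r_{1}}s_{2})=\rho^{r_{1}}\rho^{-r_{1}}=1$, donc $s$ est une réflexion de la représentation (conjuguée, dans le groupe diédral $\langle s_{2},s_{3}\rangle$, à $s_{2}$). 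Comme $\beta=\alpha$ et $l=m=\epsilon\sqrt{\gamma}$ on a $\alpha l=\beta m$, de sorte que $\varphi$ est symétrique avec $\theta=\mathrm{id}$ et, surtout, la proposition précédente s'applique dans son cas $(2)$ et fournit que $s_{1}s=s_{1}(s_{2}s_{3})^{r_{1}}s_{2}$ est d'ordre $2$, c'est-à-dire que $s_{1}$ et $s$ commutent.

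Il faudra ensuite déterminer les ordres des produits deux à deux de $\{s_{1},s_{2},s\}$ dans $G$. L'ordre de $s_{1}s_{2}$ est $p$, car $\langle R(s_{1}),R(s_{2})\rangle\simeq\langle s_{1},s_{2}\rangle$. L'ordre de $s_{1}s$ est $2$ d'après ce qui précède. Enfin $s_{2}s=s_{2}\rho^{r_{1}}s_{2}=\rho^{-r_{1}}$, et comme $\gcd(r_{1},r)=\gcd(r_{1},2r_{1}+1)=1$ cet élément est d'ordre $r$; de plus $\langle s_{2},s\rangle=\langle s_{2},\rho^{-r_{1}}\rangle=\langle s_{2},\rho\rangle=\langle s_{2},s_{3}\rangle$, d'où $\langle s_{1},s_{2},s\rangle=\langle s_{1},s_{2},s_{3}\rangle=G$. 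Ainsi $(s_{2},s_{1},s)$ est un système générateur de $G$ formé de réflexions dont la matrice de Coxeter est celle de $W(p,r,2)$, $s_{2}$ étant le sommet joint par les arêtes étiquetées $p$ et $r$.

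Il restera à calculer les valeurs $C$ de ce triplet, à l'aide des formules de la Section~2 (propositions \ref{b12} et \ref{b13}, remarque qui suit la proposition \ref{b13}) et de la proposition \ref{a14}. L'arête $\{s_{1},s_{2}\}$ donne $C(s_{1},s_{2})=\alpha$, racine de $v_{p}(X)$. L'arête $\{s_{1},s\}$ donne $C(s_{1},s)=0$ puisque $s_{1}s$ est d'ordre $2$; le paramètre de la troisième arête de la cible est donc $0$, ce qui force $l=0$ par la convention relative aux arêtes d'ordre $2$. Pour l'arête $\{s_{2},s\}$ on utilisera $s_{2}s=\rho^{-r_{1}}$: sur le plan qu'il déplace, $\rho=s_{2}s_{3}$ est la rotation attachée à $\gamma_{k}=4\cos^{2}\tfrac{k\pi}{r}$ (i.e. $C(s_{2},s_{3})=\gamma_{k}$), donc $\rho^{-r_{1}}$ est attachée à $4\cos^{2}\tfrac{(r_{1}k\bmod r)\pi}{r}$; ainsi $C(s_{2},s)=4\cos^{2}\tfrac{k'\pi}{r}=\gamma_{k'}$ où $k'$ est le représentant de $\pm r_{1}k$ modulo $r$ situé dans $[0,\tfrac{r}{2})$. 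Comme $2r_{1}\equiv-1\pmod r$, si $k=2k_{1}$ alors $r_{1}k\equiv-k_{1}$, d'où $k'=k_{1}$; si $k=2k_{1}+1$ alors $r_{1}k\equiv r_{1}-k_{1}$, d'où $k'=r_{1}-k_{1}$ (et dans les deux cas $\gcd(k',r)=1$, donc $\gamma_{k'}$ est bien racine de $v_{r}(X)$). On conclura que $(s_{2},s_{1},s)$, muni de la forme ambiante sur $M$, réalise la représentation de $W(p,r,2)$ issue de la construction fondamentale de système de paramètres $(\alpha,\gamma_{k'},0;0)$, d'où, en passant aux images, l'isomorphisme cherché.

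L'obstacle principal sera cette dernière identification: il faudra s'assurer que la donnée des trois nombres $C(s_{2},s_{1})$, $C(s_{1},s)$, $C(s_{2},s)$, jointe aux vecteurs $v^{-}$ calculés via la proposition \ref{b13}, détermine effectivement la représentation portée par $(s_{2},s_{1},s)$ comme étant la construction fondamentale pour $W(p,r,2)$ — en particulier qu'aucune dégénérescence résiduelle ne subsiste et que l'annulation du troisième paramètre force réellement $l=0$, de sorte que la cible soit $G(\alpha,\gamma_{k'},0;0)$ et non un $G(\alpha,\gamma_{k'},0;l')$. Une fois l'égalité $s_{2}s=\rho^{-r_{1}}$ observée, la petite arithmétique modulo $r$ qui produit $k'$ est routinière.
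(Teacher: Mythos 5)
Votre démonstration est correcte et suit pour l'essentiel la même route que celle du texte : on remplace $s_{3}$ par $s'_{3}=(s_{2}s_{3})^{r_{1}}s_{2}$, on invoque la proposition précédente pour obtenir que $s_{1}s'_{3}$ est d'ordre $2$, on note que $\langle s_{2},s'_{3}\rangle=\langle s_{2},s_{3}\rangle$ donne un nouveau système générateur de type $(p,r,2)$, puis on identifie le paramètre de l'arête $\{s_{2},s'_{3}\}$. La seule différence, inoffensive, est le mode de calcul de ce paramètre : le texte obtient $C(s_{2},s'_{3})=2+\epsilon\sqrt{\gamma_{k}}$ directement sur les vecteurs $a_{2},a_{3}$ puis applique une identité d'angle moitié, tandis que vous le lisez comme $2+\mathrm{tr}\,(\rho^{-r_{1}})=4\cos^{2}\frac{r_{1}k\pi}{r}$ et réduisez $r_{1}k$ modulo $r$; les deux calculs coïncident puisque $\epsilon=(-1)^{k}$.
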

\begin{proof}
Posons $s'_{3}=(s_{2}s_{3})^{r_{1}}s_{2}$. On a $< s_{2}, s'_{3}>=<s_{2},s_{3}>$ d'où\\ $G=<s_{1},s_{2},s'_{3}>$. En utilisant la proposition 16 nous voyons que $s'_{3}(a_{1})=a_{1}$, $s'_{3}(a_{2})=a_{3}$ et $s'_{3}(a_{3})=a_{2}$. Nous avons le diagramme:
\[
\begin{picture}(150,88)
\put(29,42){\circle{7}}
\put(32,42){\line(1,0){30}}
\put(65,42){\circle*{7}}
\put(70,42){\line(4,0){30}}
\put(103,42){\circle{7}}
\put(24,52){$s_{1}$}
\put(61,52){$s_{2}$}
\put(99,52){$s'_{3}$}
\put(44,47){$p$}
\put(80,47){$r$}
\end{picture}
\]
Nous posons $a'_{1}=s_{1}(a_{2})-a_{2}=\alpha a_{1}$ et $a'_{3}=s'_{3}(a_{2})-a_{2}=a_{3}-a_{2}$ et nous avons la relation $s_{2}(a'_{3})-a'_{3}=(2+\epsilon\sqrt{\gamma})a_{2}$ donc $C(s_{2},s'_{3})=2+\epsilon\sqrt{\gamma}$. Un calcul simple montre que si $\gamma =\gamma_{k}$ on a:
\begin{itemize}
  \item si $k=2k_{1}$ alors $\epsilon=+1$ et $2+\epsilon\sqrt{\gamma}=4\cos^{2}\frac{k_{1}\pi}{r}$;
  \item si $k=2k_{1}+1$ alors $\epsilon=-1$ et $2+\epsilon\sqrt{\gamma}=4\cos^{2}\frac{(r_{1}-k_{1})\pi}{r}$
 \end{itemize}
 d'où le résultat en utilisant la proposition 11.
\end{proof}
\begin{corollary}
On pose $\tau:=\frac{3+\sqrt{5}}{2}=4\cos^{2}\frac{\pi}{5}$. C'est la plus grande racine de $u_{5}(X)$, l'autre étant $3-\tau=\frac{3-\sqrt{5}}{2}=4\cos{2}\frac{2\pi}{5}$. On a:
\begin{enumerate}
  \item $G(1,1,1;-1)\simeq W(A_{3})$;
  \item $G(2,2,2;-1)\simeq W(B_{3})$;
  \item $G(3,3,3;-1)\simeq W(\tilde{G_{2}})$;
  \item $G(1,1,\tau;1-\tau)\simeq W(H_{3})$;
  \item $G(\tau,\tau,1;-1)\simeq W(H_{3})$.
\end{enumerate}
\end{corollary}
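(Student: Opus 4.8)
The plan is to dispose of the five isomorphisms one at a time. In each case $G$ is an image of the rank-$3$ Coxeter group $W(p,p,r)$, for the values of $p$ and $r$ read off from the parameters ($\alpha$ a root of $v_p(X)$, $\gamma$ a root of $v_r(X)$): $(p,r)=(3,3)$ in (1), $(4,4)$ in (2), $(6,6)$ in (3), $(3,5)$ in (4) and $(5,3)$ in (5). The first step is always the same: record $m=\gamma/l$, compute $\Delta(G)=8-2\alpha-2\beta-2\gamma-(\alpha l+\beta m)$, and decide whether the hypotheses of Proposition~\ref{a15} and of the preceding corollary are met.

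For items (1), (4) and (5) they are. In each of these $\alpha=\beta$, the integer $r$ for which $\gamma$ is a root of $v_r(X)$ is odd ($r=3$ in (1), (5) and $r=5$ in (4)), and $l=m=\epsilon\sqrt{\gamma}$ with $\epsilon=-1$ --- for (4) one uses that $\sqrt{\tau}=\frac{1+\sqrt5}{2}$, so that $1-\tau=-\sqrt{\tau}$. Hence the preceding corollary applies and gives $G(\alpha,\alpha,\gamma_k;\epsilon\sqrt{\gamma_k})\simeq G(\alpha,\gamma_{k'},0;0)$, and the group on the right is a $W(p,q',2)$: its last parameter being $0$ one has $m_{s_2s_3}=2$, so $s_2$ and $s_3$ commute and the Coxeter graph of $\{s_1,s_2,s_3\}$ is the path $s_2-s_1-s_3$ with labels $p$ and $q'$. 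One reads off $\{p,q'\}=\{3,3\}$ in (1) (the $A_3$ diagram) and $\{p,q'\}=\{3,5\}$ in (4) and (5) (the $H_3$ diagram). Since in all three cases $\alpha l=\beta m$ (everything equals $-1$) and $\Delta>0$, the invariant form is positive definite, so $R$ is the faithful standard reflection representation of the corresponding finite Coxeter group; this yields the isomorphisms (1), (4), (5).

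For items (2) and (3) the preceding corollary does not apply: here $r$ is even ($\gamma=2$ is a root of $v_4(X)$, $\gamma=3$ of $v_6(X)$) and moreover $\alpha l\neq\beta m$, so the invariant bilinear form is not symmetric and the positivity criterion is unavailable. I would argue directly from the explicit matrices of $s_1,s_2,s_3$ in the basis $\mathcal A$ written down in the proof of Proposition~\ref{b12}: compute enough of the products $s_i(s_js_k)^{n}$ of Propositions~\ref{b12}--\ref{b15} to exhibit inside $G$ a generating triple of reflections whose Coxeter graph is the path with labels $4,3$ (type $B_3$) in (2), respectively $6,3$ (type $\tilde{G_2}$) in (3), verify the corresponding braid and order relations, and then identify $G$ --- by a count $\lvert G\rvert=48$ in case (2), and by exhibiting the translation lattice, hence the crystallographic structure, in the affine case (3). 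The genuine obstacle lies precisely here: one must locate the right alternative Coxeter system inside the image and make sure that no further relation collapses $G$ to a proper quotient of $W(B_3)$, respectively $W(\tilde{G_2})$; the affine case (3) is the more delicate, since $G$ is infinite and a cardinality argument is not available, so the translation subgroup must be pinned down explicitly. Once this is in place, items (1), (4) and (5) are essentially bookkeeping on top of the preceding corollary.
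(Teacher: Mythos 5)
Your handling of items (1), (4) and (5) is exactly the paper's (one-line) proof: everything is meant to follow from the preceding corollary $G(\alpha,\alpha,\gamma_{k};\epsilon\sqrt{\gamma_{k}})\simeq G(\alpha,\gamma_{k'},0;0)$, and your verifications that $l=m=\epsilon\sqrt{\gamma}$ with $\epsilon=-1$ and that the resulting rank-$3$ diagrams are those of $A_{3}$ and $H_{3}$ are correct (only the parenthetical ``everything equals $-1$'' is off: in (4) one has $\alpha l=1-\tau$, in (5) $\alpha l=-\tau$; what matters is $\alpha l=\beta m$, which does hold).

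The gap is in your treatment of (2) and (3). You rightly notice that the preceding corollary does not apply to $G(2,2,2;-1)$ and $G(3,3,3;-1)$ as printed ($r$ even, $l^{2}=1\neq\gamma$, $\alpha l\neq\beta m$), but the conclusion to draw is not that a separate direct argument is required: it is that these two items cannot be correct as written, and that the intended parameters are $G(2,2,1;-1)$ and $G(3,3,1;-1)$ (so $\gamma=1$, $r=3$, $l=m=-1=\epsilon\sqrt{\gamma}$), to which the preceding corollary applies exactly as in the other three cases and gives $G(2,1,0;0)=W(4,3,2)=W(B_{3})$ and $G(3,1,0;0)=W(6,3,2)=W(\tilde{G_{2}})$. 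The direct computation you propose for (2) is doomed: with $\alpha=\beta=\gamma=2$, $l=-1$, $m=-2$ one gets $\theta=0$, $\theta'=2$, so by Proposition~\ref{b15} the characteristic polynomial of $s_{1}s_{2}s_{3}$ is $X^{3}-X^{2}+X+1$. A real $3\times3$ matrix of determinant $-1$ and finite order must admit $-1$ as an eigenvalue, but this polynomial does not vanish at $-1$ (nor at $1$); hence $s_{1}s_{2}s_{3}$ has infinite order, $G(2,2,2;-1)$ is infinite, and the count $\lvert G\rvert=48$ you plan to carry out will never close. You actually had the decisive clue in hand: you observe that $\alpha l\neq\beta m$, so (since $\Delta=2\neq0$ makes $R$ irreducible and $\dim\Phi=1$) there is no nonzero symmetric invariant form --- impossible for the absolutely irreducible real reflection representation of $W(B_{3})$. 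For (3) the same inconsistency occurs ($\alpha l=-3\neq-9=\beta m$, and $P_{s_{1}s_{2}s_{3}}(X)=X^{3}-3X^{2}+3X+1$ has no eigenvalue equal to $1$, whereas every element of $W(\tilde{G_{2}})$ fixes the radical of the Tits form pointwise), so the ``translation lattice'' you intend to exhibit does not exist in this representation. The correct move was to repair the statement and then apply the preceding corollary uniformly to all five items, which is precisely what the paper's proof asserts.
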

\begin{proof}
C'est clair d'après le corollaire 8.
\end{proof}
\begin{proof} (Du théorème 6)\\
Par hypothèse $r$ est impair et $r=2r_{1}+1$.\\
1) Distinguons deux cas suivant que $p=r$ ou non.\\
\textbf{Premier cas} $p\neq r$. Nous partons de la représentation suivante de $W(p,p,r)$: on suppose que $\alpha=\beta=4\cos^{2}\frac{\pi}{p}$, $\gamma=4\cos^{2}\frac{\pi}{r}$ et $\alpha l=\beta m$. D'après le corollaire 4, comme $k=1$ on a $k_{1}=0$ et le groupe obtenu en ajoutant la relation $(s_{1}(s_{2}s_{3})^{r_{1}}s_{2})^{2}=1$ est isomorphe à $G(\alpha,\gamma_{r},0;0)$. maintenant le groupe de Galois $\textit{G}(K_{0}/\mathbb{Q})$ est commutatif et il existe $\sigma \in \textit{G}(K_{0}/\mathbb{Q})$ tel que $\sigma(\alpha)=\alpha$ et $\sigma(\gamma_{r_{1}})=\gamma_{1}$ car les polynômes $v_{p}(X)$ et $v_{q}(X)$ sont premiers entre eux et ils sont irréductibles. Il en résulte que $R(\alpha,\gamma_{r},0;0)$ est conjuguée de la représentation géométrique de $W(p,r,2)$, d'où le résultat dans ce cas.\\
\textbf{Deuxième cas} $p=r$. Nous partons de la représentation suivante de $W(p,p,p)$: on suppose que $\alpha=\beta=4\cos^{2}\frac{\pi}{p}$, $\gamma=4\cos^{2}\frac{2\pi}{r}$ et $\alpha l=\beta m$. D'après le corollaire 4, comme $k=2$, nous voyons que le groupe obtenu en ajoutant la relation $(s_{1}(s_{2}s_{3})^{r_{1}}s_{2})^{2}=1$ est isomorphe à $G(\alpha,\gamma_{1},0;0)$ d'où le résultat dans ce cas.\\
2) Réciproquement nous partons de la représentation géométrique de $W(p,r,2)$: $\alpha=4\cos^{2}\frac{pi}{p}$, $\gamma_{1}=4\cos^{2}\frac{pi}{r}$ et $l=m=0$. Si nous appliquons le corollaire 4 au groupe $W(p,p,r)$ et à sa représentation $R(\alpha,\alpha,\gamma_{2};\sqrt{\gamma_{2}})$ nous avons le résultat.
\end{proof}
\subsection{Présentations du groupe de Coxeter $W(H_{3})$.}
Nous montrons comment obtenir le groupe de Coxeter $W(H_{3})$ comme quotient d'un groupe de Coxeter de rang $3$.\\
\begin{notation}
Soit $W(p,q,r)$ un groupe de Coxeter de rang $3$. Si $G$ est un quotient de $W(p,q,r)$, une présentation de $G$ est notée:
\[
(w(p,q,r),R_{1},\cdots,R_{n})
\]
où $w(p,q,r)$ est la présentation de $W(p,q,r)$ comme groupe de Coxeter:
\[
\begin{picture}(150,68)
\put(-30,35){($w(p,q,r)$:}
\put(60,63){$s_{1}$}
\put(63,55){\circle{7}}
\put (37,5){\circle{7}}
\put (90,5){\circle{7}}
\put(35,-8){$s_{2}$}
\put(88,-8){$s_{3}$}
\put(39,6,5){\line(1,0){48}}
\put(36,6,7){\line(1,2){23}}
\put(90,6,8){\line(-1,2){23}}
\put(41,35){$p$}
\put(80,35){$q$}
\put(60,-4){$r$}\put(100,35){)}
\end{picture}
\]
et où les $R_{i}$ sont des mots en $s_{1}$, $s_{2}$ et $s_{3}$.\\
Si $G$ est un groupe de réflexion, dans la mesure du possible, nous choisissons les $R_{i}$ sous la forme $(st)^{m}$ où $s$ et $t$ sont des réflexions de $W(p,q,r)$ et aussi de telle sorte que l'on puisse calculer les éléments du système de paramètres $\textit{P}(G):=\textit{P}(\alpha,\beta,\gamma ;\alpha l,\beta m)$, puisque celui-ci permet de construire la représentation $R$.
\end{notation}
Nous connaissons la structure du groupe $W(H_{3})$: $W(H_{3})\simeq C_{2}\times A_{5}$ où $A_{5}$ est le groupe alterné de degré $5$. Nous connaissons aussi son corps de définition en tant que groupe de réflexion: on a $K=K_{0}=\mathbb{Q}(\tau)$. Nous avons alors:
\begin{theorem}
Les présentations de $W(H_{3})$ comme quotient d'un groupe de Coxeter $W(p,q,r)$ sont les suivantes où l'on a posé $t=s_{1}s_{2}s_{3}$: ($t=t_{1}$ dans la notation 2)
\begin{enumerate}
  \item $(p,q,r)=(3,5,2)$ (présentation de Coxeter)\\
  \[
  (w(3,5,2))
  \]
   on a $t^{5}=-id_{M}$ et $\Delta=2(3-\tau)$. On obtient la représentation $R(1,\tau,0;0)$ de $W(3,5,2)$.
  \item $(p,q,r)=(5,5,2)$
  \[
  (w(5,5,2),\,(s_{2}s_{3}^{s_{1}})^{3}=1)
  \]
  On a $t^{3}=-id_{M}$ et $\Delta=2$. On obtient la représentation $R(\tau,3-\tau,0;0)$ de $W(5,5,2)$.
  \item $(p,q,r)=(3,3,5)$
  \[
 (w(3,3,5), \,(s_{1}s_{2}^{s_{3}s_{2}})^{2}=1)
  \]
  On a $t^{3}=-id_{M}$ et $\Delta=2$. On obtient la représentation $R(1,1,\tau;1-\tau)$ de $W(3,3,5)$.
  \item $(p,q,r)=(5,5,3)$
  \[
  (w(5,5,3),\, (s_{1}s_{2}^{s_{3}})^{2}=1)
  \]
  On a $t^{5}=-id_{M}$ et $\Delta=2(3-\tau)$. On obtient la représentation $R(\tau,\tau,1;-1)$ de $W(5,5,3)$.
  \item $(p,q,r)=(5,5,3)$
  \[
  (w(5,5,3),\,(s_{3}s_{2}^{s_{1}})^{2}=1)
  \]
   On a $t^{3}=-id_{M}$ et $\Delta=2$. On obtient la représentation $R(\tau,3-\tau,1;\tau-3)$ de $W(5,5,3)$.
  \item $(p,q,r)=(5,5,5)$
  \[
  (w(5,5,5),\,(s_{1}s_{2}^{s_{3}})^{3}=1,\,(s_{1}s_{2}^{s_{3}s_{2}})^{2}=1)
  \]
  On a $t^{5}=-id_{M}$ et $\Delta=2(3-\tau)$. On obtient la représentation $R(\tau,\tau,\tau;1-\tau)$ de $W(5,5,5)$.
\end{enumerate}
\end{theorem}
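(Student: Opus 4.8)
\emph{Esquisse du plan de démonstration.} Le plan consiste à croiser les résultats de structure des représentations de réflexion en rang~$3$ rappelés plus haut — irréductibilité équivalente à $\Delta\neq 0$, caractère réel et symétrie ($\alpha l=\beta m$), positivité de $\varphi$ équivalente à $\Delta>0$ — avec les formules explicites des sections~1 et~2 (propositions~\ref{a14}, \ref{b12}, \ref{b13}, \ref{b15} et formules $\mathcal{C}_{i}$), avec le théorème~6 et ses corollaires, et avec ce que l'on sait de $W(H_{3})$: groupe fini irréductible de rang~$3$, d'ordre~$120$, contenant $-\mathrm{id}_{M}$, de corps de définition $\mathbb{Q}(\tau)$, et dans lequel deux réflexions quelconques engendrent un groupe diédral d'ordre $2m$ avec $m\in\{2,3,5\}$.

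\emph{Restriction de la liste des triplets.} Supposons $W(p,q,r)$ surjecté sur $W(H_{3})$ au moyen d'une représentation de réflexion $R(\alpha,\beta,\gamma;l)$ vérifiant les conditions (R). Chaque parabolique de rang~$2$ s'y plongeant fidèlement, $W(H_{3})$ contient pour chaque $m\in\{p,q,r\}$ un diédral d'ordre $2m$ engendré par deux réflexions; d'où $p,q\in\{3,5\}$ et $r\in\{2,3,5\}$. De plus $\alpha,\beta$ sont racines de $v_{p},v_{q}$ et $\gamma$ racine de $v_{r}$, donc $\alpha,\beta\in\{1,\tau,3-\tau\}$, $\gamma\in\{0,1,\tau,3-\tau\}$, et l'un au moins des trois est racine de $v_{5}$ puisque le corps de définition est $\mathbb{Q}(\tau)$. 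Enfin, $-\mathrm{id}_{M}$ étant dans l'image et $\dim M=3$, il n'y a pas de sous-espace invariant propre: la représentation est irréductible, donc $\Delta\neq 0$; et $W(H_{3})$ étant fini, $\varphi$ n'est pas de signature $(2,1)$, d'où $\Delta>0$ et $\alpha l=\beta m$. À permutation de $(p,q,r)$ (permutation des sommets du triangle) et à conjugaison $\tau\leftrightarrow 3-\tau$ près, il ne reste que les triplets $\{3,5,2\}$, $\{5,5,2\}$, $\{3,3,5\}$, $\{3,5,5\}$, $\{5,5,5\}$: les cas $\{3,3,2\}$ et $\{3,3,3\}$ sont exclus ($W(A_{3})$ est d'ordre $24<120$, et $W(\tilde A_{2})$ est résoluble donc ne se surjecte pas sur $C_{2}\times A_{5}$).

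\emph{Réalisation de chaque cas.} Pour $(3,5,2)$ la représentation cherchée est la représentation géométrique $R(1,\tau,0;0)$ de $W(H_{3})=W(3,5,2)$ lui-même. Pour $(3,3,5)$ et pour le cas~4 de $(5,5,3)$, on invoque le théorème~6 et les corollaires qui le suivent: la relation ajoutée est, à réécriture près, la relation $(s_{1}(s_{2}s_{3})^{r_{1}}s_{2})^{2}=1$ de la proposition~15 (soit $(s_{1}s_{2}^{\,s_{3}s_{2}})^{2}=1$ si $r_{1}=2$, soit $(s_{1}s_{2}^{\,s_{3}})^{2}=1$ si $r_{1}=1$, les deux formes coïncidant dans le groupe diédral correspondant), laquelle identifie le quotient à $W(p,r,2)=W(H_{3})$; les images $G(1,1,\tau;1-\tau)$ et $G(\tau,\tau,1;-1)$ sont reconnues comme $W(H_{3})$ par le corollaire donnant $G(1,1,1;-1)\simeq W(A_{3})$, \dots, $G(\tau,\tau,1;-1)\simeq W(H_{3})$. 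Le cas~5 de $(5,5,3)$ s'obtient par une construction analogue avec $\alpha=\tau$, $\beta=3-\tau$, $\gamma=1$, en accord avec la partie~2) de la dernière proposition de la section~1 ($4-\alpha-\beta-\gamma=0$). Les triplets $(5,5,2)$ et $(5,5,5)$ se traitent de même, en appliquant successivement une ou deux des relations précédentes (pour $(5,5,5)$: d'abord $(s_{1}s_{2}^{\,s_{3}})^{3}=1$, puis $(s_{1}s_{2}^{\,s_{3}s_{2}})^{2}=1$) afin de se ramener à un groupe déjà traité; les représentations finales sont $R(\tau,3-\tau,0;0)$ et $R(\tau,\tau,\tau;1-\tau)$.

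\emph{Vérifications et obstacle principal.} Dans chaque cas, on contrôle par un calcul direct dans $\mathbb{Q}(\tau)$ (utilisant $\tau^{2}=3\tau-1$) et les formules $\mathcal{C}_{i}$ que les relations affichées sont satisfaites — chacune se ramenant à une égalité $C(s_{i},s)=4\cos^{2}\tfrac{\pi}{m}$ —, que $\alpha l=\beta m$, et que $\Delta=2$ (cas~2, 3, 5) ou $\Delta=2(3-\tau)$ (cas~1, 4, 6), ces deux valeurs étant $>0$ (donc $\varphi$ définie positive). Pour $t=t_{1}=s_{1}s_{2}s_{3}$: comme $\alpha l=\beta m$ entraîne $\theta+\theta'=0$, la proposition~\ref{b15} donne $P_{t_{1}}(X)=(X+1)\bigl(X^{2}-(2-\tfrac{\Delta}{2})X+1\bigr)$; si $\Delta=2(3-\tau)$ le facteur quadratique est $X^{2}-(\tau-1)X+1$, de racines $e^{\pm i\pi/5}$ (car $\tau-1=2\cos\tfrac{\pi}{5}$), d'où $t^{5}=-\mathrm{id}_{M}$; si $\Delta=2$ c'est $X^{2}-X+1$, de racines $e^{\pm i\pi/3}$, d'où $t^{3}=-\mathrm{id}_{M}$. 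Pour la complétude, on vérifie enfin que tout autre choix de paramètres dans la liste ci-dessus donne soit $\Delta\leqslant 0$ (image réductible ou infinie), soit un groupe de réflexion fini d'un autre type (via le corollaire précité), soit, faute de relations suffisantes, une image infinie. L'obstacle principal est l'identification du quotient à l'étape précédente: montrer que l'adjonction des relations indiquées fait s'effondrer le groupe triangulaire \emph{infini} $W(p,q,r)$ sur un groupe d'ordre \emph{exactement} $120$; c'est là qu'il faut réellement invoquer les isomorphismes du théorème~6 et de ses corollaires, via un changement de générateurs produisant une présentation d'un groupe déjà connu pour être $W(H_{3})$, plutôt qu'un simple calcul matriciel qui ne fournit que la surjection.
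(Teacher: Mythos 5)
Votre plan suit pour l'essentiel le même schéma en deux temps que la démonstration du texte : (A) restriction de la liste des triplets $(p,q,r)$ au moyen des ordres possibles d'un produit de deux réflexions dans $W(H_{3})$ et du corps de définition $\mathbb{Q}(\tau)$, puis (B) réalisation de chaque cas par changement de générateurs, vérification des paramètres $(l,m)$, de $\Delta$ et de $t^{n}$, et enfin élimination des autres choix. La différence principale est dans (B) : là où le texte réécrit explicitement, cas par cas, le système générateur pour faire apparaître un diagramme linéaire de type $H_{3}$, vous invoquez le théorème 6, la proposition 15 et le corollaire donnant $G(1,1,\tau;1-\tau)\simeq G(\tau,\tau,1;-1)\simeq W(H_{3})$ ; pour les cas 3 et 4 c'est légitime et plus économique, puisque ces énoncés contiennent déjà la conclusion. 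Votre calcul uniforme de $t^{5}=-id_{M}$ ou $t^{3}=-id_{M}$ via la factorisation $P_{t_{1}}(X)=(X+1)\bigl(X^{2}-(2-\tfrac{\Delta}{2})X+1\bigr)$ tirée de la proposition \ref{b15} (avec $\theta=-\Delta/2$ lorsque $\alpha l=\beta m$) est également plus propre que le calcul laissé au lecteur dans le texte.

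Trois points restent à consolider. D'abord, l'inférence \emph{$-id_{M}$ est dans l'image et $\dim M=3$, donc pas de sous-espace invariant propre} est fausse telle quelle : $-id_{M}$ est central et ne fait obstacle à aucun sous-espace invariant ; la bonne route est $Z(G)\simeq Z(W(H_{3}))\neq\{1\}$, donc $\Delta\neq 0$ par la proposition 19, 1)(b), puis $\Delta>0$ par finitude. Ensuite, les cas 2, 5 et 6 ne relèvent pas directement du mécanisme de la proposition 15, qui exige $r$ impair et force $\beta=\alpha$ : pour $(5,5,2)$ on a $r=2$, et pour le cas 5 on a $\beta=3-\tau\neq\alpha$ ; il faut donc, comme dans le texte, un changement de générateurs ad hoc ($G=\langle s_{3}^{s_{1}},s_{2},s_{3}\rangle$, $G=\langle s_{3},s_{2},s_{2}^{s_{1}}\rangle$, etc.) accompagné du calcul du $C$ correspondant, ce que votre phrase \emph{se traitent de même} ne fournit pas. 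Enfin, la partie exhaustivité (élimination des mauvais signes $\epsilon$ et des mauvaises valeurs de $\gamma$ pour chaque triplet retenu) n'est qu'esquissée chez vous alors qu'elle occupe la moitié de la démonstration du texte ; la méthode que vous décrivez — calculer $C(s_{i},s_{j}^{s_{k}})$ et tester s'il est de la forme $4\cos^{2}\frac{\pi}{m}$ — est bien la bonne, mais les calculs doivent être effectivement menés pour conclure que les six présentations listées sont les seules.
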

\begin{proof}
Elle est divisée en deux parties. D'abord nous montrons que les présentations données sont bien des présentations du groupe de Coxeter $W(H_{3})$, ensuite nous montrons que ce sont les seules possibilités pour obtenir $W(H_{3})$ comme quotient d'un groupe de Coxeter de rang $3$. Le calcul de $t^{n}$ ne présente pas de difficultés.\\
\textbf{Première partie.}\\
1) C'est la définition de $W(H_{3})$. On a la représentation $R(1,\tau,0;0)$ de $W(3,5,2)$.\\
2) On a $<s_{3},s_{3}^{s_{1}}>=<s_{1},s_{3}>$, donc $G=<s_{3}^{s_{1}},s_{2},s_{3}>$ avec le diagramme:
\[
\begin{picture}(150,88)
\put(29,42){\circle{7}}
\put(32,42){\line(1,0){30}}
\put(65,42){\circle{7}}
\put(70,42){\line(4,0){30}}
\put(103,42){\circle{7}}
\put(24,52){$s_{2}$}
\put(61,52){$s_{3}^{s_{1}}$}
\put(99,52){$s_{3}$}
\put(44,47){$3$}
\put(80,47){$5$}
\end{picture}
\]
et $G$ est isomorphe à $W(H_{3})$. En effet nous avons $\{\alpha,\beta\}\subset\{\tau,3-\tau\}$, $l=m=\gamma=0$, puis $C(s_{2},s_{3}^{s_{1}})=\gamma+\alpha\beta+(\alpha l+\beta m)=\alpha\beta=1$, d'où par exemple $\alpha=\tau$ et $\beta=3-\tau$. Nous obtenons la représentation $R(\tau,3-\tau,0;0)$ de $W(5,5,2)$.\\
3) On a $<s_{2},s_{3}>=<s_{2},s_{2}^{s_{3}s_{2}}>$, donc $G=<s_{1},s_{2},s_{2}^{s_{3}s_{2}}>$ avec le diagramme:
\[
\begin{picture}(150,88)
\put(29,42){\circle{7}}
\put(32,42){\line(1,0){30}}
\put(65,42){\circle{7}}
\put(70,42){\line(4,0){30}}
\put(103,42){\circle{7}}
\put(24,52){$s_{1}$}
\put(61,52){$s_{2}$}
\put(99,52){$s_{2}^{s_{3}s_{2}}$}
\put(44,47){$3$}
\put(80,47){$5$}
\end{picture}
\]
et $G$ est isomorphe à $W(H_{3})$. En effet nous avons $\alpha=\beta=1$, $\gamma=\tau$ et $C(s_{1},s_{2}^{s_{3}s_{2}})=(l+\gamma -1)(m+\gamma -1)$ d'où $l=m=1-\gamma=1-\tau$. Nous obtenons la représentation $R(1,1,\tau;1-\tau)$ de $W(3,3,5)$.\\
4) On a $<s_{2},s_{3}>=<s_{2},s_{2}^{s_{3}}>$ donc $G=<s_{1},s_{2},s_{2}^{s_{3}}>$ avec le diagramme:
\[
\begin{picture}(150,88)
\put(29,42){\circle{7}}
\put(32,42){\line(1,0){30}}
\put(65,42){\circle{7}}
\put(70,42){\line(4,0){30}}
\put(103,42){\circle{7}}
\put(24,52){$s_{2}^{s_{3}}$}
\put(61,52){$s_{2}$}
\put(99,52){$s_{1}$}
\put(44,47){$3$}
\put(80,47){$5$}
\end{picture}
\]
et $G$ est isomorphe à $W(H_{3})$. En effet nous avons $C(s_{1},s_{2}^{s_{3}})=(\beta m+\alpha)(l+1)$ d'où $\beta m+\alpha=0$ et $l+1=0$ donc $l=-1$; comme $lm=\gamma=1$, $m=-1$ et $\beta=\alpha$. Nous obtenons la représentation $R(\tau,\tau,1;-1)$ de $W(5,5,3)$.\\
5) On a $<s_{2},s_{2}^{s_{1}}>=<s_{2},s_{1}>$ donc $G=<s_{3},s_{2},s_{2}^{s_{1}}>$ avec le diagramme:
\[
\begin{picture}(150,88)
\put(29,42){\circle{7}}
\put(32,42){\line(1,0){30}}
\put(65,42){\circle{7}}
\put(70,42){\line(4,0){30}}
\put(103,42){\circle{7}}
\put(24,52){$s_{3}$}
\put(61,52){$s_{2}$}
\put(99,52){$s_{2}^{s_{1}}$}
\put(44,47){$3$}
\put(80,47){$5$}
\end{picture}
\]
et $G$ est isomorphe à $W(H_{3})$. En effet nous avons $C(s_{3},s_{2}^{s_{1}})=(\alpha+m)(\beta+l)$ d'où $\alpha+m=\beta+l=0$ donc $m=-\alpha$, $l=-\beta$ et $lm=\gamma=1=\alpha\beta$. Nous obtenons la représentation $R(\tau,3-\tau,1;\tau-3)$ de $W(5,5,3)$.\\
6) On a $<s_{2},s_{2}^{s_{3}s_{2}}>=<s_{2},s_{3}>$, donc $G=<s_{1},s_{2},s_{2}^{s_{3}s_{2}}>$ avec le diagramme:
\[
\begin{picture}(150,88)
\put(29,42){\circle{7}}
\put(32,42){\line(1,0){30}}
\put(65,42){\circle{7}}
\put(70,42){\line(4,0){30}}
\put(103,42){\circle{7}}
\put(24,52){$s_{1}$}
\put(61,52){$s_{2}$}
\put(99,52){$s_{2}^{s_{3}s_{2}}$}
\put(44,47){$5$}
\put(80,47){$5$}
\end{picture}
\]
Posons $t_{2}:=s_{1}$, $t_{1}=s_{2}$, $t_{3}=s_{2}^{s_{3}s_{2}}$, alors $t_{2}t_{3}^{t_{1}}=s_{1}s_{2}^{s_{3}}$ qui est d'ordre 3 par hypothèse, donc d'après le 2), $G\simeq W(H_{3})$. En effet nous avons $C(s_{1},s_{2}^{s_{3}s_{2}})=(\alpha l+\beta(\gamma-1))(m+\gamma-1)$ d'où $\alpha l+\beta(\gamma-1)=0=m+(\gamma-1)$, donc $m=-(\gamma-1)$ puis, comme $lm=\gamma$, $l=-(\gamma-1)$ et la première relation donne $\beta=\alpha$. Nous avons
 $C(s_{1},s_{2}^{s_{3}})=1=\beta\gamma+\alpha+\alpha l+\beta m=\alpha(3-\gamma)$. Il en résulte que $\gamma=\alpha$. Nous obtenons la représentation $R(\tau,\tau,\tau;1-\tau)$ de $W(5,5,5)$.\\
 \textbf{Deuxième partie.}\\
 Nous pouvons remarquer que le sous-groupe $G^{+}\,(=D(G))$ où $G\simeq W(H_{3})$ est isomorphe au groupe alterné de degré $5$ donc il ne contient pas d'éléments d'ordre $4$, $6$ ou $10$. Si l'on veut obtenir $W(H_{3})$ comme quotient d'un groupe de Coxeter $W(p,q,r)$, on doit avoir $\{p,q,r\}\subset \{2,3,5\}$ avec $5\in \{p,q,r\}$ et aussi $2$ ne peut apparaitre qu'une fois au plus parmi $\{p,q,r\}$. Les triples à examiner sont donc l'un des suivants (à l'ordre près de $p$, $q$, $r$): $(3,5,2),(5,5,2),(3,3,5),(5,5,3),(5,5,5)$. on doit aussi avoir $\alpha l=\beta m$ puisqu'il existe une forme bilinéaire $G$-invariante non nulle. Dans toute la suite $\epsilon \in \{-1,+1\}$.\\
 Nous examinons successivement les différents cas.\\
 1) Pour $(p,q,r)=(3,5,2)$ il n'y a rien à dire.\\
 2) Pour $(p,q,r)=(5,5,2)$ on a $\{\alpha,\beta\}\subset \{\tau,3-\tau\}$ et $\gamma=l=m=0$. Nous avons $C(s_{2},s_{3}^{s_{1}})=\gamma+\alpha\beta+\alpha l+\beta m=\alpha\beta$ donc si le groupe $G$ est fini, nécessairement $\alpha\beta=1$, donc $\beta\neq \alpha$ et $s_{2}s_{3}^{s_{1}}$ est d'ordre $3$.\\
 3) Pour $(p,q,r)=(3,3,5)$, on a $\alpha=\beta=1$ et $\gamma=\tau$. Nous avons $\alpha l= \beta m=l=m$, $lm=\tau$ donc $l=m=\epsilon(\tau-1)$. Nous obtenons 
 \[
 C(s_{2},s_{3}^{s_{1}})=\tau +1+2\epsilon(\tau-1)=
 \begin{cases}
 3\tau-1, &\text{si $\epsilon=+1$}\\
 3-\tau,  &\text{si $\epsilon=-1$.}
 \end{cases}
 \]
 Il en résulte que $\epsilon=-1$ car sinon $s_{2}s_{3}^{s_{1}}$ est d'ordre infini.\\
 4)Pour $(p,q,r)=(5,5,3)$, on a $\{\alpha,\beta\}\subset \{\tau,3-\tau\}$, $\gamma=1$ et aussi $lm=1$ et $\alpha l=\beta m$. Distinguons deux cas suivant que $\alpha=\beta$ ou non.\\
 \textbf{Premier cas.} Si $\alpha=\beta$, alors $l=m=\epsilon$ et nous obtenons:
 \[
 C(s_{1},s_{2}^{s_{3}})=\alpha+\beta\gamma+\alpha l+\beta m=(\alpha+\beta m)(l+1)=
 \begin{cases}
4\alpha &\text{si $\epsilon=+1,$}\\
0 &\text{si $\epsilon=-1.$}
\end{cases}
 \]
 Si $\epsilon=+1$, $s_{1}s_{2}^{s_{3}}$ est d'ordre infini;\\
 si $\epsilon=-1$, $\alpha+\beta m=l+1=0$ donc $s_{1}s_{2}^{s_{3}}$ est d'ordre $2$.\\
 \textbf{Deuxième cas.} Si $\beta \neq \alpha$, alors $\alpha=\tau$, $\beta=3-\tau$, $\gamma=1$, $\alpha l\beta m=1$ et $\alpha l=\beta m=\epsilon$. Nous obtenons:
 \[
 C(s_{1},s_{2}^{s_{3}})=\alpha+\beta\gamma+\alpha l+\beta m=3+2\epsilon=
 \begin{cases}
5 &\text{si $\epsilon=+1$,}\\
1&\text{si $\epsilon=-1$.}
\end{cases}
 \]
 Si $\epsilon=+1$, $s_{1}s_{2}^{s_{3}}$ est d'ordre infini;\\
 si $\epsilon=-1$, $s_{1}s_{2}^{s_{3}}$ est d'ordre $3$.\\
 5) Pour $(p,q,r)=(5,5,5)$, on a $\{\alpha,\beta,\gamma\}\subset\{\tau,3-\tau\}$ et nous pouvons supposer que $\alpha=\beta=\tau$ et $\gamma \in \{\tau,3-\tau\}$. Comme $\alpha l=\beta m$, nous avons $l=m$ et $l^{2}=m^{2}=\gamma$.
 On a $(\tau-1)^{2}=\tau$ et $(\tau-2)^{2}=3-\tau$ et nous obtenons:
 \[
 l=m=
 \begin{cases}
\epsilon(\tau-1) &\text{si $\gamma=\tau$,}\\
\epsilon(\tau-2) &\text{si $\gamma=3-\tau$}.
\end{cases}
 \]
 Nous avons $C(s_{2},s_{3}^{s_{1}})=\alpha\beta +\gamma +\alpha l+\beta m$.
 Distinguons deux cas suivant que $\gamma=\tau$ ou $\gamma=3-\tau$.\\
 \textbf{Premier cas.} Si $\gamma=\tau$, alors $l=m=\epsilon(\tau-1)$ et $C(s_{2},s_{3}^{s_{1}})=\tau(\tau+1+2\epsilon\tau-2\epsilon)$, si $\epsilon=+1$, $C(s_{2},s_{3}^{s_{1}})=\tau(3\tau-1)=\tau^{3}$ et $s_{2}s_{3}^{s_{1}}$ est d'ordre infini; si $\epsilon=-1$, $C(s_{2},s_{3}^{s_{1}})=\tau(3-\tau)=1$ et $s_{2}s_{3}^{s_{1}}$ est d'ordre $3$.\\
 \textbf{Deuxième cas.} Si $\gamma=3-\tau$, alors $l=m=\epsilon(\tau-2)$ et $C(s_{2},s_{3}^{s_{1}})=\tau^{2}+3-\tau+2\epsilon(\tau^{2}-\tau)$; si $\epsilon=+1$, $C(s_{2},s_{3}^{s_{1}})=3(\tau^{2}-\tau+1)=6\tau=3(3+\sqrt{5})>4$ et $s_{2}s_{3}^{s_{1}}$ est d'ordre infini; si $\epsilon=-1$, $C(s_{2},s_{3}^{s_{1}})=-\tau^{2}+\tau+3=-2\tau+4=1-\sqrt{5}<0$ et $s_{2}s_{3}^{s_{1}}$ est d'ordre infini.
\end{proof}
\subsection{Présentations du groupe de Coxeter $W(H_{4})$.}
Nous donnons maintenant quelques résultats pour le groupe de Coxeter $W(H_{4})$ sans prétendre être exhaustif.
\begin{proposition}
Des présentations de $W(H_{4})$ comme quotient d'un groupe de Coxeter de rang $4$ sont les suivantes:\\
1) \[
\begin{picture}(150,70)
\put(20,38){(}
\put(29,42){\circle*{7}}
\put(32,42){\line(1,0){30}}
\put(65,42){\circle{7}}
\put(70,42){\line(4,0){30}}
\put(103,42){\circle{7}}
\put(106,42){\line(7,0){30}}
\put(139,42){\circle{7}}
\put(24,52){$s_{1}$}
\put(61,52){$s_{2}$}
\put(99,52){$s_{3}$}
\put(134,52){$s_{4}$}
\put(44,47){$3$}
\put(80,47){$3$}
\put(116,47){$5$}
\put(146,42){)}
\end{picture}
\]
C'est la présentation de $W(H_{4})$ comme groupe de Coxeter.\\
2) \[
\begin{picture}(150,70)
\put(20,38){(}
\put(29,42){\circle*{7}}
\put(32,42){\line(1,0){30}}
\put(65,42){\circle{7}}
\put(70,42){\line(4,0){30}}
\put(103,42){\circle{7}}
\put(106,42){\line(7,0){30}}
\put(139,42){\circle{7}}
\put(24,52){$s_{1}$}
\put(61,52){$s_{2}$}
\put(99,52){$s_{3}$}
\put(134,52){$s_{4}$}
\put(44,47){$3$}
\put(80,47){$5$}
\put(116,47){$5$}
\put(148,42){,}
\put(152,42){$(s_{2}s_{4}^{s_{3}})^{3}=1$}
\put(204,42){)}
\end{picture}
\]
On obtient la représentation $R(1,\tau,3-\tau;0)$ de $W(H_{4})$.\\
3) \[
\begin{picture}(150,78)
\put(28,3){(}
\put(110,63){$s_{4}$}
\put(113,55){\circle{7}}
\put (87,5){\circle{7}}
\put (140,5){\circle{7}}
\put(39,5){\circle*{7}}
\put(85,-8){$s_{2}$}
\put(138,-8){$s_{3}$}
\put(91,6){\line(1,0){45}}
\put(87,8){\line(1,2){23}}
\put(42,6){\line(1,0){42}}
\put(140,8){\line(-1,2){23}}
\put(35,-8){$s_{1}$}
\put(60,10){$3$}
\put(110,10){$3$}
\put(90,30){$5$}
\put(133,30){$5$}
\put(145,6){,}
\put(150,6){$(s_{4}^{s_{3}}s_{2})^{2}=1$}
\put(204,6){)}
\end{picture}
\]
On obtient la représentation $R(1,1,\tau,\tau;-\tau)$ de $W(H_{4})$.\\
4)\[
\begin{picture}(150,78)
\put(28,3){(}
\put(110,63){$s_{4}$}
\put(113,55){\circle{7}}
\put (87,5){\circle{7}}
\put (140,5){\circle{7}}
\put(39,5){\circle*{7}}
\put(85,-8){$s_{2}$}
\put(138,-8){$s_{3}$}
\put(91,6){\line(1,0){45}}
\put(87,8){\line(1,2){23}}
\put(42,6){\line(1,0){42}}
\put(140,8){\line(-1,2){23}}
\put(35,-8){$s_{1}$}
\put(60,10){$3$}
\put(110,10){$3$}
\put(90,30){$5$}
\put(133,30){$5$}
\put(145,6){,}
\put(150,6){$(s_{3}^{s_{4}}s_{2})^{2}=1$}
\put(204,6){)}
\end{picture}
\]
On obtient la représentation $R(1,1,\tau,3-\tau;-1)$ de $W(H_{4})$.\\
5)\[
\begin{picture}(150,78)
\put(28,3){(}
\put(110,63){$s_{4}$}
\put(113,55){\circle{7}}
\put (87,5){\circle{7}}
\put (140,5){\circle{7}}
\put(39,5){\circle*{7}}
\put(85,-8){$s_{2}$}
\put(138,-8){$s_{3}$}
\put(91,6){\line(1,0){45}}
\put(87,8){\line(1,2){23}}
\put(42,6){\line(1,0){42}}
\put(140,8){\line(-1,2){23}}
\put(35,-8){$s_{1}$}
\put(60,10){$3$}
\put(110,10){$3$}
\put(90,30){$3$}
\put(133,30){$5$}
\put(145,6){,}
\put(150,6){$(s_{3}^{s_{4}s_{3}}s_{2})^{2}=1$}
\put(209,6){)}
\end{picture}
\]
On obtient la représentation $R(1,1,1,\tau;1-\tau)$ de $W(H_{4})$.
\end{proposition}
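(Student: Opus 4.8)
La stratégie reprend celle du théorème précédent concernant $W(H_3)$ : dans chaque cas, on exhibe à l'intérieur du groupe de réflexion un système générateur de réflexions portant le diagramme de Coxeter de $H_4$, puis on conclut en le comparant, via la fidélité de la représentation géométrique de $W(H_4)$, à ce groupe fini. Le cas 1) est la définition même de $W(H_4)$ comme groupe de Coxeter.

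Fixons l'un des groupes $W$ de rang $4$ figurant en 2)--5). Dans chacun, la réflexion conjuguée $s_i^{w}$ apparaissant dans le relateur supplémentaire $R_1$ appartient à un sous-groupe parabolique $\langle s_i,s_j\rangle$ de rang $2$ et d'ordre fini (le générateur voisin $s_j$ vérifiant $m_{s_is_j}\in\{3,5\}$), de sorte que $\langle s_i,s_i^{w}\rangle=\langle s_i,s_j\rangle$ : en rempla\c cant $s_j$ par $s_i^{w}$ (au besoin après une seconde substitution de même nature), on obtient encore un système de réflexions engendrant $W$. Le cœur calculatoire de la preuve consiste à vérifier que ce nouveau système porte le diagramme de $H_4$ ; les coefficients de Cartan entre ces réflexions s'obtiennent par les formules de la section~2 donnant les $C(s_k,s_l^{w})$ (et, le cas échéant, par les formules $(\mathcal{C}_i)$), et doivent prendre, selon l'arête, l'une des valeurs $4\cos^{2}\frac{\pi}{3}=1$, $4\cos^{2}\frac{\pi}{5}=\tau$, $4\cos^{2}\frac{2\pi}{5}=3-\tau$, les autres étant nuls. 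En particulier le relateur $(st)^{m}=1$ équivaut à l'égalité $C(s,t)=4\cos^{2}\frac{\pi}{m}$ ; reportée dans les formules de Cartan, cette égalité détermine entièrement le ou les paramètres libres ($l$, $m$, \dots) --- d'où les valeurs annoncées --- et fixe les autres paramètres de Cartan : on retrouve exactement le système de paramètres de la représentation $R(\dots)$ indiquée. Pour les présentations 3), 4) et 5), on peut aussi raisonner en deux temps, en remarquant d'abord que le sous-système $\langle s_2,s_3,s_4\rangle$ muni du relateur $(st)^{2}=1$ relève, après renumérotation convenable des générateurs et au besoin conjugaison et emploi de l'automorphisme de diagramme, de l'une des situations du théorème précédent sur $W(H_3)$, donc est isomorphe à $W(H_3)$ ; il ne reste alors qu'à constater que $s_1$, relié à un seul nœud du système simple ainsi obtenu par une arête de valeur $3$ et aux deux autres par des arêtes de valeur $2$, prolonge ce diagramme en celui de $H_4$.

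Soit $\overline W:=W/N$ où $N$ est le sous-groupe distingué de $W$ engendré par $R_1$. Comme $R_1$ est trivial dans $\mathrm{Im}\,R$ (d'après le calcul de $C(s,t)$ ci-dessus), la représentation $R$ se factorise en $\overline R:\overline W\to GL(M)$, d'image $\mathrm{Im}\,R$. D'un côté, les réflexions $R(s_1),\dots$ du système simple construit plus haut ont pour matrice de Cartan celle du diagramme (sphérique) de $H_4$ ; d'après les résultats de la première partie, $M$ muni de leur action est donc, à conjugaison près, la représentation géométrique de $W(H_4)$, qui est fidèle, et comme ces réflexions engendrent $\mathrm{Im}\,R$, on a $\mathrm{Im}\,R\simeq W(H_4)$ : d'où une surjection $\overline R:\overline W\twoheadrightarrow W(H_4)$. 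De l'autre côté, ces mêmes réflexions engendrent $\overline W$ et y satisfont aux relations de Coxeter de $H_4$ (toutes conséquences des relations définissant $W$, sauf une qui est exactement $R_1$) : d'où une surjection $W(H_4)\twoheadrightarrow\overline W$. Puisque $W(H_4)$ est fini, ces deux surjections sont des isomorphismes : $\overline W\simeq W(H_4)$, autrement dit $(w(p,q,r),R_1,\dots)$ est une présentation de $W(H_4)$, de représentation de réflexion associée $R(\dots)$.

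L'obstacle principal est purement calculatoire et combinatoire : il faut, pour chacun des diagrammes 2)--5), choisir correctement la (ou les) réflexion(s) conjuguée(s), puis évaluer les coefficients $C(s,t)$ nécessaires au moyen des identités de la section~1 sur les polynômes $u_n(X)$ (relations $(A1)$, $(A2)$, $(AR)$ et propositions afférentes) et des formules de la section~2 ; ces calculs faits, le fait que le diagramme obtenu soit bien celui de $H_4$ --- et non un diagramme de Coxeter de groupe infini --- résulte du caractère défini positif de la forme de Tits de $H_4$ et de la fidélité de sa représentation géométrique.
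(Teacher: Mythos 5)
Votre démarche est essentiellement celle du texte : remplacer un générateur par une réflexion conjuguée engendrant le même sous-groupe parabolique de rang~$2$, constater que le nouveau système porte le diagramme de $H_{4}$ (le relateur supplémentaire équivalant à la valeur voulue du coefficient de Cartan, ce qui détermine les paramètres), puis conclure par les deux surjections entre le quotient et $W(H_{4})$. La seule différence est de degré : le texte effectue explicitement le calcul de $C(s_{2},s_{4}^{s_{3}})=\tau'\tau$ dans le cas 2) pour fixer $\tau'=3-\tau$, tandis que votre rédaction reste schématique et renvoie ces vérifications aux formules des sections 1 et 2 sans les mener à bien.
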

\begin{proof}
Dans tous les cas, la racine de l'arbre couvrant est $s_{1}$. Dans les cas 3), 4) et 5), on enlève l'arête $(s_{3},s_{4})$ pour obtenir l'arbre couvrant.

Nous n'avons rien à dire sur le 1).\\
2) On a $<s_{3},s_{4}>=<s_{4}^{s_{3}},s_{4}>$ donc $G=<s_{1},s_{2},s_{4}^{s_{3}},s_{4}>$ avec le diagramme:
\[
\begin{picture}(150,70)
\put(29,42){\circle{7}}
\put(32,42){\line(1,0){30}}
\put(65,42){\circle{7}}
\put(70,42){\line(4,0){30}}
\put(103,42){\circle{7}}
\put(106,42){\line(7,0){30}}
\put(139,42){\circle{7}}
\put(24,52){$s_{1}$}
\put(61,52){$s_{2}$}
\put(99,52){$s_{4}^{s_{3}}$}
\put(134,52){$s_{4}$}
\put(44,47){$3$}
\put(80,47){$3$}
\put(116,47){$5$}
\end{picture}
\]
d'où le résultat: $G\simeq W(H_{4})$.\\
Soit $\tau' \in \{\tau,3-\tau\}$. Un calcul simple montre que $s_{4}^{s_{3}}(a_{2})=a_{2}+(\tau'a_{3}+a_{4})$ et $s_{2}(\tau'a_{3}+a_{4})=\tau\tau'a_{2}+(\tau'a_{3}+a_{4})$, d'où $C(s_{2},s_{4}^{s-3})=\tau'\tau$. Ensuite $(s_{2}s_{4}^{s_{3}})^{3}=1\leftrightarrow C(s_{2},s_{4}^{s-3})=1=\tau'\tau$ donc $\tau'=3-\tau$. On obtient la représentation $R(1,\tau,3-\tau;0)$.\\
3) On a $<s_{3},s_{4}>=<s_{4}^{s_{3}},s_{3}>$ donc $G=<s_{1},s_{2},s_{3},s_{4}^{s_{3}}>$ et nous terminons la démonstration comme dans le cas précédent.\\
4) On a $<s_{3},s_{4}>=<s_{3}^{s_{4}},s_{3}>$ donc $G=<s_{1},s_{2},s_{3},s_{3}^{s_{4}}>$ et nous terminons la démonstration comme dans le cas précédent.\\
5) On a $<s_{3},s_{4}>=<s_{3}^{s_{4}s_{3}},s_{3}>$ donc $G=<s_{1},s_{2},s_{3},s_{3}^{s_{4}s_{3}}>$ et nous terminons la démonstration comme dans le cas précédent.
\end{proof}
\section{Exemples lorsque le corps $K$ est complexe.}
\subsection{Les groupes de réflexion complexes $G(p,p,n)$ comme quotients du groupe de Weyl affine $W(\tilde{A}_{n-1})$.}
Nous montrons maintenant comment la théorie précédente permet de trouver la famille des groupes de réflexion complexe $G(p,p,n)$ comme quotients du groupe de Weyl affine $W(\tilde{A}_{n-1})$.

1) On pose $G:=W(\tilde{A}_{n-1})$, $S=\{s_{1}, \cdots,s_{n}\}$ et le graphe $\Gamma=\Gamma(G)$ est un circuit sans corde. On prend $s_{1}$ comme racine et comme arbre couvrant $\{s_{1}, \cdots,s_{n}\}$. On cherche toutes les représentations de réflexion de $G$ satisfaisant aux conditions (R). On a $m_{s_{i}s_{i+1}}=3$ pour $(1\leqslant i \leqslant n, \text{les indices étant pris}\mod n)$. Si $e$ est une arête de $\Gamma$ alors  $\alpha_{e}$ est égal à $1$. On a $K_{0}=\mathbb{Q}$. On a $(s_{1},s_{n})\in E'(\Gamma)$. Soient $l$ et $m$ dans une extension $K$ de $\mathbb{Q}$ avec $lm=1$ et on suppose que $K=\mathbb{Q}(l)$. On pose $l_{1,n}=l$ et $l_{n,1}=m$.

Dans la base $\mathcal{A}$ de $M$, on a les relations: toutes celles obtenues grâce à la construction fondamentale et $s_{1}(a_{n})=la_{1}+a_{n}$, et $s_{n}(a_{1})=a_{1}+ma_{n}$
On pose $s_{0}:=s_{1}^{s_{2}s_{3}\cdots s_{n-1}}$ et $a_{0}:= \sum_{i=1}^{n-1}a_{i}$ ($a_{0}$ est l'opposée de la plus grande racine du système de racines associé au groupe de Weyl $W(A_{n-1})$ engendré par les $s_{i} (1\leqslant i \leqslant n-1)$). On a alors $s_{0}(a_{0})=-a_{0}$, $s_{0}(a_{1})=a_{1}+a_{0}$, $s_{0}(a_{i})=a_{i}\,(2\leqslant i\leqslant n-2)$, $s_{0}(a_{n-1})=a_{n-1}+a_{0}$, $s_{0}(a_{n})=a_{n}-(l+1)a_{0}$. De plus $s_{n}(a_{0})=a_{0}-(m+1)a_{n}$, donc $C(s_{0},s_{n})=(l+1)(m+1)=l+m+2$.\\ Dans la représentation géométrique de $W(\tilde{A}_{n-1})$, on a $l=m=1$ donc $C(s_{0},s_{n})=4$, $s_{0}s_{n}$ est une application unipotente et $s_{0}s_{n}$ et ses conjugués engendrent le sous-groupe des translations de $W(\tilde{A}_{n-1})$.\\
Grâce à $s_{0}$ nous obtenons une autre présentation du groupe $G$. On a:
\[
<s_{1},s_{2},\cdots, s_{n-1}>=<s_{2},\cdots, s_{n-1},s_{0}>
\]
car $s_{0}\in <s_{1},s_{2},\cdots, s_{n-1}>$ et $s_{1}\in <s_{2},\cdots, s_{n-1},s_{0}>$ car $s_{1}=s_{0}^{s_{n-1}s_{n-2}\cdots s_{2}}$. Il en résulte que $G=<s_{2},\cdots, s_{n-1},s_{o},s_{n}>$ avec le diagramme\\ $\Gamma(s_{2},s_{3},\cdots,s_{n-2},s_{n-1},s_{n},s_{0})$:
\[
\begin{picture}(150,78)
\put(150,63){$s_{0}$}
\put(153,55){\circle{7}}
\put (127,5){\circle{7}}
\put (180,5){\circle{7}}
\put(24,5){\circle{7}}
\put(-29,5){\circle{7}}
\put(79,5){\circle{7}}
\put(46,5){$\cdots$}
\put(125,-8){$s_{n-1}$}
\put(178,-8){$s_{n}$}
\put(131,6,5){\line(1,0){45}}
\put(127,6,8){\line(1,2){23}}
\put(180,6,8){\line(-1,2){23}}
\put(82,6,8){\line(1,0){42}}
\put(-26,6,8){\line(1,0){46}}
\put(75,-8){$s_{n-2}$}
\put(-32,-8){$s_{2}$}
\put(21,-8){$s_{3}$}
\end{picture}
\]
On a la relation supplémentaire $(s_{0}^{s_{n-1}}s_{n})^{3}=1$. En effet $(s_{0}^{s_{n-1}}s_{n})^{3}=(s_{1}^{s_{2}\cdots s_{n-2}}s_{n})^{3}=(s_{1}s_{n})^{3}=1$. Il résulte de ceci qu'il y a une \textbf{inclusion canonique de $W(\tilde{A}_{i})$ dans $W(\tilde{A}_{n-1})$ pour $3\leqslant i \leqslant n-1$.}\\
Le diagramme précédent avec la relation supplémentaire $(s_{0}^{s_{n-1}}s_{n})^{3}=1$ donne une autre présentation du groupe $G$.\\
Soit $\Gamma=\Gamma(t_{1},t_{2},\cdots,\,t_{n-3},t_{n-2},t_{n},t_{n-1})$
\[
H:=<t_{1},\cdots, t_{n}| \Gamma ,(t_{n-1}^{t_{n-2}}t_{n})^{3}=1>
\]
Nous montrons que $H\simeq G$. Il est clair que $G$ est isomorphe à un quotient de $H$, donc $H$ est infini.

Posons $t_{0}:=t_{n-1}^{t_{n-2}\cdots,\,t_{2}t_{1}}$. Alors $H=<t_{0},t_{1},\cdots,\,t_{n-2},t_{n}>$, de plus 
\[
(t_{0}t_{n})^{3}=(t_{n-1}^{t^{n-2}\cdots\,t_{2}t_{1}}t_{n})^{3}=(t_{n-1}^{t_{n-2}}t_{n})^{3}
\]
et l'on a le diagramme:
\[
\begin{picture}(150,68)
\put(56,63){$t_{0}$}
\put(59,55){\circle{7}}
\put(63,56,0){\line(1,0){48}}
\put(110,63){$t_{n}$}
\put(113,55){\circle{7}}
\put(47,20,0){\line(1,3){10}}
\put(47,15){\circle{7}}
\put(34,12){$t_{1}$}
\put(115,52,0){\line(1,-3){11}}
\put(127,14){\circle{7}}
\put(132,14){$t_{n-2}$}
\put(47,10,0){\line(1,-3){10}}
\put(117,-20,0){\line(1,3){10}}
\put(80,-25){$\cdots$}
\end{picture}
\]
\[
\]
donc $H$ est isomorphe à un quotient du groupe $G$. Comme tous les quotients propres de $G$ sont des groupes finis, d'après la remarque précédente, on voit que $H\simeq G$.

2) Soit maintenant un quotient propre de $G$. On sait que ce quotient est fini, donc l'ordre de $s_{0}s_{n}$ est fini, $p$ par exemple. On appelle $G_{p}$ un tel quotient. On sait aussi que cela est possible si et seulement si $C(s_{0},s_{n})$ est racine du polynôme $v_{p}(X)$: il existe $k\in \mathbb{N}$, $k$ premier à $p$ tel que $l+m+2=4\cos^{2} \frac{k\pi}{p}$. Il en résulte que $l$ et $m$ sont les racines du polynôme:
\[
Q(X)=X^{2}-2\cos \frac{2k\pi}{p}X+1
\]
et l'on a $\{ l,m\}=\{\zeta_{p},\zeta_{p}^{-1}\}$ où $\zeta_{p}=\exp (\frac{2ik\pi}{p})$.\\
Dans ces conditions, on obtient un groupe de réflexion complexe isomorphe à $G(p,p,n)$ et  $K=\mathbb{Q}(\zeta_{p})$.\\
Si $p=2$, on obtient le groupe $W(D_{n})$ (on a $l=m=-1$).\\
On a ainsi les deux présentations suivantes de $G(p,p,n)$:
\[
\begin{picture}(150,68)
\put(56,63){$s_{1}$}
\put(59,55){\circle*{7}}
\put(63,56,0){\line(1,0){48}}
\put(110,63){$s_{n}$}
\put(113,55){\circle{7}}
\put(47,20,0){\line(1,3){10}}
\put(47,15){\circle{7}}
\put(34,12){$s_{2}$}
\put(115,52,0){\line(1,-3){11}}
\put(127,14){\circle{7}}
\put(132,14){$s_{n-1}$}
\put(47,10,0){\line(1,-3){10}}
\put(117,-20,0){\line(1,3){10}}
\put(80,-20){$\cdots$}
\put(-92,12){(1)}
\put(162,12){$(s_{0}s_{n})^{p}=1$}
\end{picture}
\]
\[
\]
\[
\begin{picture}(130,78)
\put(110,63){$s_{0}$}
\put(113,55){\circle{7}}
\put (87,5){\circle{7}}
\put (140,5){\circle{7}}
\put(-16,5){\circle{7}}
\put(-69,5){\circle{7}}
\put(39,5){\circle{7}}
\put(6,5){$\cdots$}
\put(85,-8){$s_{n-1}$}
\put(138,-8){$s_{n}$}
\put(90,6,5){\line(1,0){48}}
\put(87,6,8){\line(1,2){23}}
\put(140,6,8){\line(-1,2){23}}
\put(42,6,8){\line(1,0){42}}
\put(-66,6,8){\line(1,0){48}}
\put(35,-8){$s_{n-2}$}
\put(-72,-8){$s_{2}$}
\put(-19,-8){$s_{3}$}
\put(-102,20){(2)}
\put(145,20){$(s_{0}^{s_{n-1}}s_{n})^{3}=1$,\, $(s_{0}s_{n})^{p}$}
\end{picture}
\]
\[
\]
La présentation (2) de $G(p,p,n)$ montre que \textbf{si $2 \leqslant n'\leqslant n$ alors $G(p,p,n')\subset G(p,p,n)$.}

Si $C(s_{0},s_{n})$ n'est racine d'aucun polynôme $v_{p}(X)$, alors $s_{0}s_{n}$ est d'ordre infini et l'on obtient une représentation fidèle de réflexion de $G$ qui n'est pas équivalente à la représentation géométrique.
\subsection{Compléments sur les groupes $W(\tilde{A}_{2})$.}
En préparation de ce qui suit, nous complétons les résultats précédents sur le groupe $W(\tilde{A}_{2})$.
\begin{proposition}
On a les présentations suivantes pour le groupe de réflexion complexe $G(n,n,3)$:
\begin{enumerate}
  \item $(w(3,3,3),\quad (s_{1}s_{2}^{s_{3}})^{n}=1)$
  \item $(w(3,3,n), \quad (s_{1}s_{2}s_{3})^{2}=(s_{2}s_{3}s_{1})^{2})$
 \end{enumerate}
\end{proposition}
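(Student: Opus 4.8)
Le plan est d'appliquer à chacune des deux présentations le schéma habituel de ce travail: choisir une représentation de réflexion de $W(p,q,r)$ de paramètres convenables, vérifier à l'aide des formules explicites des sections~1 et~2 que la relation supplémentaire est satisfaite dans $\mathrm{Im}\,R$, identifier $\mathrm{Im}\,R$ à $G(n,n,3)$ grâce à l'étude du numéro précédent sur $W(\widetilde{A}_{m})$, puis conclure en comparant les ordres.

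\emph{Présentation (1).} On travaillerait dans $W(3,3,3)=W(\widetilde{A}_{2})$ avec $\alpha=\beta=\gamma=1$ et $l,m$ tels que $lm=1$. La formule $C(s_{1},s_{2}^{s_{3}})=\alpha+\beta\gamma+(\alpha l+\beta m)$ donne ici $C(s_{1},s_{2}^{s_{3}})=2+l+m=(l+1)(m+1)$, qui est exactement la quantité $C(s_{0},s_{n})$ de l'étude de $W(\widetilde{A}_{m})$ (ici $m=3$). On choisit alors $l$ de façon que $2+l+m=4\cos^{2}\frac{\pi}{n}$, c'est-à-dire $\{l,m\}=\{\zeta,\zeta^{-1}\}$ avec $\zeta=\exp\frac{2i\pi}{n}$: ainsi $C(s_{1},s_{2}^{s_{3}})$ est une racine primitive de $v_{n}(X)$, $s_{1}s_{2}^{s_{3}}$ est d'ordre exactement $n$ dans $\mathrm{Im}\,R$, et d'après le numéro précédent $\mathrm{Im}\,R\simeq G(n,n,3)$. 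La relation $(s_{1}s_{2}^{s_{3}})^{n}=1$ étant vérifiée dans $\mathrm{Im}\,R$, on obtient une surjection $Q:=W(3,3,3)/\langle\langle(s_{1}s_{2}^{s_{3}})^{n}\rangle\rangle\twoheadrightarrow G(n,n,3)$. Enfin, dans le modèle affine $W(\widetilde{A}_{2})=L\rtimes\mathfrak{S}_{3}$ (où $L$ est le réseau des racines de type $A_{2}$), un calcul immédiat montre que $s_{1}s_{2}^{s_{3}}$ est la translation par un vecteur primitif de $L$; sa clôture normale est donc $nL$ et $|Q|=|L/nL|\cdot 6=6n^{2}=|G(n,n,3)|$, si bien que la surjection est un isomorphisme.

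\emph{Présentation (2).} Je ramènerais ce cas au précédent par changement de générateurs. Dans la présentation (1) de $G(n,n,3)$, on pose $\sigma_{1}=s_{3}$, $\sigma_{2}=s_{1}$, $\sigma_{3}=s_{2}^{s_{3}}$: alors $\langle\sigma_{1},\sigma_{2},\sigma_{3}\rangle=G(n,n,3)$, et $m_{\sigma_{1}\sigma_{2}}=m_{\sigma_{1}\sigma_{3}}=3$, $m_{\sigma_{2}\sigma_{3}}=n$ (cette dernière égalité parce que $C(s_{1},s_{2}^{s_{3}})$ est racine primitive de $v_{n}$), de sorte que $\{\sigma_{i}\}$ porte le diagramme $w(3,3,n)$. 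En réécrivant par transformations de Tietze les relateurs de la présentation (1) dans les $\sigma_{i}$, toutes les relations de $w(3,3,n)$ se retrouvent automatiquement sauf une, provenant de $(s_{1}s_{2})^{3}=1$, qui s'écrit $(\sigma_{2}\sigma_{1}\sigma_{3}\sigma_{1})^{3}=1$; en la conjuguant par $\sigma_{1}$ puis en utilisant la tresse $\sigma_{1}\sigma_{2}\sigma_{1}=\sigma_{2}\sigma_{1}\sigma_{2}$, ce relateur devient $(\sigma_{2}\sigma_{1}\sigma_{2}\sigma_{3})^{3}=1$, soit $(\sigma_{2}t_{1})^{3}=1$ avec $t_{1}=\sigma_{1}\sigma_{2}\sigma_{3}$. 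Il resterait alors à montrer que, modulo les relations de $w(3,3,n)$ (et en notant désormais $s_{i}$ pour $\sigma_{i}$), les relateurs $(\sigma_{2}t_{1})^{3}$ et $(s_{1}s_{2}s_{3})^{2}(s_{2}s_{3}s_{1})^{-2}$ engendrent le même sous-groupe distingué. Une inclusion serait immédiate: dans le modèle affine, l'identité $t_{1}^{2}=x_{1}^{2}$ — où $x_{1}=s_{2}s_{3}s_{1}=s_{1}t_{1}s_{1}$ — est déjà vraie dans $W(\widetilde{A}_{2})$, donc dans $G(n,n,3)$, d'où une surjection $W(3,3,n)/\langle\langle t_{1}^{2}x_{1}^{-2}\rangle\rangle\twoheadrightarrow G(n,n,3)$. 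L'inclusion réciproque, c'est-à-dire $(s_{1}s_{3}^{s_{2}})^{3}=1$ dans ce quotient, je la traiterais soit par un calcul fini direct dans le groupe de Coxeter $W(3,3,n)$, soit en construisant la représentation de réflexion de $W(3,3,n)$ avec $\alpha=\beta=1$, $\gamma$ racine de $v_{n}$ et $l+m=-\gamma$ (condition fournie par la formule $C(s_{1},s_{3}^{s_{2}})=\beta+\alpha\gamma+(\alpha l+\beta m)$): comme $\Delta=4-\gamma\neq 0$ l'image est irréductible, et l'on vérifie $t_{1}^{2}=x_{1}^{2}$ dans l'image au moyen des matrices de la proposition~\ref{b12}, puis on identifie cette image à $G(n,n,3)$.

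L'obstacle principal est précisément cette dernière inclusion pour~(2): contrairement à~(1), où le décompte d'ordre dans le groupe affine est immédiat, $W(3,3,n)$ est infini (forme de Gram indéfinie dès $n\geqslant 4$) et l'on ne dispose d'aucun argument de finitude gratuit; il faut donc mener à bien soit la réécriture de relateurs ci-dessus, soit le calcul matriciel de $t_{1}^{2}$ et $x_{1}^{2}$. On veillera aussi à la surcharge de la lettre $n$ (rang $m$ du groupe affine du numéro précédent contre ordre $n$ de la réflexion supplémentaire ici).
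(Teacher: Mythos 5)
Pour le point (1), votre démonstration est correcte et va même plus loin que le texte, qui se borne à renvoyer à l'étude de $W(\tilde{A}_{2})$ faite au numéro précédent: vous ajoutez un décompte d'ordres dans le modèle affine ($s_{1}s_{2}^{s_{3}}$ est une translation par un vecteur primitif du réseau $L$, la clôture normale de sa puissance $n$-ième est $nL$, d'où $|Q|=6n^{2}=|G(n,n,3)|$), ce qui force l'injectivité de la surjection et rend ce point autosuffisant.

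Pour le point (2) en revanche, il y a une lacune réelle, exactement à l'endroit que vous signalez vous-même, et aucune des deux voies que vous proposez pour la combler n'aboutit telle quelle. Il s'agit d'établir $(s_{1}^{s_{2}}s_{3})^{3}=1$ dans $H_{n}:=W(3,3,n)/\langle\langle t_{1}^{2}x_{1}^{-2}\rangle\rangle$. Votre seconde voie (construire une représentation de réflexion de $W(3,3,n)$ dans laquelle $t_{1}^{2}=x_{1}^{2}$ et identifier son image à $G(n,n,3)$) est circulaire: elle ne fait que retrouver la surjection $H_{n}\twoheadrightarrow G(n,n,3)$ déjà acquise et ne peut démontrer aucune relation \emph{dans} $H_{n}$ tant qu'on ignore si cette surjection est injective, ce qui est précisément la chose à prouver. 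Votre première voie (un calcul fini direct dans $W(3,3,n)$) n'a pas de sens littéral: la relation est fausse dans le groupe de Coxeter infini $W(3,3,n)$ et doit être \emph{déduite} de $t_{1}^{2}=x_{1}^{2}$, c'est-à-dire qu'il faut exhiber $(s_{1}^{s_{2}}s_{3})^{3}$ comme produit de conjugués de $t_{1}^{2}x_{1}^{-2}$; ce n'est pas une vérification mécanique. Le texte comble ce point par une identité combinatoire précise: en posant $s'_{2}:=s_{2}^{s_{1}}\,(=s_{1}^{s_{2}}$ par la relation de tresse$)$ et $u:=(s_{1}s_{2}s_{3})^{2}$, la relation supplémentaire équivaut à la commutation de $u$ avec $s_{1}$; on écrit alors $s'_{2}s_{3}s'_{2}$ et $s_{3}s'_{2}s_{3}$ comme deux conjugués de $s_{1}$, et la commutation de $u$ avec $s_{1}$ permet de montrer qu'ils coïncident, d'où la relation de tresse entre $s'_{2}$ et $s_{3}$, c'est-à-dire $(s'_{2}s_{3})^{3}=1$, qui est exactement la relation manquante. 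C'est cette étape qu'il vous faudrait fournir; le reste de votre schéma (transformations de Tietze, inclusion facile vérifiée dans le modèle affine, conclusion par égalité des clôtures normales ou par finitude de $G(n,n,3)$) est correct et correspond à la structure de la preuve du texte, qui procède par deux surjections réciproques entre les deux groupes présentés puis conclut par finitude.
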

\begin{proof}
Le 1) est clair d'après l'étude de $W(\tilde{A}_{n-1})$ faite dans la section précédente.\\
Pour le 2), appelons, pour cette démonstration uniquement, $G_{n}$ le groupe $G(n,n,3)$ et $H_{n}$ un groupe ayant la présentation de l'énoncé:
\[
G_{n}=<t_{1},t_{2},t_{3}|w(3,3,3),(t_{1}t_{3}^{t_{2}})^{n}=1>
\]
Comme $<t_{1},t_{2}>=<t_{1},t_{2}^{t_{1}}>$, nous avons $G_{n}=<t_{1},t_{2}^{t_{1}},t_{3}>$. Posons $t'_{2}:=t_{2}^{t_{1}}$. Alors $t'_{2}t_{3}=t_{2}^{t_{1}}t_{3}=t_{1}^{t_{2}}t_{3}=(t_{1}t_{3}^{t_{2}})^{t_{2}}$, donc $t'_{2}t_{3}$ est d'ordre $n$.\\
Ensuite $(t_{1}t'_{2}t_{3})^{2}=(t_{2}t_{1}t_{3})^{2}$ et $(t'_{2}t_{3}t_{1})^{2}=(t_{2}^{t_{1}}t_{3}t_{1})^{2}$ d'où 
\begin{multline}
(t_{2}^{t_{1}}t_{3}t_{1})^{2}=t_{2}^{t_{1}}t_{3}t_{1}t_{2}^{t_{1}}t_{3}t_{1}=t_{1}t_{2}t_{1}t_{3}t_{2}t_{1}t_{3}t_{1}\\
=t_{2}t_{1}(t_{2}t_{3}t_{2}t_{3})t_{1}t_{3}=t_{2}t_{1}(t_{3}t_{2})t_{1}t_{3}=(t_{2}t_{1}t_{3})^{2}\notag
\end{multline}
et nous obtenons $(t_{1}t'_{2}t_{3})^{2}=(t'_{2}t_{3}t_{1})^{2}$. Il en résulte que le groupe $G_{n}$ est isomorphe à un quotient du groupe $H_{n}$.\\
Partant du groupe $H_{n}$ de l'énoncé, posons $s'_{2}:=s_{2}^{s_{1}}$ et $u:=(s_{1}s_{2}s_{3})^{2}$. Nous pouvons remarquer que $u$ commute avec $s_{1}$. Posons $a:=s'_{2}s_{3}s'_{2}$, $b:=s_{3}s'_{2}s_{3}$. Alors $a=s_{3}^{s_{1}s_{2}s_{1}}=s_{1}^{s_{3}s_{2}s_{1}}$ et $b=s_{2}^{s_{1}s_{3}}=s_{1}^{s_{2}s_{3}}$, d'où $a^{s_{1}s_{2}s_{1}}=s_{1}$ et $b^{s_{1}s_{2}s_{3}}=s_{1}^{s_{2}s_{3}s_{1}s_{2}s_{3}}=s_{1}^{u}=s_{1}$ d'après la remarque précédente. Il en résulte que $a=b$ et que $s'_{2}s_{3}$ est d'ordre $3$.\\
Enfin $s_{1}s'_{2}s_{3}s'_{2}=s_{2}s_{1}(s_{3}s_{2})s_{1}s_{2}$, donc $s_{1}s'_{2}s_{3}s'_{2}$ a le même ordre que $s_{3}s_{2}$ c'est à dire $n$. Comme $H_{n}=<s_{1},s'_{2},s_{3}>$, nous voyons que $H_{n}$ est isomorphe à un quotient de $G_{n}$. En combinant les deux morphismes  ainsi construits entre $G_{n}$ et $H_{n}$, nous voyons que ce sont des isomorphismes et nous avons le résultat.
\end{proof}
\begin{proposition}
Soit $R(1,1,\gamma;l)$ une représentation de réflexion de $W(3,3,n)$. Alors les conditions suivantes sont équivalentes:
\begin{enumerate}
  \item $(s_{1}s_{2}s_{3})^{2}=(s_{2}s_{3}s_{1})^{2}$;
  \item $l+m=-\gamma$;
  \item $\Delta=4-\gamma$.
\end{enumerate}
Si ces conditions sont satisfaites, on a $(s_{1}s_{2}s_{3})^{2n}=id_{M}$ et\\ $\{l,m\}=\{-1-\exp (\frac{2ki\pi}{n}),-1-\exp(\frac{-2ki\pi}{n})\}$ pour un certain $k$ premier à $n$.
\end{proposition}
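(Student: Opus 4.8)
The plan is to work with the explicit matrices of the fundamental construction in the basis $\mathcal{A}=(a_1,a_2,a_3)$. Since $m_{s_1s_2}=m_{s_1s_3}=3$ we have $\alpha=\beta=1$ (the unique root of $v_3(X)=X-1$), while $\gamma$ is a root of $v_n(X)$ and $lm=\gamma$. With $\alpha=\beta=1$ the parameter $\Delta=8-2\alpha-2\beta-2\gamma-(\alpha l+\beta m)$ reduces to $\Delta=4-2\gamma-(l+m)$, so the equation $\Delta=4-\gamma$ is literally the same as $l+m=-\gamma$; this gives $(2)\Leftrightarrow(3)$ at once.

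For $(1)\Leftrightarrow(2)$ I would first turn $(1)$ into a commutation statement. Put $t:=s_1s_2s_3$; then $s_2s_3=s_1t$, hence $s_2s_3s_1=s_1ts_1$ and $(s_2s_3s_1)^2=s_1t^2s_1$, so $(1)$ says exactly that $t^2$ commutes with $s_1$. Since $s_1=\mathrm{id}_M+a_1\otimes\ell$ with $\ell$ the linear form $\ell(a_1)=-2$, $\ell(a_2)=\ell(a_3)=1$, an invertible $g$ commutes with $s_1$ iff $g(a_1)\in\langle a_1\rangle$ and $\ell\circ g\in\langle\ell\rangle$ with the same scalar. Using $s_1=\begin{pmatrix}-1&1&1\\0&1&0\\0&0&1\end{pmatrix}$, $s_2=\begin{pmatrix}1&0&0\\1&-1&l\\0&0&1\end{pmatrix}$, $s_3=\begin{pmatrix}1&0&0\\0&1&0\\1&m&-1\end{pmatrix}$ one computes $t$ and then $t^2$, whose $(3,1)$-entry works out to $l+m+\gamma$. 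Hence, if $t^2$ commutes with $s_1$ it preserves the $(-1)$-eigenline $\langle a_1\rangle$ of $s_1$, which forces $l+m+\gamma=0$; that is the easy half.

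Conversely, assume $(2)$: from $l+m+\gamma=0$ and $lm=\gamma$ one extracts the auxiliary identity $l^2+l\gamma+\gamma=0$. Substituting $m=-l-\gamma$ and this identity into the matrix of $t^2$ collapses it to the block form
\[
t^2=\begin{pmatrix}-(1+l)&0&0\\0&m(2-\gamma)&\gamma-1\\0&(\gamma-1)(1+m)&m\end{pmatrix},
\]
and a short verification (both eigenvector conditions reduce once more to $l+m+\gamma=0$) shows this commutes with $s_1$; so $(2)\Rightarrow(1)$. Now set $\zeta:=-(1+l)$; the identity $l^2+l\gamma+\gamma=0$ becomes $\gamma\zeta=(1+\zeta)^2$, i.e. $\gamma=2+\zeta+\zeta^{-1}$, and then $m=\gamma/l=-(1+\zeta^{-1})$. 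Since $\gamma$ is a root of $v_n(X)$, the description of those roots recalled above gives $\gamma=4\cos^2(k\pi/n)$ with $(k,n)=1$; comparing with $\gamma=2+\zeta+\zeta^{-1}$ forces $\zeta\in\{e^{2k\pi i/n},e^{-2k\pi i/n}\}$, a primitive $n$-th root of unity, so $\{l,m\}=\{-1-e^{2k\pi i/n},\,-1-e^{-2k\pi i/n}\}$.

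It remains to check $(s_1s_2s_3)^{2n}=t^{2n}=\mathrm{id}_M$. Specializing Proposition~\ref{b15}(1) to $k=0$ (so $u_1=1$, $u_0=0$) gives $P_{t_1}(X)=X^3-(1+\theta)X^2+(\theta'-1)X+1$ with $\theta=\gamma+l-2$ and, under $(2)$, $\theta'=2+l$; using $\gamma+l-1=\zeta^{-1}$ and $l+1=-\zeta$ this is $P_{t_1}(X)=X^3-\zeta^{-1}X^2-\zeta X+1=(X-\zeta^{-1})(X^2-\zeta)$. If $n\neq3$ the two factors are coprime, so by Cayley--Hamilton $M=\ker(t-\zeta^{-1})\oplus\ker(t^2-\zeta)$; on the first summand $t^{2n}$ acts by $\zeta^{-2n}$ and on the second by $(t^2)^n=\zeta^n$, both $=1$ since $\zeta$ is a primitive $n$-th root of unity. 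If $n=3$ then $\gamma=1$, hence $\zeta^2+\zeta+1=0$; then $2-\gamma=1$ and $m=-(1+\zeta^{-1})=\zeta$, so the block form above degenerates to $t^2=\zeta\,\mathrm{id}_M$ and $t^6=\mathrm{id}_M$. The one genuinely laborious point is the matrix computation of $t^2$ and the check that it commutes with $s_1$ under $(2)$; everything else is bookkeeping with the $u_n$ and with the roots of $v_n$.
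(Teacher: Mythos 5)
Your proof is correct, and for the equivalence of (1)--(3) it is essentially the paper's argument: the paper also computes $(s_{1}s_{2}s_{3})^{2}$ and $(s_{2}s_{3}s_{1})^{2}$ as explicit matrices in the basis $\mathcal{A}$ and reads off from the $(3,1)$-entries that equality forces $\gamma+l+m=0$, then gets $(2)\Leftrightarrow(3)$ from $\Delta=4-2\gamma-(l+m)$ exactly as you do. Your reorganization via $(s_{2}s_{3}s_{1})^{2}=s_{1}t^{2}s_{1}$, so that condition (1) becomes the commutation of $t^{2}$ with $s_{1}=\mathrm{id}_{M}+a_{1}\otimes\ell$, is a genuine (if modest) improvement: it halves the matrix work, and the eigenline argument explains \emph{why} only the single relation $\gamma+l+m=0$ survives from nine entry-by-entry comparisons. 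The more substantive difference is that the paper's proof stops after the three equivalences and never establishes the concluding assertions; you actually prove them, and correctly. Your parametrization $\zeta:=-(1+l)$, turning $l^{2}+l\gamma+\gamma=0$ into $\gamma=2+\zeta+\zeta^{-1}$ and hence $\zeta=e^{\pm 2ik\pi/n}$, gives the stated form of $\{l,m\}$, and the factorization $P_{t}(X)=(X-\zeta^{-1})(X^{2}-\zeta)$ obtained from Proposition~\ref{b15} (with the case $n=3$, where the factors are not coprime, correctly handled separately via $t^{2}=\zeta\,\mathrm{id}_{M}$) yields $t^{2n}=\mathrm{id}_{M}$. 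I checked the collapsed block form of $t^{2}$ against the paper's displayed matrix under the substitutions $m=-l-\gamma$ and $l^{2}+l\gamma+\gamma=0$; all entries agree, so there is no gap.
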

\begin{proof}
On a:
\[
s_{1}=\begin{pmatrix}
-1 & 1 & 1\\
0 & 1 & 0\\
0 & 0 &1
\end{pmatrix},
s_{2}=\begin{pmatrix}
1 & 0 & 0\\
1 & -1 & l\\
0 & 0 & 1
\end{pmatrix},
s_{3}=\begin{pmatrix}
1 & 0 & 0\\
0 & 1 & 0\\
1 & m & -1
\end{pmatrix}
\]
et un calcul simple montre que:
\[
(s_{1}s_{2}s_{3})^{2}=\begin{pmatrix}
l^{2}+l\gamma+2\gamma+m-1 & \gamma^{2}+\gamma(l+m-1)-(l+m) & -l^{2}-l\gamma-\gamma\\
l^{2}+l\gamma+\gamma & \gamma^{2}+l\gamma-\gamma-l+m & -l^{2}-l\gamma-1\\
\gamma+l+m & m\gamma+\gamma-m-1 & -\gamma-l
\end{pmatrix}
\]
et 
\[
(s_{2}s_{3}s_{1})^{2}=\begin{pmatrix}
-l-1 & \gamma+l+m & 0\\
-l^{2}-l\gamma-\gamma & \gamma^{2}+2l\gamma+l^{2}+m-l & -l-1\\
-\gamma-l-m & m\gamma+2\gamma+l-1 & m
\end{pmatrix}
\]
Si $(s_{1}s_{2}s_{3})^{2}=(s_{2}s_{3}s_{1})^{2}$, nous devons avoir $\gamma +l+m=0$ et nous vérifions immédiatement que cette condition est suffisante, donc les conditions 1. et 2. sont équivalentes. Comme $\Delta=4-2\gamma -(l+m)$, nous voyons tout de suite que 2. et 3. sont équivalentes.
\end{proof}
\subsection{Le groupe de réflexion complexe $G_{24}$.}
Dans cette partie nous montrons comment obtenir le groupe de réflexion complexe $G_{24}$ comme quotient d'un groupe de Coxeter de rang $3$. Plus précisément, nous obtenons tous les triples $(p,q,r)$ (à l'ordre près) tels que $G_{24}$ soit un quotient de $W(p,q,r)$. Nous donnons des présentations de $G_{24}$ en utilisant les résultats précédents.

Nous connaissons la structure de $G_{24}$: $G_{24}\simeq C_{2}\times PSL_{2}(7)$. Nous connaissons aussi son corps de définition en tant que groupe de réflexion complexe: si $\zeta:=\frac{1+i\sqrt{7}}{2}$, $\zeta$ racine du polynôme $X^{2}-X+2$ l'autre étant $\zeta':=\frac{1-i\sqrt{7}}{2}$, alors $K_{0}=\mathbb{Q}$ et $K=\mathbb{Q}(\zeta)$ puisque $\zeta$ n'est racine d'aucun polynôme $u_{n}(X)$. De plus l'anneau des entiers de $K$ est $\mathbb{Z}[\zeta]$ qui est un anneau principal..

Le reste de cette partie va être consacrée à la démonstration du théorème suivant:
\begin{theorem}\hfill
\begin{enumerate}
  \item Les triples $(p,q,r)$ tels que $G_{24}$ soit un quotient de $W(p,q,r)$ sont les suivants (à permutation près de $p,q,r$):
  \[
  (p,q,r)\in \{(3,3,4),(4,4,3),(4,4,4)\}.
  \]
  On pose dans les trois cas $t:=s_{1}s_{2}s_{3}$.
  \item Des présentations de $G_{24}$ sont:
  \begin{enumerate}
  \item $(w(3,3,4),(s_{2}^{s_{1}}s_{3})^{4}=1)$.
  On a $t^{7}=-id_{M}$ et $\Delta =1$. On obtient la représentation $R(1,1,2;-\zeta)$ de $W(3,3,4)$.
  \item $(w(4,4,3),(s_{2}s_{1}^{s_{3}})^{3}=1)$. On a $t^{7}=-id_{M}$ et $\Delta =1$. On obtient la représentation $R(2,2,1;-1+\frac{\zeta}{2}(=\frac{\zeta}{\zeta'}))$ de $W(4,4,3)$.
  \item $(w(4,4,4),(s_{1}s_{2}^{s_{3}})^{3}=1)$. On a $t^{7}=-id_{M}$ et $\Delta =1$. On obtient la représentation $R(2,2,2;-1-\frac{\zeta}{2})$ de $W(4,4,4)$.
\end{enumerate}
\end{enumerate}
\end{theorem}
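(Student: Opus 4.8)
Le plan est de reproduire le schéma suivi plus haut pour les présentations de $W(H_3)$: on établit d'abord que les présentations annoncées présentent bien $G_{24}$, puis que ce sont les seules possibles. Pour la liste des triples on part de ce que $G_{24}\simeq C_2\times PSL_2(7)$ est d'ordre $336$, que ses réflexions sont toutes d'ordre $2$ et que $G_{24}^{+}=D(G_{24})\simeq PSL_2(7)$. Si $G_{24}$ est un quotient de $W(p,q,r)$ via une représentation vérifiant $(R)$, cette représentation est fidèle sur les paraboliques de rang $2$: l'image de $s_is_j$ est donc d'ordre exactement $m_{ij}$ et appartient à $G_{24}^{+}$, de sorte que $p$, $q$, $r$ sont des ordres d'éléments de $PSL_2(7)$, c'est-à-dire sont dans $\{2,3,4,7\}$. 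On exclut $7$: une relation $m_{ij}=7$ plongerait $D_{14}$ dans $C_2\times PSL_2(7)$, ce qui est impossible puisque le normalisateur d'un $7$-Sylow de $PSL_2(7)$ est $C_7\rtimes C_3$ et ne contient pas d'involution. On exclut aussi les triples pour lesquels $W(p,q,r)$ est fini (d'ordre $\leqslant120<336$) ou euclidien — les triangles $(3,3,3)$, $(2,4,4)$ et leurs permutés: dans ce dernier cas $W(p,q,r)$ est virtuellement abélien, donc tout quotient fini possède un sous-groupe abélien d'indice $\leqslant 8$, alors que le plus grand sous-groupe abélien de $PSL_2(7)$ est cyclique d'ordre $7$, d'indice $24$. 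Il ne reste que les triangles hyperboliques $(3,3,4)$, $(4,4,3)$ et $(4,4,4)$ à permutation près.

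Pour la réciproque on construit, pour chaque triple, la représentation. On prend $\alpha$, $\beta$, $\gamma$ racines respectives de $v_p(X)$, $v_q(X)$, $v_r(X)$, soit $1$ pour un indice égal à $3$ et $2$ pour un indice égal à $4$. Dire que la relation supplémentaire est satisfaite revient à imposer que la caractéristique $C$ de deux réflexions conjuguées vaille la racine voulue de $v_4(X)$ ou $v_3(X)$, c'est-à-dire $2$ ou $1$; en reportant dans les formules $\mathcal{C}_i$ et en tenant compte de $lm=\gamma$, on obtient la somme et le produit de $\{l,m\}$, donc une équation du second degré pour $l$. On trouve, dans le cas (a), $l+m=-1$ et $lm=2$, donc $\{l,m\}=\{-\zeta,-\zeta'\}$ et la représentation $R(1,1,2;-\zeta)$; dans le cas (b), $l+m=-\frac{3}{2}$ et $lm=1$, donc $l=-1+\frac{\zeta}{2}$; dans le cas (c), $l+m=-\frac{5}{2}$ et $lm=2$, donc $l=-1-\frac{\zeta}{2}$. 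Dans les trois cas $\Delta=8-2\alpha-2\beta-2\gamma-(\alpha l+\beta m)=1$ (en utilisant $\zeta+\zeta'=1$), de sorte que $R$ est irréductible et $K=\mathbb{Q}(\zeta)=\mathbb{Q}(\sqrt{-7})$. Pour $t=t_1=s_1s_2s_3$, la proposition \ref{b15} donne, puisque $u_1(\gamma)=1$ et $u_0(\gamma)=0$, le polynôme caractéristique $P_{t_1}(X)=X^3-(1+\theta)X^2+(\theta'-1)X+1$; on vérifie qu'il est à racines simples et qu'il divise $X^7+1$ dans $K[X]$ — il s'agit de l'un des deux facteurs cubiques de $\Phi_{14}(X)$ sur $\mathbb{Q}(\sqrt{-7})$ — d'où $t^{7}=-id_{M}$.

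Il reste à identifier le quotient. Le déterminant d'une réflexion d'ordre $2$ valant $-1$, l'élément $t^{7}=-id_{M}$ est produit de $21$ réflexions et n'appartient donc pas à $G^{+}$; étant central et engendrant un facteur direct d'ordre $2$, il fournit $G=G(\alpha,\beta,\gamma;l)\simeq C_2\times G^{+}$. Alors $G^{+}$ est un groupe de réflexions complexe irréductible de rang $3$ défini sur $\mathbb{Q}(\sqrt{-7})$ où $t$ est d'ordre $14$, ce qui, dès que $G$ est connu fini, force $G\simeq G_{24}$ d'après la classification de Shephard--Todd. L'obstacle principal est précisément de montrer que l'adjonction de l'unique relation annoncée rend $W(p,q,r)$ fini: on procède comme dans le traitement de $W(5,5,5)$ pour $W(H_3)$, en remplaçant une réflexion génératrice par un conjugué convenable afin de ramener les cas (b) et (c) à la présentation du cas (a) munie de sa relation $(s_2^{s_1}s_3)^4=1$, puis en exhibant dans $G_{24}$ un triplet de réflexions génératrices satisfaisant toutes les relations; la comparaison des deux morphismes ainsi obtenus entre le groupe présenté et $G_{24}$ donne l'isomorphisme. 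Hors cette vérification de finitude, tout se réduit à du calcul matriciel de routine.
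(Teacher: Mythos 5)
Your computational claims all check out and agree with the paper: the systems $l+m=-1,\ lm=2$, puis $l+m=-\tfrac{3}{2},\ lm=1$, puis $l+m=-\tfrac{5}{2},\ lm=2$, the value $\Delta=1$, and the identification of $P_{t_{1}}(X)=X^{3}-(1+\theta)X^{2}+(\theta'-1)X+1=X^{3}-\bar{\zeta}X^{2}-\zeta X+1$ as one of the two conjugate cubic factors of $\Phi_{14}(X)$ over $\mathbb{Q}(\sqrt{-7})$, whence $t^{7}=-id_{M}$. Your derivation of the list of triples is a legitimate variant of the paper's Part B: you bound $p,q,r$ by the orders $\{2,3,4,7\}$ of elements of $G_{24}^{+}\simeq PSL_{2}(7)$ and exclude $7$ via the normalisateur $C_{7}\rtimes C_{3}$ d'un $7$-Sylow, then discard the finite and affine triangles; the paper instead uses $K_{0}=\mathbb{Q}$ to force $\{p,q,r\}\subset\{3,4,6\}$ and excludes $6$ because the real elements of $PSL_{2}(7)$ have order $3$ or $4$. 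Either route works.

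The genuine gap is exactly at the point you yourself call \emph{l'obstacle principal}, and the workaround you sketch does not close it. Exhibiting in $G_{24}$ a generating triple of reflections satisfying the relations only yields a surjection from the presented group $\hat{G}$ onto $G_{24}$; to get an isomorphism (and even to know that the matrix group $G(1,1,2;-\zeta)$ is finite) you need an independent upper bound on $|\hat{G}|$, i.e.\ a surjection in the other direction. In the reduction $W(5,5,5)\to W(H_{3})$ that you take as a model, the base case is a genuine Coxeter presentation whose finiteness is known; here the base case $(w(3,3,4),(s_{2}^{s_{1}}s_{3})^{4}=1)$ is not a Coxeter presentation and cannot be reduced to one, so \emph{la comparaison des deux morphismes} has nothing to compare. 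The missing idea, which is the paper's actual mechanism, is to pass to the rotation subgroup: with $c:=s_{1}s_{2}$ and $d:=s_{1}s_{3}$ one has $G^{+}=\langle c,d\rangle$ with $c^{3}=d^{3}=1$, $(c^{2}d)^{4}=(s_{2}s_{3})^{4}=1$ (a Coxeter relation) and $(cd)^{4}=(s_{2}^{s_{1}}s_{3})^{4}=1$ (the added relation), so that $G^{+}$ is a quotient of the Coxeter--Moser group $(3,3\,|\,4,4)$, which is known to be $PSL_{2}(7)$ (voir \cite{CM}); simplicity forces $G^{+}\simeq PSL_{2}(7)$, hence $|\hat{G}|\leqslant 336$, and $t^{7}=-id_{M}\notin G^{+}$ completes the identification, cases (b) and (c) being then reduced to (a) by the change of generators you describe. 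Without this finiteness input (or an equivalent, e.g.\ a coset enumeration), your appeal to the Shephard--Todd classification remains conditional and the proof of part 2 is incomplete.
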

\begin{proof}
Elle divisée en deux parties.\\ D'abord nous montrons que les groupes obtenus dans le 2) sont isomorphes au groupe $G_{24}$, puis nous montrons que ce sont les seules possibilités.

A. 1) Considérons la représentation $R(1,1,2;l)$ de $W(3,3,4)$ et exprimons que $(s_{2}^{s_{1}}s_{3})$ est d'ordre $4$. Nous avons $(s_{2}^{s_{1}}s_{3})$ d'ordre $4$ si et seulement si $(1+l)(1+m)=2$ donc, tenant compte de $lm=2$, nous voyons que $l$ et $m$ sont les racines du polynôme :
\[
Q(X)=X^{2}+X+2.
\]
Nous obtenons $l=-\zeta$ et $m=-1+\zeta$ (l'autre choix correspondant à la représentation conjuguée).
\\Posons $c:=s_{1}s_{2}$ et $d:=s_{1}s_{3}$. Alors le sous-groupe $G^{+}$ de $G$ est engendré par $c$ et $d$ et nous avons 
$c^{3}=d^{3}=1,c^{2}d=s_{2}s_{3}$ donc $(c^{2}d)^{3}=1$ et nous venons de voir que $(cd)^{4}=1$. Il en résulte que $<c,d>$ est isomorphe au groupe $PSL_{2}(7)$ (voir \cite{CM}).\\
D'après la proposition 18 1), nous avons $\theta=l=-\zeta$, $\theta'=-m=\bar{\theta}$, d'où $P_{t}(X)=x^{3}-\bar{\zeta}X^{2}-\zeta X+1$ et un calcul simple montre que $t^{7}=-id_{M}$. Comme $t\not \in G^{+}$, nous avons $t^{7}\not \in G^{+}$ et comme $G^{+}$ est d'indice $2$ dans $G$, nous obtenons $G\simeq C_{2}\times PSL_{2}(7)\simeq G_{24}$.\\
2) Considérons la représentation $R(2,2,1;l)$ de $W(4,4,3)$.Nous avons $s_{2}s_{1}^{s_{3}}$ d'ordre $3$ si et seulement si $\beta\gamma+\alpha+l\alpha+m\beta=1$, si et seulement si $2l+2m=-3$. Comme $2l2m=4$, $2l$ et $2m$ sont les racines du polynôme:
\[
Q(X)=x^{2}+3X+4
\]
et nous obtenons $\{l,m\}=\{-1+\frac{\zeta}{2},-1+\frac{\bar{\zeta}}{2}\}$ ce qui détermine $l$ à conjugaison près.\\
Nous avons $<s_{1},s_{3}>=<s_{3},s_{1}^{s_{3}}>$, donc $G=<s_{2},s_{3},s_{1}^{s_{3}}>$ avec le diagramme:
\[
\begin{picture}(150,68)

\put(46,63){$s_{2}=s'_{1}$}
\put(63,55){\circle{7}}
\put (37,5){\circle{7}}
\put (90,5){\circle{7}}
\put(20,-8){$s'_{2}=s_{3}$}
\put(71,-8){$s_{1}^{s_{3}}=s'_{3}$}
\put(39,6,5){\line(1,0){48}}
\put(36,6,7){\line(1,2){23}}
\put(90,6,8){\line(-1,2){23}}
\put(41,35){$3$}
\put(80,35){$3$}
\put(60,-4){$4$}
\end{picture}
\]
Enfin $s'_{1}s'_{2}s'_{1}s'_{3}=s_{3}^{s_{2}}s_{1}^{s_{3}}=s_{2}^{s_{3}}s_{1}^{s_{3}}=(s_{2}s_{1})^{s_{3}}$ donc $s'_{1}s'_{2}s'_{1}s'_{3}$ est d'ordre $4$. Nous appliquons le 1) pour avoir le résultat: $G\simeq G_{24}$.\\
3) Considérons la représentation $R(2,2,2;l)$ de $W(4,4,4)$. Nos avons $s_{1}s_{2}^{s_{3}}$ d'ordre $3$ si et seulement si $\alpha+\beta\gamma +l\alpha+m\beta =1$ si et seulement si $2l+2m=-5$. Comme $2l2m=8$, $2l$ et $2m$ sont les racines du polynôme:
\[
Q(X)=X^{2}+5X+8
\]
et nous obtenons $\{l,m\}=\{-1-\frac{\zeta}{2},-1-\frac{\bar{\zeta}}{2}\}$ ce qui détermine $l$ à conjugaison près.\\
Nous avons $<s_{2},s_{3}>=<s_{3},s_{2}^{s_{3}}>$ donc $G=<s_{1},s_{3},s_{2}^{s_{3}}>$ avec le diagramme:
\[
\begin{picture}(150,68)

\put(46,63){$s_{1}=s'_{2}$}
\put(63,55){\circle{7}}
\put (37,5){\circle{7}}
\put (90,5){\circle{7}}
\put(20,-8){$s'_{1}=s_{3}$}
\put(71,-8){$s_{2}^{s_{3}}=s'_{3}$}
\put(39,6,5){\line(1,0){48}}
\put(36,6,7){\line(1,2){23}}
\put(90,6,8){\line(-1,2){23}}
\put(41,35){$4$}
\put(80,35){$3$}
\put(60,-4){$4$}
\end{picture}
\]
Enfin $s'_{2}s'_{3}s'_{1}s'_{3}=s_{1}(s_{2}^{s_{3}})s_{3}(s_{2}^{s_{3}})=s_{1}s_{3}^{s_{2}}$. Un calcul simple montre que $s_{1}s_{3}^{s_{2}}$ est d'ordre $3$, donc $s'_{2}s'_{3}s'_{1}s'_{3}$ est d'ordre $3$ et $G\simeq G_{24}$ d'après le 2).

B. Comme $K_{0}=\mathbb{Q}$, nous avons $\{p,q,r\}\subset \{3,4,6\}$ et $G$ est un quotient d'un $W(p,q,r)$, donc $\{\alpha,\beta,\gamma\}\subset \{1,2,3\}$. Nous savons que l'ordre des éléments réels de $PSL_{2}(7)$ est $3$ ou $4$. Un produit de deux réflexions de $G_{24}$ ne peut donc être d'ordre que $2$, $3$ ou $4$; en particulier $6 \not \in \{p, q, r\}$. Nous avons donc les possibilités (à permutation près) ($2$ est aussi exclu car sinon nous obtenons des groupes de Coxeter: $W(A_{3}),W(B_{3}), W(\tilde{B_{2}}))$:
\[
(p,q,r)\in \{(3,3,4),(4,4,3),(4,4,4)\}.
\]
Examinons ces trois cas successivement.

1) Si $(p,q,r)=(3,3,4)$, d'après la remarque précédente $s_{2}^{s_{1}}s_{3}$ est d'ordre $2$, $3$ ou $4$.\\
- Si $s_{2}^{s_{1}}s_{3}$ est d'ordre $2$, comme $C(s_{3},s_{2}^{s_{1}})=(l+1)(m+1)$, nous avons $l=m=-1$, $\gamma=lm=1\neq 2$ ce cas ne peut pas se produire.\\
- Si $s_{2}^{s_{1}}s_{3}$ est d'ordre $3$, comme $<s_{1},s_{2}^{s_{1}}>=<s_{1},s_{2}>$ nous obtenons\\ $G=<s_{1},s_{2}^{s_{1}},s_{3}>$ avec le diagramme:
\[
\begin{picture}(150,68)

\put(58,65){$s_{1}$}
\put(63,55){\circle{7}}
\put (37,5){\circle{7}}
\put (90,5){\circle{7}}
\put(34,-8){$s_{2}$}
\put(88,-8){$s_{3}$}
\put(39,6,5){\line(1,0){48}}
\put(36,6,7){\line(1,2){23}}
\put(90,6,8){\line(-1,2){23}}
\put(41,35){$3$}
\put(80,35){$3$}
\put(60,-4){$3$}
\end{picture}
\]
et $G$ est isomorphe à $W(\tilde{A}_{2})$.
Ce cas ne peut pas se produire. Il en résulte que $s_{2}^{s_{1}}s_{3}$ est d'ordre $4$.

2) Si $(p,q,r)=(4,4,3)$, comme ci-dessus $s_{2}^{s_{1}}s_{3}$ est d'ordre $2$, $3$ ou $4$.\\
- Si $s_{2}^{s_{1}}s_{3}$ est d'ordre $2$, $l=m=-1$, donc $G=<s_{1},s_{2},s_{3}^{s_{2}}>$ avec le diagramme:
\[
\begin{picture}(150,88)
\put(29,42){\circle{7}}
\put(32,42){\line(1,0){30}}
\put(65,42){\circle{7}}
\put(70,42){\line(4,0){30}}
\put(103,42){\circle{7}}
\put(24,52){$s_{1}$}
\put(61,52){$s_{2}$}
\put(99,52){$s_{2}^{s_{3}}$}
\put(44,47){$4$}
\put(80,47){$3$}
\end{picture}
\]
et $G\simeq W(B_{3})$. Ce cas ne peut pas se produire.\\
- Si $s_{2}^{s_{1}}s_{3}$ est d'ordre $4$, on a $C(s_{2},s_{1}^{s_{3}})=2=4+2(l-m)$, donc $l+m=-1$ et $\Delta=8-4-4-2+2=0$. Ce cas ne peut pas se produire. Il en résulte que $s_{2}^{s_{1}}s_{3}$ est d'ordre $3$.

3) Si $(p,q,r)=(4,4,4)$, comme ci-dessus, $s_{1}s_{2}^{s_{3}}$ est d'ordre $2$, $3$ ou $4$.\\
- Si $s_{1}s_{2}^{s_{3}}$ est d'ordre $2$, on a $l+1=0=2(m+1)$, donc $l=m=-1$ et $\gamma=1\neq 2$. Ce cas ne peut pas se produire.\\
- Si $s_{1}s_{2}^{s_{3}}$ est d'ordre $4$, on a $2=\beta\gamma+\alpha+l\alpha+m\beta=6+2(l+m)$ donc $2(l+m)=-4$ et $\Delta=8-4-4-4+4=0$. Ce cas ne peut pas se produire.\\
Il en résulte que $s_{1}s_{2}^{s_{3}}$ est d'ordre $3$ et ceci achève la démonstration.
\end{proof}
\subsection{Le groupe de réflexion complexe $G_{27}$.}
Dans cette partie nous montrons comment obtenir le groupe de réflexion complexe $G_{27}$ comme quotient d'un groupe de Coxeter de rang $3$. Plus précisément, nous obtenons tous les triples $(p,q,r)$ (à l'ordre près) tels que $G_{27}$ soit un quotient de $W(p,q,r)$. Nous donnons aussi des présentations de $G_{27}$.

Nous connaissons la structure de $G_{27}$: $G_{27}\simeq C_{2}\times (C_{3}*A_{6})$ où $A_{6}$ désigne le groupe alterné de degré $6$ et $C_{3}*A_{6}$ est une extension centrale non scindée de $A_{6}$ par $C_{3}$.

Nous connaissons aussi son corps de définition en tant que groupe de réflexion complexe: si $\omega=\frac{-1+i\sqrt{3}}{2}$ est une racine primitive troisième de l'unité, on a $K=\mathbb{Q}(\tau,\omega)$.

Le reste de cette partie va être consacrée à la démonstration du résultat suivant:
\begin{theorem}\hfill
\begin{enumerate}
  \item Les triples $(p,q,r)$ tels que $G_{27}$ soit isomorphe à un quotient de $W(p,q,r)$ sont les suivants (à permutation près):
  \[
  (p,q,r)\in \{(3,3,5),(3,4,5),(5,5,3),(5,5,4),(4,4,5),(3,3,4)\}.
  \]
  \item Des présentations de $G_{27}$ sont:\begin{enumerate}
  \item $(w(3,3,5),(s_{1}s_{2}^{s_{3}})^{4}=1)$. On a $t^{5}=-\omega id_{M}$, $t^{15}=-id_{M}$ et $\Delta=3-\tau$. On obtient la représentation $R(1,1,\tau;\omega(1-\tau))$ de $W(3,3,5)$ et $K_{0}=\mathbb{Q}(\tau)$. 
  \item $(w(3,4,5),(s_{1}s_{2}^{s_{3}})^{4}=1)$. On a $t^{4}=\omega id_{M}$, $t^{12}=id_{M}$ et $\Delta=3$. On obtient la représentation $R(1,2,\tau;-\omega-\tau)$ de $W(3,4,5)$ et $K_{0}=\mathbb{Q}(\tau)$.
  \item $(w(3,4,5),(s_{1}s_{2}^{s_{3}})^{5}=1)$. On a $t^{5}=-\omega id_{M}$, $t^{15}=-id_{M}$ et $\Delta=3-\tau$. On obtient la représentation $R(1,2,\tau;\omega^{2}+\omega\tau)$ de $W(3,4,5)$ et $K_{0}=\mathbb{Q}(\tau)$.
  \item $(w(5,5,3),(s_{3}s_{2}^{s_{1}})^{3}=1,(s_{1}s_{2}^{s_{3}})^{4}=1)$. On a $t^{5}=-\omega id_{M}$, $t^{15}=-id_{M}$ et $\Delta=3-\tau$. On obtient la représentation $R(\tau,3-\tau,1;\omega(3-\tau))$ de $W(5,5,3)$ et $K_{0}=\mathbb{Q}(\tau)$.
  \item $(w(5,5,4),(s_{1}s_{2}^{s_{3}})^{3}=1, (s_{1}s_{3}^{s_{2}})^{3}=1)$. On a $t^{5}=-\omega id_{M}$, $t^{15}=-id_{M}$ et $\Delta=3-\tau$. On obtient la représentation $R(\tau,\tau,2;\omega(\tau-1)+\omega^{2}))$ de $W(5,5,4)$ et $K_{0}=\mathbb{Q}(\tau)$. 
  \item $(w(4,4,5),(s_{1}s_{2}^{s_{3}})^{3}=1)$. On a $t^{4}=\omega^{2}id_{M}, t^{12}=id_{M}$ et $\Delta=1$. On obtient la représentation $R(2,2,\tau;\frac{1}{2}(\omega-\tau))$ de $W(4,4,5)$ et $K_{0}=\mathbb{Q}(\tau)$.
  \item $(w(3,3,4),(s_{1}s_{2}^{s_{3}})^{5}=1)$. On a $t^{5}=-\omega^{2}id_{M},t^{15}=-id_{M}$ et $\Delta=\tau$. On obtient la représentation $R(1,1,2;\omega(\tau-1)+\omega^{2}))$ de $W(3,3,4)$ et $K_{0}=\mathbb{Q}$.
\end{enumerate}
  
\end{enumerate}

\end{theorem}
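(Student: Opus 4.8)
The plan is to follow the same two-part scheme already used for $G_{24}$: a \emph{construction part} showing that each of the seven presentations in~2) defines a group isomorphic to $G_{27}$, and an \emph{exhaustion part} showing that no triple $(p,q,r)$ outside the list~1) admits $G_{27}$ as a quotient.

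\textbf{Construction part.} For each listed triple one starts from the representation $R(\alpha,\beta,\gamma;l)$ of $W(p,q,r)$ with $\alpha,\beta,\gamma$ the prescribed roots of $v_p,v_q,v_r$. Each extra relation $(uv)^{n}=1$, with $u,v$ reflections of $W(p,q,r)$, is to be rewritten — via the appropriate formula $\mathcal{C}_i$ of Section~2 — as the condition that $C(u,v)$, an explicit polynomial in $\alpha,\beta,\gamma,l,m$, be a root of $v_n(X)$, i.e. $C(u,v)=4\cos^{2}(k\pi/n)$ for some $k$ prime to $n$. Together with $lm=\gamma$ this pins down $\{l,m\}$ as the two roots of an explicit quadratic over $K_0$, hence the parameter system $P(G)$, the value of $l$ displayed in~2) (up to the conjugation $l\leftrightarrow m$), and the value of $\Delta$. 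One then identifies $G$: the conjugacy moves $\langle s_i,s_j^{s_i}\rangle=\langle s_i,s_j\rangle$ (and, for cases~(d) and~(e) with two extra relations, one further such substitution reducing to a case already settled, exactly as in the treatment of $W(H_3)$) let one rewrite $G$ on a new triple of reflections whose Coxeter diagram is one already handled, so that all seven presentations collapse to one core computation. In that computation one checks $G=\langle t\rangle\,G^{+}$ with $t=s_1s_2s_3$ an odd word, exhibits generators of $G^{+}=\langle s_1s_2,\,s_2s_3\rangle$ satisfying a known presentation of the non-split triple cover $C_3*A_6=3.A_6$ (in the spirit of Coxeter–Moser~\cite{CM}), and computes the relevant power of $t$ from its characteristic polynomial $P_t(X)$ of Proposition~\ref{b15}: one finds $t^{5}=-\omega^{\pm1}\,\mathrm{id}_M$, respectively $t^{4}=\omega^{\pm1}\,\mathrm{id}_M$ in cases~(b) and~(f). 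Thus $t\notin G^{+}$, the scalars produced by $t$ are exactly the expected central subgroup, and combining $G^{+}\simeq 3.A_6$, $[G:G^{+}]=2$ and the presence of the scalar $-\mathrm{id}_M$ in $G$ one concludes $G\simeq C_2\times(C_3*A_6)\simeq G_{27}$, reading off $K_0$ as the field generated by $\alpha,\beta,\gamma$.

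\textbf{Exhaustion part.} Here one uses the structure $G_{27}\simeq C_2\times(C_3*A_6)$. Since $\alpha,\beta,\gamma$ are real and lie in the field of definition $\mathbb{Q}(\tau,\omega)$ of $G_{27}$, they lie in $\mathbb{Q}(\tau,\omega)\cap\mathbb{R}=\mathbb{Q}(\tau)=\mathbb{Q}(\sqrt{5})$; hence each of $v_p,v_q,v_r$ has a root in $\mathbb{Q}(\tau)$, which forces $p,q,r\in\{2,3,4,5,6\}$. The value $2$ is excluded because a parameter $\gamma=0$ makes $R$ (conjugate to) the geometric representation of a real Coxeter group, whereas $G_{27}$ is not real; the value $6$ is excluded because a product of two reflections of $G_{27}$ has order in $\{2,3,4,5\}$ (no rank-$2$ reflection subgroup of $G_{27}$ is dihedral of order $12$). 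One is thus left with the ten multisets $\{3,3,3\},\{3,3,4\},\{3,3,5\},\{3,4,4\},\{3,4,5\},\{3,5,5\},\{4,4,4\},\{4,4,5\},\{4,5,5\},\{5,5,5\}$. For each of them, and for each admissible order $n\in\{2,3,4,5\}$ of the relevant extra reflection product $uv$, one computes $C(u,v)$ from the $\mathcal{C}_i$ formulae and checks the outcome: either the resulting relation among the parameters forces $\Delta(G)=0$ — so $R$ is reducible and $Z(G)=\{1\}$, impossible since $Z(G_{27})\neq\{1\}$ — or $uv$ has infinite order (its $C$-value not of the form $4\cos^{2}(\text{rational}\cdot\pi)$), or one lands precisely on one of the seven presentations of~2). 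This eliminates $\{3,3,3\},\{3,4,4\},\{4,4,4\},\{5,5,5\}$ and leaves exactly the six multisets of~1).

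The main obstacle is the exhaustion part: it is a long but essentially mechanical traversal of the $\mathcal{C}_i$ formulae over every admissible triple and every admissible order, tracking the value of $\Delta$ along each branch. A secondary, more delicate point in the construction part is to pin down the isomorphism type of $G^{+}$ precisely — that it is the non-split extension $3.A_6$ and not the direct product $C_3\times A_6$ (the only other central extension of $A_6$ by $C_3$) — which is done by matching the exhibited generators and relations against an explicit finite presentation of $3.A_6$.
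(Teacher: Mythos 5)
Your outline reproduces the paper's own two-part strategy almost exactly: the construction part (use the $\mathcal{C}_i$ formulae and $lm=\gamma$ to pin $\{l,m\}$ down as the roots of an explicit quadratic, reduce the seven presentations by the substitutions $\langle s_i,s_j\rangle=\langle s_i,s_j^{s_i}\rangle$ to one or two core cases, identify $G^{+}$ as the non-split extension $C_3*A_6$ by matching Burnside-type relations, and read off the order of $t$ from $P_t(X)$) and the exhaustion part (bound $\{p,q,r\}$, then sweep the remaining triples and orders, killing each branch by $\Delta=0$, by an element of infinite order, or by a square root falling outside $\mathbb{Q}(\tau,\omega)$). The one point where you genuinely diverge is the initial restriction of $\{p,q,r\}$: the paper gets $\{p,q,r\}\subset\{3,4,5\}$ purely group-theoretically ($G^{+}$ has no element of order $10$ and no real element of order $6$), whereas you argue via the field of definition. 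As stated your field argument is too weak: $v_{10}(X)$ also has its roots $\frac{5\pm\sqrt{5}}{2}$ in $\mathbb{Q}(\tau)$, so $\alpha,\beta,\gamma\in\mathbb{Q}(\tau)$ does not by itself force $p,q,r\in\{2,3,4,5,6\}$. The slip is harmless, because your parenthetical observation that a product of two reflections of $G_{27}$ has order in $\{2,3,4,5\}$ (a real element of $C_3*A_6$ generates a cyclic group meeting the centre trivially, hence has the same order as its image in $A_6$) already excludes $10$ as well as $6$; but it is that observation, not the field computation, that must carry the restriction — which is precisely how the paper proceeds. Apart from this, and from the fact that your construction part remains a scheme whose case-by-case verifications (in particular the $t_2^4=\omega^{2}\,id_M$ computation certifying $[c,d^{3}]^{2}\in Z(G^{+})$) still have to be carried out, the proposal matches the paper's proof.
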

\begin{proof}
Elle divisée en deux parties.\\ D'abord nous montrons que les groupes obtenus dans le 2) sont isomorphes au groupe $G_{27}$, puis nous montrons que ce sont les seules possibilités.

A. 1) Considérons la représentation $R(1,1,\tau;l)$ de $W(3,3,5)$ et exprimons que l'élément $s_{1}s_{2}^{s_{3}}$ est d'ordre $4$. Nous avons $s_{1}s_{2}^{s_{3}}$ est d'ordre $4$ si et seulement si $C(s_{1},s_{2}^{s_{3}})=2$ donc $2=1+\tau +l+m$ et comme $lm=\gamma=\tau$, nous voyons que $l$ ,$m$ sont les racines du polynôme:
\[
Q(X)=X^{2}-(1-\tau)X+\tau
\]
dont le discriminant est $-3\tau=-3(\tau-1)^{2}$, d'où $l=\omega(1-\tau)$ et $m=\omega^{2}(1-\tau)$ et $K=\mathbb{Q}(\tau,\omega)$.\\
Nous devons montrer que $G$ possède la bonne structure. Posons $c:=s_{1}s_{2}$ et $d:=s_{2}s_{3}$. Alors $D(G)=G^{+}=<c,d>$ et nous avons $c^{3}=d^{5}=1$, $cd=s_{1}s_{3}$, donc $(cd)^{3}=1$. De plus $c^{2}d=s_{2}s_{1}s_{2}s_{3}=s_{1}^{s_{2}}s_{3}=(s_{1}s_{3}^{s_{2}})^{s_{2}}$. Comme $\alpha=\beta=1$, nous avons $C(s_{1},s_{3}^{s_{2}})=C(s_{1},s_{2}^{s_{3}})=2$, donc $c^{2}d$ est d'ordre $4$.

Nous montrons maintenant que $G^{+}/Z(G^{+})$ est isomorphe au groupe alterné de degré $6$ en utilisant la présentation de ce groupe donnée dans \cite{Bu}:
\[
A_{6}=<c,d| c^{3}=d^{5}=(cd)^{3}=(c^{2}d)^{4}=[c,d^{3}]^{2}=1>.
\]
(Par exemple on prend $c=(2,4,6)$ et $d=(1,2,3,4,5)$.)\\
Il nous reste à calculer l'ordre de $[c,d^{3}]$. Nous avons:
 \[
 [c,d^{3}]=(s_{1}s_{2})(s_{2}s_{3})^{3}(s_{2}s_{1})(s_{3}s_{2})^{3}=(s_{1}(s_{2}s_{3})^{2})^{2}=t_{2}^{2} 
 \]
  où $t_{2}=s_{1}(s_{2}s_{3})^{2}$ (notation de la section 2). Nous utilisons les formules de la proposition 18.
Nous avons:
\begin{align}
\theta &= -4+1+1+\tau+l &= l+\tau-2 &= -1-m \notag\\
\theta' &= 4-1-1-\tau-m &= 2-m-\tau &= l+1 \notag
\end{align}
Nous obtenons $\theta\theta'=-(1+l)(1+m)=-2$ et $\theta+\theta'=l-m$. Il en résulte que $P_{t^{2}}(X)=X^{3}-uX^{2}+u'X+1$ avec $u=1+\theta\tau-(\theta+\theta')$ et $u'=-1+\theta'\tau-(\theta+\theta')$, puis 
\begin{align*}
u&=1-\tau-\omega^{2}\tau(\tau-1)-\omega(\tau-1)+\omega^{2}(\tau-1)\\
&=(\tau-1)(-1-\omega^{2}\tau-\omega+\omega^{2})\\
&=(\tau-1)\omega^{2}(\tau-2)=-\omega^{2}
\end{align*}
\begin{align*}
u' &= -1+\omega\tau(\tau-1)+\tau-\omega(\tau-1)+\omega^{2}(\tau-1)\\
&= (\tau-1)(1+\omega\tau-\omega+\omega^{2})\\
&= (\tau-1)\omega(\tau-2)=\omega
\end{align*}
car $\tau^{2}-3\tau+1=0$.\\
Nous avons alors:
\begin{align*}
t_{2}^{3} &= ut_{2}^{2}-u't_{2}-id_{M},\\
t_{2}^{4} &= (u^{2}-u')t_{2}^{2}-(uu'+1)t_{2}-u id_{M} =\omega^{2}id_{M}
\end{align*}
d'où $t_{2}^{4}\in Z(G^{+})$, $t_{2}^{4}$ est d'ordre $3$ et $t_{2}$ est d'ordre $12$.\\
Maintenant $[c,d^{3}]=t_{2}^{2}$, donc $[c,d^{3}]$ est d'ordre $6$ et $[c,d^{3}]^{2}\in Z(G^{+})$. D'après le résultat de Burnside, nous voyons que $D(G^{+})/Z(G^{+})$ est isomorphe au groupe alterné $A_{6}$ et que $Z(G^{+})=<[c,d^{3}]^{2}>$.\\
Comme $D(G)$ est son propre groupe dérivé, il en résulte que $D(G)\simeq C_{3}*A_{6}$ extension centrale non scindée de $A_{6}$.

Nous avons $P_{t}(X)=X^{3}-(1+\theta)X^{2}+(-1+\theta')X+1=X^{3}+mX^{2}+lX+1$ et nous obtenons:
\begin{align*}
t^{3} &= -mt^{2}-lt-id_{M}\\
t^{4} &= (m^{2}-l)t^{2}+(\tau-1)t+mid_{M}\\
t^{5} &= (-m^{3}+2\tau-1)t^{2}+(-lm^{2}+l^{2}+m)t-(m^{2}-l)id_{M}.
\end{align*}
Comme $l=\omega(\tau-1)$ et $m=\omega^{2}(\tau-1)$, nous avons $m^{2}-l=\omega(\tau-1)^{2}-\omega(\tau-1)=\omega$, $m^{3}=2\tau-1$, donc $-m^{3}+2\tau-1=0$ et de même on voit que $-lm^{2}+l^{2}+m=0$. Finalement, $t^{5}=-\omega id_{M}\in Z(G)$ et $t^{15}=-id_{M}$. Comme $-id_{M}\not \in D(G)$ nous avons le résultat: $G\simeq C_{2}\times(C_{3}*A_{6})\simeq G_{27}$.

2) Considérons la représentation $R(1,2,\tau;l)$ de $W(3,4,5,)$. Exprimons que l'élément $s_{1}s_{2}^{s_{3}}$ est d'ordre $4$.

(i) Nous avons $s_{1}s_{2}^{s_{3}}$ d'ordre $4$ si et seulement si $C(s_{1},s_{2}^{s_{3}})=2$ donc $2=1+2\tau+l+2m$ et comme $lm=\gamma=\tau$, nous voyons que $l$ et $2m$ sont les racines du polynôme:
\[
Q(X)=x^{2}-(1-2\tau)X+2\tau
\]
dont le discriminant est $-3$, d'où $K=\mathbb{Q}(\tau,\omega)$ et $\{l,2m\}=\{-\omega-\tau,-\omega^{2}-\tau$\}.\\
Nous avons $C(s_{1},s_{2}^{s_{3}s_{2}})=\beta\gamma+\alpha(\gamma-1)^{2}+(\gamma-1)(l\alpha+m\beta)=3\tau+(\tau-1)(1-2\tau)$ donc 
$C(s_{1},s_{2}^{s_{3}s_{2}})=-2\tau^{2}+6\tau-1=1$. Nous obtenons $G=<s_{1},s_{2},s_{2}^{s_{3}s_{2}}>$ avec le diagramme:
\[
\begin{picture}(150,68)

\put(58,65){$s_{1}$}
\put(63,55){\circle{7}}
\put (37,5){\circle{7}}
\put (90,5){\circle{7}}
\put(34,-8){$s_{2}$}
\put(75,-8){$s'_{3}=s_{2}^{s_{3}s_{2}}$}
\put(39,6,5){\line(1,0){48}}
\put(36,6,7){\line(1,2){23}}
\put(90,6,8){\line(-1,2){23}}
\put(41,35){$3$}
\put(80,35){$3$}
\put(60,-4){$5$}
\end{picture}
\]
et comme $s_{2}^{s_{3}s_{2}}s_{2}s_{2}^{s_{3}s_{2}}=s_{3}$, nous avons $(s_{1}s'_{3}s_{2}s'_{3})^{4}=(s_{1}s_{3})^{4}=1$ donc, d'après le 1), $G$ est isomorphe à $G_{27}$. Les autres calculs ne présentent pas de difficultés.

(ii) Exprimons maintenant que l'élément $s_{1}s_{2}^{s_{3}}$ est d'ordre $5$.\\
 Nous avons $C(s_{1},s_{2}^{s_{3}})=1+2\tau+(l+2m)\in \{\tau,3-\tau\}$ d'où deux possibilités.\\
 - $1+2\tau+(l+2m)=\tau$, alors $l$ et $2m$ sont les racines du polynôme:
 \[
 Q(X)=X^{2}+(\tau+1)X+2\tau
 \]
 et nous voyons que $\{l,2m\}=\{\omega^{2}+\tau\omega,\omega+\tau\omega^{2}\}$ d'où $K=\mathbb{Q}(\tau,\omega)$.
 
 Nous avons $C(s_{1},s_{3}^{s_{2}})=2+\tau-1-\tau=1$, donc $s_{1}s_{3}^{s_{2}}$ est d'ordre $3$\\ et $G=<s_{1},s_{2},s_{3}^{s_{2}}>$ avec le diagramme:
 \[
\begin{picture}(150,68)

\put(58,65){$s_{1}$}
\put(63,55){\circle{7}}
\put (37,5){\circle{7}}
\put (90,5){\circle{7}}
\put(34,-8){$s_{2}$}
\put(87,-8){$s_{3}^{s_{2}}$}
\put(39,6,5){\line(1,0){48}}
\put(36,6,7){\line(1,2){23}}
\put(90,6,8){\line(-1,2){23}}
\put(41,35){$3$}
\put(80,35){$3$}
\put(60,-4){$5$}
\end{picture}
\]
et $s_{1}(s_{3}^{s_{2}})^{s_{2}}=s_{1}s_{3}$ avec $(s_{1}s_{3})^{4}=1$. D'après le 1) $G$ est isomorphe à $G_{27}$. Les autres calculs ne présentent pas de difficultés.\\
- $1+2\tau+l+2m=3-\tau$, donc $l+2m=2-3\tau$, $l2m=2\tau$ et $l$ et $2m$ sont les racines du polynôme 
\[
Q(X)=X^{2}+(3\tau-2)X+2\tau
\]
dont le discriminant est $(3\tau-2)^{2}-8\tau=7\tau-5$, mais $\sqrt{7\tau-5}$ n'est pas dans $K$. Ce cas ne peut pas se produire.

3) Considérons la représentation $R(\alpha,\beta,1;l)$ de $W(5,5,3)$ où $\alpha$ et $\beta$ sont des racines de $u_{5}(X)= X^{2}-3X+1$ et $\gamma=1$. Nous avons:
\[
(s_{3}s_{2}^{s_{1}})^{3}=1\iff C(s_{3},s_{2}^{s_{1}})=\gamma+\alpha\beta+l\alpha+m\beta=1,
\]
\[
(s_{1}s_{2}^{s_{3}})^{4}=1\iff C(s_{1},s_{2}^{s_{3}})=\alpha+\beta\gamma+l\alpha+m\beta=2.
\]
Comme $\gamma=1$ nous obtenons: $\alpha\beta=-(l\alpha+m\beta)$ et $\alpha+\beta=2-(l\alpha+m\beta)$ donc $\alpha+\beta-\alpha\beta=2$. Si $\beta=\alpha$, nous obtenons $\alpha^{2}-2\alpha+2=0$ ce qui n'est pas; donc $\beta\neq\alpha$ et nous pouvons choisir $\alpha=\tau$ et $\beta=3-\tau$. Dans ces conditions $l\tau+m(3-\tau)=-1$ et $l\tau.m(3-\tau)=1$: $l\tau$ et $m(3-\tau)$ sont les racines du polynôme 
\[
Q(X)=X^{2}+X+1
\]
donc $K=\mathbb{Q}(\tau,\omega)$ et$\{l\tau,m(3-\tau)\}=\{\omega,\omega^{2}\}$.\\
Nous avons $G=<s_{2}^{s_{3}},s_{3},s_{1}>$ avec le diagramme:
 \[
\begin{picture}(150,68)

\put(47,65){$s'_{1}=s_{2}^{s_{3}}$}
\put(63,55){\circle{7}}
\put (37,5){\circle{7}}
\put (90,5){\circle{7}}
\put(22,-8){$s'_{2}=s_{3}$}
\put(76,-8){$s_{1}=s'_{3}$}
\put(39,6,5){\line(1,0){48}}
\put(36,6,7){\line(1,2){23}}
\put(90,6,8){\line(-1,2){23}}
\put(41,35){$3$}
\put(80,35){$4$}
\put(60,-4){$5$}
\end{picture}
\]
et $(s'_{1}s'_{3}s'_{2}s'_{3})^{4}=(s_{2}^{s_{3}}s_{3}^{s_{1}})^{4}$. Des calculs simples montrent que:
\[
s_{2}^{s_{3}}(a_{1})=a_{1}+(l+1)(a_{2}+ma_{3}),\quad s_{2}^{s_{3}}(a_{3})=a_{3}-l(a_{2}+ma_{3})
\]
\[
s_{3}^{s_{1}}(a_{2})=a_{2}+(\tau+m)((3-\tau)a_{1}+a_{3}), \quad s_{3}^{s_{1}}(a_{3})=a_{3}+(1-\tau)((3-\tau)a_{1}+a_{3})
\]
donc $C(s_{2}^{s_{3}},s_{3}^{s_{1}})=[(3-\tau)(l+1)-l][(\tau+m)+m(1-\tau)]$ et, tenant compte des égalités $l\tau=\omega$ et $2m(3-\tau)=2\omega^{2}$ nous voyons que $C(s_{2}^{s_{3}},s_{3}^{s_{1}})=2$ et $(s'_{1}s'_{3}s'_{2}s'_{3})^{4}=1$. d'après le 2), $G$ est isomorphe à $G_{27}$. Les autres calculs ne présentent pas de difficultés.

4) Considérons la représentation $R(\alpha,\beta,2;l)$ de $W(5,5,4)$ où $\alpha$ et $\beta$ sont racines de $u_{5}(X)$ et $\gamma=2
$. Nous avons $C(s_{1} ,s_{2}^{s_{3}})= 1= \alpha+2\beta+(\alpha l+\beta m)=1$, $C(s_{1},s_{3}^{s_{2}}) =\beta+2\alpha+(\alpha l+\beta m)=1$ et nous obtenons $\beta= \alpha=\tau$. Alors $l\tau+m\tau=1-3\tau$, d'où $l+m=-\tau$ et $lm=2$, donc $l$, $m$ sont racines du polynôme:
\[
Q(X)=X^{2}+\tau X+2
\]
dont le discriminant est $\tau^{2}-8=-3(3-\tau)=-3(\tau-2)^{2}$. Il en résulte que $K=\mathbb{Q}(\tau,\omega)$ et $\{l,m\}=\{\omega(\tau-1)+\omega^{2},\omega^{2}(\tau-1)+\omega\}$.\\
Comme $<s_{2},s_{3}>=<s_{2}^{s_{3}},s_{3}>$, on a $G=<s_{2}^{s_{3}},s_{1},s_{3}>$ avec le diagramme:
\[
\begin{picture}(150,68)

\put(47,65){$s'_{1}=s_{2}^{s_{3}}$}
\put(63,55){\circle{7}}
\put (37,5){\circle{7}}
\put (90,5){\circle{7}}
\put(22,-8){$s'_{2}=s_{1}$}
\put(76,-8){$s'_{3}=s_{3}$}
\put(39,6,5){\line(1,0){48}}
\put(36,6,7){\line(1,2){23}}
\put(90,6,8){\line(-1,2){23}}
\put(41,35){$3$}
\put(80,35){$4$}
\put(60,-4){$5$}
\end{picture}
\]
et $s'_{1}s'_{3}s'_{2}s'_{3}=s_{2}^{s_{3}}s_{1}^{s_{3}}=(s_{2}s_{1})^{s3}$, donc $s'_{1}s'_{3}s'_{2}s'_{3}$ est d'ordre $5$ et d'après le 2) $G$ est isomorphe à $G_{27}$. Les autres calculs ne présentent pas de difficultés.

5) Considérons la représentation $R(2,2,\tau;l)$ de $W(4,4,5)$.\\
 Nous avons $C(s_{1},s_{2}^{s_{3}})=1=2\tau+2+2(l+m)$ donc $2l+2m=-(2\tau+1)$, $2l2m=4\tau$ et $2l$, $2m$ sont les racines du polynôme 
\[
Q(X)=X^{2}+(2\tau+1)X+4\tau
\]
dont le discriminant est $(2\tau+1)^{2}-16\tau=-3$, donc $K=\mathbb{Q}(\tau,\omega)$ et nous obtenons:$\{2l,2m\}=\{\omega-\tau,\omega^{2}-\tau\}$.\\
On a $<s_{2},s_{3}>=<s_{2}^{s_{3}}, s_{3}>$, donc $G=<s_{1},s_{2}^{s_{3}},s_{3}>$ avec le diagramme:
\[
\begin{picture}(150,68)

\put(47,65){$s'_{1}=s_{1}$}
\put(63,55){\circle{7}}
\put (37,5){\circle{7}}
\put (90,5){\circle{7}}
\put(22,-8){$s'_{2}=s_{2}^{s_{3}}$}
\put(76,-8){$s'_{3}=s_{3}$}
\put(39,6,5){\line(1,0){48}}
\put(36,6,7){\line(1,2){23}}
\put(90,6,8){\line(-1,2){23}}
\put(41,35){$3$}
\put(80,35){$4$}
\put(60,-4){$5$}
\end{picture}
\]
et $s'_{1}s'_{3}s'_{2}s'_{3}=s_{1}s_{2}$ est d'ordre $4$. D'après le 2), $G$ est isomorphe à $G_{27}$. Les autres calculs ne présentent pas de difficultés. 

6) Considérons la représentation $R(1,1,2;l)$ de $W(3,3,4)$\\
. Nous avons $C(s_{1},s_{2}^{s_{3}})=3-\tau=3+l+m$, $lm=2$ donc $l$ et $m$ sont les racines du polynôme 
\[
Q(X)=X^{2}+\tau X+2
\] dont le discriminant est $\tau^{2}-8=3\tau-9==-3(3-\tau)=-3(\tau-2)^{2}$ donc $K=\mathbb{Q}(\tau,\omega)$ et nous obtenons $\{l,m\}=\{\omega^{2}(\tau-2)-1,\omega(\tau-2)-1\}$.\\
On a $<s_{2},s_{3}>=<s_{2},s_{3}^{s_{2}}>$, donc $G=<s_{3},s_{1},s_{2}^{s_{3}}>$ avec le diagramme:
\[
\begin{picture}(150,68)

\put(47,65){$s'_{1}=s_{3}$}
\put(63,55){\circle{7}}
\put (37,5){\circle{7}}
\put (90,5){\circle{7}}
\put(22,-8){$s'_{2}=s_{1}$}
\put(76,-8){$s'_{3}=s_{2}^{s_{3}}$}
\put(39,6,5){\line(1,0){48}}
\put(36,6,7){\line(1,2){23}}
\put(90,6,8){\line(-1,2){23}}
\put(41,35){$3$}
\put(80,35){$4$}
\put(60,-4){$5$}
\end{picture}
\]
On a $s'_{1}s'_{3}s'_{2}s'_{3}=s_{3}(s_{2}^{s_{3}})s_{1}(s_{2}^{s_{3}})=(s_{2}^{s_{3}})(s_{3}^{s_{2}}s_{1})(s_{2}^{s_{3}})$. Comme $C(s_{1},s_{3}^{s_{2}})=3-\tau$, $s_{1}s_{3}^{s_{2}}$ est d'ordre $5$, donc aussi $s'_{1}s'_{3}s'_{2}s'_{3}$: $G$ est isomorphe à $G_{27}$ d'après le 2). Les autres calcule ne présentent pas de difficultés.

B. Nous montrons maintenant que les seules possibilités sont celles du théorème.\\
Nous commen\c cons par deux remarques:
\begin{itemize}
  \item Le groupe $G^{+}\simeq C*A_{6}$ ne contient aucun élément d'ordre $10$.
  \item Le groupe $G^{+}$ ne contient aucun élément réel d'ordre $6$, car dans $A_{6}$ il n'y a aucun élément d'ordre $6$.

\end{itemize}
Il résulte de ces deux remarques que si l'on veut obtenir $G_{27}$ comme quotient d'un groupe de Coxeter $W(p,q,r)$ on doit avoir $\{p,q,r\} \subset \{3,4,5\}$. Les triples $(p,q,r)$ à examiner sont donc les triples suivants:
\[
(3,3,4),(3,3,5),(3,4,4),(3,4,5),(4,4,4),(4,4,5),(5,5,3),(5,5,4),(5,5,5).
\]
Pour éliminer certains de ces triples, nous montrons soit que le corps obtenu n'est pas le bon, soit que certains éléments sont d'ordre infini. Pour cela nous exprimons que l'ordre de $s_{1}s_{2}^{s_{3}}$ est d'ordre $3$, $4$ ou $5$  (cet ordre ne peut pas être $2$, car alors le groupe $G$ est un groupe de réflexion réel: en effet, on a $<s_{2},s_{3}>=<s_{3},s_{2}^{s_{3}}>$, donc $G=<s_{1},s_{3},s_{2}^{s_{3}}>$ avec le diagramme:
\[
\begin{picture}(150,88)
\put(29,42){\circle{7}}
\put(32,42){\line(1,0){30}}
\put(65,42){\circle{7}}
\put(70,42){\line(4,0){30}}
\put(103,42){\circle{7}}
\put(24,52){$s_{1}$}
\put(61,52){$s_{3}$}
\put(99,52){$s_{2}^{s_{3}}$}
\end{picture}
\]
d'où le résultat.)

Nous éliminons les triples $(3,4,4), (4,4,4),(5,5,5)$, les autres triples se traitant de la même manière.\\
- Si $(p,q,r)=(4,4,3)$, alors $\alpha=\beta=2,\gamma=1$ et nécessairement $s_{1}s_{2}^{s_{3}}$ est d'ordre $5$, donc $C(s_{1},s_{2}^{s_{3}})=4+2(l+m)=\tau$ et $2l$ et $2m$ sont les racines du polynôme:
\[
Q(X)=X^{2}-(4-\tau)X+4
\]
dont le discriminant est $\tau^{2}-8=1-5\tau$, mais $\sqrt{1-5\tau}\not \in \mathbb{Q}(\tau,\omega)$. Ce cas ne peut pas se produire.\\
- Si $(p,q,r)=(4,4,4)$, alors $\alpha=\beta=2=\gamma$ et comme ci-dessus $s_{1}s_{2}^{s_{3}}$ est d'ordre $5$, donc $C(s_{1},s_{2}^{s_{3}})=6+2(l+m)=\tau$ et $2l$ et $2m$ sont les racines du polynôme 
\[
Q(X)=X^{2}-(\tau-6)X+8
\]
dont le discriminant est $\tau^{2}-12\tau+4=3-9\tau$, mais $\sqrt{3-9\tau}\not \in \mathbb{Q}(\tau,\omega)$. Ce cas ne peut pas se produire.\\
- Si $(p,q,r)=(5,5,5)$, alors nous pouvons sans perte de généralité supposer que $\alpha=\beta=\tau$ et $\gamma=lm \in \{\tau,3-\tau\}$.\\
Nous avons $C(s_{1},s_{2}^{s_{3}})=\alpha+\beta\gamma+\tau(l+m)=\tau+\tau\gamma+\tau(l+m)$.\\
Considérons deux cas suivant la valeur de $\gamma$.\\
\textbf{Premier Cas.} $\gamma=\tau$, alors $C(s_{1},s_{2}^{s_{3}})=\tau^{2}+\tau+\tau(l+m)=4\tau-1+\tau(l+m)$.\\
-- Si $s_{1}s_{2}^{s_{3}}$ est d'ordre $3$, on a $\tau(l+m)=2-4\tau$, d'où $l+m=2(5-2\tau),lm=\tau$, donc $l$ et $m$ sont les racines du polynôme
\[
Q(X)=x^{2}-2(5-2\tau)X+\tau
\]
dont le discriminant est $21-9\tau$, mais $\sqrt{21-9\tau} \not \in \mathbb{Q}(\tau,\omega)$. Ce cas ne peut pas se produire.\\
--Si $s_{1}s_{2}^{s_{3}}$ est d'ordre $4$, on a $\tau(l+m)=3-4\tau$, d où $l+m=3-4\tau,lm=\tau$ et $l$ et $m$ sont les racines du polynôme
\[
Q(X)=x^{2}+(3\tau-5)X+\tau
\]
dont le discriminant est $16-17\tau$, mais $\sqrt{16-17\tau} \not \in \mathbb{Q}(\tau,\omega)$. Ce cas ne peut pas se produire.\\
-- Si $s_{1}s_{2}^{s_{3}}$ est d'ordre $5$, alors $4\tau-1+\tau(l+m)\in \{\tau,3-\tau\}$ d'où deux possibilités:\\
--- si $4\tau-1+\tau(l+m)=\tau$, on a $l+m=-\tau,lm=\tau$, donc $l$ et $m$ sont les racines du polynôme
\[
Q(X)=x^{2}+\tau X+\tau
\]
dont le discriminant est $\tau^{2}-4\tau$, mais $\sqrt{\tau^{2}-4\tau} \not \in \mathbb{Q}(\tau,\omega)$. Ce cas ne peut pas se produire.\\
--- si $4\tau-1+\tau(l+m)=3-\tau$, on a $l+m=7-4\tau,lm=\tau$, donc $l$ et $m$ sont les racines du polynôme
\[
Q(X)=x^{2}+(4\tau-7) X+\tau
\]dont le discriminant est $33-12\tau$, mais $\sqrt{33-12\tau}\not \in \mathbb{Q}(\tau,\omega)$.\\
On ne peut donc pas avoir $\gamma=\tau$.\\
\textbf{Deuxième cas.} $\gamma=3-\tau$, alors $C(s_{1},s_{2}^{s_{3}})=\tau+1+\tau(l+m)$.\\
- Si $s_{1}s_{2}^{s_{3}}$ est d'ordre $3$, on a $\tau+1+\tau(l+m)=1$ d'où $l+m=-1,lm=3-\tau$ donc $l$ et $m$ sont les racines du polynôme
\[
Q(X)=x^{2}+ X+3-\tau
\]
dont le discriminant est $4\tau-11$, mais $\sqrt{4\tau-11}\not \in \mathbb{Q}(\tau,\omega)$.\\
- Si $s_{1}s_{2}^{s_{3}}$ est d'ordre $4$, on a $\tau+1+\tau(l+m)=2$ d'où $l+m=2,\tau,lm=3-\tau$ donc $l$ et $m$ sont les racines du polynôme
\[
Q(X)=x^{2}-(2-\tau) X+3-\tau
\]
dont le discriminant est $\tau^{2}-8=-3(3-\tau)=-3(\tau-2)^{2}$, donc on a le bon corps dans ce cas, mais $¢(s_{2},s_{3}^{s_{1}})=3+\tau$, ce qui implique que $s_{2}s_{3}^{s_{1}}$ est d'ordre infini, ce qui ne convient pas.\\
- Si $s_{1}s_{2}^{s_{3}}$ est d'ordre $5$, on a $\tau+1+\tau(l+m)\in \{\tau,3-\tau\}$ d'où deux possibilités:\\
-- Si $\tau+1+\tau(l+m)=\tau$, on a $l+m=\tau-3,lm=\tau-3$ donc $l$ et $m$ sont les racines du polynôme
\[
Q(X)=x^{2}+(3-\tau) X+3-\tau
\]
dont le discriminant est: $2-\tau$, mais $\sqrt{2-\tau}\not \in \mathbb{Q}(\tau,\omega)$.\\
-- Si $\tau+1+\tau(l+m)=3-\tau$, on a $l+m=2(2-\tau),lm=3-\tau$ donc $l$ et $m$ sont les racines du polynôme
\[
Q(X)=x^{2}-2(2-\tau) X+3-\tau=(X-(2-\tau))^{2}
\]
On obtient $\Delta =0$, avec comme corps $K=\mathbb{Q}(\sqrt{5})$ ce qui n'est pas le bon corps. On ne peut pas avoir ce cas.

Nous verrons dans la dernière partie de ce travail à quoi correspond cette possibilité.
La fin de cette démonstration ne présente pas de difficultés et se fait comme dans le théorème précédent.
\end{proof}

\begin{center}
Université de Picardie Jules Verne\\
Pôle Scientifique\\
Laboratoire LAMFA, UMR CNRS 7352\\
33, rue Saint Leu\\
80039 Amiens Cedex\\
francois.zara@u-picardie.fr
\end{center}

\end{document}